\theoremstyle{plain}
\newtheorem{theorem}{Theorem}
\newtheorem{thm}{Theorem}[subsection]
\newtheorem{cor}[thm]{Corollary}
\newtheorem{lem}[thm]{Lemma}
\newtheorem{prop}[thm]{Proposition}
\theoremstyle{definition}
\newtheorem{defn}[thm]{Definition}
\newtheorem{exm}[thm]{Example}
\theoremstyle{remark}
\newtheorem{rem}[thm]{Remark}
\numberwithin{equation}{section}
\newcommand\Gal{\operatorname{Gal}\nolimits}
\newcommand\Inv{\operatorname{Inv}\nolimits}
\renewcommand\dim{\operatorname{dim}}
\newcommand\Res{\operatorname{Res}\nolimits}
\newcommand\id{\operatorname{Id}\nolimits}
\newcommand\Hom{\operatorname{Hom}\nolimits}
\newcommand\Ker{\operatorname{Ker}\nolimits}
\newcommand\im{\operatorname{Im}\nolimits}
\newcommand\inn{\operatorname{Inn}\nolimits}
\newcommand\rad{\operatorname{Rad}\nolimits}
\newcommand\p{\mathbb{P}}
\newcommand\aut{\operatorname{Aut}\nolimits}
\newcommand\graut{\operatorname{GrAut}\nolimits}
\newcommand\stimes{{\scriptscriptstyle\times}}
\newcommand\splus{{\scriptscriptstyle +}}
\newcommand\sperp{{\scriptscriptstyle \perp}}
\newcommand\iomega{\operatorname{I\Omega}\nolimits}
\def\1{\mathbbm{1}}
\newcommand\f{\mathbbm{f}}
\newcommand\mbt{\mathbbm{t}}
\newcommand\C{\mathscr{C}}
\newcommand\G{\mathscr{G}}
\newcommand\cmu{{\check\mu}}
\begin{document}

\title{Automorphisms of path coalgebras and applications}
\author{Yu Ye}
\date{}
\maketitle
\footnote{ {\it Keywords:} path coaglebra, complete path algebra, trans-datum, automorphism}
\footnote{\emph{MSC2010:} 16W20, 16T15, 16G20}
\footnote{Supported by National Natural Science Foundation of China (No. 10971206)}

\begin{abstract}
Our main purpose is to introduce the notion of trans-datum for quivers, and apply it to the
study of automorphism groups of path coalgebras and algebras. We observe that any homomorphism of path coalgebras is uniquely determined by a
 trans-datum, which is the basis of our work.
Under this correspondence, we show for any quiver $Q$
an isomorphism from $\aut(kQ^c)$ to $\Omega^*(Q)$, the group of invertible trans-data from $Q$ to
itself. We point out that the coradical filtration gives to a tower of normal subgroups of $\aut(kQ^c)$ with
all factor groups determined. Generalizing this fact, we establish a Galois-like theory for acyclic quivers,
which gives a bijection between large subcoalgebras of the path coalgebra and their Galois groups, relating
large subcoalgebras of a path coalgebra with certain subgroups of its automorphism group. The group
$\aut(kQ^c)$ is discussed by studying its certain subgroups, and the corresponding trans-data are given
explicitly. By the duality between reflexive coalgebras and algebras, we therefore obtain some structural
results of $\aut(\widehat{kQ^a})$ for a finite quiver $Q$, where $\widehat{kQ^a}$
is the complete path algebra. Moreover, we also apply these results to finite dimensional elementary algebras
and recover some classical results.
\end{abstract}

\setcounter{tocdepth}{2}

\tableofcontents

\parskip=0.15cm
\setcounter{section}{-1}
\section{Introduction}

Automorphism group of a mathematical object is, in roughly speaking, the symmetry of the object. A good knowledge about automorphisms of an object is essential in understanding its structure. To determine an automorphism group is always fundamental in classification problems. The present work aims to investigate automorphism groups of an important class of coalgebras and algebras, say the ones obtained from quivers.

Let $A$ be an associative algebra over a field $k$. In case that $A$ is finite dimensional, the automorphism group $\aut(A)$ is known to be an affine algebraic group with Lie algebra $\mathrm{Der}(A)$, the algebra of derivations of $A$; see \cite[$\S$1.2.3, ex.2]{ov}, for example. Some other important groups related are the outer automorphism group $\mathrm{Out}(A)=\aut(A)/\inn(A)$, the quotient group modulo the inner ones, and the Picard group $\mathrm{Pic}(A)$, the group of isoclasses of invertible $A$-$A$-bimodules.

Fr\"{o}hlich studied the Picard group of associative rings systematically. The Picard group is shown to be invariant
under Morita equivalence, and for any ring $R$ there exists a map from $\aut(R)$ to $\mathrm{Pic}(R)$ with kernel
 $\inn(R)$; see \cite{fr}. Moreover, Bolla \cite{bo} proved that when $R$ is basic semiperfect, then the map is also surjective, that is $\mathrm{Pic}(R)\cong\mathrm{Out}(R)$ in this case.

How the assumption $\aut(A)$ being reductive (respectively, semi-simple, toral, solvable, nilpotent)
effects the algebra structure of $A$ has been discussed by Pollack \cite{po}. It is worth mentioning that the solvability of the automorphism group is of
great importance in the classification of isolated hypersurface singularities,
see a series papers \cite{y1, y2} by Yau.

Usually to determine $\aut(A)$ (respectively, $\mathrm{Out}(A)$, $\mathrm{Pic}(A)$) for a
finite dimensional algebra $A$ is very difficult, even to find out all idempotent elements
of $A$ is not an easy task. Only a few scattered examples are known. A well studied example is the automorphism group of an incidence algebra beginning with Stanley \cite{st} and continued by Scharlau \cite{sch}, Baclawski \cite{ba}, and Coelho \cite{co}. Other examples are exterior algebras by Djokovi\'{c} \cite{dj}, square-free algebras by Anderson and D'Ambrosia \cite{aa}, and
monomial algebras by Guil-Asensio and Saor\'{i}n \cite{gs2}.

In a series papers \cite{gs1, gs2}, Guil-Asensio and Saor\'{i}n developed a strategy to compute the outer automorphism group and the
Picard group for finite dimensional algebras. As an application the Picard group of a split finite
dimensional algebra is calculated  in several special cases.

The present work is intended to be from a different point of view. We begin with the study of automorphisms of coalgebras and then apply to algebras. Notice that algebra and coalgebra are dual categorically. In philosophy, algebra is easy to handle sometimes, while in some other cases, to deal with coalgebras is easier. The key observation is that automorphisms of a path coalgebra are much easier to characterize. This is our starting point. We make an attempt to study the automorphism group (respectively, outer automorphism group, Picard group) of a path algebra via its dual path coalgebra. Recall that in path algebra case, the outer automorphism group and the Picard group are the same.

We remark that both incidence algebras and exterior algebras are elementary, which can be realized as quotient algebras of path algebras. It is possible to apply our general results on path algebras and coalgebras to these examples, although we will not go further in this direction in this paper.

Throughout, $k$ will be a fixed field and all
unadorned $\otimes$ will mean $\otimes_{k}$. It is worth mentioning that most results hold true in general situation, and quivers considered are arbitrary quivers unless otherwise stated. More precisely, we need the "acyclic" condition in Section 4 for the establishment of the Galois theory, and in some scattered applications we need certain finiteness condition.

The main results and the structure of the paper is outlined as follows.

In Section 1, we recall some basic notions and give a brief introduction to quiver techniques needed.

Section 2 deals with a characterization of path coalgebra homomorphisms.
We introduce the notion of trans-datum for quivers, see Definition \ref{def-trans-datum}. Let $Q$ and $Q'$ be quivers. We denote by $\Omega(Q,Q')$ the set of trans-data from $Q$ to $Q'$ and write for brevity $\Omega(Q) = \Omega(Q, Q)$. The following fundamental result is given in Theorem \ref{thm-monomial-extension}.

\begin{theorem} Let $Q, Q'$ be quivers, and $kQ^c$ and $kQ'^c$ the path coalgebras. Then we have mutually inverse maps $ \xymatrix @C=2.5pc {\operatorname{Coalg}(kQ^c, kQ'^c) \ar@<+0.5ex>[r]^(.6){ \mathbbm{t}} & \Omega(Q, Q')\ar@<+0.5ex>[l]^(.4){\mathbbm{f}} }$, here $\operatorname{Coalg}(kQ^c, kQ'^c)$ denotes the set of coalgebra homomorphisms from $kQ^c$ to $kQ'^c$.
\end{theorem}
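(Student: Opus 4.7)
The plan is to exploit the coradical filtration of $kQ^c$, whose $n$-th term $(kQ^c)_n$ is spanned by paths of length at most $n$, and to reconstruct a coalgebra homomorphism layer by layer. Any coalgebra map $f : kQ^c \to kQ'^c$ must send grouplike elements to grouplike elements; since the grouplikes of a path coalgebra are precisely its vertices, this immediately yields a vertex map $Q_0 \to Q_0'$, which forms the first piece of $\mathbbm{t}(f)$. For an arrow $\alpha : i \to j$, the relation $\Delta(\alpha) = i \otimes \alpha + \alpha \otimes j$ forces $f(\alpha)$ to be an $(f(i), f(j))$-skew-primitive element of $kQ'^c$, hence a linear combination of arrows between $f(i)$ and $f(j)$ together with possibly a scalar multiple of $f(i) - f(j)$. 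Continuing inductively, on a path $p$ of length $n$ the identity $\Delta(f(p)) = (f \otimes f)\Delta(p)$ constrains $f(p)$ modulo the $(n-1)$-st coradical layer, and the residual freedom at each step, after this constraint is imposed, is exactly what should be packaged into the trans-datum $\mathbbm{t}(f)$.

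For the inverse map $\mathbbm{f}$, I would take a trans-datum and construct $f$ path-by-path, in order of increasing length, defining its value on each newly encountered path by the prescribed coefficients, and at each stage checking that $\Delta \circ f = (f \otimes f) \circ \Delta$ holds on the freshly added elements. The axioms built into the definition of a trans-datum should be precisely the coherence conditions needed to make this inductive extension well-defined, translating the combinatorics of path splittings into constraints on the chosen coefficients. Mutual inversion of $\mathbbm{t}$ and $\mathbbm{f}$ is then transparent: $\mathbbm{t} \circ \mathbbm{f}$ reads back the data we fed in, while $\mathbbm{f} \circ \mathbbm{t}$ recovers $f$ from its own values on each coradical layer, which suffices because $kQ^c = \bigcup_n (kQ^c)_n$.

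The main obstacle, as I see it, lies in the inductive step of the construction of $\mathbbm{f}$: one must verify that the trans-datum axioms really do suffice to enforce $\Delta \circ f = (f \otimes f) \circ \Delta$ on all paths, not merely on arrows or paths of small length. This amounts to a combinatorial bookkeeping on the various ways a path decomposes as a concatenation of subpaths, and the axioms must be \emph{tight} — strong enough to guarantee coassociativity compatibility on images of arbitrarily long paths, but weak enough that every coalgebra homomorphism gives rise to a trans-datum in the first place. Once this bookkeeping is organized, both directions follow cleanly, and the naturality of the correspondence in $Q$ and $Q'$ should drop out automatically.
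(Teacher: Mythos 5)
Your high-level strategy (induct on path length, read off the vertex map from grouplikes, the arrow data from skew-primitives, and package the ``residual freedom'' at each layer into the trans-datum) does match the skeleton of the paper's argument: the paper likewise defines $\mu_p = f(p) - \sum_{p=p_1\cdots p_r,\, r\ge 2}F(\mu_{p_1}\Box\cdots\Box\mu_{p_r})$ by induction on $l(p)$ and checks that this residual is primitive. But there is a genuine gap, and it sits exactly where you flag ``the main obstacle'': you never produce the reconstruction formula for $\mathbbm{f}$, and without it the inductive step cannot be carried out. A trans-datum in this paper has essentially \emph{no} axioms beyond each $\mu_p$ lying in $\p_{\mu_0(e_{s(p)}),\mu_0(e_{t(p)})}$ --- it is a free family of primitives, one per nontrivial path. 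So your hope that ``the axioms built into the definition of a trans-datum should be precisely the coherence conditions needed'' cannot be realized: there are no coherence conditions to lean on. All of the coherence is carried by the explicit formula $f(p)=\sum_{p=p_1\cdots p_r}F(\mu_{p_1}\Box\cdots\Box\mu_{p_r})$, and proving that this defines a coalgebra map for an \emph{arbitrary} choice of primitives is the substantive content of the theorem (Proposition \ref{prop-coalgmap-to-datum}), done by matching the two-fold splittings of $p$ refined into finer splittings against the $r$-fold splittings with a chosen cut point.

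A second, related omission: the primitives $\mu_p$ are not homogeneous of degree one; each has a degree-zero component which is a multiple of $\mu_0(e_{s(p)})-\mu_0(e_{t(p)})$. Because of this, the cotensor products $\mu_{p_1}\Box\cdots\Box\mu_{p_r}$ do not naively live in $kQ'^c$, and one cannot simply ``define $f$ path-by-path by the prescribed coefficients.'' The paper's device for handling this is the augmented quiver $Q'^\tau$ together with the coalgebra map $F\colon k(Q'^\tau)^c\to kQ'^c$ of Lemma \ref{keylemma}, which lets one treat each primitive as an arrow of $Q'^\tau$, form the cotensor product there, and then collapse. Your proposal contains no substitute for this mechanism, so the ``combinatorial bookkeeping'' you defer is not merely tedious verification --- it requires an idea that is absent from the write-up.
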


Under the above correspondence, a composition map of trans-data, which is compatible with the one of coalgebra homomorphisms, is given explicitly, making $\Omega(Q)$ a monoid which is isomorphic to $\mathrm{Coalg}(kQ^c, kQ^c) $; see  Theorem \ref{thm-composion-map}.

The following results is obtained in Theorem \ref{thm-large-auto-ext} and \ref{thm-autogp-fd-alg-lifting}, which show us the advantage of using the notion of trans-datum, especially in the study of automorphisms of pointed coalgebras and elementary algebras.

\begin{theorem}
(1) Let $Q$ be any quiver and $C$ a large subcoalgebra of $kQ^c$. Then any automorphism of $C$ extends to an automorphism of $kQ^c$.

(2) Let $A$ be a finite dimensional elementary algebra and $Q$ its extension quiver. Then any automorphism of $A$ is induced from an automorphism of $\widehat{kQ^c}$.
\end{theorem}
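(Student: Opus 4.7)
The strategy is to use the trans-datum correspondence as the common backbone, handling (1) by translating $\phi$ into a trans-datum that already lives on $Q$, and deriving (2) from (1) by duality.

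For part (1), let $\phi\in\aut(C)$. By definition of ``large'', $C$ contains every vertex and every arrow of $Q$. Preservation of the coradical forces $\phi$ to permute $Q_0$; on each arrow $\alpha$, the element $\phi(\alpha)\in C\subseteq kQ^c$ satisfies the comultiplicative conditions imposed by $\phi$ being a coalgebra map. I claim that the pair (action on $Q_0$, action on arrows) is precisely a trans-datum $\mbt(\phi)\in\Omega(Q,Q)$: the axioms of Definition~\ref{def-trans-datum} are conditions on vertex/arrow data, and they are exactly the ones forced by compatibility with counit and comultiplication. Theorem~\ref{thm-monomial-extension} then yields a coalgebra endomorphism $\widetilde\phi:=\f(\mbt(\phi))$ of $kQ^c$, and induction on the coradical filtration shows $\widetilde\phi|_C=\phi$, since both maps agree on vertices and arrows while $C$ is closed under $\widetilde\phi$.

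To see that $\widetilde\phi$ is bijective, apply the same construction to $\phi^{-1}\in\aut(C)$ to produce $\widetilde{\phi^{-1}}$. Under the monoid isomorphism of Theorem~\ref{thm-composion-map}, the composites $\widetilde\phi\circ\widetilde{\phi^{-1}}$ and $\widetilde{\phi^{-1}}\circ\widetilde\phi$ correspond to products of trans-data whose vertex and arrow actions agree with those of the identity trans-datum; hence both equal $\id_{kQ^c}$.

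For part (2), dualize. Since $A$ is finite-dimensional and elementary, $A^{\ast}$ is a finite-dimensional pointed coalgebra, and the standard structure theorem embeds it as a large subcoalgebra $C\subseteq kQ^c$, with $Q$ the extension quiver of $A$. A given $\sigma\in\aut(A)$ dualizes to $\sigma^{\ast}\in\aut(C)$, which by part (1) extends to some $\psi\in\aut(kQ^c)$. Finiteness of $Q$ makes $(kQ^c,\widehat{kQ^a})$ a reflexive dual pair, so $\psi$ dualizes back to a continuous algebra automorphism $\Phi\in\aut(\widehat{kQ^a})$. That $\psi$ preserves $C$ translates under duality to $\Phi$ preserving the kernel of the canonical surjection $\widehat{kQ^a}\twoheadrightarrow A$, and the induced automorphism of $A$ recovers $\sigma$.

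The principal obstacle is the very first claim in (1), namely that the vertex/arrow restriction of $\phi$ genuinely satisfies the axioms of a trans-datum on all of $Q$, not merely on $C$. This is where the content of Definition~\ref{def-trans-datum} must interact cleanly with the largeness of $C$. Once it is established, (2) reduces to a bookkeeping exercise in the duality between $kQ^c$ and $\widehat{kQ^a}$, the only subtle point being continuity of $\Phi$, which is automatic because $\psi$ respects the coradical filtration.
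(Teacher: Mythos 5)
The central step of your part (1) has a genuine gap. A trans-datum in $\Omega(Q,Q)$ is not determined by its action on vertices and arrows: by Definition \ref{def-trans-datum} it consists of $\mu_0$ together with a primitive element $\mu_p$ for \emph{every} nontrivial path $p$ of $Q$, and formula (\ref{endoformulae}) for $\f_\mu(p)$ on a path of length $\ge 2$ involves the higher datum $\mu_p$ in an essential way. So your $\mbt(\phi)$ is not yet a trans-datum, and the inductive claim that $\widetilde\phi|_C=\phi$ ``since both maps agree on vertices and arrows'' fails: two coalgebra endomorphisms of $kQ^c$ that agree on $kQ_{\le 1}$ and preserve $C$ need not agree on $C$ --- the whole subgroup $\Omega_1^*(Q)$ consists of automorphisms restricting to the identity on $C_{(1)}$. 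The same error reappears in your bijectivity argument: a trans-datum whose vertex and arrow components agree with those of $\1$ need not equal $\1$. (Invertibility of $\f_\mu$ \emph{is} detected in degrees $\le 1$, by Proposition \ref{prop-coalgmap-to-datum}; equality of morphisms is not.)

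What is actually needed is that the full family $(\mu_p)_{p\in P}$ can be chosen consistently so that $\f_\mu$ restricts to $\phi$ on $C$. Theorem \ref{thm-monomial-extension} provides this only for \emph{monomial} subcoalgebras (those with a basis of paths), which a general large subcoalgebra is not; the general case is Proposition \ref{prop-coalg-mor-extension}, proved by a Zorn's lemma argument on pairs $(E,g)$ of intermediate subcoalgebras equipped with compatible extensions, where a maximal such $E$ is shown to be all of $kQ^c$ by adjoining one path at a time (with $\mu_p=0$ for the adjoined path). Once (1) is repaired along these lines, your part (2) is essentially the paper's argument: dualize $\sigma$ to $\sigma^*\in\aut(A^*)$, extend via (1), and dualize back using reflexivity of the finitary coalgebra $kQ^c$ to land in $\aut(\widehat{kQ^a})$.
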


The above theorem just says that the automorphism group of a finite dimensional elementary algebra is somehow controlled by the automorphism group of a complete path algebra. We expect similar result holds true for any finite dimensional algebra which satisfies certain separable condition, although we do not have an argument at moment.

We may compare the above theorem with Quebbemann's result \cite{qu}, which connects automorphisms of a tensor algebra and the ones of its quotient algebras. Let $A$ be a finite dimensional algebra with Jacobson radical $J$. Suppose that there is an algebra embedding $A/J\hookrightarrow A$ such that $A= A/J\oplus J$ as $A/J$-bimodules. Quebbemann showed that for any automorphism $\tau$ of $A$, there exists an automorphism $\tau'$ of the tensor algebra $T_{A/J}(J/J^2)$ and an algebra epimorphism $\phi\colon T_{A/J}(J/J^2)\longrightarrow A$, such that $\tau\circ\phi = \phi\circ\tau'$. Notice that this correspondence does not give a group homomorphism, for $\phi$ varies for different $\tau$.

Section 3 is devoted to the study of $\aut(kQ^c)$ for a quiver $Q$. The coradical filtration is shown to give a tower of normal subgroups of $\aut(kQ^c)$ with all factor groups characterized explicitly, particularly we calculate $\dim(\aut(kQ^c_{\le n}))$ in Proposition \ref{prop-dim-auto-coalg}, where $kQ^c_{\le n}$ is the $n$-truncated path coalgebra of $Q$. Consequently, finite dimensional truncated path coalgebras with solvable automorphism group are classified as follows; see Theorem \ref{thm-schurian-coalg} for detail.

\begin{theorem}
Let $Q$ be a finite quiver and $n\ge 2$ an integer. Then $\aut_0(kQ^c_{\le n})$ is solvable if and only if $Q$ is a Schurian quiver; $\aut(kQ^c_{\le n})$ is solvable if and only if $Q$ is a Schurian quiver and $\aut(Q)$ is resolvable.
 \end{theorem}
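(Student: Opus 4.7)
The plan is to deduce both statements from the tower of normal subgroups of $\aut(kQ^c_{\le n})$ produced by the coradical filtration, which is precisely the machinery developed in the paragraph preceding the theorem. Write $C = kQ^c_{\le n}$ and, for $0 \le i \le n$, let $N_i \trianglelefteq \aut_0(C)$ denote the subgroup of vertex-fixing automorphisms acting as the identity on the $i$-th coradical piece $C_i = kQ^c_{\le i}$. Since every coalgebra morphism preserves the coradical filtration and $N_n = \{\sid\}$, one obtains a chain
$$\aut_0(C) = N_0 \supseteq N_1 \supseteq \cdots \supseteq N_n = \{\sid\}.$$

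For the first statement I would use the trans-datum correspondence of Theorem \ref{thm-monomial-extension} to identify the successive quotients. For $i \ge 1$, an element of $N_i/N_{i+1}$ is prescribed by the deformation it induces on length-$(i{+}1)$ paths into strictly shorter ones; the composition formula of Theorem \ref{thm-composion-map} shows that the group law on these data becomes additive, so $N_i/N_{i+1}$ is a vector group, and therefore $N_1$ is an iterated extension of abelian groups and in particular solvable (unipotent, as an algebraic group). The top factor is different: an element of $\aut_0(C)/N_1$ is precisely an invertible operator on $kQ_1 = C_1/C_0$ respecting the source--target decomposition, and so
$$\aut_0(C)/N_1 \;\cong\; \prod_{i,j\in Q_0} GL(e_j kQ_1 e_i) \;=\; \prod_{i,j\in Q_0} GL_{a_{ij}}(k),$$
where $a_{ij}$ is the number of arrows from $i$ to $j$. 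Viewed as an algebraic group, a product of $GL_m$'s is solvable if and only if every $m$ occurring is $\le 1$, which is exactly the Schurian condition. Combining the two facts gives the first equivalence.

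For the second statement I would exploit the short exact sequence obtained by evaluation on vertices,
$$1 \lto \aut_0(C) \lto \aut(C) \lto G \lto 1,$$
with $G \le \aut(Q)$. Any automorphism of $Q$ lifts, tautologically, to a coalgebra automorphism of $C$ by permuting paths, so in fact $G = \aut(Q)$. Since the class of solvable groups is closed under extensions and under forming kernels and quotients, $\aut(C)$ is solvable iff both $\aut_0(C)$ and $\aut(Q)$ are solvable; combining with the first part yields exactly ``$Q$ Schurian and $\aut(Q)$ resolvable''.

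The main obstacle I anticipate is the careful bookkeeping needed to upgrade the set-level identification $\aut_0(C)/N_1 \leftrightarrow \prod_{i,j} GL_{a_{ij}}(k)$ and the identifications $N_i/N_{i+1} \leftrightarrow \Hom_k(kQ_{i+1}, C_i)$-with-its-additive-group-structure to genuine isomorphisms of groups (not merely pointed sets), so that ``solvable'' is being asserted in the same sense on both sides. Both assertions should follow transparently from the explicit composition law for trans-data in Theorem \ref{thm-composion-map}, but this bookkeeping, rather than any clever step, is where the actual substance of the proof will live.
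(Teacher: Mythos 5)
Your proof follows essentially the same route as the paper's: the tower of normal subgroups induced by the coradical filtration (Proposition \ref{prop-normal-subgp-chain}), abelian (vector-group) successive quotients in degrees $\ge 1$ (Lemma \ref{lem-auto-coalg-factorgp}), identification of the top graded factor with $\prod_{s,t\in Q_0}GL(k,|(Q_1)_{s,t}|)$ (Lemma \ref{lem-graut-c1}), and the extension $1\to\aut_0(C)\to\aut(C)\to\aut(Q)\to 1$ of Proposition \ref{prop-identity-cpt-aut-coalg} for the second statement. The one inaccuracy is the claim $\aut_0(C)/N_1\cong\prod_{i,j}GL_{a_{ij}}(k)$: since $\p_{s,t}=k(e_s-e_t)\oplus k(Q_1)_{s,t}$, a vertex-fixing automorphism may still translate an arrow by a multiple of $e_s-e_t$, so $\aut_0(C)/N_1\cong\aut_0(C_{(1)})$ is really an extension of $\prod GL$ by the vector group $k^{d_1}$ --- precisely the reason the paper inserts the intermediate subgroup $\Omega^*_{1/2}(Q)$ between $\Omega^*_0(Q)$ and $\Omega^*_1(Q)$ --- but since the discrepancy is an abelian normal subgroup, your solvability conclusion is unaffected.
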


Compare also with Proposition \ref{prop-sol-autgp-trunalg} which concerns truncated path algebras with solvable automorphism groups. We mention that the solvability of the identity component of the automorphism group of a monomial algebra has been discussed in \cite{gs2}. Since truncated path algebras are obviously monomial and $\aut_0(kQ^c_{\le n})$ is the identity component of $\aut(kQ^c_{\le n})$ (see Remark \ref{rem-connectness-coalg}), thus the first part of Proposition \ref{prop-sol-autgp-trunalg} can also be deduced from  \cite[Corollary 2.22]{gs2}.

In Section 4 we develop a Galois-like theory for an acyclic quiver $Q$, connecting admissible subgroups of $\aut(kQ^c)$ and large subcoalgebras of $kQ^c$. The following result, which we call the fundamental theorem for Galois extensions, is given in Theorem \ref{thm-galois-cor-pt-coal}.

\begin{theorem} Let $Q$ be an acyclic quiver. Let $D$ be a large subcoalgebra of $kQ^c$ and $E/D$ a Galois extension. Then there is a bijection between $\C(D,E)$, the set of intermediate coalgebras, and $\G(D,E)$, the set of admissible subgroups of $\Gal(E/D)$; Moreover, for any intermediate coalgebra $D\subseteq M\subseteq E$, $M/D$ is a Galois extension if and only if $\Gal(E/M)\lhd\Gal(E/D)$, and in this case, $\Gal(E/D)/\Gal(E/M)\cong \Gal(M/D).$
\end{theorem}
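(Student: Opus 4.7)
The plan is to mirror the proof of the classical fundamental theorem of Galois theory, using the extension theorem for automorphisms of large subcoalgebras (Theorem on extending large subcoalgebra automorphisms, stated earlier) as the crucial technical ingredient, in place of the primitive-element or linear-disjointness arguments of the classical setting. Define two inclusion-reversing maps $\Phi\colon \C(D,E)\to\G(D,E)$ by $M\mapsto \Gal(E/M)$, and $\Psi\colon \G(D,E)\to\C(D,E)$ by $H\mapsto E^H$, where $E^H$ is the subcoalgebra of $x\in E$ with $\sigma(x)=x$ for all $\sigma\in H$. One first checks these are well defined, i.e., intermediate coalgebras give admissible subgroups and vice versa; the trans-datum description of Section 2 makes the passage between the two sides concrete.

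To show $\Phi$ and $\Psi$ are mutually inverse, the easy containments $M\subseteq E^{\Gal(E/M)}$ and $H\subseteq \Gal(E/E^H)$ are immediate from the definitions. For the reverse containment $E^{\Gal(E/M)}\subseteq M$: given $x\in E\setminus M$, I must produce $\sigma\in\Gal(E/M)$ with $\sigma(x)\neq x$. Since $Q$ is acyclic, the coradical filtration truncates to finite-dimensional stable pieces, and the trans-datum correspondence lets me construct a nontrivial automorphism of a suitable large subcoalgebra containing both $M$ and $x$ that restricts to $\sid_M$; the extension theorem then lifts this to an element of $\Gal(E/M)$ that moves $x$. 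The other reverse containment $\Gal(E/E^H)\subseteq H$ is exactly the property characterizing ``admissible'' subgroups, and via the trans-datum correspondence reduces to a component-wise verification that the trans-datum of any $\sigma$ fixing $E^H$ lies in the prescribed set for $H$.

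For the normality statement, conjugation gives $\sigma\,\Gal(E/M)\,\sigma^{-1}=\Gal(E/\sigma(M))$, so $\Gal(E/M)\lhd\Gal(E/D)$ if and only if $\sigma(M)=M$ for every $\sigma\in\Gal(E/D)$. In this case restriction defines a group homomorphism $r\colon\Gal(E/D)\to\aut(M)$ whose kernel is precisely $\Gal(E/M)$ and whose image lies in $\Gal(M/D)$. To see $r$ is surjective onto $\Gal(M/D)$, take $\rho\in\Gal(M/D)$; since $M$ is a large subcoalgebra fitting into the Galois extension, the extension theorem lifts $\rho$ to an automorphism of $E$ that automatically fixes $D$, i.e., to an element of $\Gal(E/D)$. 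The isomorphism $\Gal(E/D)/\Gal(E/M)\cong \Gal(M/D)$ is then the first isomorphism theorem, and the Galois property of $M/D$ follows because the induced action on $M$ has fixed coalgebra $M\cap E^{\Gal(E/D)}=M\cap D=D$ (using the bijection already established).

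The main obstacle I expect is the reverse inclusion $E^{\Gal(E/M)}\subseteq M$, the coalgebra analogue of the classical fact that the fixed field of a Galois group equals the base field. The classical proofs via Artin's lemma or via separability of minimal polynomials do not transport directly, so one must substitute an explicit construction at the level of trans-data combined with the extension theorem. Carefully verifying that the constructed $\sigma$ respects the coalgebra structure on all of $E$ (not only on the finite-dimensional truncation where $x$ sits) requires a delicate induction along the coradical filtration, where acyclicity of $Q$ guarantees that each truncation $kQ^c_{\le n}$ is stable under the automorphisms in play and that the passage to the colimit is well controlled.
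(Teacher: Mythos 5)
Your overall strategy coincides with the paper's: the crux is the identity $\Inv(\Gal(\,\cdot\,/M))=M$, proved by exhibiting, for $x\notin M$, a path $p\le$ some summand of $x$ of maximal length outside $M$ (acyclicity guaranteeing $s(p)\ne t(p)$), perturbing it by $e_{s(p)}-e_{t(p)}$, and invoking the extension theorem; the rest is the formal Galois connection plus restriction maps. Your conjugation identity $\sigma\Gal(E/M)\sigma^{-1}=\Gal(E/\sigma(M))$ for the normality criterion is in fact cleaner than the paper's element-wise contradiction argument. However, two steps as written have gaps, both stemming from the fact that the ambient object is $C=kQ^c$ and not $E$.

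First, Theorem \ref{thm-large-auto-ext} (and Proposition \ref{prop-coalg-mor-extension}) extends an automorphism of a large subcoalgebra to an automorphism of the \emph{full path coalgebra} $C$, not of $E$. So when you say the extension theorem ``lifts this to an element of $\Gal(E/M)$'' (and likewise when you lift $\rho\in\Gal(M/D)$ to $\Gal(E/D)$), you actually obtain an element of $\Gal(C/M)\subseteq\Gal(C/D)$, and you must then check that it preserves $E$ so that it restricts to an automorphism of $E$. This is exactly where the hypothesis that $E/D$ is a Galois extension is used; as written, your argument never invokes that hypothesis, which should be a warning sign. The paper handles this by first proving the absolute case $E=C$ (Proposition \ref{prop-galois-cor-path-coalg}) and then descending via the surjection $\Gal(C/M)\twoheadrightarrow\Gal(E/M)$ given by restriction. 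Second, in this paper ``$M/D$ is a Galois extension'' means by definition that $M$ is invariant under all of $\Gal(C/D)$, not merely under $\Gal(E/D)$, and it is not the statement that the fixed coalgebra of the induced action on $M$ equals $D$; your closing computation $M\cap E^{\Gal(E/D)}=D$ is therefore not the right thing to verify. What is needed is that $\Gal(E/D)$-invariance of $M$ implies $\Gal(C/D)$-invariance, which again follows from the surjectivity of the restriction $\Gal(C/D)\to\Gal(E/D)$ (itself a consequence of the extension theorem together with the Galois property of $E/D$). Both gaps are repaired by routing every lifting argument through $C$ and using $E/D$ Galois to come back down, which is precisely the paper's organization.
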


We refer to Section 4 for an explanation of notations. Some applications of the fundamental theorem are also given there.

Section 5 concerns with the study of automorphisms of path coalgebra and complete path algebras. The
idea is to consider for a quiver $Q$ the following natural subgroups of $\Omega^*(Q)$, say $\iomega^*(Q)$, $\iomega^*_0(Q)$, $\iomega^*_\circ(Q)$ and
$\Omega_\bullet^*(Q)$. We show that $\iomega^*(Q)\lhd\Omega^*(Q)$, $\iomega^*_\circ(Q)\lhd\Omega^*(Q)$ and $\iomega^*(Q) = \iomega^*_\circ(Q) \rtimes \iomega^*_0(Q)$, see Proposition \ref{prop-inn-coalg-is-normal} for detail.

We set $\inn(kQ^c)=\f(\iomega^*(Q))$, $\inn_0(kQ^c)=\f(\iomega^*_0(Q))$, $\inn_\circ(kQ^c)=\f(\iomega^*_\circ(Q))$ and
$\aut_\bullet(kQ^c)=\f(\Omega_\bullet^*(Q))$.
We mention that for $\mu\in\iomega^*(Q)$, a useful formula for $\f_\mu$ is obtained in Proposition \ref{prop-inn-autocoalg-formular}. Consequently each $\sigma\in \inn(kQ^c)$ acts invariantly on any large subcoalgebra of $kQ^c$; see Corollary \ref{cor-inn-preserving}.

Let $D$ be a finitary pointed
coalgebra, here finitary means that $D_{(1)}$ is finite dimensional. $D$ is realized as a large
subcoalgebra of $kQ^c$ for some finite quiver $Q$.
Due to Taft we have a group isomorphism $\aut(D)\cong\aut(D^*)$, particularly
$\aut(\widehat{kQ^a})\cong \aut(kQ^c)$ and hence $\aut(\widehat{kQ^a})\cong \Omega^*(Q)$ for any
finite quiver $Q$; see Section 5.1.1.
We rely heavily on this basic fact, which enables to apply the technique developed for path coalgebras.

Denote by $\inn(\widehat{kQ^a})$ the inner automorphism group of $\widehat{kQ^a}$. Consider also
 $\inn_\circ(\widehat{kQ^a})$, the normal subgroup of $\inn(\widehat{kQ^a})$ induced by elements in
 $1+ \rad(\widehat{kQ^a})$, and the subgroup $\inn_0(\widehat{kQ^a})$
of inner automorphisms induced by elements in $(kQ_0)^\stimes$. Another subgroup taken into account is
$\aut_\bullet(\widehat{kQ^a})$, the subgroup of automorphisms fixing $kQ_0$. Similar
subgroups are defined for quotient algebras of $\widehat{kQ^a}$. Notice that the subgroups
$\inn_\circ(kQ^a)$ and $\aut_\bullet(kQ^a)$ have been studied by many authors, see for example \cite[Section 1]{po} and \cite{gs1}.

By Proposition \ref{prop-cor-inn-sbgps} and \ref{prop-cor-bullet-sbgp}, $\inn(\widehat{kQ^a})$, $\inn_\circ(\widehat{kQ^a})$, $\inn_0(\widehat{kQ^a})$ and $\aut_\bullet(\widehat{kQ^a})$ correspond to
$\inn(kQ^c)$, $\inn_\circ(kQ^c)$, $\inn_0(kQ^c)$ and $\aut_\bullet(kQ^c)$ respectively. In this sense, elements in $\inn(kQ^c)$ are also called inner automorphisms of $kQ^c$.
Now we have the following characterization, which is given in Theorem \ref{thm-antiiso-dual-alg}.

\begin{theorem} Let $Q$ be a finite quiver, $A= \widehat{kQ^a}$ and $B=A/I$, where $I\subseteq \widehat{kQ^a}_{\ge2}$ is an ideal of $\widehat{kQ^a}$. Then $\aut_0(B) = \inn_\circ(B)\aut_\bullet(B)$ and $\inn(B)\cap \aut_\bullet(B) = \chi(Z^\stimes_{B}(B_0))$.
If moreover, $Z^\stimes_{B}(B_0) = B_0^\stimes$, then $\aut_0(B) = \inn_\circ(B)\rtimes\aut_\bullet(B)$.
\end{theorem}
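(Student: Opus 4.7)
The plan is to work directly on the algebra $B$, exploiting a Wedderburn--Malcev type lifting available because the assumption $I\subseteq\widehat{kQ^a}_{\ge 2}$ ensures $B/\rad(B)\cong kQ_0$ (so $B_0$ is a genuine semisimple complement of $\rad(B)$) and that $B$ is complete in the $\rad(B)$-adic topology.

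First I would analyze the canonical map $\aut(B)\to\aut(B/\rad(B))=\aut(kQ_0)$. The target is a discrete group (essentially the symmetric group on $Q_0$), so its connected component is trivial and every $\sigma\in\aut_0(B)$ induces the identity on $B/\rad(B)$. In particular, for each primitive idempotent $e_i\in B_0$, $\sigma(e_i)$ is another idempotent lift of $\bar e_i\in B/\rad(B)$. A Wedderburn--Malcev argument (applied level by level along the radical filtration and passed to the limit using completeness) produces a single $u\in 1+\rad(B)$ with $\sigma(e_i)=ue_iu^{-1}$ for all $i$, and more precisely $\sigma|_{B_0}=\chi_u|_{B_0}$. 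Then $\chi_u^{-1}\circ\sigma$ fixes $B_0$ pointwise, hence lies in $\aut_\bullet(B)$, while $\chi_u\in\inn_\circ(B)$ by construction. This yields the factorization $\aut_0(B)=\inn_\circ(B)\,\aut_\bullet(B)$.

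For the intersection $\inn(B)\cap\aut_\bullet(B)$ a direct computation suffices: $\chi_u$ with $u\in B^\stimes$ fixes $B_0$ pointwise iff $ub=bu$ for all $b\in B_0$ iff $u\in Z^\stimes_B(B_0)$, giving $\inn(B)\cap\aut_\bullet(B)=\chi(Z^\stimes_B(B_0))$. Under the additional hypothesis $Z^\stimes_B(B_0)=B_0^\stimes$, elements of $\inn_\circ(B)\cap\aut_\bullet(B)$ correspond to $u\in(1+\rad(B))\cap B_0^\stimes$; but $B_0\cap\rad(B)=0$ forces $u=1$, so the intersection is trivial. Normality of $\inn_\circ(B)$ in $\aut_0(B)$ comes for free: any $\tau\in\aut(B)$ preserves $\rad(B)$ and hence $1+\rad(B)$, so $\tau\chi_u\tau^{-1}=\chi_{\tau(u)}\in\inn_\circ(B)$. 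Combined with the trivial intersection this gives the semidirect product $\aut_0(B)=\inn_\circ(B)\rtimes\aut_\bullet(B)$.

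The main obstacle I expect is making the Wedderburn--Malcev step rigorous in the complete, possibly infinite-dimensional, setting: one must produce not merely a conjugacy $\sigma(B_0)=uB_0u^{-1}$ but a single $u\in 1+\rad(B)$ realising $\sigma|_{B_0}$ as $\chi_u|_{B_0}$. The standard device is a successive-approximation argument modulo $\rad(B)^n$ (using separability of $kQ_0$ over $k$ to kill the obstructions at each step) followed by taking the limit via completeness of $B$ in the radical filtration, which is exactly what the hypothesis $I\subseteq\widehat{kQ^a}_{\ge 2}$ supplies.
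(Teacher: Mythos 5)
Your proof is correct, but it takes a genuinely different route from the paper. The paper obtains this theorem by transport from the coalgebra side: the decomposition $\aut_0(D)=\aut_\bullet(D)\inn_\circ(D)$ is first proved for a large subcoalgebra $D\subseteq kQ^c$ (Propositions \ref{prop-decom-aut-pathcoalg} and \ref{prop-decom-aut-coalg}) by an explicit induction on path length that constructs a trans-datum $\nu\in\iomega^*_\circ(Q)$ with $\mu\circ\nu\in\Omega^*_\bullet(Q)$, and is then carried over to $B=D^*$ via the anti-isomorphism $()^*$ and the correspondences $(\inn_\circ(D))^*=\inn_\circ(B)$, $(\aut_\bullet(D))^*=\aut_\bullet(B)$. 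You instead argue directly on $B$ by lifting the congruence $\sigma(\overline{e_i})\equiv\overline{e_i}\pmod{\rad(B)}$ to a conjugation by a unit in $1+\rad(B)$; this is precisely the ``direct proof'' that the paper sketches in the remark immediately following the theorem, where it cites the classical fact that two primitive orthogonal decompositions of $1$ are simultaneously conjugate. The paper's route buys an explicit, constructive formula for the inner factor (via trans-data), while yours is shorter, self-contained on the algebra side, and in fact applies to any ideal $I\subseteq\widehat{kQ^a}_{\ge 2}$ without needing $I$ to be of the form $D^\sperp$. Two small simplifications to your argument: the Wedderburn--Malcev step you worry about needs no successive approximation or completeness beyond the invertibility of $1+\rad(B)$ --- the single unit $u=\sum_{i\in Q_0}\sigma(\overline{e_i})\,\overline{e_i}$ lies in $1+\rad(B)$ and satisfies $u\overline{e_j}=\sigma(\overline{e_j})u$ for all $j$, hence $\sigma|_{B_0}=\chi_u|_{B_0}$ in one stroke; and in the intersection computation one should note that $\chi_u\in\aut_\bullet(B)$ forces $u\in Z^\stimes_B(B_0)$ for the \emph{given} representative $u\in 1+\rad(B)$, which together with $B_0\cap\rad(B)=0$ gives $u=1$ exactly as you conclude.
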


Apply to finite dimensional case, we reobtain some classical results. For instance, we calculate the dimension
of $\mathrm{Out}(kQ^a)$ for acyclic quivers, and consequently show that $\mathrm{Out}(kQ^a)$ is finite
if and only if the quiver is a tree; see Corollary \ref{cor-dim-outer-gp} and
\ref{cor-autogp-eq-inngp} for more detail.

Examples are provided in Section 6. We discuss the quivers of directed $A_n$-type, $n$-Kronecker quivers and $n$-subspace quivers there. It is known that for a non-acyclic quiver, the automorphism group of the path algebra and the one of the path coalgebra are not isomorphic under the natural correspondence. We also illustrate this fact with a typical example.

\section{Qivers, path algebras and coalgebras}

We will give a brief introduction to quiver techniques in this section. For more details we refer to \cite{ars} and \cite{ri}.

\subsection{Quivers}
Recall that a \textbf{quiver} $Q=(Q_0, Q_1, s, t\colon Q_1\longrightarrow Q_0)$ is a directed graph, where $Q_0$ is the set of vertices, $Q_1$ the set of edges (usually called arrows), and $s, t\colon Q_1\longrightarrow Q_0$ are two maps assigning for each arrow its starting and terminating vertex respectively.
A quiver $Q$ is said to be \textbf{finite} if $Q_0$ and $Q_1$ are both finite, and \textbf{acyclic} if $Q$ contains no oriented cycles as subquiver.

By a \textbf{nontrivial path} in $Q$ we mean a sequence of arrows $p=\alpha_1\alpha_2\cdots\alpha_n$ with $s(\alpha_i)=t(\alpha_{i-1})$ for all $2\le i\le n$, $n$ is called the length of $p$, denoted by $l(p) = n$,  pictorially
\[\bullet\xrightarrow{\alpha_1}\bullet\xrightarrow{\alpha_2}\bullet\xrightarrow{\alpha_3}\cdots\xrightarrow{\alpha_n}\bullet.\] We use $s(p)=s(\alpha_1)$ and
$t(p)=t(\alpha_n)$ to denote the starting and terminating vertex of $p$ respectively and say that $p$ is a path from $s(p)$ to $t(p)$. For each vertex $i\in Q_0$,
we use $e_i$ to denote the \textbf{trivial path}, i.e., a path of length 0, from vertex $i$ to itself.

By abuse of notations, we also use $Q$ to denote the set of all paths in $Q$ and usually $P$ the set of all nontrivial paths in $Q$. For $i, j\in Q_0$, $Q_{i,j}$ (respectively, $P_{i,j}$) denotes the set of paths (respectively, nontrivial paths) from $i$ to $j$.

As usual $kQ$ denotes the $k$-space spanned by all paths in $Q$. Clearly, $kQ=\bigoplus_{n\ge 0}kQ_n$ is a positively graded space, here $Q_n$ is the set of paths in $Q$ which are of length $n$. Similarly set $(Q_n)_{i,j}\subseteq Q_n$ to be the subset consisting of those paths from $i$ to $j$ for any $i,j\subseteq Q_0$. Clearly $kQ$ is finite dimensional if and only if $Q$ is a finite acyclic quiver.

The \textbf{path algebra} of the quiver $Q$ has the underlying vector space $kQ$ and multiplication given by concatenation of paths in an obvious way, that is, the product of two paths $x$ and $y$ is the path
$xy$ if $t(x)=s(y)$ and 0 otherwise. We denote the path algebra by $kQ^a$.  By definition $e_i$ is an idempotent for each $i\in Q_0$, and $kQ^a$ has an identity element if and only if $Q_0$ is a finite set, and in this case, $1_{kQ^a}=\sum_{i\in Q_0}e_i$. The path algebra is a positively graded algebra with respect to the length grading.

The \textbf{path coalgebra} of $Q$, denoted by $kQ^c$, is in some sense the dual of the path algebra. As a vector space $kQ^c=kQ$. The coproduct $\Delta$ is given by splitting the path at all possible position, to be precise, $\Delta(e_i)=e_i\otimes e_i$ for each $i\in Q_0$ and
\[\Delta(p)=s(p)\otimes p + \sum_{1\le i\le n-1}\alpha_1\alpha_2\cdots\alpha_i\otimes \alpha_{i+1}\alpha_{i+2}\cdots\alpha_n+ p\otimes t(p)\] for each nontrivial path $p=\alpha_1\alpha_2\cdots\alpha_n$.
The counit is given by $\varepsilon(e_i)=1$ for each vertex $i$ and $\varepsilon(p)=0$ for each nontrivial path $p$. Again the path coalgebra is a positively graded coalgebra with respect to the length grading.

\begin{rem} Recall that there is a more general construction by using tensor product and cotensor product. Let $C$ be a coalgebra and $M^C$ and $^CN$ be $C$-comodules with the coaction given by $\phi\colon M\to M\otimes C$ and $\psi\colon N\to C\otimes N$. The cotensor product $M \Box_C N$ is defined to be the kernel of \[\phi\otimes\id_N-\id_M\otimes\psi\colon M\otimes N\longrightarrow M\otimes C\otimes N,\] we refer to \cite{em} for more detail about cotensor products.

Note that for a quiver $Q$, the path algebra $kQ^a$ is isomorphic to $\operatorname{T}_{kQ_0}(kQ_1)$, the tensor algebra of the $kQ_0$-bimodule $kQ_1$ over $kQ_0$; and the path coalgebra $kQ^c$ is isomorphic to the cotensor coalgebra $\operatorname{Cot}_{kQ_0}(kQ_1)$ of the $kQ_0$-bicomodule $kQ_1$ over $kQ_0$. We therefore identify the $k$-space $kQ_n$ with the tensor space $\operatorname{T}^n_{kQ_0}(kQ_1)$ as well as the cotensor space $\operatorname{Cot}_{kQ_0}^n(kQ_1)$.

\end{rem}

\subsection{Taft-Wilson Theorem and coalgebra filtration}
Let $C$ be a coalgebra. A nonzero element $g$ in $C$ is called a \textbf{group-like element} if $\Delta(g)=g\otimes g$. The set of group-like elements in $C$ is denoted by $G(C)$. Let $g, h$ be two group-likes, a $(g,h)$-\textbf{primitive element} is by definition an element $\alpha$ such that $\Delta(\alpha)=g\otimes \alpha + \alpha\otimes h$. We denote the set of $(g,h)$-primitive elements by $\p_{g,h}$, which is easy shown to be a $k$-linear space. For consistency of notations, we set $\p_{g,h} = 0$ if either $g = 0$ or $ h = 0$. In case $C=kQ^c$ is the path coalgebra of some quiver $Q$, then we simply write $\p_{s,t}=\p_{e_s, e_t}$ for any $s, t\in Q_0$.

 We have the following easy characterization for path coalgebras, which is known as the Taft-Wilson Theorem, see also \cite[Theorem 5.4.1]{dnr} for a proof.

\begin{lem}{\rm(\cite{tw})}\ Let $Q$ be an arbitrary quiver and $kQ^c$ the path coalgebra. Then
\begin{enumerate}
\item[(1)] $G(kQ^c) = \{e_i| i\in Q_0\} $;
\item[(2)] $\p_{s, t} = k(e_s-e_t)\bigoplus k(Q_1)_{s,t}$ for each pair of vertices $s, t\in Q_0$; in particular, $\p_{s,s} = k(Q_1)_{s,s}$ for each vertex $s\in Q_0$.
\end{enumerate}
\end{lem}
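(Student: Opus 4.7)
The plan is to express every element of $kQ^c$ in the path basis and compare coefficients on both sides of the defining equations. The key combinatorial input I would establish at the outset is the following: for $g = \sum_{p} c_p p$ and any basis tensor $u \otimes v$ in $kQ^c \otimes kQ^c$ (with $u, v$ paths), the coefficient of $u \otimes v$ in $\Delta(g)$ equals $c_{uv}$ when $t(u) = s(v)$, so that the concatenation $uv$ is itself a path, and is $0$ otherwise. This is immediate from the defining splitting formula for $\Delta$ together with the uniqueness of the prefix-suffix decomposition of a path.

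For part (1), translating $\Delta(g) = g \otimes g$ via this comparison gives the identities $c_{uv} = c_u c_v$ for composable pairs $u, v$ and $c_u c_v = 0$ otherwise. For any path $p$ in the support of $g$, the trivial splits $p = e_{s(p)} \cdot p = p \cdot e_{t(p)}$ make $c_{e_{s(p)}}$ and $c_{e_{t(p)}}$ nonzero; the identity at $u = v = e_i$ then forces $c_{e_i} = 1$ whenever nonzero, while the identity at $u = e_i, v = e_j$ with $i \neq j$ forces $c_{e_i} c_{e_j} = 0$. Hence there is a unique vertex $i^*$ with $c_{e_{i^*}} = 1$, and every element of the support of $g$ runs from $i^*$ to $i^*$. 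To finish, I would suppose that some nontrivial path $p = \alpha_1 \cdots \alpha_n$ lies in the support; iterating the product identity makes each $c_{\alpha_j}$ nonzero, each $\alpha_j$ must then be a loop at $i^*$, and therefore $c_{\alpha_j^m} = c_{\alpha_j}^m \neq 0$ for every $m \geq 1$, contradicting that $g \in kQ^c$ has finite support. Thus $g = e_{i^*}$.

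For part (2), the same comparison applied to $\Delta(\alpha) = e_s \otimes \alpha + \alpha \otimes e_t$ yields $c_{uv} = \delta_{u, e_s} c_v + \delta_{v, e_t} c_u$ for composable $u, v$, with the analogous equation having left-hand side $0$ in the non-composable case. A short case analysis on pairs of trivial paths pins down the length-zero component of $\alpha$: it vanishes at every vertex outside $\{s, t\}$; when $s \neq t$, the non-composable constraint at $(u, v) = (e_s, e_t)$ gives $c_{e_s} + c_{e_t} = 0$, so that part lies in $k(e_s - e_t)$; when $s = t$, the composable identity at $u = v = e_s$ gives $2 c_{e_s} = c_{e_s}$, so the length-zero part vanishes. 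Pairs where one factor is trivial and the other is an arrow then force any arrow in the support to lie in $(Q_1)_{s, t}$. Finally, for a path $p = \alpha_1 \cdots \alpha_n$ of length $n \geq 2$, the split $u = \alpha_1, v = \alpha_2 \cdots \alpha_n$ makes both Kronecker deltas vanish, forcing $c_p = 0$; the direct-sum statement is then immediate from the length grading. The only real hurdle throughout is the case split between $s = t$ and $s \neq t$ in part (2); no deeper obstacle appears, since the whole argument rests on the single basis-level identity for coefficients of $\Delta$.
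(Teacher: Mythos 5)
Your argument is correct. Note first that the paper itself gives no proof of this lemma: it is quoted as the (path-coalgebra instance of the) Taft--Wilson theorem, with a pointer to \cite{tw} and to \cite[Theorem 5.4.1]{dnr}, where the statement is derived inside the general structure theory of pointed coalgebras. Your route is the direct one: the observation that the coefficient of a basis tensor $u\otimes v$ in $\Delta(g)$ is $c_{uv}$ when $u,v$ concatenate and $0$ otherwise reduces everything to the multiplicativity relations $c_{uv}=c_uc_v$, and your handling of part (1) is more careful than the ``obvious'' argument precisely where care is needed, namely when $Q$ has loops: the finite-support contradiction via $c_{\alpha^m}=c_\alpha^m\neq 0$ is exactly what rules out group-likes supported on oriented cycles, and the lemma is claimed for arbitrary quivers. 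The case split $s=t$ versus $s\neq t$ in part (2) is also handled correctly and matches the stated special case $\p_{s,s}=k(Q_1)_{s,s}$. Two cosmetic omissions: you only prove the inclusions $G(kQ^c)\subseteq\{e_i\}$ and $\p_{s,t}\subseteq k(e_s-e_t)\oplus k(Q_1)_{s,t}$; the reverse inclusions (that each $e_i$ is group-like and that $e_s-e_t$ and each arrow from $s$ to $t$ are $(e_s,e_t)$-primitive) are one-line computations from the definition of $\Delta$, but since the statement asserts equalities you should record them. With that said, your proof is complete and arguably preferable for this paper's purposes, since it is elementary and self-contained where the cited source invokes the general Taft--Wilson machinery.
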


Let $C$ be an arbitrary coalgebra. There exists a unique filtration of subcoalgebras of $C$, say the \textbf{coradical filtration}
\[C_{(0)}\subseteq C_{(1)}\subseteq C_{(2)}\subseteq C_{(3)}\subseteq
\cdots, \]
 such that $C_{(0)}$ is the coradical of $C$, that is the sum of all simple subcoalgebras of $C$, and $\Delta(C_{(n)})\subseteq \sum\limits_{0\le i\le n} C_{(i)}\otimes C_{(n-i)}$.
If $C=kQ^c$, the path coalgebra of a quiver $Q$, then $kQ^c_{(n)}= kQ_{\le n}=\sum\limits_{0\le i\le n}kQ_i$. The following useful lemma is due to Heyneman and Radford (\cite{hr}); see also \cite[p.65]{dnr} for a proof.

\begin{lem}\label{corad}Let $C$ and $D$ be coalgebras and $f\colon C\longrightarrow D$ a coalgebra morphism. Then $f(C_{(n)})\subseteq D_{(n)}$, and $f$ is injective if and only if $f|_{C_{(1)}}$ is injective. In particular, $f(G(C))\subseteq G(D)$ and $f(\p_{g,h})\subseteq \p_{f(g),f(h)}$.
\end{lem}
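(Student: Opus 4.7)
My plan is to establish the three assertions in sequence, leveraging standard properties of the coradical filtration and simple subcoalgebras. First, for the claim $f(C_{(n)})\subseteq D_{(n)}$, I would induct on $n$. The base case $n=0$ reduces to showing that every simple subcoalgebra $S\subseteq C_{(0)}$ satisfies $f(S)\subseteq D_{(0)}$: the kernel of $f|_S$ is a subcoalgebra of $S$, hence either $0$ or $S$, so $f(S)$ is either zero or a simple subcoalgebra of $D$, and in both cases contained in $D_{(0)}$. For the inductive step I would use the recursive characterization $C_{(n)}=\Delta^{-1}(C_{(0)}\otimes C+C\otimes C_{(n-1)})$: for $c\in C_{(n)}$, applying $f\otimes f$ to $\Delta(c)$ and using the inductive hypothesis together with the $n=0$ case places $\Delta(f(c))$ in $D_{(0)}\otimes D+D\otimes D_{(n-1)}$, which means $f(c)\in D_{(n)}$.

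For the injectivity criterion, the forward direction is immediate since $C_{(1)}\subseteq C$. For the converse, assume $f|_{C_{(1)}}$ is injective and set $K=\ker f$; it suffices to show $K\cap C_{(n)}=0$ for every $n\geq 0$, which I would again do by induction. The cases $n=0,1$ are covered by the hypothesis via $C_{(0)}\subseteq C_{(1)}$. For the inductive step, assuming $K\cap C_{(n)}=0$ with $n\geq 1$ and taking $c\in K\cap C_{(n+1)}$, the coideal property $\Delta(K)\subseteq K\otimes C+C\otimes K$ combined with $\Delta(c)\in\sum_{i+j=n+1}C_{(i)}\otimes C_{(j)}$ allows one to strip off the ``middle'' terms using the inductive hypothesis, reducing the situation to $\Delta(c)\in C_{(0)}\otimes C_{(n+1)}+C_{(n+1)}\otimes C_{(0)}$, which forces $c\in C_{(1)}$ and hence $c=0$ by hypothesis. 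The two ``in particular'' consequences follow by direct computation: applying $f\otimes f$ to $\Delta(g)=g\otimes g$ (respectively to $\Delta(\alpha)=g\otimes\alpha+\alpha\otimes h$), and using the counit identity $\varepsilon_D\circ f=\varepsilon_C$ to guarantee $f(g)\neq 0$.

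The main technical hurdle I anticipate is the inductive step in the injectivity proof: deducing $c\in C_{(1)}$ from the combination of the coideal condition on $K$ and the filtration condition on $\Delta(c)$. The bookkeeping is the heart of the Heyneman–Radford argument and requires either a careful choice of vector-space complements to iteratively discard components of $\Delta(c)$ already controlled by the induction, or equivalently an argument via the wedge operation $X\wedge Y=\Delta^{-1}(X\otimes C+C\otimes Y)$ combined with a dimension count in the appropriate quotient. Once this step is handled, all remaining arguments are formal consequences of the coalgebra axioms.
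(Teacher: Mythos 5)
The paper gives no proof of this lemma at all --- it is quoted from Heyneman--Radford, with \cite[p.65]{dnr} cited for a proof --- so there is nothing internal to compare against; your outline follows the standard route (induction along the coradical filtration via the wedge $X\wedge Y=\Delta^{-1}(X\otimes C+C\otimes Y)$). Judged on its own, however, it contains one incorrect step and one acknowledged but unfilled gap. The incorrect step is the base case of the first assertion: the kernel of a coalgebra morphism is a coideal, not a subcoalgebra, and a simple coalgebra can perfectly well have nonzero proper coideals. For instance, in the $2\times 2$ matrix coalgebra with basis $\{e_{ij}\}$ and $\Delta(e_{ij})=\sum_{l}e_{il}\otimes e_{lj}$, the span of $e_{12}$ and $e_{21}$ is a nonzero proper coideal (it is the kernel of the quotient map onto the group-like coalgebra spanned by $\bar e_{11},\bar e_{22}$), so the dichotomy ``$\ker(f|_S)$ is $0$ or $S$'' fails and $f(S)$ need not be simple. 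This is harmless for the pointed coalgebras the paper actually uses, where every simple subcoalgebra is one-dimensional, but the lemma is stated for arbitrary $C$ and $D$. The standard repair: $f(S)\cong S/\ker(f|_S)$ is a quotient coalgebra of a cosemisimple coalgebra, hence cosemisimple (every comodule over it becomes a comodule over $S$ by corestriction and is therefore completely reducible), hence a sum of simple subcoalgebras of $D$ and so contained in $D_{(0)}$. With that correction your inductive step for $f(C_{(n)})\subseteq D_{(n)}$ is fine, as are the two ``in particular'' statements.

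The second issue is that the only nontrivial point of the whole lemma --- passing from $\Delta(c)\in(K\otimes C+C\otimes K)\cap\bigl(\sum_{i+j=n+1}C_{(i)}\otimes C_{(j)}\bigr)$ together with $K\cap C_{(n)}=0$ to the conclusion $\Delta(c)\in C_{(0)}\otimes C+C\otimes C_{(0)}$, and hence $c\in C_{(0)}\wedge C_{(0)}=C_{(1)}$ --- is precisely the content of the Heyneman--Radford theorem, and you explicitly defer it as a ``technical hurdle'' rather than carrying it out. The stripping of middle terms is not a formality: it requires a careful choice of complements (or the wedge/quotient argument you allude to), and as written the proof is incomplete exactly where the work lies. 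Either execute that bookkeeping in full or do what the paper does and cite \cite{hr} or \cite[p.65]{dnr} for this step.
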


\subsection{Complete path algebra, the dual of path coalgebra}
 In this subsection $Q$ is assumed to be a finite quiver. Note that the path algebra $kQ^a$ and path coalgebra $kQ^c$ have the same underlying space $kQ$, thus they share the same basis $Q$ consisting of all paths. To avoid confusion, we use $\overline{Q}=\{\overline{p}|p\in Q\}$ to denote the corresponding basis of $kQ^a$. We may introduce a non-degenerate pairing $(-,-)\colon kQ^a \times kQ^c \longrightarrow k$ by setting $(\overline{p}, q)=\delta_{p,q}$, for any $p, q\in Q$. Thus for each $n\ge 0$, the homogeneous components $kQ^a_n$ and $kQ^c_n$ are dual to each other as vector spaces via $(-,-)$. Moreover, if $Q$ is acyclic, then $kQ^a$ and $kQ^c$ are dual to each other.

The notion of \textbf{complete path algebra} $\widehat{kQ^a}$ for a quiver $Q$ is introduced by Derksen, Weyman and Zelevinsky to study the mutation of quivers with potentials; see \cite[Definition 2.2]{dwz}. As $k$-spaces, $\widehat{kQ^a} = \prod_{p\in Q}k =\{ (a_p)_{p\in Q}, a_p\in k \}$, and the multiplication in $ \widehat{kQ^a}$ is given by
\[(a_p)_{p\in Q}\cdot (b_p)_{p\in Q} = (c_p)_{p\in Q},\ c_p = \sum_{p=p_1p_2\atop p_1, p_2\in Q, }a_{p_1}b_{p_2}. \]
Here by $p= p_1p_2$ we mean a split of the path $p$ such that the concatenation of $p_1$ and $p_2$ is $p$. An element $(a_p)_{p\in Q}\in \widehat{kQ^a}$ can also be written as $\sum_{p\in Q} a_p\overline p$, an infinite linear combinations of paths in $Q$. In this sense, $kQ^a$ is a subalgebra of $\widehat{kQ^a}$ and $kQ^a = \widehat{kQ^a}$ if and only if $Q$ is a finite acyclic quiver.

Set $J=kQ^a_{\ge 1}$ be the Graded Jacobson ideal of $kQ^a$. Then we have a filtration of ideals $kQ^a\supseteq J\supseteq J^2\supseteq J^3\supseteq\cdots$ of $kQ^a$, where each
$J^n= kQ^a_{\ge n}$. One shows that  $\widehat{kQ^a}$ is nothing but the completion of $kQ^a$ at the ideal $J$, i.e., the inverse limit $\varprojlim kQ^a/J^n$.

There is a uniquely determined non-degenerate pairing $(-,-)\colon \widehat{kQ^a} \times kQ^c \longrightarrow k$, extending the one between $kQ^a$ and $kQ^c$ introduced above. Under this pairing, $\widehat{kQ^a}$ is isomorphic to $(kQ^c)^*$, the dual vector space of $kQ^c$. The following basic facts can be found in \cite[Chapter5]{dnr}, and we list them without a proof for later use.

\begin{lem} Let $(C,\Delta, \varepsilon)$ be a coalgebra and $C^*$ its dual vector space.

(1) $C^*$ is an associative algebra with the identity element $\varepsilon$ and the multiplication given by the convolution $*$, that is, $(f*g)(c) := \sum f(c_{(1)})g(c_{(2)})$ for any $f, g\in C^*$ and $c\in C$, here we use the Sweedler's notation $\Delta(c)=\sum c_{(1)}\otimes c_{(2)}$.

(2) Let $D$ be a subcoalgebra of $C$. Then $D^*\cong C^*/D^\sperp$ as an associative algebra, where $D^\sperp = \{f\in C^*\mid f(d) = 0, \forall d\in D\}$ is an ideal of $C^*$.

(3) Let $C_{(0)}\subseteq C_{(1)}\subseteq C_{(2)}\subseteq \cdots $ be the coradical filtration of $C$. Then $\rad^n(C^*)= C_{(n-1)}^\sperp$ for each $n\ge 1$, here $\rad(C^*)$ is the Jacobson radical of $C^*$ and $\rad^n(C^*) = \rad(C^*)\rad^{n-1}(C^*)$ for each $n\ge2$.
\end{lem}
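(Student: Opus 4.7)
The plan is to handle the three parts in order, the first two being essentially formal and the third requiring genuine work.

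For (1), I would verify directly that convolution $f*g := m_k \circ (f\otimes g)\circ \Delta$ defines an associative product with identity $\varepsilon$. Associativity $(f*g)*h = f*(g*h)$ is a one-line consequence of coassociativity $(\Delta\otimes\id)\Delta = (\id\otimes\Delta)\Delta$, while the counit axioms $\sum \varepsilon(c_{(1)})c_{(2)} = c = \sum c_{(1)}\varepsilon(c_{(2)})$ give $\varepsilon * f = f = f * \varepsilon$. No surprises are expected here.

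For (2), the inclusion $\iota\colon D\hookrightarrow C$ induces a linear restriction map $\iota^*\colon C^*\to D^*$. Choosing a vector-space complement of $D$ in $C$ and extending functionals by zero shows $\iota^*$ is surjective, and its kernel is $D^\sperp$ by definition. That $\iota^*$ is multiplicative and that $D^\sperp$ is a two-sided ideal both follow from the single fact that $D$ is a subcoalgebra, i.e.\ $\Delta(D)\subseteq D\otimes D$: for $d\in D$, $f\in D^\sperp$ and $g\in C^*$ we get $(f*g)(d) = \sum f(d_{(1)})g(d_{(2)}) = 0$ since every $d_{(1)}\in D$, and symmetrically for $g*f$.

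For (3), I would proceed by induction on $n$, built on the wedge product of subspaces of $C$, namely $X\wedge Y := \Delta^{-1}(X\otimes C + C\otimes Y)$. The two load-bearing identities are: (i) the coradical filtration satisfies $C_{(n)} = C_{(0)}\wedge C_{(n-1)}$; and (ii) for any subspaces $X,Y\subseteq C$, one has $(X\wedge Y)^\sperp = X^\sperp * Y^\sperp$ inside $C^*$. Once these are in place, the inductive step is immediate:
\[
\rad^n(C^*) = \rad(C^*)\cdot\rad^{n-1}(C^*) = C_{(0)}^\sperp * C_{(n-2)}^\sperp = (C_{(0)}\wedge C_{(n-2)})^\sperp = C_{(n-1)}^\sperp.
\]
The base case $\rad(C^*) = C_{(0)}^\sperp$ requires both inclusions. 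The inclusion $\rad(C^*)\subseteq C_{(0)}^\sperp$ follows because $C^*/C_{(0)}^\sperp \cong C_{(0)}^*$ by (2), and $C_{(0)}$ being a sum of simple subcoalgebras makes $C_{(0)}^*$ a product of finite-dimensional simple algebras (matrix rings over division algebras), hence semisimple. The reverse inclusion $C_{(0)}^\sperp\subseteq \rad(C^*)$ amounts to showing each $f\in C_{(0)}^\sperp$ is quasi-regular; the fundamental theorem of coalgebras lets us restrict attention to an arbitrary finite-dimensional subcoalgebra $C'\subseteq C$, where the classical identification $\rad((C')^*) = (C'_{(0)})^\sperp$ applies and gives nilpotency of $f|_{C'}$.

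The main obstacle is the pair of wedge identities together with the base case, all of which involve a careful duality between the discrete coalgebra $C$ and the linearly compact algebra $C^*$; the cleanest route is to verify (ii) first for finite-dimensional $X,Y$ (by a direct dimension-count via the perfect pairing of $C_{(n)}$ with $C^*/C_{(n)}^\sperp$), and then invoke the fundamental theorem of coalgebras to globalize. Everything else in part (3) is bookkeeping.
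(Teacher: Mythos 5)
The paper itself gives no proof of this lemma --- it is quoted from [DNR] as a list of known facts --- so I can only assess your proposal on its own terms. Parts (1) and (2) are correct and routine, and your base case $\rad(C^*)=C_{(0)}^\perp$ is also fine (semisimplicity of $C_{(0)}^*$ for one inclusion; for the other, local nilpotency of each $f\in C_{(0)}^\perp$ on finite-dimensional subcoalgebras makes $1-f$ invertible in $C^*=\varprojlim (C')^*$). The genuine gap is in the inductive step of (3): your load-bearing identity (ii), $(X\wedge Y)^\perp=X^\perp * Y^\perp$, is false for general coalgebras. Only $X^\perp * Y^\perp\subseteq (X\wedge Y)^\perp$ holds; the correct general statement is $(X^\perp * Y^\perp)^\perp=X\wedge Y$, i.e.\ $(X\wedge Y)^\perp$ is the \emph{closure} of the ideal product in the finite topology, and that product need not be closed. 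Your proposed repair --- verify (ii) for finite-dimensional $X,Y$ and ``globalize by the fundamental theorem of coalgebras'' --- cannot work: a functional in $(X\wedge Y)^\perp$ whose restriction to every finite-dimensional subcoalgebra lies in the corresponding product ideal need not be a \emph{finite} sum of convolution products on all of $C$. Indeed, statement (3) itself fails for general $C$: let $C=kQ^c$ for the quiver with vertices $1,2,3$, infinitely many arrows $1\to 2$ spanning $V$ and infinitely many arrows $2\to 3$ spanning $W$. Then $\rad(C^*)=C_{(0)}^\perp$, and for $f,g\in C_{(0)}^\perp$ one computes $(f*g)(\alpha\beta)=f(\alpha)g(\beta)$, so the restriction of $\rad^2(C^*)$ to $kQ_2\cong V\otimes W$ is exactly $V^*\otimes W^*$, the finite-rank functionals --- a proper subspace of $(V\otimes W)^*=C_{(1)}^\perp$.

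The equality in (3) is true, and is only ever invoked in this paper, for finitary coalgebras (path coalgebras of finite quivers and their large subcoalgebras), where every $C_{(n)}$ is finite-dimensional. There the hard inclusion $C_{(n-1)}^\perp\subseteq \rad^n(C^*)$ should be proved directly rather than through (ii): for $C=kQ^c$ with $Q$ finite, an element $\sum_{l(p)\ge n}a_p\overline{p}$ of $C_{(n-1)}^\perp\subseteq\widehat{kQ^a}$ can be rewritten as the finite sum $\sum_{\alpha\in Q_1}\bigl(\sum_{p=p'\alpha,\ l(p)\ge n}a_p\overline{p'}\bigr)\overline{\alpha}$ over the finitely many arrows, exhibiting it as an element of $C_{(n-2)}^\perp\cdot C_{(0)}^\perp$, and induction finishes the argument; part (2) then transports the result to large subcoalgebras. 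So you should either add a finiteness hypothesis to (3) (as the paper implicitly assumes wherever it uses the lemma) or weaken its conclusion to $C_{(n-1)}=(\rad^n(C^*))^\perp$.
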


Turning back to quiver case, one has the following results.
\begin{lem} Let $Q$ be a finite quiver. Then

(1)\quad $\widehat{kQ^a}\cong (kQ^c)^*$ as associative algebras, and $kQ^a\cong (kQ^c)^*$ if $Q$ is acyclic;

(2)\quad $\rad^n(\widehat{kQ^a}) = \{\sum_{p\in Q, l(p)\ge n} a_p \overline p\mid a_p\in k\}$;

(3)\quad A subcoalgebra $D\subseteq kQ^c$ is large if and only if $D^\sperp\subseteq\rad(\widehat{kQ^a})$.
\end{lem}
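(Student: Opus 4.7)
The plan is to read off all three statements from the non-degenerate pairing $(-,-)\colon \widehat{kQ^a}\times kQ^c\to k$ together with the preceding general lemma on $C^*$. The pairing $(\overline p, q)=\delta_{p,q}$ identifies each element $\sum a_p\overline p\in\widehat{kQ^a}$ with the linear functional on $kQ^c$ whose value on a path basis element $q$ is $a_q$. Throughout, the only structural input needed is the explicit coproduct $\Delta(p)=\sum_{p=p_1p_2}p_1\otimes p_2$ of the path coalgebra (where the sum includes the two trivial splits $s(p)\cdot p$ and $p\cdot t(p)$).

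For (1), I would first show the pairing-induced map $\Phi\colon\widehat{kQ^a}\to (kQ^c)^*$ is a linear bijection: non-degeneracy gives injectivity, and surjectivity holds because every $f\in (kQ^c)^*$ is determined by the scalars $f(p)$ and the tuple $(f(p))_{p\in Q}$ defines an element of $\widehat{kQ^a}$. To see $\Phi$ is an algebra homomorphism, I would compare the convolution $(f\ast g)(p)=\sum f(p_{(1)})g(p_{(2)})=\sum_{p=p_1p_2}f(p_1)g(p_2)$ with the multiplication rule $c_p=\sum_{p=p_1p_2}a_{p_1}b_{p_2}$ in $\widehat{kQ^a}$; these match termwise for every path $p$ (including trivial paths, via $\Delta(e_i)=e_i\otimes e_i$). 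When $Q$ is finite and acyclic, $Q$ itself is finite, so $\widehat{kQ^a}=kQ^a$, which yields the second claim. The only technical point here is to track the two boundary summands in $\Delta(p)$ correctly so that the convolution matches the concatenation split; this is the main (but straightforward) verification.

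For (2), I would invoke part (3) of the preceding lemma: $\rad^n((kQ^c)^*)=(kQ^c)_{(n-1)}^\sperp$. Since the coradical filtration of $kQ^c$ is the length filtration, $(kQ^c)_{(n-1)}=kQ_{\le n-1}$. Under $\Phi$, an element $\sum a_p\overline p$ annihilates $kQ_{\le n-1}$ iff $a_p=0$ for every path $p$ with $l(p)\le n-1$, which is exactly the stated description. The case $n=1$ in particular gives $\rad(\widehat{kQ^a})=\{\sum_{l(p)\ge 1}a_p\overline p\}=(kQ_0)^\sperp$.

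For (3), I assume (as seems standard in the paper's setup) that a subcoalgebra $D\subseteq kQ^c$ is large precisely when $D\supseteq (kQ^c)_{(0)}=kQ_0$. Under the order-reversing correspondence $D\leftrightarrow D^\sperp$ between subcoalgebras of $kQ^c$ and closed ideals of $\widehat{kQ^a}$, this containment is equivalent to $D^\sperp\subseteq (kQ_0)^\sperp$, which by (2) is $\rad(\widehat{kQ^a})$. Thus (3) reduces to a one-line application of (2) once the $\sperp$-correspondence is invoked; the only subtlety is checking that the standard correspondence works correctly in the topological/profinite setting of $\widehat{kQ^a}=(kQ^c)^*$, which follows from the fact that subcoalgebras of $C$ correspond bijectively to closed ideals of $C^*$ under $D\mapsto D^\sperp$.
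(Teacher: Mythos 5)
The paper states this lemma without proof, listing it as an immediate consequence of the general duality facts for $C^*$ recorded just above it; your argument is exactly that intended deduction, and parts (1) and (2) are correct as you give them. For (1), the identification $\widehat{kQ^a}=\prod_{p\in Q}k\cong \Hom_k(kQ,k)$ and the termwise match between the convolution $(f*g)(p)=\sum_{p=p_1p_2}f(p_1)g(p_2)$ and the rule $c_p=\sum_{p=p_1p_2}a_{p_1}b_{p_2}$ is the whole content (one should also note $\varepsilon\mapsto\sum_i\overline{e_i}=1$, but that is immediate). For (2), quoting $\rad^n(C^*)=C_{(n-1)}^\sperp$ together with $kQ^c_{(n-1)}=kQ_{\le n-1}$ is precisely the right move. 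One small economy: you do not need the correspondence between subcoalgebras and closed ideals anywhere; the elementary double-annihilator identity $(D^\sperp)^\sperp=D$, valid for an arbitrary subspace $D$ of an arbitrary vector space, is all that the argument uses.

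There is, however, a genuine issue in (3), and you half-noticed it yourself when you wrote that you were \emph{assuming} ``large'' means $D\supseteq kQ_0$. The paper defines a large subcoalgebra as one with $D\supseteq kQ_{\le 1}=C_{(1)}$, not merely $D\supseteq C_{(0)}$. Under that definition your duality argument gives $D$ large if and only if $D^\sperp\subseteq C_{(1)}^\sperp=\rad^2(\widehat{kQ^a})$ (i.e.\ part (2) with $n=2$), which is exactly the criterion the paper itself uses later, in Section 5.3: ``$D$ is large if and only if $D^\sperp\subseteq\rad^2(A)$.'' So the statement of (3) as printed, with $\rad$ in place of $\rad^2$, is either a typo or uses ``large'' loosely; taken literally it is false in the ``if'' direction (take $D=kQ_0$, so $D^\sperp=\rad(\widehat{kQ^a})$, yet $D$ is not large whenever $Q_1\neq\emptyset$). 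What you actually proved is the correct equivalence $D\supseteq kQ_0\Longleftrightarrow D^\sperp\subseteq\rad(\widehat{kQ^a})$, which is not the paper's notion of large. The repair is one line: run the identical argument with $kQ_{\le 1}$ in place of $kQ_0$.
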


Recall that a subcoalgebra $D$ of a path coalgebra $kQ^c$ is said to be \textbf{large} if $D\supseteq kQ_{\le1}$. A classical result by Chin and Montgomery says that any pointed coalgebra $D$ can be realized as a large subalgebra of the path coalgebra $kQ^c$ of some quiver $Q$, and such $Q$ is uniquely determined by $D$ and usually called the extension quiver of $D$; see \cite[Theorem 4.3]{cm}.

An ideal $I$ of $kQ^a$ is called an \textbf{admissible ideal} if $kQ^a_{\ge n}\subseteq I\subseteq kQ^a_{\ge2}$ for some $n\ge2$. Note that $kQ^a/I$ is a finite dimensional elementary algebra if $Q$ is a finite quiver and $I$ is admissible, here elementary means that each simple module is one dimensional. In this case, $\widehat{kQ^a}/\widehat{I}\cong kQ^a/I$, here $\widehat{I} = I\widehat{kQ^a}$ is an ideal of $\widehat{kQ^a}$ satisfying $\rad^n(\widehat{kQ^a}) \subseteq \widehat{I}\subseteq \rad^2(\widehat{kQ^a})$. We also call an ideal of $\widehat{kQ^a}$ with this property an admissible ideal.

Conversely, a famous result by Gabirel says that any finite dimensional elementary algebra $A$ has the form $kQ^a/I$, where $Q$ is a finite quiver uniquely determined by $A$ and $I$ an admissible ideal of $kQ^a$; see \cite[Theorem 3.6.6]{dk}. $Q$ is usually called the extension quiver of $A$. Clearly, the dual coalgebra $C = A^*$ is a large subcoalgebra of $kQ^c$.

\subsection{Augmented quivers}
We may associate each quiver an auxiliary quiver, which plays an important role in our characterization of path coalgebra homomorphisms.

\begin{defn} Let $Q$ be a given quiver. The \textbf{augmented quiver} of $Q$, denoted by  $Q^{\tau}$, is a quiver obtained from $Q$ by adding some new arrows, precisely $Q^{\tau}_0=Q_0$, and $Q^{\tau}_1=Q_1\cup\{e_{s,t}\mid s,t\in
Q_0, s\ne t\}$ with $s(e_{s,t})=s$ and $t(e_{s,t})=t$ for each $e_{s,t}$.
\end{defn}

\begin{rem}
For a given quiver $Q$, we can identify $\p_{s,t}$ with $k(Q^{\tau}_1)_{s,t}$ for any $s, t\in Q_0$, here $(Q^{\tau}_1)_{s,t}$ denotes the set of arrows in the augmented quiver $Q^{\tau}$ from $i$ to $j$.
\end{rem}

\begin{exm} Let $Q$ be the quiver $\xymatrix{
 {\bullet}1 \ar @{->}[r]^{\alpha}
& {\bullet}2 \ar @{->}[r]^{\beta}
& {\bullet}3}$, then the augmented quiver $Q^{\tau}$ is given by
\[\xymatrix{
 {\bullet}1 \ar @{->}[rr]|{\alpha} \ar @{-->}@/^.5pc/[rr]^{e_{1,2}}\ar @{-->}@/^2.2pc/[rrrr]^{e_{1,3}} &
& {\bullet}2 \ar @{-->}@/^.5pc/[ll]^{e_{2,1}} \ar @{->}[rr]|{\beta} \ar @{-->}@/^.5pc/[rr]^{e_{2,3}} &
& {\bullet}3 \ar @{-->}@/^.5pc/[ll]^{e_{3,2}} \ar @{-->}@/^2.2pc/[llll]^{e_{3,1}}},
\] here we use dashed arrows to denote the new added ones.

\end{exm}

As before let $Q$ denote the set of all paths in $Q$, $P\subset Q$ the set of
all nontrivial paths in $Q$ and $Q^{\tau}$ the set of all paths in $Q^{\tau}$. $Q$ and $P$ are identified as
subsets of $Q^{\tau}$ in an obvious way. We also use $E$ to denote the set of all nontrivial paths
in $Q^{\tau}$ which are compositions of new arrows.

 We define a linear map $F: k(Q^{\tau})^c\longrightarrow kQ^c$ by
setting {\begin{eqnarray*}
&F(p)=p, &\forall \ p\in Q;\\
&F(e)=e_{i_1}-e_{i_2},&\forall\
e= e_{i_1,i_2}e_{i_2,i_3}\cdots e_{i_n,i_{n+1}}\in E;\\
&F(pe)=p, &\forall\ p\in P, e\in E;\\
&F(e_{s,s(p)}p)=-p, &\forall\ p\in P,
 s\neq s(p)
\in Q_0;\\
&F(e_{s,s(p)}pe)=-p, &\forall\ p\in
P,
 s\neq s(p)\in Q_0, e\in E;\\
&F(p)=0, &otherwise.
\end{eqnarray*}}

To characterize homomorphisms of path coalgebras we need the following key lemma. The proof is given by routine check and we omit it here.

\begin{lem}\label{keylemma} Let $Q$ and $F$ be as above. Then $F$ is a homomorphism of coalgebras such that $F|_{kQ^c}=\id$ and
$F(e_{s,t})=e_s-e_t$ for any $s$, $t\in Q_0$.
\end{lem}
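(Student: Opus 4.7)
The assertions $F|_{kQ^c}=\id$ and $F(e_{s,t})=e_s-e_t$ read off directly from the first and second defining cases of $F$, the latter at $n=1$. The substance of the lemma is that $F$ respects counit and comultiplication, and both properties will be checked on each path of $Q^\tau$, which is a basis of $k(Q^\tau)^c$.

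Counit compatibility is immediate: for a trivial path $q=e_i$ one has $F(e_i)=e_i$, while for any nontrivial $q$ the image $F(q)$ is either $0$, a nontrivial path in $Q$, or a difference $e_s-e_t$ with $s\ne t$. Since $\varepsilon$ vanishes on each of these, $\varepsilon F(q) = \varepsilon(q)$ in every case.

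For comultiplication I would parametrise each nontrivial path $q$ in $Q^\tau$ by its unique decomposition into maximal alternating runs of original arrows (``$P$-blocks'') and augmented arrows (``$E$-blocks''). Inspection of the defining cases shows that $F(q)$ is nonzero in precisely five patterns: $p$, $e$, $pe$, $e_{s,s(p)}p$, and $e_{s,s(p)}pe$, where in the last two the leading $E$-block is forced to have length one. For each pattern I compute $(F\otimes F)\Delta(q)$ by summing over all splits $q=q_1q_2$ together with the two group-like boundary terms, and match the result with $\Delta F(q)$. The seed calculation is the pure $E$-block $e=e_{i_1,i_2}e_{i_2,i_3}\cdots e_{i_n,i_{n+1}}$: every nontrivial prefix has $F$-image $e_{i_1}-e_{i_2}$ and every nontrivial suffix has $F$-image $e_{i_j}-e_{i_{j+1}}$, so the interior splits telescope to $(e_{i_1}-e_{i_2})\otimes(e_{i_2}-e_{i_{n+1}})$; combined with the two boundary group-like pieces this equals $e_{i_1}\otimes e_{i_1}-e_{i_2}\otimes e_{i_2}=\Delta F(e)$. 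The four remaining nonzero patterns reduce to this one by isolating the $P$-block, on which $F$ acts as the identity, so that its inner splittings feed unchanged into $\Delta F(q)$.

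The main expected obstacle is the ``otherwise'' case, where $q$ has an initial or terminal $E$-block of length $\ge 2$, an interior $E$-block, or four or more blocks, and one must verify $(F\otimes F)\Delta(q)=0$. The plan is to discard all splits $q=q_1q_2$ in which both factors lie outside the five nonzero patterns — they contribute $0$ automatically — and to group the remaining, nonzero-contributing splits, which necessarily sit at boundaries between adjacent blocks, into pairs of opposite sign. A single example exhibits the mechanism: for $q=e_{s,r}e_{r,s(p)}p$ only the splits after the first and after the second new arrow survive, yielding $(e_s-e_r)\otimes p$ and $-(e_s-e_r)\otimes p$ respectively, which cancel. This sign-matching extends to arbitrary block structures by induction on the number of blocks, finishing the verification.
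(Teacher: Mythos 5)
The paper offers no proof of this lemma, dismissing it as a routine check, and your argument is that check carried out correctly and with essentially the only available strategy: the five nonzero patterns of $F$ are identified accurately, the telescoping computation for a pure $E$-block is right, and the cancellation between the split one new arrow before an $E$-block/$P$-block boundary and the split at that boundary is exactly what kills $(F\otimes F)\Delta(q)$ in the ``otherwise'' cases. The only quibble is the phrase ``both factors lie outside the five nonzero patterns'' --- a split already contributes zero when just one factor does --- but your subsequent restriction to the splits where both images are nonzero shows you mean the right thing, so there is no gap.
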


\begin{rem}  We emphasize that a coalgebra homomorphism from $ k(Q^{\tau})^c$ to $kQ^c$ satisfying the conditions in Lemma \ref{keylemma} is not unique in general.
\end{rem}

\section{Trans-data and path coalgebra morphisms }

We introduce the notion of trans-datum for quivers in this section. As we will show, homomorphisms
of path coalgebras correspond to trans-data bijectively. Under this correspondence,
trans-data can also be composed and the composition map is given explicitly.

\subsection{Trans-data}
\begin{defn} \label{def-trans-datum} Let $Q, Q'$ be quivers. A \textbf{trans-datum} $\mu$ from $Q$ to $Q'$ is by definition a pair $(\mu_0, \mu_\splus)$, where $\mu_0\colon kQ_0^c\longrightarrow Q'^c_0$ is a coalgebra homomorphism and $\mu_\splus =(\mu_p)_{p\in P}\in \prod_{p\in P}\p_{\mu_0(e_{s(p)}),\mu_0(e_{t(p)})}$ a family of primitive elements in $kQ'^c$ indexed by $P$. Trans-data are usually denoted by $\lambda, \mu, \nu, \cdots$. The set of all trans-data from $Q$ to $Q'$ is denoted by $\Omega(Q,Q')$ and we write $\Omega(Q) = \Omega(Q,Q)$ for brevity.
\end{defn}

Let $\mu = (\mu_0, \mu_\splus)\in \Omega(Q, Q')$.
Write $\mu_\splus = \mu^0+\mu^1$ in the sense that $\mu^0$ and $\mu^1$ are both families of primitives indexed by $P$ with $\mu^0_p\in kQ'_0$, $\mu^1_p\in kQ'_1$ and $\mu_p=\mu^0_p+\mu^1_p$ for any $p\in P$. Clearly $\mu^0$ and $\mu^1$ are uniquely determined by $\mu_\splus$ and hence $\mu$ can also be written as a triple $(\mu_0, \mu^0, \mu^1)$. Moreover, since $\mu^0_p$ is a multiple of ${\mu_0(e_{s(p)})-\mu_0(e_{t(p)})}$, we denote by $\cmu_p$ the coefficient in the expression $\mu^0_p = \cmu_p({\mu_0(e_{s(p)})-\mu_0(e_{t(p)})})$. Clearly we may set $\cmu=0$ whenever $s(p)=t(p)$. We also view $\mu^0$ and $\mu^1$ as linear maps from $kQ_{\ge 1}$ to $kQ'_0$ and $kQ'_1$ in an obvious way. Now we have the following $k$-linear maps associated to a trans-datum $\mu$:
\[\mu_0\colon kQ\longrightarrow kQ'_0,\ \mu_0(p) = 0 \ \forall p\in P;\]
\[\mu^0\colon kQ\longrightarrow kQ'_0, \ \mu^0(e_i)=0 \ \forall i\in Q_0,\ \mu^0(p)=\mu^0_p \ \forall p\in P;\]
\[\cmu\colon kQ\longrightarrow k, \ \cmu(e_i)=0\ \forall i\in Q_0, \ \cmu(p) = \cmu_p\ \forall p\in P;\]
\[\mu^1\colon kQ\longrightarrow kQ'_1, \ \mu^1(e_i)=0 \ \forall i\in Q_0,\ \mu^1(p)=\mu^1_p\ \forall p\in P;\]
\[\mu_\splus\colon kQ\longrightarrow kQ'_0\oplus kQ'_1,\ \mu_\splus = \mu^0 +\mu^1.\]

\begin{prop}\label{prop-coalgmap-to-datum}
Let $Q, Q'$ be any quivers and $Q^{\tau}, Q'^\tau$ the augmented quivers. Denote by $P$ the set of all nontrivial paths in $Q$. Let $\mu = (\mu_0, \mu_\splus)\in \Omega(Q, Q')$. We extend $\mu_0$ to a linear map $f\colon kQ^c\longrightarrow kQ'^c$ by setting
$ f|_{kQ_0} = \mu_0 $ and
\begin{equation}
  \label{endoformulae} f(p)=\sum_{p=p_1p_2\cdots p_r\atop p_i\in P, i=1,\cdots, r}F(\mu_{p_1}\Box \mu_{p_2}\cdots\Box \mu_{p_r})
\end{equation}
for any $p\in P$, here we view the path coalgebra as a cotensor coalgebra and $\Box = \Box_{kQ'_0}$ is the cotensor product. Then $f$ is a homomorphism of coalgebras. Moreover, $f$ is injective if and only $\mu_0$ and $\mu^1|_{kQ_1}$ are both injective.
\end{prop}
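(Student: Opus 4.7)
My plan is to factor $f$ through the path coalgebra of the augmented quiver $Q'^\tau$, exploiting the coalgebra homomorphism $F\colon k(Q'^\tau)^c\longrightarrow kQ'^c$ of Lemma~\ref{keylemma}. I would first define an auxiliary map $\tilde f\colon kQ^c\longrightarrow k(Q'^\tau)^c$ by $\tilde f|_{kQ_0}=\mu_0$ and, for each nontrivial path $p\in P$,
\[
\tilde f(p)=\sum_{p=p_1p_2\cdots p_r\atop p_i\in P}\mu_{p_1}\Box\mu_{p_2}\cdots\Box\mu_{p_r},
\]
where each primitive $\mu_{p_i}\in\p_{\mu_0(e_{s(p_i)}),\mu_0(e_{t(p_i)})}$ is viewed as an element of $k(Q'^\tau_1)_{\mu_0(e_{s(p_i)}),\mu_0(e_{t(p_i)})}$ via the identification $\p_{s,t}\cong k(Q'^\tau_1)_{s,t}$ recalled in Section~1.4. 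Since $F|_{kQ'^c}=\id$ and $F(e_{s,t})=e_s-e_t$, evaluating on the basis shows $f=F\circ\tilde f$, and because $F$ is a coalgebra map (Lemma~\ref{keylemma}), it then suffices to prove that $\tilde f$ is one.

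The counit axiom for $\tilde f$ is immediate: $\mu_0$ preserves $\varepsilon$ on vertices, forcing each $\mu_0(e_i)$ to be a nonzero group-like of $kQ'^c$, while every positive-degree summand of $\tilde f(p)$ is killed by $\varepsilon$. The essential step is to verify $\Delta\tilde f(p)=(\tilde f\otimes\tilde f)\Delta(p)$ for every nontrivial $p$. I would expand the left-hand side using the cotensor-coalgebra coproduct formula
\[
\Delta(\nu_1\Box\cdots\Box\nu_r)=\sum_{i=0}^{r}(\nu_1\Box\cdots\Box\nu_i)\otimes(\nu_{i+1}\Box\cdots\Box\nu_r),
\]
with the $i=0$ and $i=r$ boundary terms understood as the group-like endpoints $\mu_0(e_{s(p)})\otimes(\nu_1\Box\cdots\Box\nu_r)$ and $(\nu_1\Box\cdots\Box\nu_r)\otimes\mu_0(e_{t(p)})$, and expand the right-hand side using $\Delta(p)=s(p)\otimes p+\sum_{p=p'p''}p'\otimes p''+p\otimes t(p)$ together with the definition of $\tilde f$. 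Both sides are then indexed by pairs consisting of a decomposition $p=p_1\cdots p_r$ into nontrivial pieces and a cut index $0\le i\le r$; the two boundary indices absorb the two endpoint terms in $\Delta(p)$ while the interior cuts match the interior splits $p=p'p''$. This combinatorial reindexing is the principal computational step and is the part I expect to be the main obstacle.

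For the injectivity claim I would invoke the Heyneman--Radford lemma (Lemma~\ref{corad}): $f$ is injective iff its restriction to $(kQ^c)_{(1)}=kQ_0\oplus kQ_1$ is. A direct check gives $f(\alpha)=\mu_\alpha=\mu^0_\alpha+\mu^1_\alpha$ for any arrow $\alpha\in Q_1$, hence for $x\in kQ_0$ and $y\in kQ_1$,
\[
f(x+y)=\bigl(\mu_0(x)+\mu^0(y)\bigr)+\mu^1(y),
\]
whose two summands lie in the complementary subspaces $kQ'_0$ and $kQ'_1$ of $(kQ'^c)_{(1)}$. If $\mu_0$ and $\mu^1|_{kQ_1}$ are both injective, then $f(x+y)=0$ forces first $\mu^1(y)=0$, whence $y=0$, and then $\mu_0(x)=0$, whence $x=0$. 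Conversely, $\mu_0=f|_{kQ_0}$ is obviously injective; and if $\mu^1(y)=0$ for some $y=\sum_\alpha c_\alpha\alpha\in kQ_1$, then using $\mu^0_\alpha=\cmu_\alpha(\mu_0(e_{s(\alpha)})-\mu_0(e_{t(\alpha)}))$ one rewrites $f(y)=\mu_0(x)=f(x)$ with $x=\sum_\alpha c_\alpha\cmu_\alpha(e_{s(\alpha)}-e_{t(\alpha)})\in kQ_0$, so injectivity of $f$ forces $y=x\in kQ_0\cap kQ_1=0$.
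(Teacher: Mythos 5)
Your proposal is correct and follows essentially the same route as the paper: the paper likewise reduces the coproduct check to the identity $\Delta F(\mu_{p_1}\Box\cdots\Box\mu_{p_r})=(F\otimes F)\Delta(\mu_{p_1}\Box\cdots\Box\mu_{p_r})$ and then performs exactly the reindexing of path decompositions that you identify as the main step, while the injectivity claim is the direct appeal to Lemma~\ref{corad} that you spell out. Your explicit factorization $f=F\circ\tilde f$ is only a cosmetic repackaging of this, and is in fact the same device the paper later records as $\f_\mu=F\circ\f_{\bar\mu}$ in Section~2.3.
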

\begin{proof} It suffices to prove that $f$ is a coalgebra homomorphism, and the rest part follows from Lemma \ref{corad} directly. We will show that for any $p\in P$, $(f\otimes f)(\Delta(p)) = \Delta(f(p))$.  By direct calculation,

\begin{align*}
 RHS=& \sum_{p=p_1p_2\cdots p_r\atop {p_i\in P}}\Delta(F(\mu_{p_1}\Box\cdots\Box \mu_{p_r}))\\
    =& \sum_{p=p_1p_2\cdots p_r\atop {p_i\in P}}(F\otimes F)(\Delta(\mu_{p_1}\Box\cdots\Box \mu_{p_r}))\\
    =& \sum_{p=p_1p_2\cdots p_r\atop {p_i\in P}} \left( (f(e_{s(p)})\otimes F(\mu_{p_1}\Box\cdots\Box \mu_{p_r})\right.\\
     &+  \sum_{1\le i \le r-1} F(\mu_{p_1}\Box\cdots\Box \mu_{p_i})\otimes F(\mu_{p_{i+1}}\Box\cdots\Box \mu_{p_r})  \\
     &+  \left. (\mu_{p_1}\Box\cdots\Box \mu_{p_r})\otimes f(e_{t(p)}) \right)\\
    =& f(e_{s(p)})\otimes \sum_{p=p_1p_2\cdots p_r\atop {p_i\in P}}  F(\mu_{p_1}\Box\cdots\Box \mu_{p_r})\\
     & + \sum_{p=p_1p_2\cdots p_r\atop {p_i\in P}} \sum_{1\le i \le r-1} F(\mu_{p_1}\Box\cdots\Box \mu_{p_i})\otimes F(\mu_{p_{i+1}}\Box\cdots\Box \mu_{p_r})  \\
     & +\sum_{p=p_1p_2\cdots p_r\atop {p_i\in P}} (\mu_{p_1}\Box\cdots\Box \mu_{p_r})\otimes f(e_{t(p)}) \\
    =& f(e_{s(p)})\otimes f(p) \\
     & +\sum_{p=p_1p_2\cdots p_r\atop {p_i\in P}}\sum_{1\le i \le r-1} F(\mu_{p_1}\Box\cdots\Box \mu_{p_i})\otimes F(\mu_{p_{i+1}}\Box\cdots\Box \mu_{p_r})\\
     & + f(p)\otimes f(e_{t(p)}).
\end{align*}
and
\begin{align*}
 LHS=& (f\otimes f)(e_{s(p)}\otimes p + \sum_{p=p_1p_2\atop p_i\in P}p_1\otimes p_2 + p\otimes e_{t(p)}) \\
    =& f(e_{s(p)})\otimes f(p) + \sum_{p=p_1p_2\atop p_i\in P}f(p_1)\otimes f(p_2) + f(p)\otimes f(e_{t(p)}),
\end{align*}
Now by comparing different ways to write a path as a composition of subpaths, we have
\begin{align*} &\sum_{p=p_1p_2\atop p_i\in P}f(p_1)\otimes f(p_2)\\
= & \sum_{p=p_1p_2\atop p_i\in P}\sum_{p_1 = x_1x_2\cdots x_u\atop {p_2 = y_1y_2\cdots y_v\atop x_i, y_j\in P}}
 F(\mu_{x_1}\Box\cdots\Box \mu_{x_u})\otimes F(\mu_{y_1}\Box\cdots\Box \mu_{y_v})\\
= & \sum_{p=p_1p_2\cdots p_r\atop {p_i\in P}}\sum_{1\le i \le r-1} F(\mu_{p_1}\Box\cdots\Box \mu_{p_i})\otimes F(\mu_{p_{i+1}}\Box\cdots\Box \mu_{p_r}),
\end{align*}
and it follows that LHS=RHS, which completes the proof.
\end{proof}

The formula \ref{endoformulae} can also be rewritten as follows, which will be more practical in some special cases.

\begin{prop}\label{endoformulae2} Let $Q, Q'$ be quivers and $\mu\in\Omega(Q, Q')$. Then the coalgebra morphism $f$ obtained from
$\mu$ is given by
\begin{align*}\f_\mu(x)&= \mu_0(x) + \sum_{r\ge 1} F(\mu_\splus(x_{(1)})\Box\cdots \Box \mu_\splus(x_{(r)}))
\end{align*}
for any $x\in kQ^c_{\ge1}$, again we use the Sweedler's notation $\Delta^{n}(x)=\sum x_{(1)}\otimes \cdots \otimes x_{(n+1)}$.
\end{prop}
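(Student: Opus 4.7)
The plan is to reduce the statement to a basis path $p \in P$ and then match the new formula with (\ref{endoformulae}) by unfolding the iterated coproduct explicitly. By linearity it suffices to verify the identity when $x = p$ is a single nontrivial path. Since $p$ is nontrivial, the extended map $\mu_0\colon kQ \to kQ'_0$ satisfies $\mu_0(p) = 0$ by the definition given just before Proposition \ref{prop-coalgmap-to-datum}, so the target identity reduces to showing
\[
\f_\mu(p) = \sum_{r\ge 1} F(\mu_\splus(p_{(1)})\Box\cdots \Box \mu_\splus(p_{(r)})).
\]

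Next I would unfold the Sweedler sum using the explicit formula for the iterated coproduct in a path coalgebra,
\[
\Delta^{r-1}(p) = \sum_{p = q_1 q_2 \cdots q_r} q_1 \otimes q_2 \otimes \cdots \otimes q_r,
\]
where the sum ranges over all ordered decompositions of $p$ into $r$ subpaths $q_i$, each possibly trivial. This is a straightforward induction starting from the definition of $\Delta$ on $kQ^c$ recalled in Section 1.1. Since $\mu_\splus = \mu^0 + \mu^1$ and both $\mu^0$ and $\mu^1$ vanish on $kQ_0$ by construction, applying $\mu_\splus$ to any trivial factor $q_i$ yields zero. Hence only decompositions $p = q_1 \cdots q_r$ with every $q_i \in P$ contribute, and for these $\mu_\splus(q_i) = \mu_{q_i}$. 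Collecting the surviving terms reproduces exactly the right-hand side of (\ref{endoformulae}).

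The proof is essentially bookkeeping: matching the indexing of the Sweedler sum against ordered compositions of $p$ into nontrivial subpaths. The only mildly technical point is the inductive verification of the explicit iterated-coproduct formula, which I expect to dispose of in a single line rather than treat as a genuine obstacle.
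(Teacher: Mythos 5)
Your proposal is correct and follows essentially the same route as the paper: reduce to a single nontrivial path $p$ by linearity, expand $\Delta^{r-1}(p)$ over all ordered decompositions into (possibly trivial) subpaths, observe that $\mu_\splus$ kills trivial factors so only decompositions into nontrivial subpaths survive, and match the result with formula (\ref{endoformulae}). The observation that $\mu_0(p)=0$ for nontrivial $p$ is used implicitly in the paper's computation as well, so there is no substantive difference.
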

\begin{proof} We need only to prove the case $x=p$ for some nontrivial path $p$. By definition
\[\Delta^{r-1}(p) =\sum\limits_{p=q_1q_2\cdots q_r \atop q_i\in Q}q_1\otimes q_2\otimes\cdots\otimes q_r,\]
where $q_i$'s are allowed be trivia paths.
Note that $\mu_\splus(q_i) = 0$ if $q_i$ is a trivial path. Therefore $F(\mu_\splus(q_1)\Box \mu_\splus(q_2)\Box\cdots\Box\mu_\splus(q_r)) = 0$ whenever one of $q_i$'s is trivial.
It follows that
\begin{align*}& \mu_0(p) + \sum_{p=q_1q_2\cdots q_r\atop q_i\in Q}F(\mu_\splus({q_1})\Box \mu_\splus({q_2})\cdots\Box \mu_\splus({q_r}))\\
 = &\sum_{p=p_1p_2\cdots p_r\atop p_i\in P}F(\mu_\splus({p_1})\Box \mu_\splus({p_2})\cdots\Box \mu_\splus({p_r}))
\\ = & \sum_{p=p_1p_2\cdots p_r\atop p_i\in P}F(\mu_{p_1}\Box \mu_{p_2}\cdots\Box \mu_{p_r}) = \f_\mu(p).
\end{align*}
The assertion follows from the linearity of both sides in the equality.
\end{proof}

\subsection{Trans-data $=$ Path coalgebra homomorphisms}
Next we show that any coalgebra homomorphism of path coalgebras is obtained from a trans-datum in a way as given in Proposition \ref{prop-coalgmap-to-datum}.  Recall that a subcoalgebra $D$ of $kQ^c$ is said to be \textbf{monomial} if $D$ has a basis consisting of paths in $Q$. Note that if $p=\alpha_1\alpha_2\cdots\alpha_r\in D$, then $\alpha_i\alpha_{i+1}\cdots\alpha_j\in D$ for any $i\le j$, especially $e_{s(p)}, e_{t(p)}\in D$.

\begin{thm}\label{thm-monomial-extension} Let $Q, Q'$ be given quivers, $D\subseteq kQ^c$ a monomial subcoalgebra with a basis $X$ consisting of paths in $Q$ and $f\colon D\longrightarrow kQ'^c$ a coalgebra homomorphism. Then we can assign each nontrivial path $p\in X$ a primitive element $\mu_{p}\in \p_{f(e_{s(p)}),f(e_{t(p)})}$ in a unique way,  such that $(\ref{endoformulae})$ holds for any $p\in P\cap X $.
In particular, we have mutually inverse bijections  \[ \xymatrix @C=2.5pc {\operatorname{Coalg}(kQ^c, kQ'^c) \ar@<+0.5ex>[r]^(.6){ \mathbbm{t}} & \Omega(Q, Q')\ar@<+0.5ex>[l]^(.4){\mathbbm{f}} }, \] here $\operatorname{Coalg}(kQ^c, kQ'^c)$ is the set of coalgebra homomorphisms,  $\mathbbm{t}$ is given as above and $\mathbbm{f}$ given as in Proposition $\ref{prop-coalgmap-to-datum}$.
\end{thm}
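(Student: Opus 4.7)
The plan is to run an induction on path length that simultaneously defines the trans‑datum $\mathbbm{t}(f)=\mu$ and verifies that each $\mu_p$ is primitive, then read off the bijection from the recovered recursion and Proposition~\ref{prop-coalgmap-to-datum}.

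First I set $\mu_0=f|_{kQ_0}$; since $f$ is a coalgebra map, Lemma~\ref{corad} forces $f(e_i)$ to be group‑like in $kQ'^c$, so $\mu_0$ is a coalgebra homomorphism $kQ^c_0\to kQ'^c_0$. For an arrow $\alpha\in X\cap Q_1$, Lemma~\ref{corad} again gives $f(\alpha)\in\p_{f(e_{s(\alpha)}),f(e_{t(\alpha)})}$, so there is no choice but to set $\mu_\alpha:=f(\alpha)$; this already satisfies (\ref{endoformulae}), because the only factorization of an arrow has $r=1$ and by Lemma~\ref{keylemma} the map $F$ restricts to the identification $\p_{s,t}\cong k(Q'^{\tau}_1)_{s,t}$, in particular $F(\mu_\alpha)=\mu_\alpha$ inside $kQ'^c$.

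For $p\in X\cap P$ of length $n\ge 2$, assume inductively that primitives $\mu_q$ have been uniquely determined for every nontrivial subpath $q$ of $p$. Separating the $r=1$ summand of (\ref{endoformulae}), the formula is equivalent to
\begin{equation*}
 \mu_p \;=\; f(p) \;-\; \sum_{r\ge 2}\sum_{\substack{p=p_1\cdots p_r\\ p_i\in P}} F(\mu_{p_1}\Box\cdots\Box\mu_{p_r}),
\end{equation*}
and the right‑hand side is now completely determined. This proves uniqueness and defines $\mu_p$ as an element of $kQ'^c$; it remains to check $\mu_p\in\p_{f(e_{s(p)}),f(e_{t(p)})}$, since then via $F^{-1}$ it lifts uniquely to $k(Q'^{\tau}_1)_{\sigma(s(p)),\sigma(t(p))}$ and decomposes into $\mu^0_p+\mu^1_p$ as required.

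The main obstacle is this primitivity verification, and the argument is essentially the computation in Proposition~\ref{prop-coalgmap-to-datum} run in reverse. I would apply $\Delta$ to the displayed recursion and use two coalgebra‑map identities: $(f\otimes f)\Delta(p)=\Delta f(p)$, and $(F\otimes F)\Delta=\Delta F$ applied to each cotensor $\mu_{p_1}\Box\cdots\Box\mu_{p_r}$, whose coproduct in the cotensor coalgebra is explicit. The inductive hypothesis lets me rewrite $\sum_{p=p_1p_2}f(p_1)\otimes f(p_2)$ as $\sum_{r\ge 2}\sum_i F(\mu_{p_1}\Box\cdots\Box\mu_{p_i})\otimes F(\mu_{p_{i+1}}\Box\cdots\Box\mu_{p_r})$, exactly the middle terms produced by $(F\otimes F)\Delta$. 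After this reorganization all mixed terms cancel and one is left with
\begin{equation*}
 \Delta(\mu_p) \;=\; f(e_{s(p)})\otimes\mu_p + \mu_p\otimes f(e_{t(p)}),
\end{equation*}
which is the desired primitivity.

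Finally, for the ``in particular'' part, take $D=kQ^c$ and $X=Q$: the construction above defines $\mathbbm{t}$ on $\operatorname{Coalg}(kQ^c,kQ'^c)$ and (\ref{endoformulae}) says $\mathbbm{f}\circ\mathbbm{t}=\id$. Conversely, given $\mu\in\Omega(Q,Q')$, Proposition~\ref{prop-coalgmap-to-datum} shows $\mathbbm{f}(\mu)$ is a coalgebra morphism satisfying (\ref{endoformulae}) with those chosen $\mu_p$; the uniqueness clause just proved then forces $\mathbbm{t}(\mathbbm{f}(\mu))=\mu$, so the two maps are mutually inverse.
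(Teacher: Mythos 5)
Your proposal is correct and follows essentially the same route as the paper: define $\mu_p$ for arrows by $\mu_p=f(p)$, then recursively by subtracting the $r\ge 2$ terms of (\ref{endoformulae}), with uniqueness automatic and primitivity of $\mu_p$ checked by running the coproduct computation of Proposition \ref{prop-coalgmap-to-datum} in reverse. The paper's proof is exactly this induction (it merely states the primitivity step more tersely), and your closing argument for the mutually inverse bijections matches the intended reading of the theorem.
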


\begin{proof} Set $X_n=\{p\in X\mid l(p)=n\}$ for each $n\ge 1$, here $l(p)$ denotes the length of $p$. We use induction on $l(p)$ to prove the existence and uniqueness of $\mu_p$.

For $p\in X_1$, we set $\mu_p=f(p)$, and $p\in \p_{s(p),t(p)}$ implies that $f(p)\in \p_{f(e_{s(p)}),f(e_{t(p)})}$. In this case, the formula (\ref{endoformulae}) holds automatically and it also determines $\mu_p$ uniquely.

Now assume that for each $p\in X$ with $1\le l(p)\le n-1$, we have assigned a primitive element $\mu_{p}\in \p_{f(e_{s(p)}),f(e_{t(p)})}$ such that (\ref{endoformulae}) holds for all  $p\in X$ with $1\le l(p)\le n-1$. Consider $p\in X_n$. We set
\[\mu_p=f(p)-\sum_{p=p_1p_2\cdots p_r, r\ge2\atop p_i\in P, i=1,\cdots, r}F(\mu_{p_1}\Box \mu_{p_2}\Box\cdots\Box \mu_{p_r}).
\]
Again (\ref{endoformulae}) holds automatically for $p$ and $\mu_p$ is uniquely determined. The only left is to show that  $\mu_p\in \p_{f(e_{s(p)}),f(e_{t(p)})}$. It follows from the proof of Propostion \ref{prop-coalgmap-to-datum} easily that
$\Delta(\mu_p)= f(e_{s(p)})\otimes \mu_p + \mu_p\otimes f(e_{t(p)})$, this just says $\mu_p$ is a primitive element and the proof is completed.
\end{proof}

\begin{exm} Let $Q$ be a subquiver of $Q'$ and $\iota\colon kQ^c \longrightarrow kQ'^c$ the obvious embedding. Then the trans-datum $\mathbbm{t}_\iota\in \Omega(Q, Q')$ is given by $(\mathbbm{t}_\iota)_{0}(e_i) = e_i$ for any $i\in Q_0$, $(\mathbbm{t}_\iota)_\alpha = \alpha$ for any $\alpha\in Q_1$ and $(\mathbbm{t}_\iota)_p = 0$ for any $p\in Q$ with $l(p)\ge 2$. In particularly, the trans-datum $\mathbbm{t}_{\id_{kQ^c}}\in \Omega(Q)$ corresponding to the identity map of $kQ^c$ is denote by $\mathbbm{1}_Q$, or simply $\mathbbm{1}$ when there is no confusion.
\end{exm}

\begin{rem} The correspondence $\mathbbm{t}$ is given by using induction. It comes to a naive but interesting question: whether there exists an explicit reciprocity formula for (\ref{endoformulae}). Such a formula is helpful in finding the inverse element of an automorphism of a path coalgebra, which could be useful in the study of Jacobian conjecture.
\end{rem}

\begin{rem} One defines \textbf{$n$-truncated trans-datum} for quivers $Q$ and $Q'$ and any integer $n\ge 1$, which characterizes the coalgebra morphisms $\mathrm{Coalg}(kQ^c_{\le n}, kQ'^c_{\le n})$. By definition an $n$-truncated trans-datum from $Q$ to $Q'$ is a pair $\mu = (\mu_0,\mu_\splus)$, where $\mu_0 = \mathrm{Coalg}(kQ_0, kQ'_0)$ is a coalgebra morphism and $\mu_\splus = \{\mu_p\in \p_{\mu_0(e_{s(p)}),\mu_0(e_{t(p)})}\}_{p\in P_{\le n}}$, a family of primitive elements in $kQ'^c$ indexed by $P_{\le n}$, here $P_{\le n}$ denotes the nontrivial paths in $Q$ with length less than or equal to $n$.

Denote by $\Omega(Q, Q'; n)$ the set of  $n$-truncated trans-datum from $Q$ to $Q'$. Similar to Theorem \ref{thm-monomial-extension}, we have a bijection from $\Omega(Q, Q'; n)$ to $\mathrm{Coalg}(kQ^c_{\le n}, kQ'^c_{\le n}))$. Obviously each $\mu \in \Omega(Q, Q'; n)$ gives rise to an element $\mu$ in $\Omega(Q, Q')$ with $\mu_p = 0$ for all path $p$ in $Q$ with length strictly greater than $n$. In other words, any coalgebra homomorphism from $kQ^c_{\le n}$ to $kQ'^c_{\le n}$ extends to a coalgebra homomorphism from $kQ^c$ to $kQ'^c$. This interesting result has a more generalized version.
\end{rem}

\begin{prop}\label{prop-coalg-mor-extension} Let $Q, Q'$ be quivers and $D\subseteq
kQ^c$ a subcoalgebra. Then any $f\in\mathrm{Coalg}(D,kQ'^c)$
extends to a coalgebra homomorphism from $kQ^c$ to $kQ'^c$.
\end{prop}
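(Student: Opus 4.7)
The plan is to build the extension $\tilde f : kQ^c \to kQ'^c$ by induction along the coradical filtration of $kQ^c$, using the trans-datum bijection (in its truncated form) as the key combinatorial tool. Writing $V_n = kQ^c_{(n)} = kQ_{\le n}$ and $D_n = D \cap V_n$, I will produce a compatible chain of coalgebra morphisms $\tilde f_n : V_n \to kQ'^c$ such that each $\tilde f_n$ extends $\tilde f_{n-1}$ and agrees with $f$ on $D_n$; the union $\tilde f = \bigcup_n \tilde f_n$ will then give the desired extension to $kQ^c = \bigcup_n V_n$.

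For the base case $n = 0$, Lemma \ref{corad} forces $f|_{D_0}$ to land in the group-likes $\{e_j : j \in Q'_0\}$, so I extend to $\tilde f_0 : kQ_0 \to kQ'^c$ simply by assigning to each $e_i \notin D_0$ any fixed vertex of $Q'$ (if $Q'_0 = \emptyset$ the statement is trivial).

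For the inductive step, represent $\tilde f_n$ via its $n$-truncated trans-datum $\mu^{(n)} = (\mu_0, \{\mu_p\}_{l(p) \le n})$, as provided by the truncated version of Theorem \ref{thm-monomial-extension} mentioned in the remark preceding the proposition. An arbitrary coalgebra extension $\tilde f_{n+1} : V_{n+1} \to kQ'^c$ is then parametrized freely by choosing, for each path $p$ of length $n+1$, a primitive $\mu_p \in \p_{\mu_0(e_{s(p)}),\, \mu_0(e_{t(p)})}$, and the formula of Proposition \ref{prop-coalgmap-to-datum} yields $\tilde f_{n+1}(p) = \mu_p + R_p$ with a remainder $R_p$ determined entirely by $\mu^{(n)}$. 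The requirement $\tilde f_{n+1}|_{D_{n+1}} = f|_{D_{n+1}}$, modulo the automatic agreement on $D_n$, becomes a linear system on the unknowns $\{\mu_p : l(p) = n+1\}$: for each $d = d' + \sum_{l(p) = n+1} c_p\, p \in D_{n+1}$ with $d' \in V_n$, the equation reads $\sum_p c_p\, \mu_p = f(d) - \tilde f_n(d') - \sum_p c_p\, R_p$.

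The main obstacle is verifying that this system is consistent, namely that each right-hand side lies in the appropriate sum of primitive subspaces $\sum_p c_p\,\p_{\mu_0(e_{s(p)}),\, \mu_0(e_{t(p)})}$ and depends only on the class of $d$ in $D_{n+1}/D_n$. The idea is to apply $\Delta$ to both sides of the equation and exploit the coalgebra identities $(f \otimes f)\Delta(d) = \Delta f(d)$ and $(\tilde f_n \otimes \tilde f_n)\Delta(d') = \Delta \tilde f_n(d')$, together with the inductive fact that each $R_p$ satisfies the correct primitive-type identity built into Proposition \ref{prop-coalgmap-to-datum}; the intermediate-length contributions in $\Delta$ then cancel pairwise, forcing the right-hand side to be primitive at the correct pair of group-likes. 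Once consistency is in hand, the system is solved by picking any preimage in $\bigoplus_{l(p)=n+1} \p_{\mu_0(e_{s(p)}),\,\mu_0(e_{t(p)})}$ of the constrained values (using that the $c_p$-coefficient vectors are linearly independent as $d$ ranges over a basis of $D_{n+1}/D_n \hookrightarrow kQ_{n+1}$) and assigning all remaining $\mu_p$ arbitrarily, completing the inductive step.
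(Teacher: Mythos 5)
Your proof is correct, but it takes a genuinely different route from the paper's. The paper first extends $f$ to $D+kQ_0$, then invokes Zorn's Lemma to obtain a maximal pair $(E,g)$ extending $f$, and shows $E=kQ^c$ by adjoining one path $p\in Q_n\setminus E$ at a time with the new trans-datum entry set to $\mu_p=0$; since $p$ is a \emph{new} basis vector, there is no constraint to satisfy and hence no linear algebra at all. You instead run a direct induction along the coradical filtration $V_n=kQ_{\le n}$ and must \emph{solve} for the length-$(n+1)$ entries $\mu_p$ so that $\tilde f_{n+1}$ agrees with $f$ on $D_{n+1}$; the crux is your consistency claim, and your method for it is sound: the defect $f(d)-\bigl(\tilde f_n(d')+\sum_p c_p R_p\bigr)$ is the difference of two maps that satisfy the comultiplication identity and agree on the lower filtration layer $D_n$, so applying $\Delta$ kills the middle terms and shows the defect is a $(\mu_0(e_{s}),\mu_0(e_{t}))$-primitive --- exactly the computation the paper uses to prove Theorem \ref{thm-monomial-extension}. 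Two points you leave implicit deserve a sentence each in a full write-up: (i) you should first reduce to $(s,t)$-homogeneous $d$ (every subcoalgebra of $kQ^c$ is homogeneous for the source--target decomposition, as one sees by applying $(\overline{e_s}\otimes\id)\Delta$), since otherwise "lies in the sum $\sum_p c_p\,\p_{\dots}$" is weaker than what solvability of the system requires when the $W_p$ vary with $p$; after this reduction all unknowns in a given equation live in the \emph{same} primitive space and full row rank suffices; (ii) the fact that $\tilde f_{n+1}|_{V_n}=\tilde f_n$ follows because formula (\ref{endoformulae}) for a path of length $\le n$ only involves $\mu^{(n)}$. What each approach buys: yours is Zorn-free and constructive at each stage (the filtration is indexed by $\mathbb N$, so a plain union suffices), at the cost of the solvability argument; the paper's maximality trick sidesteps every constraint but is correspondingly less explicit about what the extension looks like.
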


\begin{proof} First of all, we can extend $f|_{D_0}$ to a coalgebra homomorphism $\mu_0\colon kQ_0\longrightarrow kQ'_0$ easily by setting  $\mu_0(e_i)=f(e_i)$ for $e_i\in D$ and $\mu_0(e_i)=0$ for $e_i\notin D$. Thus $f$ extends to a coalgebra homomorphism $f_0\colon D + kQ_0\longrightarrow kQ'^c$ with $f|_{kQ_0} = \mu_0$.

Let $S=\{(E, g)| D+ kQ_0\subseteq E\subseteq kQ'^c, g\colon E\longrightarrow kQ^c, f|_D = g|_D\}$ be the set of pairs $(E, g)$, where $E$ is subcoalgebra of $kQ^c$ containing $D+kQ_0$ and $g\colon E\longrightarrow kQ'^c$  a colagebra homomorphism extending $f$. The above argument says that $S$ is nonempty. We may define a partial ordering $\preceq$ on $S$, by definition $(E,g)\preceq (E',g')$ if and only if $E\subseteq E'$ and $g = g'|_E$.

 Let $(E_1,g_1)\preceq (E_2,g_2)\preceq\cdots\preceq(E_n,g_n)\preceq\cdots $ be an arbitrary ascending chain in $S$ with respect to the ordering $\preceq$. We set $E=\bigcup_{n\ge1}E_n$. It is easy to show that $E$ is a subcoalgebra of $kQ^c$ which contains all $E_n$'s. We can also define a coalgebra homomorphism $g\colon E\longrightarrow kQ'^c$ as follows. For any $c\in E$, there exists some $n$ with $c\in E_n$, then we set $g(c) = g_n(c)$. Note that $g(c)$ does not depend on the choice of $n$. Thus we have obtained a $(E, g)\in S$ with $(E_n, g_n)\preceq (E, g)$ for all $n\ge 1$.

By Zorn's Lemma, there exists some maximal element $(E, g)$ in $S$.
We claim that $E = kQ^c$. For this it suffices to show that $kQ_{\le n}\subseteq E$ for all $n\ge 0$. We use induction on $n$.

By definition $kQ_0\subseteq E$. Now suppose that $kQ_{\le n-1}\subseteq E$.
By Theorem \ref{thm-monomial-extension}, we may assign each $p \in Q_{\le n-1}$ a primitive element $\mu_{p}\in \p_{f(e_{s(p)}),f(e_{t(p)})}$ in a unique way, such that $(\ref{endoformulae})$ holds for any $p\in Q_{\le n-1} $.

Assume that we have some $p\in Q_n$ such that $p\notin E$. We set $\mu_p = 0$ and $\tilde{E} = E \oplus \mathbb{K} p$. We can extend $g$ to a linear map $\tilde g\colon \tilde{E} \longrightarrow kQ'^c$ by setting
\[\tilde{g}(p) = \sum_{p= p_1p_2\cdots p_r\atop p_i\in Q} F(\mu_{p_1}\Box\mu_{p_2}\Box\cdots\Box\mu_{p_r}).\]
$\tilde{g}$ is a coalgebra morphism since it is when restricted to the subcoalgebra $E$ and to the subcoalgebra generated by $p$. Hence $(E,g)\precneqq (\tilde{E},\tilde{g})$, contrary to the maximality of $(E,g)$. Therefore $kQ_n\subseteq D$, and the proof is completed by induction.
\end{proof}

Combined with Lemma \ref{corad}, we draw the following consequence, a special case of which plays a very important role in the classification of not necessarily coradically graded pointed Hopf algebras; see for instance \cite{hyz}. In fact, this is also one of our main motivations to start with the present work.

\begin{thm}\label{thm-large-auto-ext} Let $Q$ be a finite quiver and $D\subseteq kQ^c$ a large subcoalgebra.  Then any automorphism of $D$
 extends to an automorphism of $kQ^c$. Particularly, $\aut(D)$ is a subquotient group of $\aut(kQ^c)$, here $\aut(kQ^c)$ and $\aut(D)$ are the automorphism group of $kQ^c$ and $D$ respectively.
\end{thm}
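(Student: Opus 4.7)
The plan is to apply Proposition~\ref{prop-coalg-mor-extension} to an automorphism $\sigma\in\aut(D)$ and then show that the resulting coalgebra endomorphism of $kQ^c$ is in fact invertible. First, composing $\sigma$ with the inclusion $D\hookrightarrow kQ^c$ gives a coalgebra homomorphism $D\to kQ^c$, which by Proposition~\ref{prop-coalg-mor-extension} extends to a coalgebra endomorphism $\tilde\sigma\colon kQ^c\to kQ^c$ with $\tilde\sigma|_D=\sigma$.

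Injectivity of $\tilde\sigma$ is cheap: by Lemma~\ref{corad} it suffices to check that $\tilde\sigma|_{(kQ^c)_{(1)}}$ is injective, and since $D$ is large we have $(kQ^c)_{(1)}=kQ_{\le 1}\subseteq D$, on which $\tilde\sigma$ agrees with the injective map $\sigma$. The real work — and the only step where the finiteness of $Q$ enters — is surjectivity, because $kQ^c$ may be infinite dimensional when $Q$ has cycles. By Lemma~\ref{corad} again, $\tilde\sigma$ preserves each term of the coradical filtration, i.e.\ $\tilde\sigma(kQ_{\le n})\subseteq kQ_{\le n}$; since $Q$ is finite, each $kQ_{\le n}$ is finite dimensional, so the restricted injective endomorphism is bijective. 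Passing to the union $kQ^c=\bigcup_n kQ_{\le n}$, the map $\tilde\sigma$ is surjective, hence an automorphism of $kQ^c$ extending $\sigma$.

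For the subquotient statement, set $G=\{\tau\in\aut(kQ^c)\mid \tau(D)=D\}$ and $N=\{\tau\in G\mid \tau|_D=\id_D\}$. Then $G$ is a subgroup of $\aut(kQ^c)$, and the restriction map $\rho\colon G\to\aut(D)$, $\tau\mapsto\tau|_D$, is a group homomorphism with kernel $N\lhd G$. Since $\tilde\sigma(D)=\sigma(D)=D$, the extension produced above lies in $G$, so $\rho$ is surjective and $\aut(D)\cong G/N$ is a subquotient of $\aut(kQ^c)$.

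The main obstacle is precisely the surjectivity step, since Proposition~\ref{prop-coalg-mor-extension} and Lemma~\ref{corad} together only directly give an injective extension. The coradical filtration, combined with the hypothesis that $Q$ is finite, resolves this by exhibiting a nested family of finite dimensional invariant subspaces on which injectivity forces bijectivity; this is where the finiteness of $Q$ is essential.
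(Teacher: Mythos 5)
Your proof is correct and follows the same route the paper intends: extend $\sigma$ to an endomorphism of $kQ^c$ via Proposition~\ref{prop-coalg-mor-extension} and get injectivity from Lemma~\ref{corad}, the two ingredients the paper cites when stating the theorem. Your added surjectivity argument (the extension preserves each finite dimensional $kQ_{\le n}$, where injectivity forces bijectivity) is a detail the paper leaves implicit, and it is exactly the point where the finiteness of $Q$ is used.
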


\begin{rem} Note that the theorem is not true in general if $D$ is not a large subcoalgebra. For example, let $Q$ be the quiver \[\xymatrix{ 1\bullet \ar@<+0.5ex>[r]^{\alpha} \ar@<-0.5ex>[r]_{\beta} &\bullet2 &\bullet3 \ar[l]_{\gamma} },\] and $kQ^c$ the path coalgebra. Let $D\subseteq kQ^c$ be the subcoalgebra spanned by $\{e_1, e_2, e_3, \alpha, \gamma\}$. Let $\sigma\in\aut(D)$ given by $\sigma(e_1)=e_3$, $\sigma(e_2)=e_2$, $\sigma(e_3)=e_1$, $\sigma(\alpha)=\gamma$ and $\sigma(\gamma)=\alpha$. Clearly $\sigma$ does not extend to an automorphism of $kQ^c$.
\end{rem}

\subsection{Composition of trans-data}
Since trans-data correspond to path coalgebra homomorphisms bijectively, the composition of coalgebra homomorphisms will induced a composition of trans-data. In this subsection we will give the composition map explicitly.

Let $Q, Q'$ be quivers and $Q^\tau, Q'^\tau$ the augmented quivers. As before $P$ and $P^\tau$ denote the set of nontrivial paths in $Q$ and $Q^\tau$ respectively. Let $\mu = (\mu_0, \mu_\splus)\in \Omega(Q,Q')$ be a trans-datum. $\mu$ gives rise to an element $\bar\mu\in \Omega(Q, Q'^\tau)$ in an obvious way, say $ \bar\mu_0 = \mu_0$, $\bar\mu^0 =0$ and
$\bar\mu^1_p = \mu_p$ for all $p\in P$, here each $\mu_p$ is viewed as an element in $kQ'^\tau_1$. The following result is easily deduced from the definition and we omit the proof.

\begin{lem} Let $Q, Q'$ be quivers and $\mu\in \Omega(Q,Q')$. Then we have $\mathbbm{f}_\mu = F\circ f_{\bar\mu}$.
 \[\xymatrix{
 kQ^c \ar @{->}[rr]_{\mathbbm{f}_{\bar\mu}} \ar @{->}@/^1.5pc/[rrrr]^{\mathbbm{f}_\mu} &
& k(Q'^\tau)^c  \ar @{->}[rr]_{F}  &
& kQ'^c }
\]
\end{lem}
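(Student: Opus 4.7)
The plan is to verify the identity on $kQ_0$ and on each nontrivial path $p \in P$ separately, using the explicit formula from Proposition \ref{endoformulae2} and the defining properties of $F$.

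First, on $kQ_0$: for $e_i \in Q_0$, we have $\mathbbm{f}_{\bar\mu}(e_i) = \bar\mu_0(e_i) = \mu_0(e_i) \in kQ'_0 \subseteq kQ'^c$, and since $F$ restricts to the identity on $kQ'^c$ (Lemma \ref{keylemma}), $F(\mathbbm{f}_{\bar\mu}(e_i)) = \mu_0(e_i) = \mathbbm{f}_\mu(e_i)$, as desired.

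Next, for $p \in P$, I would apply the formula of Proposition \ref{endoformulae2} to both sides. On the one hand,
\[
\mathbbm{f}_\mu(p) = \sum_{p = p_1 p_2 \cdots p_r,\ p_i \in P} F\bigl(\mu_{p_1} \Box \mu_{p_2} \Box \cdots \Box \mu_{p_r}\bigr),
\]
where each primitive $\mu_{p_i} \in \p_{\mu_0(e_{s(p_i)}), \mu_0(e_{t(p_i)})}$ is identified with an element of $k(Q'^\tau_1)_{\mu_0(s(p_i)), \mu_0(t(p_i))}$ via $\p_{s,t} = k(Q'^\tau_1)_{s,t}$. On the other hand, when we compute $\mathbbm{f}_{\bar\mu}(p)$, the target is $k(Q'^\tau)^c$ and we use the $F$-map $F'$ associated to the quiver $Q'^\tau$. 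Crucially, by construction $\bar\mu^0 = 0$, so $\bar\mu_\splus(p_i) = \bar\mu^1_{p_i}$ lies in $k(Q'^\tau)_1$; hence the cotensor product $\bar\mu_{p_1} \Box \cdots \Box \bar\mu_{p_r}$ already lies in the homogeneous component $k(Q'^\tau)_r \subseteq k(Q'^\tau)^c$, on which $F'$ acts as the identity. Therefore
\[
\mathbbm{f}_{\bar\mu}(p) = \sum_{p = p_1 p_2 \cdots p_r,\ p_i \in P} \bar\mu_{p_1} \Box \bar\mu_{p_2} \Box \cdots \Box \bar\mu_{p_r}.
\]

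Finally, since the identification $\p_{\mu_0(s(p_i)), \mu_0(t(p_i))} \cong k(Q'^\tau_1)_{\mu_0(s(p_i)), \mu_0(t(p_i))}$ is precisely the one for which $F$ sends $e_{s,t} \mapsto e_s - e_t$ and fixes the arrows of $Q'$, we have $F(\bar\mu_{p_i}) = \mu_{p_i}$ for each $i$. Applying $F$ (a coalgebra homomorphism, hence compatible with cotensor products over $kQ'_0 = k(Q'^\tau)_0$) term by term then yields $F(\mathbbm{f}_{\bar\mu}(p)) = \mathbbm{f}_\mu(p)$, finishing the verification.

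The main point requiring care will be the bookkeeping of the two different augmented-quiver identifications at play and confirming that $F$ commutes with the cotensor products in the required way; once the vanishing $\bar\mu^0 = 0$ is exploited to reduce $F'$ to the identity on the relevant subspace, the rest is a direct comparison of the formulas provided by Proposition \ref{endoformulae2}.
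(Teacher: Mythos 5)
Your verification is correct in substance and is exactly the ``easily deduced from the definition'' argument the paper omits: the key point, which you identify, is that $\bar\mu^0=0$ forces each $\bar\mu_{p_i}$ to lie in $k(Q'^\tau_1)$, so the cotensor product $\bar\mu_{p_1}\Box\cdots\Box\bar\mu_{p_r}$ lands in the honest path space $k(Q'^\tau)_r$, on which the $F$-map of the quiver $Q'^\tau$ acts as the identity; hence $\mathbbm{f}_{\bar\mu}(p)=\sum_{p=p_1\cdots p_r}\bar\mu_{p_1}\Box\cdots\Box\bar\mu_{p_r}$.

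One caution about your last step. You justify $F(\mathbbm{f}_{\bar\mu}(p))=\mathbbm{f}_\mu(p)$ by saying that $F$, being a coalgebra homomorphism, is ``compatible with cotensor products'' and can be applied term by term to the factors. That compatibility is not literally true and you should not rely on it: for example $F(e_{s,t}e_{t,u})=e_s-e_u$, which is not obtained by applying $F$ to the factors and re-cotensoring (indeed $F$ collapses a length-two path of new arrows to a degree-zero element). Fortunately the step does not need it. In formula (\ref{endoformulae}) the element $\mu_{p_1}\Box\cdots\Box\mu_{p_r}$ is already formed inside $k(Q'^\tau)^c$, with each primitive $\mu_{p_i}\in\p_{\mu_0(e_{s(p_i)}),\mu_0(e_{t(p_i)})}$ identified with an element of $kQ'^\tau_1$ --- and this is precisely the element $\bar\mu_{p_1}\Box\cdots\Box\bar\mu_{p_r}$ by the very definition of $\bar\mu$. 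So the summands of $F(\mathbbm{f}_{\bar\mu}(p))$ and of $\mathbbm{f}_\mu(p)$ coincide verbatim, with a single application of $F$ to the whole cotensor monomial in each case; no factorwise claim about $F$ is required. With that justification replaced, the proof is complete.
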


 We may also lift $\mu$ to $\tilde\mu\in \Omega(Q^\tau, Q')$ in the following way: $\tilde\mu_0=\mu_0$, $\tilde\mu_p=\mu_p$ for $p\in P$, $\tilde\mu_{e_{s,t}}=\mu_0(e_s-e_t)$ for any $s, t\in Q_0$, $\tilde\mu_{e_{s,s(p)}p} = -\cmu_p\mu_0(e_s - e_{t(p)})$ for $p\in P$ and $\tilde\mu_q = 0$ for any other $q\in P^\tau$. Recall that $\cmu_p$ denotes the coefficient in the expression  $\mu_{p}^0 = \cmu_{p} \mu_0(e_{s(p)} - e_{t(p)})$.

\begin{lem} Keep the above notation. Then $\mathbbm{f}_{\tilde\mu} = \mathbbm{f}_{\mu}\circ F$.
 \[\xymatrix{
k(Q^\tau)^c  \ar @{->}[rr]_{F}  \ar @{->}@/^1.5pc/[rrrr]^{\mathbbm{f}_{\tilde\mu}} &
& kQ^c \ar @{->}[rr]_{\mathbbm{f}_\mu}  & & kQ'^c}
\]
\end{lem}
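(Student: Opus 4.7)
The strategy is to identify $\mathbbm{f}_{\tilde\mu}$ and $\mathbbm{f}_\mu\circ F$ via the bijection $\mathbbm{t}$ of Theorem~\ref{thm-monomial-extension}. Both maps are coalgebra homomorphisms $k(Q^\tau)^c\to kQ'^c$: the left-hand side by Proposition~\ref{prop-coalgmap-to-datum} applied to $\tilde\mu\in\Omega(Q^\tau,Q')$, and the right-hand side because $F$ is a coalgebra morphism by Lemma~\ref{keylemma} while $\mathbbm{f}_\mu$ is one by Proposition~\ref{prop-coalgmap-to-datum}. By the uniqueness part of Theorem~\ref{thm-monomial-extension}, it therefore suffices to check that $\mathbbm{t}(\mathbbm{f}_\mu\circ F)=\tilde\mu$.

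The low-degree comparison is essentially automatic. On $k(Q^\tau)_0=kQ_0$ both composites restrict to $\mu_0=\tilde\mu_0$, since $F$ fixes vertices. On arrows the recursive definition of $\mathbbm{t}$ reduces to direct evaluation: for $\alpha\in Q_1$, $(\mathbbm{f}_\mu\circ F)(\alpha)=\mathbbm{f}_\mu(\alpha)=\mu_\alpha=\tilde\mu_\alpha$, and for a new arrow $e_{s,t}$, $(\mathbbm{f}_\mu\circ F)(e_{s,t})=\mathbbm{f}_\mu(e_s-e_t)=\mu_0(e_s-e_t)=\tilde\mu_{e_{s,t}}$. Having matched the data through length~$1$, I would proceed by induction on $l(q)$: assuming agreement through length $l(q)-1$, the recursive formula for $\mathbbm{t}$ together with Proposition~\ref{prop-coalgmap-to-datum} reduces the claim to the pointwise identity $(\mathbbm{f}_\mu\circ F)(q)=\mathbbm{f}_{\tilde\mu}(q)$.

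To verify this pointwise identity I would split into cases following the defining clauses of $F$, namely $q\in P$, $q\in E$, $q=pe$, $q=e_{s,s(p)}p$, $q=e_{s,s(p)}pe$, and the remaining paths for which $F(q)=0$. The case $q\in P$ is immediate, since every sub-path of $q$ already lies in $P$ and both sides evaluate to $\mathbbm{f}_\mu(q)$. For $q\in E$ only the maximal decomposition into single new arrows contributes to $\mathbbm{f}_{\tilde\mu}(q)$, and the iterated cotensor of the differences $\mu_0(e_{i_k}-e_{i_{k+1}})$---viewed in $k(Q'^\tau)^c$ via the identification $\p_{s,t}\cong k(Q'^\tau_1)_{s,t}$---collapses under the corresponding augmented map $F\colon k(Q'^\tau)^c\to kQ'^c$ to $\mu_0(e_{s(q)}-e_{t(q)})=\mathbbm{f}_\mu(F(q))$. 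The mixed cases are handled analogously, and the main obstacle lies precisely here: one must isolate the decompositions $q=q_1\cdots q_r$ with every $\tilde\mu_{q_i}\neq 0$, expand the resulting cotensor products as paths in $Q'^\tau$, and then trace the augmented map through its own case split. The special value $\tilde\mu_{e_{s,s(p)}p}=-\cmu_p\mu_0(e_s-e_{t(p)})$ is engineered precisely to absorb the contributions coming from the $\mu^0$-parts of $\mu_{p'}$ for the sub-paths $p'$ appearing in decompositions of $q$, which is what ultimately produces the equality with $\mathbbm{f}_\mu(F(q))$.
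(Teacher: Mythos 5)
Your overall strategy is the same as the paper's: everything comes down to the pointwise identity $\mathbbm{f}_{\tilde\mu}(q)=\mathbbm{f}_\mu(F(q))$ for each path $q$ in $Q^\tau$, verified case by case along the defining clauses of $F$. The opening reduction through the bijection $\mathbbm{t}$ is valid but buys nothing: as you yourself observe, the recursion lands you back on exactly that pointwise identity, which (both sides being linear) already suffices without any appeal to uniqueness of trans-data; the paper simply checks it directly on paths.

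The gap is that the decisive computations are not carried out. The paper's proof consists almost entirely of the mixed cases $q=pe$, $q=e_{s,s(p)}p$, $q=e_{s,s(p)}pe$ and $q=q'e_{t(q'),s(p)}p$: one must expand $\mathbbm{f}_{\tilde\mu}(q)$ over all decompositions into nontrivial subpaths, separate the $\mu^0$- and $\mu^1$-components of $\tilde\mu_{p_1}$, verify that the term $\tilde\mu_{e_{s,s(p_1)}p_1}=-\cmu_{p_1}\mu_0(e_s-e_{t(p_1)})$ cancels precisely against the contribution of $\tilde\mu_{e_{s,s(p)}}\Box\tilde\mu^0_{p_1}\Box\cdots$, and in the last case check that every summand vanishes so that both sides are $0$. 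You correctly locate the difficulty and correctly explain the purpose of the $-\cmu_p$ term, but you assert rather than prove the cancellations, so what you have is an outline of the paper's proof rather than a proof. There is also one concrete slip in the case you call automatic: you claim $F$ collapses $e=e_{i_1,i_2}\cdots e_{i_n,i_{n+1}}\in E$ to $e_{s(e)}-e_{t(e)}$, whereas the paper's $F$ sends it to $e_{i_1}-e_{i_2}$ (first two vertices); the first-minus-last assignment is not even compatible with $F$ being a coalgebra map once $n\ge 2$. The identity still holds with the correct reading, but this shows the ``immediate'' cases deserve the same care as the mixed ones.
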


\begin{proof}
We need only to prove $\mathbbm{f}_{\tilde\mu}(q) = \mathbbm{f}_{\mu}(F(q))$ for any $q\in P^\tau$ and this can be checked case by case directly.

\noindent{\emph{Case 1.}} If $q \in P$ or $q$ has the form $e_{i_1,i_2}e_{i_2,i_3}\cdots e_{i_{n-1},i_n}$, then it is easy to check that  $\mathbbm{f}_{\tilde\mu}(q) = \mathbbm{f}_{\mu}(F(q))$.

\noindent \emph{Case 2.} Let $q = pe_{i_1,i_2}e_{i_2,i_3}\cdots e_{i_{n-1},i_n}$ with $p\in P$. Then by definition \begin{align*}\mathbbm{f}_{\tilde\mu}(q)
 =&\sum_{q=q_1q_2\cdots q_m\atop q_i\in P^\tau} F(\tilde\mu_{q_1}\Box\cdots\Box \tilde\mu_{q_m})\\
 =&\sum_{p=p_1p_2\cdots p_m\atop p_i\in P }F(\tilde\mu_{p_1}\Box\cdots\Box \tilde\mu_{p_m}\Box \tilde\mu_{e_{i_1,i_2}}  \Box \cdots \Box \tilde\mu_{e_{i_{n-1},i_n}})\\
 =& \sum_{p=p_1p_2\cdots p_m\atop p_i\in P }F(\tilde\mu_{p_1}\Box\cdots\Box \tilde\mu_{p_m})\\
 =& \sum_{p=p_1p_2\cdots p_m\atop p_i\in P }F(\mu_{p_1}\Box\cdots\Box \mu_{p_m})\\
 =& \ \mathbbm{f}_\mu(p) = \mathbbm{f}_\mu(F(q)).
\end{align*}

\noindent \emph{Case 3.} Let $q = e_{s,s(p)}p$ with $p\in P$.
\begin{align*}\mathbbm{f}_{\tilde\mu}(q)
 =&\sum_{q=q_1q_2\cdots q_m\atop q_i\in P^\tau} F(\tilde\mu_{q_1}\Box\tilde\mu_{q_2}\Box\cdots\Box \tilde\mu_{q_m})\\
 =&\sum_{p=p_1p_2\cdots p_m\atop p_i\in P }(F(\tilde\mu_{e_{s,s(p)}p_1}\Box\tilde\mu_{p_2}\Box\cdots\Box \tilde\mu_{p_m})
 +F(\tilde\mu_{e_{s,s(p)}}\Box\tilde\mu_{p_1}\Box\tilde\mu_{p_2}\Box\cdots\Box \tilde\mu_{p_m}))\\
 = & - \sum_{p=p_1p_2\cdots p_m\atop p_i\in P } \cmu_{p_1}F(\tilde\mu_{e_{s,t(p_1)}}\Box\tilde\mu_{p_2}\Box\tilde\mu_{p_3}\Box\cdots\Box \tilde\mu_{p_m}) \\
 & + \sum_{p=p_1p_2\cdots p_m\atop p_i\in P }F(\tilde\mu_{e_{s,s(p)}}\Box\tilde\mu^0_{p_1}\Box\tilde\mu_{p_2}\Box\cdots\Box \tilde\mu_{p_m})\\
 & + \sum_{p=p_1p_2\cdots p_m\atop p_i\in P }F(\tilde\mu_{e_{s,s(p)}}\Box\tilde\mu^1_{p_1}\Box\tilde\mu_{p_2}\Box\cdots\Box \tilde\mu_{p_m}).
 \end{align*}
Now by definition of $F$ we have
 \[
    F(\tilde\mu_{e_{s,s(p)}} \Box \tilde\mu^1_{p_1} \Box \tilde\mu_{p_2} \Box
    \cdots \Box \tilde\mu_{p_m}) = -F(\mu^1_{p_1} \Box \mu_{p_2} \Box
    \cdots \Box \mu_{p_m})
 \] and
 \[
   F(\tilde\mu_{e_{s,s(p)}} \Box \tilde\mu^0_{p_1} \Box \tilde\mu_{p_2} \Box
   \cdots \Box \tilde\mu_{p_m})
   = \cmu_{p_1}\cmu_{p_2} \cdots \cmu_{p_m} \tilde\mu_{e_{s,s(p)}}.
 \]
Hence we have
\begin{align*}
   & -F(\tilde\mu_{e_{s,t(p_1)}} \Box \tilde\mu_{p_2} \Box \tilde\mu_{p_3}
     \Box \cdots \Box \tilde\mu_{p_m}) + F(\tilde\mu_{e_{s(p_1), t(p_1)}} \Box
     \tilde\mu_{p_2} \Box \cdots \Box \tilde\mu_{p_m})\\
  =& -F(\tilde\mu_{e_{s,t(p_1)}} \Box \mu^0_{p_2} \Box \mu^0_{p_3}
     \Box \cdots \Box \mu^0_{p_m}) + F(\tilde\mu_{e_{s(p_1), t(p_1)}} \Box
     \mu^0_{p_2} \Box \cdots \Box \mu^0_{p_m}) \\
  =& - \cmu_{p_2}\cmu_{p_3}\cdots\cmu_{p_m} \tilde\mu_{e_{s,t(p_1)}} +
   \cmu_{p_2}\cmu_{p_3}\cdots\cmu_{p_m} \tilde\mu_{e_{s(p_1), t(p_1)}} \\
  =& - \cmu_{p_2}\cmu_{p_3}\cdots\cmu_{p_m} \tilde\mu_{e_{s, s(p_1)}}.
\end{align*}
Therefore we obtain that
\begin{align*}
      \mathbbm{f}_{\tilde\mu}(q)
    = & \sum_{p=p_1p_2\cdots p_m\atop p_i\in P} (-F(\mu^0_{p_1} \Box \mu_{p_2} \Box
        \cdots \Box \mu_{p_m}) -F(\mu^1_{p_1} \Box \mu_{p_2} \Box
        \cdots \Box \mu_{p_m})) \\
    = & \sum_{p=p_1p_2\cdots p_m\atop p_i\in P} -F(\mu_{p_1} \Box \mu_{p_2} \Box
        \cdots \Box \mu_{p_m}) \\
    = & -\mathbbm{f}_\mu(p) = \mathbbm{f}_\mu(F(q))
\end{align*}
 The same argument also works in the case $q=e_{s,s(p)}pe_{t(p), i_1}e_{i_1,i_2}\cdots e_{i_{n-1},i_n}$.

\noindent \emph{Case 4.} Let $q= q'e_{t(q'), s(p)}p$ with $q'\in P^{\tau}$ and $p\in P$.
\begin{align*}\mathbbm{f}_{\tilde\mu}(q)
 =&\sum_{q=q_1q_2\cdots q_m\atop q_i\in P^\tau} F(\tilde\mu_{q_1}\Box\tilde\mu_{q_2}\Box\cdots\Box \tilde\mu_{q_m})\\
 =&\sum_{q'=q'_1q'_2\cdots q'_l \atop {p=p_1p_2\cdots p_n \atop q'_j\in P^\tau, p_i\in P }}
    \left\{F(\tilde\mu_{q'_1}\Box\cdots \Box \tilde\mu_{q'_l} \Box \tilde\mu_{e_{t(q'), s(p)}}\Box \tilde\mu_{p_1}\Box \cdots\Box \tilde\mu_{p_m}) \right. \\
  & \qquad\quad  \left. - F(\tilde\mu_{q'_1}\Box\cdots \Box \tilde\mu_{q'_l} \Box \tilde\mu_{e_{t(q'), s(p)}p_1}\Box \tilde\mu_{p_2}\Box \cdots\Box \tilde\mu_{p_m})\right\}.
\end{align*}
Note that
\begin{align*}
   & F(\tilde\mu_{q'_1}\Box\cdots \Box \tilde\mu_{q'_l} \Box \tilde\mu_{e_{t(q'), s(p)}}
   \Box \tilde\mu_{p_1}\Box \cdots \Box \tilde\mu_{p_c})\\
 = & F(\tilde\mu_{q'_1}\Box\cdots \Box \tilde\mu_{q'_l} \Box \tilde\mu_{e_{t(q'), s(p)}}\Box \tilde\mu^0_{p_1}\Box \cdots \Box \tilde\mu^0_{p_c}) \\
 = & F(\tilde\mu_{q'_1}\Box\cdots \Box \tilde\mu_{q'_l} \Box \mu^0_{p_1}\tilde\mu_{e_{t(q'), t(p_1)}}\Box \tilde\mu^0_{p_2}\Box \cdots \Box \tilde\mu^0_{p_c}),
\end{align*}
it follows that $\mathbbm{f}_{\tilde\mu}(q) = 0 = \mathbbm{f}_\mu(F(q))$ in this case. The same argument works for the case  $q= q'e_{t(q'), s(p)}pe_{t(p),s(q'')}q''$ with $q', q''\in P^{\tau}$ and $p\in P$.

Now we have exhausted all cases and the proof is completed.
\end{proof}

\begin{defn} Let $Q, Q', Q''$ be quivers, $\mu\in\Omega(Q,Q')$ and $\nu\in\Omega(Q',Q'')$. The \textbf{composition} of $\mu$ and $\nu$, denoted by $\nu\circ \mu$, is defined to be the trans-datum in $\Omega(Q, Q'')$ given by $(\nu\circ \mu)_0=\nu_0\circ\mu_0$, and for each nontrivial path $p$, \begin{equation}\label{compositionformula} (\nu\circ \mu)_p = \sum\limits_{p=p_{1}p_{2}\cdots p_{n} \atop {p_i\in P }} \tilde\nu(\mu_{p_{1}}\Box\cdots\Box\mu_{p_{n}}),\end{equation}
here each $\mu_{p_{1}}$ is viewed as an element in $kQ'^\tau_1$ and $\tilde\nu\in \Omega(Q'^\tau, Q'')$ is the lift of $\nu$ as in the above lemma.
\end{defn}

Before stating our main theorem in this section, we give a result in special case.

\begin{lem} Let $Q, Q', Q''$ be quivers, $\mu\in\Omega(Q,Q')$ and $\nu\in\Omega(Q',Q'')$.
Assume that $\mu^0 = 0$. Then $ \mathbbm{f}_\nu\circ \mathbbm{f}_\mu = \mathbbm{f}_{\nu\circ\mu}$.
\end{lem}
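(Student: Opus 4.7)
The plan is to verify the identity $\f_\nu\circ\f_\mu=\f_{\nu\circ\mu}$ on the generators of $kQ^c$. On $kQ_0$ both sides equal $\nu_0\circ\mu_0$, so by the uniqueness part of Theorem \ref{thm-monomial-extension} it suffices to check agreement on every nontrivial path $p\in P$. The key simplification forced by the hypothesis $\mu^0=0$ is that $\mu_\splus(q)=\mu^1_q\in kQ'_1$ for every $q\in P$, so every cotensor $\mu_\splus(q_1)\Box\cdots\Box\mu_\splus(q_r)$ lies in $kQ'_r\subseteq kQ'^c$ and involves no augmented arrows. Consequently the map $F$ in formula $(\ref{endoformulae})$ acts as the identity on all such products, and Proposition \ref{endoformulae2} collapses to
\[
   \f_\mu(q)\;=\;\sum_{r\ge 1}\sum_{q=r_1\cdots r_r,\,r_j\in P}\mu_{r_1}\Box\cdots\Box\mu_{r_r}\;\in\;kQ'^c_{\ge 1}
\]
for every $q\in P$.

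The central step is to establish the identity $(\nu\circ\mu)_q=\nu_\splus(\f_\mu(q))$ for each $q\in P$. Each summand $\mu_{r_1}\Box\cdots\Box\mu_{r_k}$ above is a linear combination of length-$k$ paths built entirely from old arrows of $Q'$, that is, lies in $kP\subseteq kP^\tau$; and on such paths $\tilde\nu$ agrees by its very definition with $\nu_\splus$, since $\tilde\nu_r=\nu_r=\nu_\splus(r)$ for $r\in P$. Substituting into the defining formula $(\ref{compositionformula})$ of $\nu\circ\mu$ therefore gives
\[
   (\nu\circ\mu)_q\;=\;\sum_{q=r_1\cdots r_k,\,r_j\in P}\nu_\splus(\mu_{r_1}\Box\cdots\Box\mu_{r_k})\;=\;\nu_\splus(\f_\mu(q)).
\]

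Finally, I compute both sides of the desired equation on $p\in P$ by Proposition \ref{endoformulae2} and match them. Plugging the identity above directly into the formula for $\f_{\nu\circ\mu}(p)$ yields
\[
   \f_{\nu\circ\mu}(p)\;=\;\sum_{s\ge 1}\sum_{p=q_1\cdots q_s,\,q_i\in P}F\bigl(\nu_\splus(\f_\mu(q_1))\Box\cdots\Box\nu_\splus(\f_\mu(q_s))\bigr).
\]
For $\f_\nu(\f_\mu(p))$ I apply Proposition \ref{endoformulae2} to $\f_\nu$ at the element $\f_\mu(p)\in kQ'^c_{\ge 1}$ and use that $\f_\mu$ is a coalgebra morphism to pull the iterated coproduct through, so that the Sweedler pieces $(\f_\mu(p))_{(i)}$ may be replaced by $\f_\mu(p_{(i)})$. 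Expanding $\Delta^{s-1}(p)$ in the path basis and discarding the trivial Sweedler components (they die because $\mu_0(e_j)\in kQ'_0$ and $\nu_\splus$ vanishes on $kQ'_0$) produces exactly the same expression. The only real obstacle is the double-indexed bookkeeping: one must convince oneself that the nested sum implicit in the definition of $\nu\circ\mu$ (first split $p$ into $q_1,\ldots,q_s$, then each $q_i$ further) matches termwise the iterated coproduct of $p$ after pulling $\f_\mu$ through. Once the identity $(\nu\circ\mu)_q=\nu_\splus(\f_\mu(q))$ is available, however, this match is manifest and the proof is complete.
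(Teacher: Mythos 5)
Your proof is correct and follows essentially the same route as the paper's: the decisive point in both is that $\mu^0=0$ forces every cotensor $\mu_{p_1}\Box\cdots\Box\mu_{p_m}$ to be a linear combination of genuine paths of $Q'$, so that $F$ acts as the identity and $\tilde\nu$ agrees with $\nu$ (equivalently with $\nu_\splus$) on these terms. The only difference is organizational: where the paper matches the two nested sums over refinements of decompositions of $p$ by direct re-indexing, you package the same bookkeeping through the intermediate identity $(\nu\circ\mu)_q=\nu_\splus(\f_\mu(q))$ together with the fact that $\f_\mu$ commutes with the iterated coproduct.
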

\begin{proof}
Since $\mu_{p_1}\Box \mu_{p_2}\Box\cdots\Box \mu_{p_m}$ is a linear combination of paths in $kQ'$, by definition we have $F(\mu_{p_1}\Box\cdots\Box \mu_{p_m}) = \mu_{p_1}\Box\cdots\Box \mu_{p_m}$ and $\tilde\nu(\mu_{p_1}\Box\cdots\Box \mu_{p_m}) = \nu(\mu_{p_1}\Box\cdots\Box \mu_{p_m})$. Now the lemma follows easily by direct calculation. In fact, we have
\begin{align*}
   &(\mathbbm{f}_\nu\circ \mathbbm{f}_\mu )(p)\\
  =& \sum_{p=p_1p_2\cdots p_m, p_i\in P}\mathbbm{f}_\nu(\mu_{p_1}\Box\mu_{p_2}\Box\cdots\Box\mu_{p_m} )\\
  =& \sum_{p=p_1p_2\cdots p_m\atop p_i\in P}\sum_{1\le i_1< \cdots<i_r<m}
          F(\nu(\mu_{p_1}\Box\cdots\Box\mu_{p_{i_1}})\Box  \cdots\Box \nu(\mu_{p_{i_r+1}}\Box\cdots\Box\mu_{p_m})          )
\end{align*}
and
\begin{align*}
 &\mathbbm{f}_{\nu\circ\mu} (p)\\
=&\sum_{p=p_1p_2\cdots p_m\atop p_i\in P} F((\nu\circ\mu)_{p_1}\Box (\nu\circ\mu)_{p_2}\Box\cdots\Box (\nu\circ\mu)_{p_m}) \\
=& \sum_{p=p_1p_2\cdots p_m\atop p_i\in P}\sum_{i=1,2,\cdots, m \atop p_i = p_{i1}p_{i2}\cdots p_{ir_i}}
F(\tilde{\nu}(\mu_{p_{11}}\Box\cdots\Box \mu_{p_{1r_1}})\Box \cdots\Box \tilde{\nu}(\mu_{p_{m1}}\Box\cdots\Box \mu_{p_{mr_m}})) \\
=& \sum_{p=p_1p_2\cdots p_m\atop p_i\in P}\sum_{i=1,2,\cdots, m \atop p_i = p_{i1}p_{i2}\cdots p_{ir_i}}
F({\nu}(\mu_{p_{11}}\Box\cdots\Box \mu_{p_{1r_1}})\Box \cdots\Box {\nu}(\mu_{p_{m1}}\Box\cdots\Box \mu_{p_{mr_m}}))
\end{align*}
for each $p\in P$, and therefore $\mathbbm{f}_{\nu\circ\mu} (p) = (\mathbbm{f}_\nu\circ \mathbbm{f}_\mu )(p)$ by comparing the summations.
\end{proof}

Combining the above lemmas, we have the following characterization.

\begin{thm}\label{thm-composion-map} Let $Q, Q', Q''$ be arbitrary quivers and $\mu\in \Omega(Q, Q')$ and $\nu\in \Omega(Q', Q'')$ trans-data. Then $ \mathbbm{f}_\nu\circ \mathbbm{f}_\mu = \mathbbm{f}_{\nu\circ\mu}$. In particular, $(\Omega(Q), \circ)$ is a monoid with identity $\mathbbm{1}$ and $\mathbbm{f}\colon \Omega(Q) \xrightarrow[]{\ \cong\ } \mathrm{Coalg}(kQ^c, kQ^c)$ is an isomorphism of monoids.
\end{thm}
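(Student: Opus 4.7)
The plan is to reduce the general composition identity $\mathbbm{f}_\nu\circ\mathbbm{f}_\mu=\mathbbm{f}_{\nu\circ\mu}$ to the special case already proved in the lemma preceding the theorem (where $\mu^0=0$), by using the two factorization lemmas involving the maps $F$, the ``bar'' lift $\bar\mu\in\Omega(Q,Q'^\tau)$, and the ``tilde'' lift $\tilde\nu\in\Omega(Q'^\tau,Q'')$. After the map identity is established, the monoid and isomorphism statements will drop out formally from Theorem \ref{thm-monomial-extension}.

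More concretely, starting from $\mathbbm{f}_\nu\circ\mathbbm{f}_\mu$, I would first apply the lemma $\mathbbm{f}_\mu=F\circ\mathbbm{f}_{\bar\mu}$ to rewrite the composite as $\mathbbm{f}_\nu\circ F\circ\mathbbm{f}_{\bar\mu}$, and then apply the lemma $\mathbbm{f}_\nu\circ F=\mathbbm{f}_{\tilde\nu}$ to obtain $\mathbbm{f}_{\tilde\nu}\circ\mathbbm{f}_{\bar\mu}$. By construction $\bar\mu^0=0$, so the special-case lemma applies and yields $\mathbbm{f}_{\tilde\nu}\circ\mathbbm{f}_{\bar\mu}=\mathbbm{f}_{\tilde\nu\circ\bar\mu}$. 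Thus the whole problem reduces to the combinatorial identity $\tilde\nu\circ\bar\mu=\nu\circ\mu$ at the level of trans-data.

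The main thing to verify is therefore this last identity, which I see as the main (though mild) obstacle. On the degree-zero part both sides equal $\nu_0\circ\mu_0$. For a nontrivial path $p\in P$, the composition formula \eqref{compositionformula} gives
\[(\tilde\nu\circ\bar\mu)_p=\sum_{p=p_1p_2\cdots p_n,\, p_i\in P}\widetilde{\tilde\nu}(\bar\mu_{p_1}\Box\cdots\Box\bar\mu_{p_n}),\]
where $\widetilde{\tilde\nu}\in\Omega((Q'^\tau)^\tau,Q'')$ is the lift of $\tilde\nu$. Because $\bar\mu^1_p=\mu_p$ (viewed via the identification $\p_{\mu_0(e_{s(p)}),\mu_0(e_{t(p)})}=k(Q'^\tau_1)_{\mu_0(e_{s(p)}),\mu_0(e_{t(p)})}$) and $\bar\mu^0_p=0$, each cotensor $\bar\mu_{p_1}\Box\cdots\Box\bar\mu_{p_n}$ is supported on genuine paths of $Q'^\tau$, on which $\widetilde{\tilde\nu}$ agrees with $\tilde\nu$ by the definition of the lift (the non-trivial values of $\widetilde{\tilde\nu}$ occur only on paths containing new arrows of $(Q'^\tau)^\tau$, none of which are present here). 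Hence each summand equals $\tilde\nu(\mu_{p_1}\Box\cdots\Box\mu_{p_n})$, and comparing with the definition of $\nu\circ\mu$ gives $(\tilde\nu\circ\bar\mu)_p=(\nu\circ\mu)_p$, as required.

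Finally, the monoid structure on $(\Omega(Q),\circ)$ and the isomorphism claim are formal consequences: $\mathbbm{1}_Q$ corresponds to $\id_{kQ^c}$ under the bijection $\mathbbm{f}$ of Theorem \ref{thm-monomial-extension}, so $\mathbbm{1}\circ\mu=\mu=\mu\circ\mathbbm{1}$ and associativity are transported from the associative monoid $(\mathrm{Coalg}(kQ^c,kQ^c),\circ)$ through the bijection. Since $\mathbbm{f}$ is a bijection compatible with composition by the first part of the theorem, it is an isomorphism of monoids.
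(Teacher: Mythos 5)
Your proposal is correct and follows essentially the same route as the paper: the paper's proof is exactly the commutative diagram $\mathbbm{f}_\nu\circ F\circ\mathbbm{f}_{\bar\mu}$ obtained from the two factorization lemmas, combined with the special case $\bar\mu^0=0$ and the identity $\nu\circ\mu=\tilde\nu\circ\bar\mu$. The only difference is that the paper merely asserts this last identity, whereas you verify it; your verification (the cotensors $\bar\mu_{p_1}\Box\cdots\Box\bar\mu_{p_n}$ are supported on genuine paths of $Q'^\tau$, where $\widetilde{\tilde\nu}$ agrees with $\tilde\nu$) is a correct and welcome addition.
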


\begin{proof}  The equality follows easily from the commutative diagram
 \[\xymatrix{
 kQ^c \ar @{->}[rr]_{\mathbbm{f}_{\bar\mu}} \ar @{-->}@/^1pc/[rrrr]^{\mathbbm{f}_\mu} &
& k(Q'^\tau)^c  \ar @{->}[rr]|{F}  \ar @{-->}@/_1pc/[rrrr]_{\mathbbm{f}_{\tilde\nu}} &
& kQ'^c \ar @{->}[rr]^{\mathbbm{f}_\nu}  & & kQ''^c},
\]
note that here we use the fact $\nu\circ \mu = \tilde\nu\circ \bar\mu$. The rest part is easy.
\end{proof}

\begin{rem} Since the composition of coalgebra morphisms obeys the associative law, so is the composition of trans-data. Explicitly, let $Q, Q', Q'', Q'''$ be any given quivers and $\mu\in \Omega(Q, Q')$, $\nu\in \Omega(Q', Q'')$ and $\lambda\in \Omega(Q'', Q''')$ be trans-data. Then $ \lambda\circ(\nu\circ\mu) = (\lambda\circ\nu)\circ\mu$. This can also be checked directly from the definition.
\end{rem}

\section{The automorphism group of a path coalgebra}
Homomorphisms of path coalgebras are characterized in terms of trans-data in last section.
In the rest part of this paper, we will restrict our interest to the automorphism group. We begin with the study of
certain subgroups, which we suppose to be helpful in understanding the structure of the whole group.

\subsection{A tower of normal subgroups of $\aut(kQ^c)$}
Recall that for any coalgebra $C$, we denote by $\aut(C)$ the automorphism group of $C$ and set $\aut_0(C)=\{\sigma\in\aut(C)\mid \sigma|_{C_{(0)}}=\id_{C_{(0)}}\}$, where $C_{(0)}$ is the coradical of $C$.

Let $Q$ be an arbitrary quiver. Set $\Omega^*(Q)$ to be the group of multiplicative invertible elements in $\Omega(Q)$, and hence $\Omega^*(Q)\cong\aut(kQ^c)$ under $\f$ and $\mbt$. By Proposition \ref{prop-coalgmap-to-datum}, $$\Omega^*(Q)=\{(\mu_0, \mu_\splus)\mid \mu_0\in \aut(kQ_0), \mu^1|_{kQ_1}\in GL_k(kQ_1)\}.$$ Notice that each $f\in \aut(kQ^c)$ induces a permutation of $Q_0$. We may consider the subgroup $\aut_0(kQ^c)=\{\sigma\in\aut(kQ^c)\mid \sigma(e_i) = e_i\ \forall i\in Q_0\}$. Denote by $\Omega^*_0(Q)$ the subgroup of $\Omega^*(Q)$ corresponding to $\aut_0(C)$. Obviously
 $\Omega_0^*(Q) = \{(\mu_0, \mu_\splus)\in \Omega^*(Q)\mid \mu_0=\id_{kQ_0}\}$.

By an \textbf{automorphism of a quiver} $Q$, we mean a permutation $\sigma$ of $Q_0$ such that $(Q_1)_{s,t}$ and $(Q_1)_{\sigma_s,\sigma_t}$ have the same cardinality for any $s, t\in Q_0$. Denote by $\aut(Q)$ the group of automorphisms of $Q$.

Recall a classical result which says that the automorphism group of a finite dimensional associative algebra is a finite dimensional linear algebraic group. Thus if $Q$ is finite acyclic, then $kQ$ is finite dimensional and hence $\aut(kQ^c)$ is also a finite dimensional linear algebraic group, comparing with Lemma \ref{lem-coalg-to-alg} below. The following result is easy.

\begin{prop}\label{prop-identity-cpt-aut-coalg} Let $Q$ be any quiver and $C=kQ^c$. Then $\aut_0(C)$ is a normal subgroup of $\aut(C)$ and the quotient group $\aut(C)/\aut_0(C) \cong \aut(Q)$. If in addition, $Q$ is finite acyclic, then $\aut_0(C)$ is the identity component of $\aut(C)$.
\end{prop}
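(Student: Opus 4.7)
The plan is to construct a natural surjective group homomorphism $\pi\colon\aut(C)\to\aut(Q)$ with kernel exactly $\aut_0(C)$. Given $\sigma\in\aut(C)$, Lemma \ref{corad} together with the Taft--Wilson theorem shows that $\sigma$ permutes the group-like set $G(C)=\{e_i\mid i\in Q_0\}$, inducing a permutation $\bar\sigma$ of $Q_0$. Since $\sigma$ restricts to isomorphisms $\p_{s,t}\xrightarrow{\sim}\p_{\bar\sigma(s),\bar\sigma(t)}$, and the Taft--Wilson decomposition $\p_{s,t}=k(e_s-e_t)\oplus k(Q_1)_{s,t}$ (with the first summand absent when $s=t$) is preserved up to the scalar summand, one forces $|(Q_1)_{s,t}|=|(Q_1)_{\bar\sigma(s),\bar\sigma(t)}|$. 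Thus $\bar\sigma\in\aut(Q)$, and $\pi\colon\sigma\mapsto\bar\sigma$ is clearly a group homomorphism with $\ker\pi=\aut_0(C)$; in particular $\aut_0(C)\lhd\aut(C)$.

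For surjectivity, given $\tau\in\aut(Q)$ I choose arbitrary bijections $\rho_{s,t}\colon(Q_1)_{s,t}\xrightarrow{\sim}(Q_1)_{\tau(s),\tau(t)}$ and define a trans-datum $\mu\in\Omega(Q)$ by $\mu_0(e_i)=e_{\tau(i)}$, $\mu_\alpha=\rho_{s(\alpha),t(\alpha)}(\alpha)$ for $\alpha\in Q_1$, and $\mu_p=0$ for $p\in P$ with $l(p)\ge 2$. In formula (\ref{endoformulae}) only the full decomposition of $p$ into arrows contributes, so $\f_\mu$ simply ``renames'' each path by $\tau$ via the chosen $\rho$; the analogous datum built from $\tau^{-1}$ and the $\rho_{s,t}^{-1}$ provides a two-sided inverse via Theorem \ref{thm-composion-map}. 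Hence $\f_\mu\in\aut(C)$ with $\pi(\f_\mu)=\tau$, and the first isomorphism theorem yields $\aut(C)/\aut_0(C)\cong\aut(Q)$.

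For the last assertion, assume $Q$ is finite acyclic, so $C$ is finite dimensional and $\aut(C)$ is a linear algebraic group; since $\aut(Q)$ is finite, $\aut_0(C)$ has finite index and is therefore both open and closed, i.e.\ a union of connected components. It suffices to show $\aut_0(C)$ is irreducible, which I do through the trans-datum parametrization. The affine space $\Omega_0(Q)=\{\mu\mid\mu_0=\sid\}\cong\prod_{p\in P}\p_{s(p),t(p)}$ is finite dimensional, and by Proposition \ref{prop-coalgmap-to-datum} the invertibility condition reduces to non-vanishing of the determinants of the block maps $\mu^1|_{k(Q_1)_{s,t}}$, cutting out an open subvariety $\Omega_0^*(Q)\subseteq\Omega_0(Q)$. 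Formula (\ref{endoformulae}) exhibits $\f$ as a polynomial map, while the recursive definition of $\mbt$ in the proof of Theorem \ref{thm-monomial-extension} also produces a polynomial map, so $\f$ and $\mbt$ are mutually inverse morphisms of varieties between $\Omega_0^*(Q)$ and $\aut_0(C)$. A nonempty open subset of affine space is irreducible, whence $\aut_0(C)$ is irreducible, and, containing the identity among a finite collection of components, must coincide with the identity component. The only slightly delicate step is verifying that $\f$ and $\mbt$ are indeed algebraic (not just set-theoretic) mutual inverses, which ultimately comes from the polynomial nature of the inductive formulas in Theorem \ref{thm-monomial-extension}.
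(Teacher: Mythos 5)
Your proof is correct. The first half coincides with the paper's argument: both define the map to $\aut(Q)$ by restriction to the coradical, identify the kernel as $\aut_0(C)$, and get surjectivity by extending a quiver automorphism to a coalgebra automorphism (you make the extension explicit as a trans-datum with $\mu_p=0$ for $l(p)\ge 2$, and you justify more carefully, via the Taft--Wilson description of $\p_{s,t}$, why the induced permutation of $Q_0$ preserves arrow multiplicities --- a point the paper glosses over). Where you genuinely diverge is the connectedness of $\aut_0(C)$. The paper deforms a given trans-datum through a family $\mu_t$ ``parameterized by $t\in[0,1]$'' down to the subgroup $\prod_{s,t}GL((kQ_1)_{s,t})$, whose connectedness is then invoked; this reads as a real homotopy and needs reinterpretation as a morphism from $\mathbb{A}^1$ to make sense over an arbitrary field. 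You instead observe that $\mbt$ identifies $\aut_0(C)$ with $\Omega_0^*(Q)$, a principal open subset of the affine space $\prod_{p\in P}\p_{s(p),t(p)}$ cut out by the block determinants of $\mu^1|_{k(Q_1)_{s,t}}$, and that $\f$ is a polynomial map by formula (\ref{endoformulae}); hence $\aut_0(C)$ is the image of an irreducible variety under a morphism, so irreducible, and being closed of finite index it is the identity component. (Strictly you only need $\f$ to be a morphism --- the image of an irreducible set under a continuous map is irreducible --- so the ``delicate'' verification that $\mbt$ is also algebraic can be dispensed with.) Your route is cleaner over a general field and has the side benefit of exhibiting $\aut_0(C)$ as an open subvariety of affine space, which is exactly what underlies the dimension count in Proposition \ref{prop-dim-auto-coalg}; the paper's route is shorter once one accepts the deformation argument.
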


\begin{proof} Consider the restriction map $\mathrm{Res}_0 \colon \aut(C)\longrightarrow \aut(C_0)$. By definition, any automorphism of $Q$ extends to an automorphism of $C$ and hence $\mathrm{Res}_0$ is surjective. Clearly $\Ker(\mathrm{Res}_0) =\aut_0(C)$ and it follows that $\aut(C)/\aut_0(C) \cong \aut(Q) $.

Now assume that $Q$ is finite acyclic. To show the connectedness of $\aut_0(C)$ one uses the fact that $GL(V)$ is connected for any finite dimensional vector space $V$. Let $\sigma\in \aut_0(C)$ and $\mu=\mathbbm{t}_\sigma$ the corresponding trans-datum. Now we define a family of trans-data $\mu_t = (\id, (\mu_t)_\splus)$  parameterized by $t\in[0,1]$, where $\mu_t$ is given by setting $(\mu_t)_\alpha= t\mu^0_{\alpha} + \mu^1_{\alpha}$ for each $\alpha\in Q_1$, and $(\mu_t)_p=t\mu_p$ for any $p\in Q_{\ge2}$. Thus $\mu$ transforms continuously to a datum $(\id, \nu_\splus)$ such that $\nu_{\alpha}\in kQ_1$ for $\alpha\in Q_1$ and $\nu_p = 0$ for  $p\in Q_{\ge2}$. Such data form a subgroup of $\aut(C)$ which is isomorphic to $\prod\limits_{s,t\in Q_0}GL((kQ_1)_{s,t})$. The connectedness of $\aut_0(C)$ follows.

If $Q$ is finite acyclic, then $\aut(C)$ is a finite dimensional linear group. We know that $\aut_0(C)$ is a closed subgroup, for "fixing points" is a closed condition. Thus $\aut_0(C)$ is a connected closed normal subgroup, and hence the identity component of $\aut(C)$, which completes the proof.
\end{proof}

For any coalgebra $D$ and each integer $n\ge 1$, we denote by $\aut_n(D)$ the subgroup of $\aut(D)$ consisting of automorphisms which act on $D_{(n)}$ trivially. Clearly, for a quiver $Q$, $\aut_n(kQ^c)$ corresponds to \[\Omega_n^*(Q)=\{(\id_{Q_0}, \mu_\splus)\in \Omega^*(Q)\mid \mu_\alpha= \alpha, \forall \alpha\in Q_1, \mu_p=0, \forall p\in Q, 2\le l(p)\le n\},\]
here $l(p)$ denotes the length of $p$. For consistence of notations, we also set \[\Omega_{1/2}^*(Q)=\{(\id_{Q_0},\mu_\splus)\in \Omega_0^*(Q)\mid \mu^1_\alpha = \alpha, \forall \alpha\in Q_1\}.\]
Now we have a more generalized result of Propostion \ref{prop-identity-cpt-aut-coalg}.

\begin{prop}\label{prop-normal-subgp-chain} Let $Q$ be a quiver and set $C=kQ^c$. Then we have a tower of normal subgroups of $\Omega^*(Q)$ \[\Omega^*(Q)\vartriangleright\Omega_0^*(Q)\vartriangleright\Omega_{1/2}^*(Q)\vartriangleright\Omega_1^*(Q)\vartriangleright\Omega_2^*(Q)
\vartriangleright\cdots,\]
and $\Omega^*(Q)/\Omega_{n}^*(Q)\cong \aut(C_{(n)})$  for each $n\ge 1$ and $\Omega^*(Q)/\Omega_{1/2}^*(Q)\cong \graut(C_{(1)})$. Moreover, we have $\Omega_0^*(Q)/\Omega_{n}^*(Q)\cong \aut_0(C_{(n)})$ and $\Omega_0^*(Q)/\Omega_{1/2}^*(Q)\cong \graut_0(C_{(1)})$, here the group $\graut(C_{(1)})$ consists of graded automorphisms of $C_{(1)}$ and $\graut_0(C_{(1)})$ the subgroup of graded automorphisms fixing $C_0$.
\end{prop}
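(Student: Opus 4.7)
The plan is to realize each quotient as the image of a natural group homomorphism out of $\aut(C)$ and to identify the kernel through the trans-datum dictionary $\mathbbm{t}\colon \aut(C)\xrightarrow{\sim}\Omega^*(Q)$ of Section~2. For each $n\ge 1$ I would introduce the restriction map $\Res_n\colon \aut(C)\to \aut(C_{(n)})$, $\sigma\mapsto\sigma|_{C_{(n)}}$; this is well defined because Lemma~\ref{corad} forces any coalgebra morphism to preserve the coradical filtration, and applying this to both $\sigma$ and $\sigma^{-1}$ shows $\sigma|_{C_{(n)}}$ is bijective. Its kernel is by definition $\aut_n(C)$, and using Proposition~\ref{endoformulae2} one checks directly that under $\mathbbm{t}$ this matches $\Omega^*_n(Q)$: if $\mu_0=\id$, $\mu_\alpha=\alpha$ and $\mu_p=0$ for $2\le l(p)\le n$, expanding $\mathbbm{f}_\mu(p)$ shows it fixes every path of length $\le n$, while the uniqueness clause in Theorem~\ref{thm-monomial-extension} gives the converse.

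The main step is the surjectivity of $\Res_n$. Given $\tau\in \aut(C_{(n)})$, Proposition~\ref{prop-coalg-mor-extension} furnishes an extension $\sigma\in\operatorname{Coalg}(C,C)$; to promote $\sigma$ to an automorphism I would read off $\mu=\mathbbm{t}_\sigma$ and invoke Proposition~\ref{prop-coalgmap-to-datum}, which says $\mu\in \Omega^*(Q)$ iff $\mu_0\in\aut(kQ_0)$ and $\mu^1|_{kQ_1}$ is an invertible linear endomorphism of $kQ_1$. Both hold because $\tau$ is already an automorphism of $C_{(n)}$: $\mu_0=\tau|_{kQ_0}$ is invertible, and for $n\ge 1$ the map $\mu^1|_{kQ_1}$ coincides with the automorphism of $kQ_1\cong C_{(1)}/C_{(0)}$ induced by $\tau$ (which is well defined and bijective since $\tau$ preserves $C_{(0)}$). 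The first isomorphism theorem then yields $\Omega^*(Q)/\Omega^*_n(Q)\cong \aut(C_{(n)})$. The $\aut_0$-variant follows at once: when $\tau\in\aut_0(C_{(n)})$ one has $\mu_0=\id$, so the extension $\sigma$ lies in $\aut_0(C)$; hence $\Res_n$ restricts to a surjection $\aut_0(C)\twoheadrightarrow \aut_0(C_{(n)})$ with kernel $\aut_n(C)\subseteq \aut_0(C)$.

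For the intermediate $1/2$-stage I would use a ``graded-part'' map $\mathrm{gr}\colon \aut(C)\to \graut(C_{(1)})$: keep $\sigma|_{kQ_0}$ and replace each $\sigma(\alpha)$ by its $kQ_1$-component in the Taft--Wilson decomposition $\p_{\pi(s),\pi(t)}=k(e_{\pi(s)}-e_{\pi(t)})\oplus k(Q_1)_{\pi(s),\pi(t)}$. Writing $\sigma(\alpha)=c_\alpha(e_{\pi(s)}-e_{\pi(t)})+\beta_\alpha$, a short computation shows that in $(\sigma\sigma')(\alpha)$ the scalar pieces contribute only to the $kQ_0$-part, so $\mathrm{gr}$ is multiplicative; surjectivity is clear because any graded automorphism of $C_{(1)}$ extends to a graded automorphism of $C$ by cotensor-multiplicative action on longer paths, and its kernel is exactly $\Omega^*_{1/2}(Q)$. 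Intersecting with $\aut_0(C)$ gives the $\graut_0(C_{(1)})$ assertion. All four subgroups in the tower are therefore kernels of homomorphisms defined on $\Omega^*(Q)$ or on $\Omega^*_0(Q)$, so normality is automatic, and the descending chain $\Omega^*_0\supseteq\Omega^*_{1/2}\supseteq\Omega^*_1\supseteq\Omega^*_2\supseteq\cdots$ is immediate from the explicit trans-datum descriptions. The principal obstacle throughout is the lift in the second step of an abstract coalgebra-morphism extension to a bona fide automorphism; the trans-datum viewpoint resolves this cleanly by reducing the test to invertibility of the two concrete linear maps $\mu_0$ and $\mu^1|_{kQ_1}$, both of which are inherited from $\tau$.
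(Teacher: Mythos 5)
Your proposal is correct and follows essentially the same route as the paper: both identify each quotient as the image of the restriction map $\Res_n$ (with kernel $\Omega^*_n(Q)$) and of the graded-part map $\Phi$ onto $\graut(C_{(1)})$ (with kernel $\Omega^*_{1/2}(Q)$), with surjectivity supplied by the extension results of Section 2; your unfolding of Theorem \ref{thm-large-auto-ext} into Proposition \ref{prop-coalg-mor-extension} plus the invertibility criterion of Proposition \ref{prop-coalgmap-to-datum} is just an inlining of how that theorem is proved in the paper.
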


\begin{proof} By Lemma \ref{corad}, each automorphism of $C$ remains to be an automorphism when restricted to each $C_{(n)}$. Thus the restriction map gives a natural group homomorphism $\mathrm{Res}_n\colon \Omega^*(Q)\longrightarrow \Omega_n^*(Q)$ for each $n\ge 1$. Again Theorem $\ref{thm-large-auto-ext}$ tells us that the map $\mathrm{Res}_n$ is surjective, and clearly $\Ker(\mathrm{Res}_n) = \Omega_{n}^*(Q)$, thus $\Omega^*(Q)/\Omega_{n}^*(Q)\cong \aut(C_{(n)})$.

 We can associate each trans-datum $\mu\in \Omega^*(Q)$ a linear endomorphism \[\Phi_\mu \colon C_{(1)}\longrightarrow C_{(1)},\  e_i\mapsto \mu_0(e_i),\ \alpha\mapsto \mu^1_\alpha.\]  It is direct to show that $ \Phi_\mu $ is a graded coalgebra automorphism of $C_{(1)}$. In fact, $ \Phi_\mu $ is given by the restriction of the automorphism of $C$ which corresponds to the trans-datum $(\mu_0, \mu^1)$.  Thus we get a map $\Phi\colon \Omega^*(Q)\longrightarrow \graut(C_{(1)})$, which is easily shown to be a group homomorphism. Again $\Phi$ is an epimorphism by Theorem $\ref{thm-large-auto-ext}$ and $\Ker(\Phi)=\Omega_{1/2}^*(Q)$, thus $\Omega^*(Q)/\Omega_{1/2}^*(Q)\cong \graut(C_{(1)})$.

For the rest part we use the same argument as in the proof of Proposition \ref{prop-identity-cpt-aut-coalg}.
\end{proof}

\subsection{Factor groups and solvability of $\aut(kQ^c)$}
In the sequel, the factor groups of the above filtration will be discussed in more detail. The quiver $Q$ is assumed to be finite, so the factor groups can be characterized by the dimension.

The following easy lemmas are useful in studying the solvability of the automorphism group of path coalgebras.
Recall that an algebraic group is solvable if and only if it is solvable as an abstract group.

\begin{lem}\label{lem-auto-coalg-factorgp} Assume $Q$ is finite. Then $\Omega_{1/2}^*(Q)/\Omega_1^*(Q)\cong k^{d_1}$ and $\Omega_{n-1}^*(Q)/\Omega_{n}^*(Q)\cong k^{d_n}$ for any $n\ge 2$, where $d_1 =\sum\limits_{s, t\in Q_0\atop s\ne t} |(Q_1)_{s,t}|$ and $d_n = \sum\limits_{s,t\in Q_0}|(Q_n)_{s,t}|\cdot |(Q^\tau_1)_{s,t}|$, and each $k^{d_i}$ denotes the additive group of a $d_i$-dimensional $k$-vector space.
\end{lem}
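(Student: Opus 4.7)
The plan is to exhibit, for $n\ge 2$, a surjective group homomorphism $\phi_n\colon \Omega_{n-1}^*(Q)\to k^{d_n}$ with kernel $\Omega_n^*(Q)$, and analogously $\phi_{1/2}\colon \Omega_{1/2}^*(Q)\to k^{d_1}$ with kernel $\Omega_1^*(Q)$. Using the identification $\p_{s,t}\cong k(Q^\tau_1)_{s,t}$ from Section 1.4, set
\[\phi_n(\mu) = (\mu_p)_{p\in Q_n}\in\prod_{p\in Q_n}\p_{s(p),t(p)}\cong k^{d_n}, \qquad \phi_{1/2}(\mu) = (\cmu_\alpha)_{\alpha\in Q_1,\, s(\alpha)\ne t(\alpha)}\in k^{d_1}.\]
The kernel descriptions are immediate from the definitions of the subgroups $\Omega_n^*(Q)$ and $\Omega_1^*(Q)$. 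For surjectivity, given any target tuple, I directly construct a trans-datum in $\Omega_{n-1}^*(Q)$ (resp.\ $\Omega_{1/2}^*(Q)$) by specifying the nontrivial $\mu_p$'s on $Q_n$ (resp.\ the scalars $\cmu_\alpha$ on $Q_1$) and setting all remaining values to the defaults forced by membership in the subgroup; invertibility follows from Proposition \ref{prop-coalgmap-to-datum}, since $\mu_0$ and $\mu^1|_{kQ_1}$ remain the identity.

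The main step is to verify additivity, which uses the composition formula (\ref{compositionformula}). For $\phi_n$ with $n\ge 2$, fix $\mu,\nu\in\Omega_{n-1}^*(Q)$ and $p=\alpha_1\alpha_2\cdots\alpha_n\in Q_n$. Because $\mu_q=0$ for $2\le l(q)\le n-1$ and $\mu_\alpha=\alpha$ on arrows, the only surviving decompositions $p=p_1\cdots p_r$ are $r=1$ (contributing $\tilde\nu(\mu_p)$) and $r=n$ (contributing $\tilde\nu(\alpha_1\Box\cdots\Box\alpha_n)$). Using the explicit formulas for $\tilde\nu$ together with $\nu_0=\id$ and $\nu_\alpha=\alpha$, these contributions evaluate to $\mu_p$ and $\nu_p$ respectively (as primitive elements in $\p_{s(p),t(p)}$), whence $(\nu\circ\mu)_p=\mu_p+\nu_p$. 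The argument for $\phi_{1/2}$ is simpler: since an arrow admits only the trivial decomposition, $(\nu\circ\mu)_\alpha=\tilde\nu(\mu_\alpha)$, and expanding $\mu_\alpha = \cmu_\alpha e_{s(\alpha),t(\alpha)}+\alpha$ one finds that the scalar coefficient at $e_{s(\alpha)}-e_{t(\alpha)}$ is the sum of the corresponding scalars for $\mu$ and $\nu$.

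The main obstacle is bookkeeping: one must carefully use the identification $\p_{s,t}\cong k(Q^\tau_1)_{s,t}$ to interpret $\tilde\nu$ on cotensor expressions, and enumerate which decompositions in the composition formula survive the vanishing hypotheses $\mu_q=0$ for intermediate lengths. Once this is settled, the isomorphisms $\Omega_{1/2}^*(Q)/\Omega_1^*(Q)\cong k^{d_1}$ and $\Omega_{n-1}^*(Q)/\Omega_n^*(Q)\cong k^{d_n}$ follow from the first isomorphism theorem.
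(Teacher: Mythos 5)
Your proposal is correct and follows essentially the same route as the paper: the heart of both arguments is the observation that the vanishing conditions in $\Omega_{n-1}^*(Q)$ collapse the composition formula (\ref{compositionformula}) to $(\nu\circ\mu)_p=\mu_p+\nu_p$ for $l(p)=n$ (and to additivity of $\cmu_\alpha$ in the $\Omega_{1/2}^*$ case), after which one identifies the quotient with $\prod_{p}\p_{s(p),t(p)}$ and counts dimensions via $\p_{s,t}\cong k(Q_1^\tau)_{s,t}$. You merely package this more formally as a surjective homomorphism plus the first isomorphism theorem, which the paper leaves implicit.
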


\begin{proof}
The proof is given by direct calculation. Note that Formula (\ref{compositionformula}) will be quite simplified in this case. Let
$n\ge 2$ and $\mu, \nu$ be in $\Omega_{n-1}^*(Q)$.   Then $ \mu\circ\nu\in \Omega_{n-1}^*(Q)$ and
\[(\mu\circ\nu)_p = \sum_{p =p_1p_2\cdots p_m, p_i\in P}\tilde\mu(\nu_{p_1}\Box \nu_{p_2}\Box \cdots \Box\nu_{p_m}) = \mu_p + \nu_p\]
for all $p\in P$ with length $n$. Thus the quotient group $\Omega_{n-1}^*(Q)/\Omega_{n}^*(Q)$ is given by the additive group of some $k$-vector space. The dimension $d_n$ is computed by
\[d_n = \sum_{p\in Q, l(p)=n }\dim_k \p_{s(p),t(p)} = \sum_{s,t\in Q_0}|(Q_n)_{s,t}|\cdot |(Q_1^\tau)_{s,t}|.\]
The case $\Omega_{1/2}^*(Q)/\Omega_1^*(Q)$ is proved similarly.
\end{proof}
\begin{rem} In fact, for any quiver $Q$ we always have (abstract) group isomorphisms
\[\Omega_{1/2}^*(Q)/\Omega_1^*(Q)\cong \prod_{s,t\in Q_0}\Hom_k(k(Q_1)_{s,t}, \p_{s,t}),\]
and
\[\Omega_{n-1}^*(Q)/\Omega_{n}^*(Q)\cong \prod_{s,t\in Q_0}\Hom_k(k(Q_n)_{s,t}, \p_{s,t})\]
for all $n\ge 2$, where the right hand sides are both viewed as an additive group.
\end{rem}

\begin{lem}\label{lem-graut-c1} Suppose $Q$ is finite. Then $\graut_0(C_{(1)})\cong \prod_{s,t\in Q_0}GL(k, |(Q_1)_{s,t}|)$.
\end{lem}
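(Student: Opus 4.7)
The plan is to unpack what a graded coalgebra automorphism of $C_{(1)} = kQ_0 \oplus kQ_1$ fixing $C_0 = kQ_0$ must look like, and show it splits as a product of linear automorphisms indexed by pairs of vertices. First, I would note that any $\sigma \in \graut_0(C_{(1)})$ is the identity on $kQ_0$ and, being graded, restricts to some $k$-linear endomorphism $\sigma_1$ of $kQ_1$. So the assignment $\sigma \mapsto \sigma_1$ embeds $\graut_0(C_{(1)})$ into $GL_k(kQ_1)$, and I just need to identify the image.

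The key step is to use the coproduct to force a block decomposition. For each arrow $\alpha\in (Q_1)_{s,t}$ we have $\Delta(\alpha) = e_s \otimes \alpha + \alpha \otimes e_t$, so applying $(\sigma\otimes\sigma)\circ\Delta = \Delta\circ\sigma$ gives
\[
\Delta(\sigma_1(\alpha)) = e_s \otimes \sigma_1(\alpha) + \sigma_1(\alpha) \otimes e_t,
\]
that is, $\sigma_1(\alpha) \in \p_{s,t}$. Since $\sigma_1(\alpha)\in kQ_1$ by gradedness, the Taft--Wilson description of $\p_{s,t}$ (Lemma in Section~1.2) forces $\sigma_1(\alpha) \in k(Q_1)_{s,t}$. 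Hence $\sigma_1$ preserves each homogeneous component $k(Q_1)_{s,t}$, so it is an element of $\prod_{s,t\in Q_0} GL_k(k(Q_1)_{s,t}) \cong \prod_{s,t\in Q_0} GL(k,|(Q_1)_{s,t}|)$.

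Conversely, given any family $(\tau_{s,t})_{s,t\in Q_0}$ with $\tau_{s,t}\in GL_k(k(Q_1)_{s,t})$, I would define $\sigma$ to be the identity on $kQ_0$ and $\tau_{s,t}$ on each block $k(Q_1)_{s,t}$. A direct check on the generators $e_i$ and $\alpha \in (Q_1)_{s,t}$ shows that $\Delta\circ\sigma = (\sigma\otimes\sigma)\circ\Delta$ and $\varepsilon\circ\sigma = \varepsilon$, so $\sigma$ is a graded coalgebra automorphism of $C_{(1)}$ fixing $C_0$. This produces a two-sided inverse to $\sigma\mapsto(\sigma_1|_{k(Q_1)_{s,t}})_{s,t}$, and compatibility with composition is immediate, giving the desired group isomorphism.

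There is essentially no obstacle here: the content is just that the coradical structure splits the arrows into $(s,t)$-blocks and a graded coalgebra automorphism fixing the vertices must respect that splitting. The only mild care needed is in invoking the Taft--Wilson lemma to rule out any contribution of $e_s - e_t$ to $\sigma_1(\alpha)$, which is immediate from gradedness since $e_s - e_t$ lives in degree zero.
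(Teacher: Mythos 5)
Your argument is correct, and it is the standard computation the paper has in mind: the paper states this lemma without proof, and your use of gradedness plus the Taft--Wilson description of $\p_{s,t}$ (to kill the $e_s-e_t$ component and force $\sigma_1$ to preserve each block $k(Q_1)_{s,t}$) is exactly the intended verification. No gaps.
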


Combining Proposition \ref{prop-normal-subgp-chain}, Lemma \ref{lem-auto-coalg-factorgp} and Lemma \ref{lem-graut-c1}, we  can calculate the dimension of the automorphism group of path coalgebras.

\begin{prop}\label{prop-dim-auto-coalg} Assume $Q$ is finite. Then $\aut(kQ^c_{\le n})$ is a linear algebraic group with \[\dim(\aut(kQ^c_{\le n})) = \sum_{s,t\in Q_0} |(Q_1^\tau)_{s,t}|\cdot |(P_{\le n})_{s,t}|\] for any $n\ge 1$, here $(P_{\le n})_{s,t} = \{p\in Q\mid s(p)= s ,t(p)=p, 1\le l(p)\le n\}$. If moreover, $Q$ is finite acyclic, then
 \[\dim(\aut(kQ^c))= \sum_{s,t\in Q_0} (|(Q_1)_{s,t}|+1)\cdot |P_{s,t}|.\]
\end{prop}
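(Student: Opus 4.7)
The plan is to specialize the tower of normal subgroups of Proposition \ref{prop-normal-subgp-chain} to the finite-dimensional truncated coalgebra $C=kQ^c_{\le n}$ and add the dimensions of the successive factor groups, using Lemmas \ref{lem-auto-coalg-factorgp} and \ref{lem-graut-c1}. Since $Q$ is finite, $kQ^c_{\le n}$ is finite-dimensional, and hence $\aut(kQ^c_{\le n})$ is a linear algebraic group via the anti-isomorphism with $\aut((kQ^c_{\le n})^*)$, the automorphism group of a finite-dimensional algebra being classically affine algebraic.

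Quotienting the tower of Proposition \ref{prop-normal-subgp-chain} by $\Omega_n^*(Q)$ and using $\Omega^*(Q)/\Omega_n^*(Q)\cong \aut(kQ^c_{\le n})$, one obtains a finite filtration of $\aut(kQ^c_{\le n})$ whose successive factor groups are $\aut(Q)$, $\graut_0(C_{(1)})$, $\Omega^*_{1/2}(Q)/\Omega^*_1(Q)$, and $\Omega^*_{i-1}(Q)/\Omega^*_i(Q)$ for $i=2,\dots,n$. By Lemma \ref{lem-graut-c1}, $\graut_0(C_{(1)})\cong\prod_{s,t\in Q_0}GL(k,|(Q_1)_{s,t}|)$ has dimension $\sum_{s,t}|(Q_1)_{s,t}|^2$; by Lemma \ref{lem-auto-coalg-factorgp} the remaining quotients are vector groups of dimensions $d_1=\sum_{s\ne t}|(Q_1)_{s,t}|$ and $d_i=\sum_{s,t}|(Q_i)_{s,t}|\cdot|(Q_1^\tau)_{s,t}|$ for $2\le i\le n$, while $\aut(Q)$ is finite. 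Since an algebraic group built as an iterated extension of affine algebraic factors has dimension equal to the sum of the dimensions of the factors, we obtain
\[
\dim \aut(kQ^c_{\le n})=\sum_{s,t}|(Q_1)_{s,t}|^2+\sum_{s\ne t}|(Q_1)_{s,t}|+\sum_{i=2}^n\sum_{s,t}|(Q_i)_{s,t}|\cdot|(Q_1^\tau)_{s,t}|.
\]

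What remains is to collapse this expression into the advertised compact form, and this is the only genuinely computational step. Splitting cases via $|(Q_1^\tau)_{s,s}|=|(Q_1)_{s,s}|$ and $|(Q_1^\tau)_{s,t}|=|(Q_1)_{s,t}|+1$ for $s\ne t$, a short direct calculation shows that the first two sums combine into $\sum_{s,t}|(Q_1^\tau)_{s,t}|\cdot|(Q_1)_{s,t}|$, which is exactly the $i=1$ contribution of $\sum_{i=1}^n\sum_{s,t}|(Q_1^\tau)_{s,t}|\cdot|(Q_i)_{s,t}|$; swapping the order of summation then gives $\sum_{s,t}|(Q_1^\tau)_{s,t}|\cdot|(P_{\le n})_{s,t}|$. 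For the acyclic case, $Q$ finite acyclic forces $kQ^c$ to be finite-dimensional with $kQ^c=kQ^c_{\le N}$ for $N$ the length of a longest path, so the truncated formula applies with $P_{\le N}=P$; acyclicity further gives $|(Q_1)_{s,s}|=|P_{s,s}|=0$, so the diagonal terms vanish, and substituting $|(Q_1^\tau)_{s,t}|=|(Q_1)_{s,t}|+1$ for $s\ne t$ yields the stated formula. The main conceptual content behind the regrouping is that the semisimple piece $\graut_0(C_{(1)})$ (the linear action along arrows) and the first unipotent piece $\Omega^*_{1/2}/\Omega^*_1$ (the $(e_s-e_t)$-component) together realise the full ``$i=1$'' primitive contribution $\dim\p_{s,t}\cdot|(Q_1)_{s,t}|$.
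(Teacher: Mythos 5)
Your proposal is correct and follows exactly the route the paper intends: the paper gives no separate proof but explicitly says the proposition follows by combining Proposition \ref{prop-normal-subgp-chain}, Lemma \ref{lem-auto-coalg-factorgp} and Lemma \ref{lem-graut-c1}, which is precisely your tower-plus-summation argument, and your regrouping of $\dim\graut_0(C_{(1)})+d_1$ into the $i=1$ term $\sum_{s,t}|(Q_1^\tau)_{s,t}|\cdot|(Q_1)_{s,t}|$ checks out. The acyclic specialization is also handled as the paper would have it.
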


  Recall that by a \textbf{Schurian quiver} we mean a quiver $Q$ such that $|(Q_1)_{s,t}|\le 1$ for any $s,t\in Q_0$. We end this section with the following characterization.

\begin{thm}\label{thm-schurian-coalg} Let $Q$ be a finite quiver. Then $\aut_0(kQ^c_{\le n})$ is solvable for some $n\ge 2$ if and only if $Q$ is a Schurian quiver, if and only if $\aut_0(kQ^c_{\le n})$ is solvable for all $n\ge 2$; $\aut(kQ^c_{\le n})$ is solvable for some $n\ge 2$ if and only if $Q$ is a Schurian quiver and $\aut(Q)$ is resolvable, if and only if  $\aut(kQ^c_{\le n})$ is solvable for all $n\ge 2$.
\end{thm}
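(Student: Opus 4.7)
The plan is to read solvability directly off the tower of normal subgroups built in Proposition \ref{prop-normal-subgp-chain} together with the explicit descriptions of consecutive factor groups supplied by Lemmas \ref{lem-auto-coalg-factorgp} and \ref{lem-graut-c1}. Truncating that tower at level $n$ expresses $\aut_0(kQ^c_{\le n}) \cong \Omega_0^*(Q)/\Omega_n^*(Q)$ as an iterated extension
\[
\Omega_0^*(Q)/\Omega_n^*(Q) \;\vartriangleright\; \Omega_{1/2}^*(Q)/\Omega_n^*(Q) \;\vartriangleright\; \Omega_1^*(Q)/\Omega_n^*(Q) \;\vartriangleright\; \cdots \;\vartriangleright\; 1,
\]
so the whole group is solvable if and only if each consecutive quotient $\Omega_{1/2}^*/\Omega_1^*$ and $\Omega_{i-1}^*/\Omega_i^*$ ($2\le i\le n$), together with the top quotient $\Omega_0^*/\Omega_{1/2}^*$, is solvable.

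By Lemma \ref{lem-auto-coalg-factorgp} every one of the lower factors is the additive group of a finite-dimensional $k$-vector space, hence abelian; thus the only obstruction to solvability is
\[
\Omega_0^*(Q)/\Omega_{1/2}^*(Q) \;\cong\; \graut_0(C_{(1)}) \;\cong\; \prod_{s,t\in Q_0} GL\bigl(k,|(Q_1)_{s,t}|\bigr)
\]
supplied by Lemma \ref{lem-graut-c1}. Since $GL(k,m)$ is solvable precisely when $m\le 1$, this product is solvable if and only if $|(Q_1)_{s,t}|\le 1$ for every pair $s,t\in Q_0$, which is exactly the Schurian condition. Crucially, this obstruction sits at the top of the tower and is the same for every $n\ge 2$, which yields the three-way equivalence (for some $n\ge 2$ / Schurian / for all $n\ge 2$) in the first part of the theorem.

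For the second part I would invoke the short exact sequence
\[
1 \longrightarrow \aut_0(kQ^c_{\le n}) \longrightarrow \aut(kQ^c_{\le n}) \longrightarrow \aut(Q) \longrightarrow 1
\]
extracted from Proposition \ref{prop-identity-cpt-aut-coalg} (applied to the truncation). Solvability is closed under extensions, subgroups and quotients, so $\aut(kQ^c_{\le n})$ is solvable iff both the kernel and the quotient are; combining this with the first part gives the stated characterization, again independently of $n\ge 2$.

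The only non-routine point I foresee is the claim that $GL(k,m)$ is not solvable for $m\ge 2$. The paper treats $\aut(kQ^c_{\le n})$ as a linear algebraic group (cf.\ Proposition \ref{prop-dim-auto-coalg}), so "solvable" is meant in the algebraic-group sense, where non-solvability of $GL_m$ for $m\ge 2$ is classical; if instead abstract solvability were intended one would additionally need $k$ infinite enough that $SL_m(k)$ is perfect and non-abelian. Modulo this standard input, the argument is a bookkeeping exercise on the filtration already in hand and requires no further computation.
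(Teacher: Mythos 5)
Your argument is essentially identical to the paper's: both reduce solvability of $\aut_0(kQ^c_{\le n})$ through the tower of Proposition \ref{prop-normal-subgp-chain}, observe via Lemma \ref{lem-auto-coalg-factorgp} that all lower factors are abelian so that the only obstruction is $\graut_0(C_{(1)})\cong\prod_{s,t}GL(k,|(Q_1)_{s,t}|)$ from Lemma \ref{lem-graut-c1}, and then handle the full group via the extension by $\aut(Q)$ from Proposition \ref{prop-identity-cpt-aut-coalg}. Your side remark about abstract versus algebraic-group solvability of $GL_m(k)$ for small fields is a fair caveat that the paper glosses over, but it does not change the substance of the proof.
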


\begin{proof} Given $n\ge 2$, by Proposition \ref{prop-normal-subgp-chain} and Lemma \ref{lem-auto-coalg-factorgp} we have  $\aut_0(kQ^c_{\le n})$ is solvable if and only if $\graut(C_{(1)})$ is solvable. Note that the general linear group $GL(k,d)$ is solvable if and only if $d=1$. Thus by Lemma \ref{lem-graut-c1}, $\graut(C_{(1)})$ is solvable if and only if for each pair $(s,t)$ in $Q_0$, $|(Q_1)_{s,t}|\le 1$, if and only if $Q$ is Schurian.

Now by Proposition \ref{prop-identity-cpt-aut-coalg}, $\aut(Q)$ is solvable  if and only $\aut_0(kQ^c)$ and $\aut(Q)$ are both solvable, if and only if $Q$ is Schurian with $\aut(Q)$ solvable. This completes the proof.
\end{proof}

Consequently we may obtain a dual version of the result, say a classification of truncated path algebras with solvable automorphism group; see Proposition \ref{prop-sol-autgp-trunalg} below.

\section{A Galois theory for path coalgebras}
In this section, we will develop a Galois-like theory for path coalgebras extensions, which aims to give a connection
between the automorphism groups of subcoalgebras of a path coalgebra and certain subgroups of its automorphism group.

Recall that in Propostion \ref{prop-normal-subgp-chain}, we have already shown for any path coalgebra a tower of normal subgroups of its automorphism group induced by the coradical filtration. Our Galois-like theory will generalize this fact. Note that Theorem \ref{thm-galois-cor-pt-coal} and some useful lemmas in this section hold true only for acyclic quivers.

\subsection{The Galois group of a coalgebra extension}
We begin with a more general setup, although we mainly deal with Galois groups of pointed coalgebra extensions.

\begin{defn} Let $E$ be a coalgebra and $D\subseteq E$ a subspace, the set of automorphisms $\{\sigma\in\aut(E)\mid \sigma(d)=d, \forall d\in D\}$ forms a subgroup of  $\aut(E)$, which we call the \textbf{Galois group} of $E$ over $D$ and denote by $\Gal(E/D)$. Conversely, for any $H\le \aut(E)$, we set $\Inv(H)=\{c\in E\mid \sigma(c) =c, \forall \sigma\in H\}$, the subspace of \textbf{fixed points} of $H$.
\end{defn}

The following lemma is classical. The proof is standard and we omit it here.

\begin{lem}\label{lem-galois-correspond} Let $E$ be a coalgebra. Let $H, H'$ be subgroups of $\aut(E)$ and $D, D'$ subspaces of $E$. Then

$(1)$ $H\le H'\Longrightarrow \Inv(H')\subseteq\Inv(H)$ and $D\subseteq D'\Longrightarrow \Gal(E/D')\subseteq\Gal(E/D)$;

$(2)$ $\Inv(\Gal(E/D))\supseteq D$ and $ \Gal(E/\Inv(H))\ge H$;

$(3)$ $\Gal(E/)\circ\Inv\circ\Gal(E/)=\Gal(E/)$ and $\Inv\circ\Gal(E/)\circ\Inv=\Inv$.
\end{lem}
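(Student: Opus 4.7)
The plan is to recognize that $(\Gal(E/-), \Inv)$ is a classical antitone Galois connection between the poset of subspaces of $E$ (ordered by inclusion) and the poset of subgroups of $\aut(E)$ (also ordered by inclusion). All three parts then follow by purely formal manipulations; in particular, nothing in the statement really uses the coalgebra structure beyond the fact that $\aut(E)$ acts by $k$-linear maps on $E$.

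For part $(1)$ I would simply unpack definitions. If $H\le H'$, then the defining condition $\sigma(c)=c$ for all $\sigma\in H'$ is a stronger requirement than the same condition quantified over $H$, so every element of $\Inv(H')$ lies in $\Inv(H)$. Analogously, if $D\subseteq D'$, then an automorphism that fixes $D'$ pointwise certainly fixes $D$ pointwise, so $\Gal(E/D')\subseteq \Gal(E/D)$. For part $(2)$, given $d\in D$ and $\sigma\in\Gal(E/D)$ we have $\sigma(d)=d$ by definition, so $d\in \Inv(\Gal(E/D))$; the second inclusion is the mirror image: for $\sigma\in H$ and $c\in\Inv(H)$, $\sigma(c)=c$ says $\sigma\in\Gal(E/\Inv(H))$.

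Part $(3)$ is then formal. Starting from $\Inv(\Gal(E/D))\supseteq D$ in part $(2)$ and applying the antitone map $\Gal(E/-)$ via part $(1)$, we get
\[
\Gal\bigl(E/\Inv(\Gal(E/D))\bigr)\subseteq \Gal(E/D).
\]
Conversely, applying the second inclusion of $(2)$ to the subgroup $H=\Gal(E/D)$ yields the reverse inclusion
\[
\Gal\bigl(E/\Inv(\Gal(E/D))\bigr)\supseteq \Gal(E/D),
\]
so the two identities combine to equality, which is the first identity in $(3)$. The second identity, $\Inv\circ\Gal(E/-)\circ\Inv=\Inv$, is proved by the symmetric argument: the inclusion $\supseteq$ is an instance of $(2)$ applied to $D=\Inv(H)$, while the inclusion $\subseteq$ follows by applying the antitone $\Inv$ to $\Gal(E/\Inv(H))\supseteq H$.

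There is no real obstacle here; the only point requiring care is keeping track of which direction each inclusion runs after applying an antitone map. Since this is just the standard abstract Galois connection lemma adapted to the coalgebra setting, a one-paragraph proof unpacking the definitions as above should suffice.
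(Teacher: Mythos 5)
Your proof is correct: the paper explicitly omits the proof of this lemma as "standard," and your argument is precisely that standard antitone Galois-connection argument — parts (1) and (2) by unpacking definitions, part (3) by combining them in both directions. Nothing further is needed.
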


\begin{rem} Note that in general, $\Inv(H)$ is not a subcoalgebra of $E$. For example, let $Q$ be the quiver
$1\bullet\xrightarrow{\ \alpha\ } 2\bullet\xrightarrow{\ \beta\ } 3\bullet$ and $E=kQ^c$ the pathcoalgebra. Assume that there exists some $a\in k$, $a\notin\{ 0, 1\}$. Consider the automorphism $\sigma\in \aut(E)$ given by
$\sigma(e_i) = e_i, i=1, 2, 3$, $\sigma(\alpha) = a\alpha$, $\sigma(\beta) = a^{-1}\beta$ and $\sigma(\alpha\beta)=\alpha\beta$. Let $H=\langle \sigma\rangle$ the subgroup generated by $\sigma$. Then it is direct to check that $\Inv(H) = ke_1\oplus ke_2\oplus ke_3 \oplus k\alpha\beta$, which is not a subcoalgebra.
\end{rem}

Now we turn to the path coalgebra case. Let $Q$ be a quiver and $C=kQ^c$. We are only interested in large subcoalgebras of $C$ and their Galois groups, since each pointed coalgebra can be realized as a large subcoalgebra. Set $\C(C)= \{D\mid  C_{(1)}\subseteq D\subseteq C\}$, the set of large subcoalgebras of $C$. A subgroup $H\le \aut(C)$ is called an \textbf{admissible subgroup} if $H = \Gal(C/D)$ for some $D\in \C(C)$, and  the set of all admissible subgroups of $\aut(C)$ is denoted by $\G(C)$. Set $\aut_n(C)=\Gal(C/C_{(n)})$ for each $n\ge 0$, comparing with the notation $\Omega^*_n(Q)$ in Section 3. Clearly any $H\in \G(C)$ is a subgroup of $\Gal(C/C_{(1)})$.

For $D\in\C(C)$, we have $D= kQ_0\oplus kQ_1\oplus D_{\ge2}$, where $D_{\ge2}=D\cap kQ_{\ge2}$. Recall that each $\mu\in\Omega(Q)$ gives $k$-linear maps $\mu_0\colon kQ\to kQ_0$ and $\mu_\splus\colon kQ\longrightarrow kQ_0\oplus kQ_1$, see Section 2.1. Then we have the following characterization for Galois groups.

\begin{prop}\label{prop-transdata-galgp} Let $Q$ be an arbitrary quiver, $C=kQ^c$ the path coalgebra and $D\in\C(C)$ a large subcoalgebra. Then
\[\mbt(\Gal(C/D))=\{\mu=(\id_{kQ_0}, \mu_\splus)\mid \mu_\alpha = \alpha\ \forall \alpha \in Q_1, \mu_\splus(x) = 0\ \forall x\in D_{\ge2}\}.\] \end{prop}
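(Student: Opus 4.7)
My plan is to split $D$ as $kQ_{\le 1}\oplus D_{\ge 2}$ and handle each summand. Since $D$ is large, $kQ_0\oplus kQ_1\subseteq D$, so any Galois element $\sigma$ must fix every vertex and every arrow; by the recursive construction of the trans-datum in the proof of Theorem \ref{thm-monomial-extension}, this is equivalent to $\mu_0=\id_{kQ_0}$ and $\mu_\alpha=\alpha$ for every $\alpha\in Q_1$. Conversely, any $\mu$ obeying these two conditions yields $\f_\mu|_{kQ_{\le 1}}=\id$. The substantial content of the proposition is therefore the equivalence, for $x\in D_{\ge 2}$, of $\f_\mu(x)=x$ and $\mu_\splus(x)=0$.

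To handle this, I would apply Proposition \ref{endoformulae2}. For $x\in kQ_{\ge 2}$ the term $\mu_0(x)$ vanishes and $F(\mu_\splus(x))=\mu_\splus(x)$ (since $\mu_\splus(x)\in kQ_{\le 1}$, on which $F$ is the identity by Lemma \ref{keylemma}), so the formula rearranges to
\[\mu_\splus(x)\;=\;\f_\mu(x)\;-\;\sum_{r\ge 2} F\bigl(\mu_\splus(x_{(1)})\Box\cdots\Box\mu_\splus(x_{(r)})\bigr).\]
I would then induct on $n$ and prove that for $x\in D_{\ge 2}\cap D_{(n)}$ the sum on the right equals $x$, which then yields both directions of the equivalence simultaneously. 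The base cases $n\le 1$ are vacuous.

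The inductive step rests on two observations. First, because $D$ is a subcoalgebra and $\Delta$ respects the coradical filtration, $\Delta^{r-1}(x)\in\sum_{n_1+\cdots+n_r=n}D_{(n_1)}\otimes\cdots\otimes D_{(n_r)}$; summands with some $n_i=0$ are killed because $\mu_\splus|_{kQ_0}=0$, and for the surviving summands each $n_i\le n-1$. Second, writing a Sweedler piece $x_{(i)}\in D_{(n-1)}$ as the sum of its $kQ_0$-, $kQ_1$-, and $kQ_{\ge 2}$-parts (the first two lying in $kQ_{\le 1}\subseteq D$, hence the third in $D_{(n-1)}\cap D_{\ge 2}$), and applying the low-degree identities $\mu_\splus|_{kQ_0}=0$, $\mu_\splus|_{kQ_1}=\id$ together with the inductive hypothesis $\mu_\splus|_{D_{(n-1)}\cap D_{\ge 2}}=0$, one obtains $\mu_\splus(x_{(i)})=\pi_1(x_{(i)})$, where $\pi_1\colon kQ\to kQ_1$ is the arrow projection.

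Substituting, the residual sum becomes $\sum_{r\ge 2}F(\pi_1(x_{(1)})\Box\cdots\Box\pi_1(x_{(r)}))$. In the cotensor coalgebra presentation $kQ^c\cong\mathrm{Cot}_{kQ_0}(kQ_1)$, for a single path $p=\alpha_1\cdots\alpha_m$ the only Sweedler decomposition with every piece landing in $kQ_1$ is the full arrow decomposition, forcing $r=m$, and $F(\alpha_1\Box\cdots\Box\alpha_m)=p$. By linearity, $\sum_{r\ge 2}F(\pi_1(x_{(1)})\Box\cdots\Box\pi_1(x_{(r)}))=x$ for every $x\in kQ_{\ge 2}$, closing the induction. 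The main obstacle I foresee is the bookkeeping of iterated Sweedler indices against the coradical filtration of $D$, in particular verifying that the only surviving multiplicities of $n_i$ contribute to the arrow-reconstruction; once those inequalities are in hand, the final identity is essentially automatic from the deconcatenation structure of the cotensor coalgebra.
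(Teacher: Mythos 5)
Your argument is correct and follows essentially the same route as the paper: both rest on Proposition \ref{endoformulae2} and an induction along the coradical filtration of $D$, using that $\mu_\splus$ kills group-like Sweedler pieces and that the surviving pieces drop strictly in coradical degree. The only difference is cosmetic --- the paper compares $\f_\mu$ term by term with $\f_{\mathbbm{1}}$, whereas you evaluate the residual sum directly as $x$ via the arrow projection, packaging both inclusions into the single identity $\f_\mu(x)=x+\mu_\splus(x)$ on $D_{\ge 2}$.
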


\begin{proof} Suppose that $\mu_\alpha = \alpha$ for all $\alpha\in Q_1$ and $\mu_\splus(x) = 0$ for all $x\in D_{\ge2}$.
Then by Proposition \ref{endoformulae2},
\[\f_\mu(x) =\sum_{r\ge 1} F(\mu_\splus(x_{(1)})\Box\cdots \Box \mu_\splus(x_{(r)}))= \f_\1(x),
\]
for in this case $\mu_\splus(x) = {\mathbbm 1}_\splus(x)$ for any $x\in kQ$. Recall that $\1$ is the identity trans-datum which corresponds to $\id_{kQ^c}$.

Conversely, suppose that $\sigma\in\Gal(C/D)$ and set $\mu = \mbt_\sigma$. For
$x\in D$, we denote by $l(x)$ the minimal integer such that $x\in D_{(l(n))}$, where $D_{(l(n))}$ is the $l(n)$-th coradical of $D$. Now we prove that $\mu_{(x)} = 0$ for all $x\in D_{\ge2}$ by induction on $l(x)$.

Clearly $\mu_0=\id_{Q_0}$ and $\mu_\alpha =\alpha$ for all $\alpha\in Q_1$. Assume that $\mu_\splus(x) = 0 =\1_\splus(x)$ for all $x\in D_{\ge2}$ with $l(x)< n$. Take any $x\in D_{\ge 2}$ with $l(x)= n$, Proposition \ref{endoformulae2} says that
\[\sigma(x) = \sum_{r\ge 1} F(\mu_\splus(x_{(1)})\Box\cdots \Box \mu_\splus(x_{(r)}))= \mu_\splus(x) + \sum_{r\ge 2} F(\mu_\splus(x_{(1)})\Box\cdots \Box \mu_\splus(x_{(r)})),\]
and \[\f_\1(x) = \sum_{r\ge 1} F(\1_\splus(x_{(1)})\Box\cdots \Box \1_\splus(x_{(r)}))= \1_\splus(x) + \sum_{r\ge 2} F(\1_\splus(x_{(1)})\Box\cdots \Box \1_\splus(x_{(r)})).\]
Note that for $r\ge 2$, we always have \[\sum F(\mu_\splus(x_{(1)})\Box\cdots \Box \mu_\splus(x_{(r)})) = \sum F(\1_\splus(x_{(1)})\Box\cdots \Box \1_\splus(x_{(r)})),\] the reason is that for each term in the sum, either $l(x_{(i)})< n$ for all $i$, or there exists some $x_i$ with $l(x_i)=0$. Since $\sigma\in\Gal(C/D)$, $\sigma(x) = x = \f_\1(x)$, and hence $\mu_\splus(x) = \1_\splus(x) = 0$.
\end{proof}

To give the following key lemma, we need some notation. We define a partial ordering on the set of all paths in $Q$. For each pair of paths $p, q\in Q$, by $p\le q$ we mean that $p$ is a subpath of $q$, and by $p<q$ we mean that $p$ is a proper subpath of $q$.

\begin{lem}\label{lem-inv-is-coalg} Let $Q$ be an acyclic quiver, $C=kQ^c$ and $D$ a large subcoalgebra of $C$. Then $ \Inv(\Gal(C/D))= D$. Consequently, we have a well-defined map $\Inv\colon \G(C)\longrightarrow \C(C)$.
\end{lem}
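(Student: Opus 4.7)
The inclusion $D \subseteq \Inv(\Gal(C/D))$ is given by Lemma \ref{lem-galois-correspond}(2). For the reverse, the plan is to argue by contradiction: given $x \in \Inv(\Gal(C/D)) \setminus D$, I will construct some $\sigma \in \Gal(C/D)$ with $\sigma(x) \neq x$.

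Since $kQ_{\le 1} \subseteq D$ is fixed pointwise by every element of $\Gal(C/D)$, after subtracting the degree-$\le 1$ part of $x$ I may assume $x \in kQ_{\ge 2} \setminus D_{\ge 2}$. I then pick a linear functional $\lambda \colon kQ_{\ge 2} \to k$ with $\lambda|_{D_{\ge 2}} = 0$ and $\lambda(x) \ne 0$. Because $D$ contains the coradical $kQ_0$, the $kQ_0$-bicomodule structure on $kQ^c$ restricts to $D$, and both decompose by pairs of vertices $(s,t)$. Writing $\lambda = \sum_{s,t} \lambda^{s,t}$ accordingly, I choose $(s,t)$ with $\lambda^{s,t}(x^{s,t}) \neq 0$; acyclicity forces $s \neq t$ here, since an acyclic quiver admits no nontrivial path from a vertex to itself.

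Next I build the trans-datum $\mu$ by setting $\mu_0 = \id$, $\mu_\alpha = \alpha$ for each arrow $\alpha$, and $\mu_p = \lambda^{s,t}(p)\, u$ for each path $p$ of length $\ge 2$, where $u \in \p_{s,t}$ is a fixed nonzero primitive element (chosen to be an arrow $\omega \in (Q_1)_{s,t}$ if one exists, and $e_s - e_t$ otherwise). Each $\mu_p$ automatically lies in $\p_{s(p),t(p)}$ (being $0$ unless $p \colon s \to t$), so $\mu$ is a legitimate trans-datum. By Proposition \ref{prop-transdata-galgp}, $\sigma := \f_\mu$ lies in $\Gal(C/D)$, since $\mu_\splus(d) = \lambda^{s,t}(d^{s,t})\, u = 0$ for every $d \in D_{\ge 2}$ by the bicomodule decomposition of $D$. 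Moreover, $\mu$ is invertible because each $\mu_p$ sits in degree $\le 1$, so $\f_\mu - \id$ strictly lowers the coradical degree and hence $\f_\mu$ is a triangular filtered automorphism of $kQ^c$.

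The last step is computing $\sigma(x) - x$ using the formula in Proposition \ref{endoformulae2}, and this is where acyclicity is essential: a path $p$ from $s$ to $t$ contains no proper sub-path from $s$ to $t$, since no vertex of $p$ is visited twice. Hence in the expansion of $\f_\mu(p)$, only the $r=1$ term (contributing $\mu_p = \lambda^{s,t}(p)\, u$) and the $r=l(p)$ all-arrows term (contributing $p$) survive, yielding $\f_\mu(p) = p + \lambda^{s,t}(p)\, u$ for $p \colon s \to t$ of length $\ge 2$. For paths $p$ not from $s$ to $t$, nonzero contributions come only from decompositions featuring a length-$\ge 2$ sub-path from $s$ to $t$, and a direct case analysis via Lemma \ref{keylemma} shows these live in length $\ge 2$ when $u = \omega$, and in length $\ge 1$ when $u = e_s - e_t$ (middle-block placements being killed by $F$). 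Consequently the length-$1$ component of $\sigma(x) - x$ in the first case, or the length-$0$ component in the second, equals $\lambda^{s,t}(x^{s,t})\, u \neq 0$, contradicting $x \in \Inv(\Gal(C/D))$. The main obstacle is organizing this final case analysis cleanly; everything else is dictated by Proposition \ref{prop-transdata-galgp} together with acyclicity.
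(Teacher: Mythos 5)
Your argument is correct, and it reaches the conclusion by a genuinely different route from the paper's. The paper argues locally: from the support of $x$ it selects a single path $p_i \notin D$ of maximal length, notes via acyclicity and largeness of $D$ that $s(p_i)\neq t(p_i)$ and that $p_i$ is not a subpath of the other paths occurring in $x$, defines by hand an automorphism of the subcoalgebra generated by $D$ and the support of $x$ that sends $p_i$ to $p_i+e_{s(p_i)}-e_{t(p_i)}$ and fixes everything else, and then promotes it to an element of $\Gal(C/D)$ by the extension theorem (Theorem \ref{thm-large-auto-ext}). You work globally instead: a linear functional separating $x$ from $D_{\ge 2}$, localized to one vertex pair $(s,t)$ through the $kQ_0$-bicomodule decomposition, is packaged into an explicit trans-datum, and membership in $\Gal(C/D)$ is read off from Proposition \ref{prop-transdata-galgp} rather than obtained by extension. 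What your route buys is a closed-form witnessing automorphism and independence from Theorem \ref{thm-large-auto-ext}; it also avoids having to check that the paper's ad hoc $\sigma$ is well defined and comultiplicative on the enlarged subcoalgebra. The price is the final bookkeeping with $F$, which you have set up correctly; note that it simplifies further if you always take $u=e_s-e_t$, since then the degree-zero component of $\sigma(x)-x$ equals $\lambda^{s,t}(x)(e_s-e_t)\neq 0$ whether or not an arrow from $s$ to $t$ exists, all other contributions lying in $kQ_{\ge 1}$. Both proofs exploit acyclicity in the same two places: to force $s\neq t$, and to ensure that a path visits each vertex at most once, so that at most one block of a decomposition can run from $s$ to $t$.
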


\begin{proof} Clearly $D\subseteq \Inv(\Gal(C/D))$. We need only to show that $\Inv(\Gal(C/D)) \subseteq D$.

Assume that $x\in \Inv(\Gal(C/D))\setminus D$. We may write $x=\sum_j \lambda_j p_j$, a linear combination of paths, here $p_j\in Q$ and $0\ne\lambda_j\in k$ for each $j$.  Since $x\notin D$, there exists some $i$ such that $p_i\notin D$. We choose such a $p_i$ with maximal length. Clearly $p_i \nleq p_j$ for any other $p_j$, otherwise we have either $p_j\notin D$ with length strictly greater than $p_i$, or $p_j\in D$ and hence $p_i\in D$, which leads to a contradiction in either case. Moreover, by assumption $Q$ is acyclic and $D$ is a large subcoalgebra, we know that $s(p_i)\ne t(p_i)$.

Let $D'$ denote the subcoalgebra generated by $D$ and all $p_j$'s. We define $\sigma\in\aut(D')$ as follows: $\sigma(p_i) = p_i + e_{s(p_i)} -e_{t(p_i)}$; $\sigma(x) = x$ for any $x < p_i$; $\sigma(x)= x$, $\forall j\ne i$, $\forall x\le p_j$; and $\sigma(d)=d$, $ \forall d\in D$. It is direct to check that $\sigma$ is an automorphism of $D'$, and by Theorem \ref{thm-large-auto-ext}, $\sigma$ extends to $\sigma'\in \aut_0(C)$. By construction $\sigma'\in \Gal(C/D)$ and $\sigma'(x)=x +  e_{s(p_i)} -e_{t(p_i)} \ne x$ and hence $x\notin \Inv(\Gal(C/D))$.
\end{proof}

\begin{rem}\label{rem-gal-not-invertible} The lemma does not hold true in general when the quiver $Q$ contains oriented cycles. For example, consider the quiver $ \xymatrix @C=2.5pc {1\bullet \ar@<+0.5ex>[r]^(.5) { \alpha} & \bullet 2\ar@<+0.5ex>[l]^(.5){\beta}}$. Let $C$ be the path coalgebra. Then by direct calculation one shows that $C_{(2)}\subseteq \Inv(\Gal(C/C_{(1)}))$.
\end{rem}

\subsection{Galois extensions}
Let $D, E$ be coalgebras over a field $k$. If $D$ is a subcoalgebra of $E$ with $E_{(1)} \subseteq D$, then we say that $E$ is an \textbf{admissible extension} over $D$, and write as $E/D$.

 Let $Q$ be a quiver, $C=kQ^c$ the path coalgebra and $D\le C$ a large subcoalgebra. Assume $E/D$ is an admissible extension. By the universal property of $C$, there exists an embedding $E\subseteq C$ which is compatible with the embedding $D\subseteq C$. Roughly speaking, $C$ plays a similar role as an algebraic closure of a field in field theory. Based on this fact, by an admissible extension $E/D$, we always mean a intermediate subcoalgebra $D\subseteq E\subseteq C$. The following definition makes sense now.

\begin{defn} Let $D$ be a large subcoalgebra of $C$. An admissible extension $E/D$ is called a \textbf{Galois extension}, if for any $\sigma\in \Gal(C/D)$, $\sigma(E) = E$.
\end{defn}

Note that $C/D$ is a Galois extension for any large subcoalgebra $D$. Applying Lemma \ref{lem-inv-is-coalg}, we have the following Galois-like correspondence.

\begin{prop}\label{prop-galois-cor-path-coalg} Let $Q$ be an acyclic quiver and $C=kQ^c$ the path coalgebra. Then

(1) The maps $\Inv\colon \G(C)\longrightarrow \C(C)$ and $\Gal(C/)\colon \C(C)\longrightarrow \G(C)$ are well defined and inverse to each other.

(2) Assume that $D, E\in \C(C)$ and set $H=\Gal(C/D)$ and $G=\Gal(C/E)$. Then $\Inv(H\cup G) = D\cap E$ and $\Inv(H\cap G) = D+E$, here $H\cup G$ denotes the subgroup of $\aut(C)$ generated by $H$ and $G$.

(3) Let $H$ be in $\G(C)$. Then $H\vartriangleleft \Gal(C/C_{(1)})$ if and only if $\Inv(H)$ is invariant under the action of  $\Gal(C/C_{(1)})$, i.e. $\Inv(H)$ is a Galois extension over $C_{(1)}$; and in this case, $\Gal(C/C_{(1)})/H\cong \Gal(\Inv(H)/C_{(1)}).$
\end{prop}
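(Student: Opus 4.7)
The plan is to dispatch the three parts in order, with each leveraging the previous one and the key extension result Theorem \ref{thm-large-auto-ext}. Throughout, set $\Gamma = \Gal(C/C_{(1)})$, and for given subcoalgebras write $H = \Gal(C/D)$, $G = \Gal(C/E)$.

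For (1), $\Gal(C/-)\colon \C(C)\to\G(C)$ is well-defined by the very definition of an admissible subgroup. The crucial ingredient is Lemma \ref{lem-inv-is-coalg}, which gives $\Inv\circ\Gal(C/-) = \id_{\C(C)}$; in particular $\Inv$ maps $\G(C)$ into $\C(C)$. For the other composition, if $H = \Gal(C/D) \in \G(C)$ then $\Gal(C/\Inv(H)) = \Gal(C/D) = H$ by the same lemma, so $\Gal(C/-)\circ\Inv = \id_{\G(C)}$.

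For (2), the inclusion $\Inv(H\cup G) \subseteq D\cap E$ follows from the inclusion-reversal property of $\Inv$ (Lemma \ref{lem-galois-correspond}(1)) since $H, G \subseteq H\cup G$. Conversely, any $x\in D\cap E$ is fixed pointwise by $H$ and by $G$, hence by every word in $H\cup G$. For the second identity, first observe that $D+E$ is itself a large subcoalgebra (sum of subcoalgebras, containing $C_{(1)}$), and $\sigma\in\Gal(C/(D+E))$ iff $\sigma$ fixes both $D$ and $E$, i.e.\ iff $\sigma\in H\cap G$. Hence $H\cap G = \Gal(C/(D+E))\in \G(C)$, and applying (1) yields $\Inv(H\cap G) = D+E$.

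For (3), write $D = \Inv(H)$. The normality/invariance equivalence is a direct group-theoretic computation: if $H\vartriangleleft\Gamma$, $\tau\in\Gamma$, $d\in D$, $\sigma\in H$, then $\tau^{-1}\sigma\tau\in H$ fixes $d$, so $\sigma(\tau d) = \tau d$, proving $\tau(D)\subseteq D$. Conversely, if $\tau(D)\subseteq D$ for all $\tau\in\Gamma$, then for $\sigma\in H$ and $d\in D$ the element $\tau^{-1}d\in D$ is fixed by $\sigma$, giving $(\tau\sigma\tau^{-1})(d) = d$, so $\tau\sigma\tau^{-1}\in\Gal(C/D) = H$. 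For the quotient isomorphism, consider the restriction map $\phi\colon \Gamma\to\Gal(D/C_{(1)})$, $\tau\mapsto\tau|_D$; it is well-defined by invariance of $D$ and has kernel exactly $H$. The one substantive point is surjectivity: given $\rho\in\Gal(D/C_{(1)})\subseteq \aut(D)$, Theorem \ref{thm-large-auto-ext} extends $\rho$ to some $\tilde\rho\in\aut(C)$, and because $\rho$ fixes $C_{(1)}\subseteq D$ pointwise so does $\tilde\rho$, placing $\tilde\rho$ in $\Gamma$. The first isomorphism theorem then delivers $\Gamma/H\cong\Gal(D/C_{(1)})$. The main (indeed only) obstacle is this surjectivity step, and it is precisely where the extension theorem does its work — without it one would get only an embedding $\Gamma/H\hookrightarrow \Gal(D/C_{(1)})$.
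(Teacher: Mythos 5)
Your proof is correct and follows essentially the same route as the paper: Lemma \ref{lem-inv-is-coalg} together with the formal Galois-connection identities for (1) and (2), and for (3) the restriction homomorphism to $\Gal(\Inv(H)/C_{(1)})$ with kernel $H$, whose surjectivity is supplied by the extension theorem (Theorem \ref{thm-large-auto-ext}). The only differences are cosmetic (you verify explicitly that $D+E$ is a large subcoalgebra, and you argue the invariance direction of (3) directly rather than by contradiction), so there is nothing to add.
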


\begin{proof} (1)  Lemma \ref{lem-inv-is-coalg} shows that $\Inv(\Gal(C/D))=D$ and $\Inv$ is well defined.
We need only to show that $H = \Gal(C/\Inv(H))$ for any $H\in \G(Q)$. This is easy, for each $H\in \G(C)$ is of the form $\Gal(C/D)$ for some $D\in \C(C)$. Thus by Lemma \ref{lem-galois-correspond}, \[H = \Gal(C/D) = \Gal(C/\Inv(\Gal(C/D))) = \Gal(C/\Inv(H)).\]

(2) First we show that $\Inv(H\cup G) = D\cap E$. By Lemma \ref{lem-galois-correspond}, $\Inv(H\cup G) \subseteq \Inv(H)$ and $\Inv(H\cup G) \subseteq \Inv(H)$ and hence $\Inv(H\cup G) \subseteq D\cap E$. Conversely, for any $c\in D\cap E$, $c$ is fixed by any elements in $H$ and $G$ and hence fixed by $H\cup G$, which implies that $c\in \Inv(H\cup G)$. Thus we have proved $\Inv(H\cup G) = D\cap E$.

Next we show $H\cap G = \Gal(C/(D+E))$. Again by Lemma \ref{lem-galois-correspond}, $\Gal(C/(D+E))\subseteq H$ and $\Gal(C/(D+E))\subseteq G$ and hence $\Gal(C/(D+E))\subseteq H\cap G$. Now let $\sigma\in H\cap G$, $\sigma$ fixes $D$ and $E$ and hence fixes $D+E$. It follows that $H\cap G = \Gal(C/(D+E))$ and hence $\Inv(H\cap G) = D+E$.

(3) If $\Inv(H)$ is an invariant subspace under the action of $\Gal(C/C_{(1)})$, then for any $\sigma\in \Gal(C/C_{(1)})$ and any $h\in H$, $(\sigma^{-1}h\sigma)(c) = (\sigma^{-1}(\sigma(c)) = c$ for any $c\in \Inv(H)$, which implies that $\sigma^{-1}h\sigma\in\Gal(C/\Inv(H)) = H$ and hence $H\vartriangleleft\aut_1(C)$.

Conversely, suppose that $H \vartriangleleft \Gal(C/C_{(1)})$. For any $\sigma\in\Gal(C/C_{(1)})$, we claim that $\sigma(\Inv(H))\subseteq\Inv(H)$. Otherwise there exists some $c\in \Inv(H)$ with $\sigma(c)\notin \Inv(H)$. Now there exists some $h\in H$ such that $h(\sigma(c))\ne \sigma(c)$. Thus $ \sigma^{-1}(h(\sigma(c))) \ne \sigma^{-1}(\sigma(c))$, and hence $\sigma^{-1} h\sigma \notin H$, which leads to a contradiction. Similarly $\sigma^{-1}(\Inv(H))\subseteq \Inv(H)$ and hence $\sigma(\Inv(H))=\Inv(H)$.

We are only left to show the isomorphism of Galois groups. Set $D=\Inv(H)$. We have a group homomorphism $f\colon \Gal(C/C_{(1)}) \longrightarrow \Gal(D/C_{(1)})$ induced by restriction, which is well-defined since $D$ is invariant under the action of $\Gal(C/C_{(1)})$. Theorem \ref{thm-large-auto-ext} says that any automorphism of $D$ extends to an automorphism of $C$, which implies that $f$ is surjective. Now $\Gal(C/C_{(1)})/H\cong \Gal(D/C_{(1)})$  follows easily from the fact $\Ker(f) = \Gal(C/D)$.
\end{proof}

\begin{exm} Let $Q$ be the quiver $1\bullet\xrightarrow{\alpha}2\bullet\xrightarrow{\beta}3\bullet\xrightarrow{\gamma}4\bullet$ and $C=kQ^c$ the path coalgebra. There are exactly 5 subcoalgebras of $C$ containing $C_{(1)}$. Say $C^0 = C_{(1)}$, $C^1= C_{(1)}+k\alpha\beta$, $C^2= C_{(1)}+k\beta\gamma$, $C^{1,2}= C_{(2)}$, and $C^{3}=C$.

The automorphism group $\aut_1(C)\cong k^3 = \{(x, y, z)\mid x, y, z\in k\}$, the additive group of 3-dimensional $k$-space, where $(x,y,z)$ corresponds to the automorphism given by $\alpha\beta\mapsto \alpha\beta+x(e_1-e_3)$, $\beta\gamma\mapsto \beta\gamma+ y(e_2-e_4)$ and $\alpha\beta\gamma\mapsto \alpha\beta\gamma -x\gamma + y\alpha + z(e_1-e_4)$. Compare with Proposition \ref{prop-cor-inn-sbgps} and Example \ref{exm-a-type}.

Set $H^0 = \{(x,y,z)\}$, $H^{1} = \{(0,y,z)\}$, $H^{2} = \{(x,0,z)\}$, $H^{1,2} = \{(0,0,z)\}$ and $H^3=\{(0,0,0)\}$. By direct calculation, $\Gal(C/C^*)=H^*$ for any $*\in\{0,1,2,(1,2),3\}$. Pictorially we have the following correspondence.
\[ \xymatrix @C=-2pc{& \C(C)\\
& C \ar @{-}[d]\\
& C_{(1)}+k\{\alpha\beta,\beta\gamma\} \ar@{-}[ld] \ar@{-}[rd]\\
C_{(1)}+k\alpha\beta \ar@{-}[rd] &  &C_{(1)}+k\beta\gamma \ar@{-}[ld] \\
& C_{(1)}  &
}\  \qquad
\xymatrix @C=-1pc{& \G(C)\\
& \{(0,0,0)\} \ar @{-}[d]\\
& \{(0,0,z)\} \ar@{-}[ld] \ar@{-}[rd]\\
\{(0,y,z)\} \ar@{-}[rd] &  &\{(x,0,z)\} \ar@{-}[ld] \\
& \{(x,y,z)\}  &
}\]
\end{exm}

\begin{rem} Recall that Proposition \ref{prop-transdata-galgp} has determined the trans-data given by $\Gal(C/D)$ for
a large subcoalgebra $D$. It is natural to ask for an intrinsic characterization
of admissibility of subgroups of $\aut(C)$, which could be helpful in our Galois-like theory.
For instance, an admissible subgroup $H\in\G(C)$ must be a closed subgroup in the Zariski sense,
for the condition fixing a point is a "closed" condition and therefore a stability group is always
closed. Up to now, no sufficient and necessary condition is known to us.
\end{rem}

\subsection{Fundamental theorem of Galois extensions}
We move to a more general case now. Again we set $C=kQ^c$, the path coalgebra of a given quiver $Q$. Let $D$ be a large subcoalgebra of $D$ and $E/D$ an admissible extension. Set $\C(D, E) = \{M\mid D\subseteq M\subseteq E\}$, the set of intermediate subcoalgebras of $D$ and $E$, and $\G(D, E)=\{\Gal(E/M)\mid M\in \C(D, E)\}$, the set of admissible subgroups of $\aut(E)$.

\begin{lem} Let $Q$ be an arbitrary quiver, $C=kQ^c$ and $D\in\C(C)$. Assume that $E/D$ is a Galois extension and $M\in\C(D,E)$. Then $E/M$ is a Galois extension, and $\Gal(E/M) \cong \Gal(C/M)/\Gal(C/E)$.
\end{lem}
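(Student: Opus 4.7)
The plan is to establish the two assertions in sequence, using the tower $D \subseteq M \subseteq E \subseteq C$ together with the extension result from Theorem~\ref{thm-large-auto-ext}.

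First, I would verify that $E/M$ is a Galois extension. Since $D \subseteq M$, Lemma~\ref{lem-galois-correspond}(1) gives $\Gal(C/M) \subseteq \Gal(C/D)$. For any $\sigma \in \Gal(C/M)$, we therefore have $\sigma \in \Gal(C/D)$, and by the assumption that $E/D$ is Galois, $\sigma(E) = E$. This is exactly the condition making $E/M$ a Galois extension.

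Next, for the isomorphism, I would construct a restriction homomorphism
\[
\mathrm{Res}\colon \Gal(C/M) \longrightarrow \Gal(E/M), \qquad \sigma \mapsto \sigma|_E.
\]
This map is well-defined by the first paragraph: $\sigma(E) = E$ lets us restrict, and $\sigma|_M = \mathrm{id}_M$ ensures $\sigma|_E \in \Gal(E/M)$. The kernel is computed directly:
\[
\sigma|_E = \mathrm{id}_E \;\Longleftrightarrow\; \sigma \in \Gal(C/E),
\]
so $\Ker(\mathrm{Res}) = \Gal(C/E)$. It then remains to check surjectivity, which is the only nontrivial step and the place where I expect the main work: given $\tau \in \Gal(E/M)$, I must produce $\sigma \in \aut(C)$ with $\sigma|_E = \tau$ and $\sigma|_M = \mathrm{id}_M$. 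Here I invoke Theorem~\ref{thm-large-auto-ext}: because $D$ is large in $C$ and $E \supseteq D$, the subcoalgebra $E$ is itself large in $C = kQ^c$, so the automorphism $\tau$ of $E$ extends to some $\sigma \in \aut(C)$. Since $\sigma|_E = \tau$ fixes $M \subseteq E$ pointwise, $\sigma \in \Gal(C/M)$ and $\mathrm{Res}(\sigma) = \tau$.

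The first isomorphism theorem then yields
\[
\Gal(C/M)/\Gal(C/E) \cong \Gal(E/M),
\]
completing the proof. The only genuine obstacle is the surjectivity of $\mathrm{Res}$, and that is handled cleanly by the largeness of $E$ in $C$ combined with the extension theorem; all other steps are formal manipulations with the Galois connection of Lemma~\ref{lem-galois-correspond}.
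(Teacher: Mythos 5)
Your proof is correct and follows essentially the same route as the paper: deduce that $E/M$ is Galois from $\Gal(C/M)\subseteq\Gal(C/D)$, then apply the first isomorphism theorem to the restriction map $\Gal(C/M)\to\Gal(E/M)$, whose surjectivity comes from the extension theorem (Theorem~\ref{thm-large-auto-ext}) applied to the large subcoalgebra $E$.
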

\begin{proof}  Firstly we show that $E/M$ is a Galois extension. By definition, it suffices to show that any for $\sigma\in\Gal(C/E)$, $\sigma(E)\subseteq E$. This follows easily from the facts $\Gal(C/E)\le \Gal(C/D)$ and $E/D$ is a Galois extension.

To show $\Gal(E/M) \cong \Gal(C/M)/\Gal(C/E)$, we use the same argument as in the proof of Proposition \ref{prop-galois-cor-path-coalg}. Consider the group homomorphism $\mathrm{Res}\colon \Gal(C/M)\longrightarrow \Gal(E/M)$
induced by restriction, which is well defined since $E/M$ is a Galosis extension and $E$ is invariant under the action of $\Gal(C/M)$. Applying Theorem \ref{thm-large-auto-ext} again we show that $\mathrm{Res}$ is surjective. Moreover, it is easy to show that $\Ker(\mathrm{Res}) = \Gal(C/E)$ and hence $\Gal(E/M) \cong \Gal(C/M)/\Gal(C/E)$ by the isomorphism theorem of groups.
\end{proof}

Now we have a more general version of Proposition \ref{prop-galois-cor-path-coalg}, which we call the fundamental theorem of Galois extensions.

\begin{thm}\label{thm-galois-cor-pt-coal} Let $Q$ be an acyclic quiver and $C=kQ^c$ the path coalgebra. Let $D$ be in $\C(C)$ and $E/D$ a Galois extension. Then

(1) The map $\Inv\colon \G(D,E)\to \C(D,E)$ is well defined and gives the inverse of the map $\Gal(E/)\colon \C(D,E)\to \G(D,E)$.

(2) Assume that $M, N\in \C(D, E)$ and set $H=\Gal(E/M)$ and $G=\Gal(E/N)$. Then $\Inv(H\cup G) = M\cap N$ and $\Inv(H\cap G) = M+N$, here $H\cup G$ denotes the subgroup of $\aut(E)$ generated by $H$ and $G$.

(3) Let $H$ be in $\G(D,E)$. Then $H\vartriangleleft \Gal(E/D)$ if and only if $\Inv(H)$ is a Galois extension over $D$; and in this case, $\Gal(E/D)/H\cong \Gal(\Inv(H)/D).$
\end{thm}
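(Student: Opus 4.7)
The plan is to reduce all three parts to the special case already handled in Proposition \ref{prop-galois-cor-path-coalg} (where $D=C_{(1)}$ and $E=C$), using the restriction identification $\Gal(E/M)\cong\Gal(C/M)/\Gal(C/E)$ supplied by the preceding lemma. The hypothesis that $E/D$ is Galois, together with the inclusion $\Gal(C/M)\subseteq\Gal(C/D)$ for any $M\in\C(D,E)$, ensures that each $\sigma\in\Gal(C/M)$ preserves $E$, so the restriction map $\Gal(C/M)\twoheadrightarrow\Gal(E/M)$ is well-defined and surjective and, in particular, $E/M$ is itself a Galois extension.

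For part (1), the containment $M\subseteq\Inv(\Gal(E/M))$ is immediate. For the reverse, let $x\in E$ be fixed by $\Gal(E/M)$; surjectivity of the restriction map forces $x$ to be fixed by every $\sigma\in\Gal(C/M)$, and since $M$ is itself a large subcoalgebra of $C$ (because $M\supseteq D\supseteq C_{(1)}$), Lemma \ref{lem-inv-is-coalg} gives $x\in M$. The identity $\Gal(E/\Inv(H))=H$ then follows formally from Lemma \ref{lem-galois-correspond}. Part (2) is now a purely lattice-theoretic consequence of (1): the order-reversal yields $\Inv(H\cup G)\subseteq \Inv(H)\cap\Inv(G)=M\cap N$ with the opposite inclusion being immediate since $M\cap N$ is pointwise fixed by both $H$ and $G$; and $\Gal(E/(M+N))=H\cap G$ holds since a $\sigma$ fixes $M+N$ iff it fixes $M$ and $N$ separately, so applying $\Inv$ to both sides and invoking (1) gives $\Inv(H\cap G)=M+N$.

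For part (3), I would mimic the argument of Proposition \ref{prop-galois-cor-path-coalg}(3) verbatim, replacing $C_{(1)}$ and $C$ throughout by $D$ and $E$: if $\Inv(H)$ is Galois over $D$, then for $\sigma\in\Gal(E/D)$, $h\in H$ and $c\in\Inv(H)$, one has $\sigma^{-1}(c)\in\Inv(H)$, so $(\sigma h\sigma^{-1})(c)=c$ and hence $\sigma H\sigma^{-1}\subseteq\Gal(E/\Inv(H))=H$; the converse is the standard fixed-point-move contradiction. The quotient isomorphism comes from the restriction map $\Gal(E/D)\to\Gal(\Inv(H)/D)$, whose kernel is plainly $\Gal(E/\Inv(H))=H$. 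The step I expect to require the most care is the surjectivity of this restriction map: given $\tau\in\Gal(\Inv(H)/D)$, Theorem \ref{thm-large-auto-ext} extends $\tau$ (viewed as an automorphism of the large subcoalgebra $\Inv(H)$ of $C$) to some $\hat\tau\in\aut(C)$, which automatically fixes $D\subseteq\Inv(H)$; one then invokes the Galois hypothesis on $E/D$ a final time to conclude $\hat\tau(E)=E$, so that $\hat\tau|_E\in\Gal(E/D)$ is the desired preimage of $\tau$.
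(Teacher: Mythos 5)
Your proposal is correct and follows essentially the same route as the paper: parts (1) and (2) are reduced to Proposition \ref{prop-galois-cor-path-coalg} via the (well-defined, surjective) restriction map $\Gal(C/M)\to\Gal(E/M)$, and part (3) repeats the normality/quotient argument with the same extension-and-restriction step for surjectivity. The only points to tidy are that in (1) you need well-definedness of the restriction map (not its surjectivity) to conclude $x$ is fixed by all of $\Gal(C/M)$, and in the converse direction of (3) your fixed-point argument yields $\Gal(E/D)$-invariance of $\Inv(H)$, which must then be upgraded to $\Gal(C/D)$-invariance (the paper's definition of a Galois extension over $D$) by restricting elements of $\Gal(C/D)$ to $E$ --- a one-line bridge using the Galois hypothesis on $E/D$ that you have already set up.
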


\begin{proof} (1) and (2) follow directly from Proposition \ref{prop-galois-cor-path-coalg} and the last lemma. We need only to check (3). The argument is similar to the one in the proof of Proposition \ref{prop-galois-cor-path-coalg}(3).

Set $M = \Inv(H)$. First assume that $H\vartriangleleft \Gal(E/D)$. Let $\sigma\in \Gal(C/D)$. We claim that $\sigma(M)\subseteq M$. Otherwise, there exists some $m\in M$ such that $\sigma(m)\notin M$, and hence some $h\in \Gal(C/M)$ such that $h(\sigma(m))\ne \sigma(m)$. Thus $\sigma^{-1}h\sigma(m)\ne \sigma^{-1}\sigma(m) = m$, and hence $\sigma^{-1}h\sigma\notin \Gal(C/M)$.

We denote by $\bar \sigma$ and $\bar h$ the restriction of $\sigma, h$ to $E$ respectively. Thus $\bar \sigma$ and $\bar h$ can be viewed as elements in $\Gal(E/D)$,  since $E/D$ is a Galois extension. Moreover, $\bar h\in H$ by the choice of $h$. Now $\bar\sigma^{-1}\bar h\bar\sigma\notin H$, which is contradict to the assumption $H\vartriangleleft \Gal(E/D)$. Thus $M$ is $\Gal(C/D)$-invariant, hence $M/D$ is a Galois extension.

Conversely, if $M$ is $\Gal(C/D)$-invariant, then clearly $M$ is $\Gal(E/D)$-invariant. Thus $(\sigma^{-1} h \sigma)(m)= \sigma^{-1}(\sigma(m))=m$ for any $\sigma\in \Gal(E/D)$, $h\in H$ and $m\in M$. It follows that $\sigma^{-1} h \sigma\in H$ and hence $H\vartriangleleft \Gal(E/D)$.

To show the group isomorphism we need the map  $\mathrm{Res}\colon \Gal(E/D)\to \Gal(M/D)$, the group homomorphism induced by restriction. $\mathrm{Res}$ is well defined since $M/D$ is a Galois extension. To show $\mathrm{Res}$ is surjective, we use the fact that any $\sigma\in\Gal(M/D)$ extends to some $\sigma'\in \Gal(C/D)$. By assumption $E/D$ is a Galois extension, the restriction of $\sigma'$ to $E$ can be viewed as an element in $\Gal(E/D)$, which maps to $\sigma$ under $\mathrm{Res}$. It is easy to show that $\Ker(\mathrm{Res})=\Gal(E/M)$ and $\Gal(E/D)/\Gal(E/M)\cong\Gal(M/D)$ follows.
\end{proof}

\begin{rem} The requirement that $Q$ is acyclic is essential in giving one to one correspondence between $\G(C)$ and $\C(C)$; see Remark \ref{rem-gal-not-invertible}. For general quivers, one also expects a similar correspondence between certain class of subcoalgebras of $C$ and certain class of subgroups of $\aut(C)$, but what subcoalgebras and what subgroups should be involved is still unclear to us.
\end{rem}

\begin{rem} We mention that even for a Galois extension $E/D$, an automorphism of $D$ may not extend to an automorphism of $E$.
\end{rem}

Let $E/D$ be an admissible extension and $\aut(E;D)=\{\sigma\in\aut(E)\mid \sigma(D)= D\}$. Define $\Res^E_D\colon \aut(E;D)\longrightarrow \aut(D)$ to be the map induced by restriction. Obviously, $\Ker(\Res^E_D) = \Gal(E/D)$. The above remark just says that $\Res^E_D$ is not surjective in general.

Before going further we introduce some notation. Let $E/D$ be an admissible extension and $M\subseteq E$ a large subcoalgebra. Set $\Gal(E/M;D)=\Gal(E/M)\cap \aut(E;D)$. Now we have the following result on automorphism groups of pointed coalgebras, by applying the correspondence given by taking Galois group and fixed points.

\begin{prop} Let $Q$ be an arbitrary quiver and $C=kQ^c$. Let $D\in \C(C)$. Then

(1)  Assume that $E/D$ is an admissible extension such that $E$ is $\aut(C;D)$-invariant. Then $\aut(D)\cong \aut(E;D)/\Gal(E/D)$. If in addition, $E/D$ is a Galois extension, then $\Res^E_D$ is surjective if and only if $E$ is $\aut(C;D)$-invariant.

(2)  $\aut(D)\cong \aut(C;D)/\Gal(C/D)$. Moreover, if $D\subseteq C_{(n)}$ for some $n\ge1$, then $\aut(D)\cong\aut(C_{(n)};D)/\Gal(C_{(n)}/D)$.

(3)  There is a tower of normal subgroups $\aut(D) \rhd \Gal(D/D_{(0)}) \rhd \Gal(D/D_{(1)}) \rhd\cdots$,
 and $\aut(D)/\Gal(D/D_{(n)})\cong\im(\Res^D_{D_{(n)}}) $ for each $n\ge 0$.

(4)  $\frac{\Gal(D/D_{(n)})}{\Gal(D/D_{(m)})} \cong \frac{\Gal(C/C_{(n)};D)}{\Gal(C/(C_{(n)}+ D_{(m)});D)}, \forall m\ge n\ge 0.$
Particularly, for each $n\ge 1$, the quotient group $ \frac{\Gal(D/D_{(n)})}{\Gal(D/D_{(n+1)})}$ is a subquotient group of $\Gal(C_{(n+1)}/C_{(n)})$ and hence abelian.
\end{prop}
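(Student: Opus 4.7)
For part (1), the plan is to study the restriction homomorphism $\Res^E_D\colon\aut(E;D)\to\aut(D)$, whose kernel is $\Gal(E/D)$ by definition. Surjectivity under the invariance hypothesis is immediate: any $\tau\in\aut(D)$ extends to $\tilde\tau\in\aut(C)$ by Theorem \ref{thm-large-auto-ext}, and since $\tilde\tau|_D=\tau$ we have $\tilde\tau\in\aut(C;D)$, so the $\aut(C;D)$-invariance of $E$ gives $\tilde\tau(E)=E$ and hence $\tilde\tau|_E$ is a preimage of $\tau$. For the converse, given $\sigma\in\aut(C;D)$ with $\Res^E_D$ surjective and $E/D$ Galois, I would lift $\sigma|_D$ to some $\tau\in\aut(E;D)$ and extend $\tau$ to $\tilde\tau\in\aut(C)$; then $\tilde\tau|_E=\tau$ preserves $E$, and $\tilde\tau\sigma^{-1}|_D=\id_D$, so $\tilde\tau\sigma^{-1}\in\Gal(C/D)$. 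The Galois hypothesis forces $\tilde\tau\sigma^{-1}(E)=E$, and combining this with $\tilde\tau(E)=E$ yields $\sigma(E)=E$.

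Parts (2) and (3) will be short consequences. For (2), apply (1) to $E=C$ (trivially $\aut(C;D)$-invariant) and, when $D\subseteq C_{(n)}$, to $E=C_{(n)}$, whose $\aut(C;D)$-invariance comes from Lemma \ref{corad}. For (3), Lemma \ref{corad} shows that every $\sigma\in\aut(D)$ preserves each $D_{(n)}$, so restriction defines a group homomorphism $\Res^D_{D_{(n)}}\colon\aut(D)\to\aut(D_{(n)})$ with kernel $\Gal(D/D_{(n)})$; normality, the quotient isomorphism $\aut(D)/\Gal(D/D_{(n)})\cong\im(\Res^D_{D_{(n)}})$, and the descending chain then follow formally from $D_{(n)}\subseteq D_{(n+1)}$ and the first isomorphism theorem.

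The heart of the argument is the first isomorphism in (4). I would introduce $\phi\colon\Gal(C/C_{(n)};D)\to\Gal(D/D_{(n)})/\Gal(D/D_{(m)})$ by $\sigma\mapsto\overline{\sigma|_D}$, noting that $\sigma|_D$ fixes $D_{(n)}=D\cap C_{(n)}$ and hence lies in $\Gal(D/D_{(n)})$. A direct calculation identifies $\ker\phi=\Gal(C/(C_{(n)}+D_{(m)});D)$, since being in the kernel is equivalent to $\sigma|_{D_{(m)}}=\id$, which together with $\sigma|_{C_{(n)}}=\id$ and $\sigma(D)=D$ yields the named group. For surjectivity, given $\tau\in\Gal(D/D_{(n)})$ I would define $\sigma_0\colon C_{(n)}+D\to C_{(n)}+D$ to be the identity on $C_{(n)}$ and $\tau$ on $D$. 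Well-definedness on the intersection reduces precisely to $\tau$ being the identity on $D_{(n)}=D\cap C_{(n)}$, and compatibility with $\Delta$ is checked piece by piece using that $\Delta$ sends $C_{(n)}$ into $C_{(n)}\otimes C_{(n)}$ and $D$ into $D\otimes D$. Since $C_{(n)}+D$ is a large subcoalgebra, Theorem \ref{thm-large-auto-ext} extends $\sigma_0$ to some $\sigma\in\aut(C)$, and this $\sigma$ lies in $\Gal(C/C_{(n)};D)$ with $\phi(\sigma)=\bar\tau$.

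Finally, for the subquotient claim, I apply the isomorphism just proved with $m=n+1$. Because $C_{(n)}+D_{(n+1)}\subseteq C_{(n+1)}$, we have $\Gal(C/C_{(n+1)};D)\subseteq\Gal(C/(C_{(n)}+D_{(n+1)});D)$, so the quotient
\[
\Gal(C/C_{(n)};D)/\Gal(C/(C_{(n)}+D_{(n+1)});D)
\]
is a further quotient of $\Gal(C/C_{(n)};D)/\Gal(C/C_{(n+1)};D)$, which in turn embeds into $\Gal(C/C_{(n)})/\Gal(C/C_{(n+1)})\cong\Gal(C_{(n+1)}/C_{(n)})$ via the surjection $\aut(C)\to\aut(C_{(n+1)})$ from Proposition \ref{prop-normal-subgp-chain}. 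Lemma \ref{lem-auto-coalg-factorgp} (with its remark for arbitrary quivers) identifies $\Gal(C_{(n+1)}/C_{(n)})$ with the additive group of a vector space, which is abelian, so every subquotient is abelian. The main obstacle in the plan is the surjectivity in (4): the construction of $\sigma_0$ rests on the identity $D_{(n)}=D\cap C_{(n)}$, a standard fact for subcoalgebras containing the coradical, which I would either cite or record as a preparatory lemma before carrying out the gluing.
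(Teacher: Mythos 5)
Your proposal is correct and follows essentially the same route as the paper: part (1) via the restriction map and Theorem \ref{thm-large-auto-ext} (with the converse handled by producing an element of $\Gal(C/D)$ that must preserve $E$), parts (2)--(3) as formal consequences, and part (4) by the same gluing construction $\sigma'|_{C_{(n)}}=\id$, $\sigma'|_{D}=\tau$ on the large subcoalgebra $C_{(n)}+D$ followed by extension to $\aut(C)$, with the abelian factor identified through $\Gal(C_{(n+1)}/C_{(n)})\cong\Omega_n^*(Q)/\Omega_{n+1}^*(Q)$. The only cosmetic difference is that you map directly onto the quotient $\Gal(D/D_{(n)})/\Gal(D/D_{(m)})$ rather than phrasing the isomorphism as a quotient of quotients, and you correctly flag the identity $D_{(n)}=D\cap C_{(n)}$ that the paper uses implicitly.
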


\begin{proof} (1) Suppose that $E$ is $\aut(C;D)$-invariant. Then $\Res^E_D$ is surjective, since any $\sigma\in\aut(D)$ extends to some $\sigma'\in\aut(C;D)$, and $E$ is $\sigma'$-invariant means that $\sigma'$ is an automorphism of $E$ when restricted to $E$. Clearly $\Ker(\Res^E_D)= \Gal(E/D)$ and the isomorphism follows.

Conversely, assume that $E/D$ is a Galois extension and $\Res^E_D$ is surjective. We need to show that for any $\sigma\in\aut(C;D)$, $\sigma(E)\subseteq E$. Since $\Res^E_D$ is surjective, then there exists some $h\in\aut(E, D)$ such that $\Res^E_D(h) = \Res^C_D(\sigma)$. Again by Theorem \ref{thm-large-auto-ext}, $h = \Res^C_E(h')$ for some $h'\in\aut(C; E)$. Clearly $h'\in \aut(C;D)$. Now we have $\sigma^{-1}h'\in\Gal(C/D)$. By definition, $E/D$
is a Galois extension implies that $E$ is $\sigma^{-1}h'$-invariant, together with the fact $E$ is $h'$-invariant we obtain that $E$ is $\sigma$-invariant.

(2) This is a special case of (1). Since $C$ and $C_{(n)}$, $n\ge 0$ are all $\aut(C)$-invariant.

(3) Observe that $\aut(D;D_{(n)})=\aut(D)$ for any $n\ge 0$. Applying the isomorphism theorem of groups to $\Res^D_{D_{(n)}}$, we get the required isomorphism.

(4) Note that we have $\Res^C_D$ induces an epimorphism $\Gal(C/C_{(n)};D)\twoheadrightarrow \Gal(D/D_{(n)})$, the reason is as follows. For any $\sigma\in \Gal(D/D_{(n)})$, we extend $\sigma$ to $\sigma'\in\aut(D+C_{(n)})$ by setting $\sigma'|_D = \sigma$ and $\sigma'|_{C_{(n)}}= \id_{C_{(n)}}$, and extend $\sigma'$ to $\sigma''\in\aut(C)$. Clearly $\sigma''\in\Gal(C/C_{(n)};D)$ and $\Res^C_D(\sigma'')=\sigma$. The isomorphism follows easily.

It is direct to show that $\frac{\Gal(C/C_{(n)};D)}{\Gal(C/(C_{(m)});D)}$ is a subgroup of $\frac{\Gal(C/C_{(n)})}{\Gal(C/(C_{(m)}))}$, which is easy shown to be isomorphic to $\Gal(C_{(m)}/C_{(n)})$. Now \[\frac{\Gal(D/D_{(n)})}{\Gal(D/D_{(m)})} \cong \frac{\Gal(C/C_{(n)};D)}{\Gal(C/(C_{(n)}+ D_{(m)});D)}
\cong \frac{\frac{\Gal(C/C_{(n)};D)}{\Gal(C/C_{(m)};D)}}{\frac{\Gal(C/(C_{(n)}+ D_{(m)});D)}{\Gal(C/C_{(m)};D)}},\]
and hence $\frac{\Gal(D/D_{(n)})}{\Gal(D/D_{(m)})}$ is a subquotient group of $\Gal(C_{(m)}/C_{(n)})$. Particularly, $\frac{\Gal(D/D_{(n)})}{\Gal(D/D_{(m)})}$ is abelian, for $\Gal(C_{(n+1)}/C_{(n)})$ is isomorphic to the additive group of some $k$-vector space by the same argument used in the proof of Lemma \ref{lem-auto-coalg-factorgp}. Note that $\Gal(C/C_{(n)})$ is just the same as $\Omega^*_n(Q)$ defined in last section, and therefore $\Gal(C_{(n+1)}/C_{(n)}) \cong \Omega^*_n(Q)/\Omega^*_{n+1}(Q)$.
\end{proof}

Given a large subcoalgebra $D$ of $kQ^c$, we set $D_\splus = kQ_{\ge1}\cap D$ and $(D_\splus)_{s,t} = (kQ_{\ge1})_{s,t}\cap D$ for any $s,t\in Q_0$. Clearly $D= D_0 \bigoplus \bigoplus\limits_{s,t\in Q_0} (D_\splus)_{s,t}$. By applying Proposition \ref{prop-transdata-galgp}, we obtain a generalization of Proposition \ref{prop-dim-auto-coalg}.

\begin{cor} Let $Q$ be a finite quiver, $C=kQ^c$ and $D\subseteq C$ a finite dimensional large subcoalgebra. Assume $\aut(C; D)=\aut(C)$. Then
\[\dim_k(\aut(D)) = \sum_{s,t\in Q_0}|(Q_1^\tau)_{s,t}| |(D_\splus)_{s,t}|.\]
\end{cor}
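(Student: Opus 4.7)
The plan is to realise $\aut(D)$ as a quotient of $\aut(C_{(n)};D)$ for $n$ large enough that $D\subseteq C_{(n)}$, and then compute its dimension via the trans-datum parametrisation of the kernel.

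Because $D$ is finite dimensional and large, its coradical filtration terminates, so $D\subseteq C_{(n)}$ for some $n\ge 1$; fix such an $n$. Then $C_{(n)}=kQ^c_{\le n}$ is finite dimensional and Proposition \ref{prop-dim-auto-coalg} gives $\dim\aut(C_{(n)})=\sum_{s,t}|(Q_1^\tau)_{s,t}|\cdot|(P_{\le n})_{s,t}|$. First I would show that the restriction map $\aut(C_{(n)};D)\to\aut(D)$ is a surjective homomorphism of algebraic groups with kernel $\Gal(C_{(n)}/D)$. Surjectivity follows by lifting: any $\tau\in\aut(D)$ extends via Theorem \ref{thm-large-auto-ext} to $\tilde\tau\in\aut(C)$; the hypothesis $\aut(C;D)=\aut(C)$ forces $\tilde\tau(D)=D$; and the restriction $\tilde\tau|_{C_{(n)}}$ (a coalgebra automorphism by Lemma \ref{corad}) lies in $\aut(C_{(n)};D)$ and maps to $\tau$. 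The same lifting argument applied to elements of $\aut_0(C_{(n)})$ yields $\aut_0(C_{(n)})\subseteq\aut(C_{(n)};D)$; since $\aut_0(C_{(n)})$ is the connected identity component of $\aut(C_{(n)})$ (the deformation proof of Proposition \ref{prop-identity-cpt-aut-coalg} transfers verbatim to the truncated setting), it is also the identity component of $\aut(C_{(n)};D)$, so $\dim\aut(C_{(n)};D)=\sum_{s,t}|(Q_1^\tau)_{s,t}|\cdot|(P_{\le n})_{s,t}|$.

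Next I would compute $\dim\Gal(C_{(n)}/D)$ via Proposition \ref{prop-transdata-galgp} (in its $n$-truncated version): $\mbt$ identifies $\Gal(C_{(n)}/D)$ with the set of trans-data $\mu=(\id,\mu_\splus)$ indexed by $P_{\le n}$ with $\mu_\alpha=\alpha$ for every $\alpha\in Q_1$ and $\mu_\splus(x)=0$ for every $x\in D_{\ge 2}$. Because $\mu_p\in\p_{s(p),t(p)}$, the parameter space splits over pairs $(s,t)\in Q_0\times Q_0$, and for each such pair the free parameter is a linear map $\bigoplus_{i=2}^n(kQ_i)_{s,t}\to\p_{s,t}$ annihilating $(D_{\ge 2})_{s,t}$, of dimension $|(Q_1^\tau)_{s,t}|\bigl(|(P_{\le n})_{s,t}|-|(Q_1)_{s,t}|-\dim(D_{\ge 2})_{s,t}\bigr)$. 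Subtracting and using the decomposition $(D_\splus)_{s,t}=(kQ_1)_{s,t}\oplus(D_{\ge 2})_{s,t}$ (which holds because any $z\in(D_\splus)_{s,t}$ splits by length as $z_1+z_{\ge 2}$ with $z_1\in(kQ_1)_{s,t}\subseteq D$, so $z_{\ge 2}=z-z_1\in D\cap(kQ_{\ge 2})_{s,t}=(D_{\ge 2})_{s,t}$), I obtain
\begin{align*}
\dim\aut(D) &= \sum_{s,t}|(Q_1^\tau)_{s,t}|\bigl(|(Q_1)_{s,t}|+\dim(D_{\ge 2})_{s,t}\bigr)\\
 &= \sum_{s,t}|(Q_1^\tau)_{s,t}|\cdot|(D_\splus)_{s,t}|,
\end{align*}
which is the claim.

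The main obstacle I expect is to confirm that $\mbt$ on the described parameter space is an isomorphism of algebraic varieties, so that the linear-dimension count actually equals $\dim\Gal(C_{(n)}/D)$. This should follow from the composition formula \eqref{compositionformula} being polynomial in the free coefficients and from $\Gal(C_{(n)}/D)$ being a unipotent subgroup of the truncated $\Omega^*_{1/2}(Q)$, whose successive filtration factors (by Lemma \ref{lem-auto-coalg-factorgp}) are additive groups of vector spaces; the constraints cutting out $\Gal(C_{(n)}/D)$ are linear at each filtration level, so the resulting variety is an affine space of the computed dimension.
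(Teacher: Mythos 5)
Your proof is correct and follows essentially the route the paper intends: the corollary is stated as a direct application of Proposition \ref{prop-transdata-galgp} together with the quotient description $\aut(D)\cong\aut(C_{(n)};D)/\Gal(C_{(n)}/D)$ from the preceding proposition, which is exactly your reduction to a truncation $C_{(n)}\supseteq D$ followed by a linear dimension count of $\Gal(C_{(n)}/D)$ in trans-datum coordinates. Your use of the hypothesis $\aut(C;D)=\aut(C)$ to get surjectivity of restriction, and the splitting $(D_\splus)_{s,t}=(kQ_1)_{s,t}\oplus(D_{\ge 2})_{s,t}$, fill in the details the paper leaves implicit.
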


\section{Automorphisms of a complete path algebra}

In this section we study the automorphism group of the complete path algebras of a finite quiver.
The key point is that for a finite quiver, the automorphism group of the path coalgebra and the one of the complete path algebra are isomorphic, hence the tool we developed for automorphisms of path coalgebras applies in this case.

Notice that most results on path coalgebras in this section hold true for any quiver, while when dealing with algebras, we always require the quiver to be finite.

\subsection{$\aut(\widehat{kQ^a})\cong \aut(kQ^c)$}

Let $\mathrm{Coalg}$ and $ \mathrm{Alg}$ denote the category of coalgebras and the one
of algebras respectively. Sweedler showed that there is a contravariant functor
$()^*\colon \mathrm{Coalg}\longrightarrow \mathrm{Alg}$.
For any coalgebra $C$, the dual algebra $C^*$ has underlying
vector space $\hom_k(C, k)$ and multiplication given by
the convolution map, see \cite[Section 1]{sw}; and for each coalgebra map $f$, $f^*$ is the usual
transpose map of $f$.

Let $A$ be any algebra and set $A^\circ = \{f\in A^*\mid \ker(f)\ \text{contains a cofinite ideal}\}$. Sweedler showed that for any algebra map $f\colon A\longrightarrow B$, the transpose map $f^*$ maps $B^\circ$ into $A^\circ$. By setting $f^\circ\colon B^\circ\longrightarrow A^\circ$ to be the restriction of $f^*$, we get a contravariant functor $()^\circ\colon \mathrm{Alg}\longrightarrow \mathrm{Coalg}$ which is adjoint to $()^*$, see \cite[Theorem 6.0.5]{sw}.

The following crucial result is due to Taft \cite[Proposition 7.1]{ta}.
Recall that a coalgebra $C$ is said to be \textbf{reflexive}
if $(C^*)^\circ= C$.

\begin{lem}\label{lem-coalg-to-alg} Let $C$ be any coalgebra and $A = C^*$ the dual algebra. Then
$()^*$ defined above induces a group monomorphism $()^*\colon \aut(C)\longrightarrow (\aut(A))^{op}$, here $\aut(A)$ is the automorphism group of $A$. If moreover, $C$ is reflexive, then $\aut(A)\cong(\aut(C))^{op}$ under the map $()^*$ and hence $\aut(A) \cong \aut(C)$.
\end{lem}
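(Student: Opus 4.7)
The plan is to establish, in order, (i) that $f\mapsto f^*$ is a well-defined group antihomomorphism $\aut(C)\to\aut(A)$, (ii) that it is injective, and (iii) that under reflexivity it is surjective. The final statement $\aut(A)\cong\aut(C)$ then follows from the general fact that any group is isomorphic to its opposite group via $g\mapsto g^{-1}$, so I would mention this at the end as a one-line observation.

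For (i), functoriality of $()^*$ does all the work: if $f\in\aut(C)$ with inverse $f^{-1}$, then applying $()^*$ to the identities $f\circ f^{-1}=\id_C=f^{-1}\circ f$ and using $(g\circ f)^*=f^*\circ g^*$ gives $f^*\circ (f^{-1})^*=\id_A=(f^{-1})^*\circ f^*$, so $f^*\in\aut(A)$ with inverse $(f^{-1})^*$. The same identity $(g\circ f)^*=f^*\circ g^*$ is exactly the statement that $f\mapsto f^*$ is a homomorphism into $(\aut(A))^{op}$. For (ii), if $f^*=g^*$ then for every $\phi\in A=\Hom_k(C,k)$ and every $c\in C$ one has $\phi(f(c))=f^*(\phi)(c)=g^*(\phi)(c)=\phi(g(c))$; since $C^*$ separates the points of $C$ (equivalently, the canonical map $C\to C^{**}$ is injective), this forces $f=g$.

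The crux is (iii). Given $\psi\in\aut(A)$, I would apply the contravariant functor $()^\circ\colon\mathrm{Alg}\to\mathrm{Coalg}$ to obtain a coalgebra automorphism $\psi^\circ\in\aut(A^\circ)$; reflexivity $A^\circ=(C^*)^\circ=C$ then places $\psi^\circ$ in $\aut(C)$, and the goal is to verify the triangle identity $(\psi^\circ)^*=\psi$. I would do this by a direct calculation: for $\phi\in A$ and $c\in C\hookrightarrow A^*$ (via reflexivity), using the definition $\psi^\circ=\psi^*|_{A^\circ}$, one has
$(\psi^\circ)^*(\phi)(c)=\phi(\psi^\circ(c))=\psi^*(c)(\phi)=c(\psi(\phi))=\psi(\phi)(c)$,
so $(\psi^\circ)^*=\psi$ in $C^*=A$, proving surjectivity.

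The main obstacle is the bookkeeping in step (iii): one must be careful about which element lives in which space under the embedding $C\hookrightarrow C^{**}$ and the reflexivity identification $C=A^\circ$, since the same symbol $\psi^*$ denotes a map $A^*\to A^*$ while $\psi^\circ$ denotes its restriction to $A^\circ$. Conceptually the identity $(\psi^\circ)^*=\psi$ is the counit of the adjunction $()^*\dashv()^\circ$ of \cite{sw} evaluated at $\psi$, but to keep the argument elementary I would write out the hands-on computation above. Everything else is purely formal functoriality.
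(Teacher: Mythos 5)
Your proposal is correct. The paper gives no proof of this lemma at all --- it simply cites Taft's Proposition 7.1 and only remarks afterwards that the final isomorphism $\aut(A)\cong\aut(C)$ comes from $G\cong G^{op}$ via $g\mapsto g^{-1}$ --- and your argument (antihomomorphism and invertibility from functoriality of $()^*$, injectivity because $C^*$ separates the points of $C$, surjectivity by applying $()^\circ$ and checking $(\psi^\circ)^*=\psi$ under the reflexivity identification $C=(C^*)^\circ$) is exactly the standard one that the citation points to, with the one delicate point, namely that evaluating $\phi$ at $\psi^\circ(c)\in C$ means evaluating $\psi^*(c)\in C^{**}$ at $\phi$ under the canonical embedding, handled correctly.
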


Note that the last isomorphism follows from the fact that $G\cong G^{op}$ under the map $g\mapsto g^{-1}$ for any group $G$.
Heyneman and Radford showed that if $C$ is a finitary coalgebra, that is $C_{(1)}$ is finite dimensional, then $C$ is reflexive, see \cite[Theorem 4.1.1]{hr}. Thus we have the following consequence.

\begin{cor}\label{cor-anti-iso} Let $Q$ be a finite quiver. Set $C = kQ^c$ and $A= C^*$. Let $D$ be a large subcoalgebra of $C$ and $B = D^* = A/D^\sperp$ the dual algebra. Then
$()^*\colon \aut(D)\longrightarrow \aut(B)$ is an anti-isomorphism of groups. Particularly,
$\aut(\widehat{kQ^a})\cong\aut(kQ^c)\cong\Omega^*(Q)$.
\end{cor}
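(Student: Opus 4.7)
The plan is to reduce the statement to Lemma \ref{lem-coalg-to-alg} via the Heyneman--Radford reflexivity criterion, and then chain the resulting isomorphism with Theorem \ref{thm-composion-map}. The main ingredient to verify is that the large subcoalgebra $D$ is reflexive in the sense of $(D^*)^\circ = D$.

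First, I would check that $D$ is finitary. Since $Q$ is a finite quiver, $kQ_{\le 1} = kQ_0 \oplus kQ_1$ is finite dimensional. By definition of a large subcoalgebra, $D \supseteq kQ_{\le 1}$, and since $D$ is a subcoalgebra of $kQ^c$ the coradical filtration of $D$ is just the restriction of that of $kQ^c$, giving $D_{(1)} = D \cap kQ^c_{(1)} = kQ_{\le 1}$, which is finite dimensional. By the Heyneman--Radford result cited just before Corollary \ref{cor-anti-iso}, $D$ is reflexive. Hence Lemma \ref{lem-coalg-to-alg} applies to $D$ and yields a group isomorphism
\[
 ()^*\colon \aut(D) \xrightarrow{\ \cong\ } (\aut(B))^{op},
\]
where $B = D^*$. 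Since $B = D^* \cong C^*/D^\sperp = A/D^\sperp$ by the basic duality lemma in Section~1.3, this is precisely the anti-isomorphism $\aut(D) \to \aut(B)$ claimed in the statement.

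For the particular consequences, I take $D = kQ^c$ itself, which is trivially a large subcoalgebra, and note that $B = (kQ^c)^* \cong \widehat{kQ^a}$ as algebras, again by the Chin--Montgomery/duality lemma recalled in Section~1.3. The previous paragraph then gives an anti-isomorphism $\aut(kQ^c) \to \aut(\widehat{kQ^a})$, which becomes an isomorphism of groups upon composition with the inversion $g \mapsto g^{-1}$ (valid in any group). Finally, Theorem \ref{thm-composion-map} supplies the isomorphism $\mathbbm{f}\colon \Omega^*(Q) \xrightarrow{\cong} \aut(kQ^c)$, and concatenating the two yields $\aut(\widehat{kQ^a}) \cong \aut(kQ^c) \cong \Omega^*(Q)$.

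There is essentially no obstacle here: the only nonformal point is recognising that the finiteness of $Q$ forces every large subcoalgebra to be finitary, so that reflexivity is automatic. All other steps are direct citations of Taft's lemma, the duality between $\widehat{kQ^a}$ and $kQ^c$, and the main result of Section~2.
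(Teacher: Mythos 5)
Your proof is correct and follows exactly the paper's route: the corollary is deduced from Lemma \ref{lem-coalg-to-alg} by observing that a large subcoalgebra $D$ of $kQ^c$ for finite $Q$ has $D_{(1)}=kQ_{\le 1}$ finite dimensional, hence is reflexive by the Heyneman--Radford criterion quoted immediately before the statement. The remaining identifications ($B\cong A/D^\sperp$, the inversion trick $G\cong G^{op}$, and $\Omega^*(Q)\cong\aut(kQ^c)$ via Theorem \ref{thm-composion-map}) are the same ones the paper uses.
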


\begin{rem} An easy consequence is that $\aut(kQ^c)\cong \aut(kQ^a)$
for any finite acyclic quiver $Q$, for in this case $kQ^a=\widehat{kQ^a}$.
It is worth emphasizing that we do not have such an isomorphism in general case when
$Q$ is not acyclic, $()^*$ even does not give a well-defined
map $\aut(kQ^c)\longrightarrow \aut(kQ^a)$. The reason is that for any $\sigma\in\aut(kQ^c)$,
$\sigma^*\in\aut(\widehat{kQ^a})$ induces an automorphism of the subalgebra $kQ^a$ if and only if $\sigma^*(kQ^a)= kQ^a$,
while this does not hold true in general; see Example \ref{polynomial-alg} below.
\end{rem}

Recall that for any finite quiver $Q$, an element $(a_p)_{p\in Q}$ in $\widehat{kQ^a}$ can be written as an infinite sum $\sum_{p\in Q} a_p\bar p $. For any $\sigma\in \aut(C)$, $\sigma^*$ is determined by $\sigma^*(\overline x) = \sum_{p\in Q}(\overline{x}, \sigma(p))\overline p$ for any path $x\in Q$, here $(-,-)$ is the paring given in Section 1.3. Applying the pairing again one obtains $\sigma(p) = \sum_{x\in Q} (\sigma^*(\overline x), p)x$. Consequently, the automorphism of the complete path algebra corresponding to a trans-datum $\mu$ is given by $\f_\mu^*(\overline x) = \sum_{p\in Q}(\overline{x}, \f_\mu(p))\overline p$.

\subsection{Inner automorphisms}

Let $Q$ be an arbitrary quiver and $C=kQ^c$. Consider the following subgroups of $\Omega^*_0(Q)$, say
\begin{gather*}
 \iomega^*(Q) =  \{(\id_{Q_0},\mu_\splus)\mid \exists k_i\in k^\stimes,  i\in Q_0, \mu^1_\alpha = \frac{k_{s(\alpha)}}{k_{t(\alpha)}}\alpha, \forall \alpha\in Q_1,  \mu^1_p = 0, \forall p\in Q_{\ge2} \}, \\
 \iomega_\circ^*(Q) =  \{(\id_{Q_0},\mu_\splus)\mid \mu^1_\alpha = \alpha, \forall \alpha\in Q_1, \mu^1_p = 0, \forall p\in Q_{\ge2} \}, \\
 \iomega_0^*(Q) =  \{(\id_{Q_0},\mu_\splus)\mid \exists k_i\in k^\stimes, \forall i\in Q_0, \mu_\alpha = \frac{k_{s(\alpha)}}{k_{t(\alpha)}}\alpha, \forall \alpha\in Q_1, \mu_p = 0, \forall p\in Q_{\ge2} \},
\end{gather*}which are of special interest to us.
The reason is that calculations within such groups are much simplified, and more importantly, they are quite helpful in understanding the whole automorphism group.

In case $Q$ is finite, $\iomega^*(Q)$ corresponds to the inner automorphism group of $\widehat{kQ^a}$. For this reason, we call elements in  $\iomega^*(Q)$ inner trans-data and the corresponding automorphisms of $kQ^c$ \textbf{inner automorphisms}.
The following result is practical. Recall that in Section 2.1, we have associated $k$-linear maps  $\mu_0$, $\mu^0$, $\mu^1$, $\mu_\splus$ and $\cmu$ to each trans-datum $\mu$ .

\begin{prop}\label{prop-inn-autocoalg-formular} Let $Q$ be an arbitrary quiver and $C=kQ^c$.
\begin{enumerate}
\item[\rm(1)] Given $\mu\in \iomega_\circ^*(Q)$, then for any $x\in C$,
\begin{align*}\f_\mu(x)&= \mu_0(x) + \mu_\splus(x) + \cmu(x_{(2)})x_{(1)} - \cmu(x_{(2)})x_{(1)}\\ &+
\sum_{r \ge 3}\left[\cmu(x_{(3)})\cdots\cmu(x_{(r)}) \right]\cdot\left[ \cmu(x_{(2)})\mu^0(x_{(1)}) +\cmu(x_{(2)}) x_{(1)} -  \cmu(x_{(1)}) x_{(2)} \right].
\end{align*}

\item[\rm(2)] Given $\mu\in\iomega_0^*(Q)$. Suppose there exists some $\{k_i\in k\}_{i\in Q_0}$ such that $\mu_\alpha = \frac{k_{s(\alpha)}}{k_{t(\alpha)}}\alpha$ for all $\alpha\in Q_1$. Then for any $s, t\in Q_0$ and $x\in kQ_{s,t}$, $f(x) = \frac{k_{s}}{k_{t}}x$.
\end{enumerate}
\end{prop}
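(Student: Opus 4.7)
The starting point for both parts is Proposition \ref{endoformulae2}, which gives
\[\f_\mu(x) = \mu_0(x) + \sum_{r\ge 1} F\bigl(\mu_\splus(x_{(1)})\Box\cdots\Box\mu_\splus(x_{(r)})\bigr).\]
The plan is to substitute $\mu_\splus = \mu^0 + \mu^1$, exploit the very restrictive form of $\mu^1$ in each case, and read off what $F$ does to the resulting cotensors of new and old arrows in the augmented quiver $Q^\tau$.

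Part (2) is the easier half and I would do it first. Since $\mu\in\iomega_0^*(Q)$, we have $\mu^0 = 0$ and $\mu_p = 0$ for every path of length $\ge 2$, so $\mu_\splus = \mu^1$ vanishes outside linear combinations of arrows. By linearity it suffices to take $x = \alpha_1\cdots\alpha_n$ a path of length $n\ge 1$ (the case $n = 0$ is immediate from $\mu_0 = \id$). In the Sweedler expansion of $\Delta^{r-1}(x)$, the only tuple on which every slot is non-trivial while each slot is a single arrow is the unique decomposition into individual arrows at $r = n$, giving
\[\f_\mu(x) = F(\mu_{\alpha_1}\Box\cdots\Box\mu_{\alpha_n}) = \mu_{\alpha_1}\cdots\mu_{\alpha_n},\]
since each $\mu_{\alpha_i}\in kQ_1$ and $F$ is the identity on $kQ$. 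The prescribed scalars $\mu_{\alpha_i} = \tfrac{k_{s(\alpha_i)}}{k_{t(\alpha_i)}}\alpha_i$ telescope over consecutive arrows to $\tfrac{k_s}{k_t}\cdot\alpha_1\cdots\alpha_n$, as claimed.

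For part (1), $\mu^1$ is the projection from $kQ$ onto $kQ_1$ (identity on arrows, zero on $e_i$ and on longer paths), and $\mu^0(x_{(i)}) = \cmu(x_{(i)})(e_{s(x_{(i)})}-e_{t(x_{(i)})})$. I would expand each $\mu_\splus(x_{(i)}) = \mu^0(x_{(i)}) + \mu^1(x_{(i)})$ inside the cotensor of length $r$, obtaining $2^r$ contributions. Viewing each $\mu^0(x_{(i)})$ as $\cmu(x_{(i)})\cdot F(e_{s,t})$, each of the $2^r$ summands becomes the image under $F$ of a product of old arrows (from the $\mu^1$-slots) and new arrows (from the $\mu^0$-slots) in $Q^\tau$. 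Now invoke the case-by-case definition of $F$ from Lemma~\ref{keylemma}: the values $F(p) = p$, $F(pe) = p$, $F(e_{s,s(p)}p) = -p$, $F(e_{s,s(p)}pe) = -p$, and $F = 0$ otherwise kill every pattern in which an old-arrow block is flanked on both sides (interior or boundary) by separate new-arrow blocks. Only four surviving configurations remain: all slots are $\mu^1$ (contributing $\mu_\splus$-type terms); a single block of new arrows occupies the far left, the far right, or the entire cotensor. Collecting these and rewriting the length-$r$ contributions in Sweedler notation (factoring out the scalar $\cmu(x_{(3)})\cdots\cmu(x_{(r)})$ coming from a maximal run of $\mu^0$-slots) repackages the sum into the three pieces displayed in the statement.

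The main obstacle is the combinatorial bookkeeping in part (1): one must decide, for every arrangement of $\mu^0$ and $\mu^1$ slots inside the cotensor, which clause of $F$ applies, and then reindex surviving terms in terms of $\cmu$, $\mu^0$ and the iterated coproduct $\Delta^{r-1}$. Once it is recognised that only configurations with at most one maximal block of new arrows — and that block touching the boundary — contribute, the collapse into the closed form becomes routine.
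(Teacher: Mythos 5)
Your overall strategy is the one the paper itself points to: the paper omits the proof of this proposition entirely, saying only that one reduces to the case where $x$ is a path and argues as in Proposition \ref{endoformulae2}, which is precisely what you do. Your treatment of part (2) is complete and correct: with $\mu^0=0$ and $\mu_p=0$ for $l(p)\ge 2$, only the finest decomposition into arrows survives, $F$ acts as the identity on old paths, and the scalars telescope.

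Part (1), however, has a genuine gap in the combinatorial step on which everything else rests. You assert that the clauses of $F$ ``kill every pattern in which an old-arrow block is flanked on both sides \ldots by separate new-arrow blocks'' and that only four configurations survive, namely all-$\mu^1$ or a single boundary block of new arrows. This contradicts the very list of values you quote: $F(e_{s,s(p)}pe)=-p$ is nonzero, so the configuration with a single new arrow on the left of an old block \emph{and} a block of new arrows on its right does survive and contributes (for example, in $x=\alpha\beta\gamma$ the summand $\mu^0(\alpha)\Box\mu^1(\beta)\Box\mu^0(\gamma)$ yields $-\cmu_\alpha\cmu_\gamma\,\beta$, which is exactly what produces the product $\cmu(x_{(3)})\cdots\cmu(x_{(r)})$ multiplying the term $-\cmu(x_{(1)})x_{(2)}$ in the stated formula). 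In the other direction, your description is too generous at the left boundary: $F$ vanishes on $e_{i_1,i_2}e_{i_2,i_3}\cdots p$ with two or more leading new arrows, so only a \emph{single} new arrow may precede an old block, not an arbitrary block. Since your final step is ``collect the surviving configurations and reindex,'' an incorrect inventory of those configurations means the collapse to the closed form cannot go through as written; the five surviving types are: all old, all new, old followed by a trailing new block, one new arrow followed by old, and one new arrow followed by old followed by a trailing new block. (Separately, you accept the displayed formula at face value; note that the term $\cmu(x_{(2)})x_{(1)}-\cmu(x_{(2)})x_{(1)}$ is identically zero as printed, so any honest verification against the corrected inventory would have to confront what the intended $r=2$ term is.)
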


Again we use the Sweedler's notation $\Delta^{n}(x)=\sum x_{(1)}\otimes \cdots \otimes x_{(n+1)}$ here. For a proof we need only to check the case that $x$ is a path, which can be done by using a similar argument as in Proposition \ref{endoformulae2} and we omit it here.

The subgroup $\f(\iomega^*(Q))$ is called the \textbf{inner automorphism group} of $C$ and denoted by $\inn(C)$.
We also set $\inn_\circ(C) = \f( \iomega_\circ^*(Q))$ and $\inn_0(C)=\f(\iomega^*_0(Q))$.
Now a basic property of $\inn(C)$ follows easily.

\begin{cor}\label{cor-inn-preserving} Let $Q$ be an arbitrary quiver and $C=kQ^c$. Then $\inn(C)\subseteq \aut(C;D)$ for any subcoalgebra $D\subseteq C$.
\end{cor}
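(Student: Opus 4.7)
The plan is to establish a ``master conjugation formula'' expressing each $\sigma\in\inn(C)$ as
\[
\sigma(c) \;=\; \sum u(c_{(1)})\, c_{(2)}\, v(c_{(3)}) \;=\; (u\otimes\sid\otimes v)\bigl(\Delta^{(2)}(c)\bigr),
\]
for every $c\in C$, where $u,v\in C^*$ are mutual convolution inverses (so $v=u^{*-1}$) and Sweedler notation $\Delta^{(2)}(c)=\sum c_{(1)}\otimes c_{(2)}\otimes c_{(3)}$ is used. This is the coalgebra-theoretic mirror, under Sweedler duality, of the fact that inner automorphisms of an algebra are conjugations by units. Granting the master formula, the Corollary is immediate: for a subcoalgebra $D\subseteq C$ and $c\in D$, iterating $\Delta(D)\subseteq D\otimes D$ yields $\Delta^{(2)}(c)\in D^{\otimes 3}$, so $\sigma(c)$ is a linear combination of middle-factor vectors in $D$ with scalar coefficients $u(c_{(1)})v(c_{(3)})$, hence $\sigma(c)\in D$; applying the same argument to $\sigma^{-1}\in\inn(C)$ gives $\sigma^{-1}(D)\subseteq D$ as well, whence $\sigma(D)=D$ and $\sigma\in\aut(C;D)$.

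To produce $u$ and $v$ from $\mu\in\iomega^*(Q)$ with data $\{k_i\in k^\stimes\}_{i\in Q_0}$ and $\{\cmu_p\in k\}_{p\in P}$, I would define $u_\mu\in C^*$ by $u_\mu(e_i)=k_i$ and $u_\mu(p)=-k_{t(p)}\cmu_p$ for $p\in P$. Since $u_\mu(e_i)\neq 0$ for all $i\in Q_0$, $u_\mu$ is convolution-invertible in $C^*$: its inverse $u_\mu^{*-1}$ is defined recursively on paths by solving $(u_\mu*u_\mu^{*-1})(p)=0$ for each $p\in P$, determining $u_\mu^{*-1}(p)$ from the known values on subpaths of $p$. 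The identity
\[
\f_\mu(p) \;=\; \sum u_\mu(p_{(1)})\,p_{(2)}\,u_\mu^{*-1}(p_{(3)})
\]
is then checked by induction on $l(p)$: the cases $l(p)\leq 1$ are direct computations, and the inductive step compares the cotensor expansion $\f_\mu(p)=\sum_{p=p_1\cdots p_r,\,p_i\in P}F(\mu_{p_1}\Box\cdots\Box\mu_{p_r})$ of Proposition \ref{endoformulae2} against the Sweedler expansion $\sum_{p=q_1q_2q_3,\,q_i\in Q}u_\mu(q_1)\,q_2\,u_\mu^{*-1}(q_3)$ of $\Delta^{(2)}(p)$.

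The main obstacle is the combinatorial bookkeeping in that inductive step: the left side encodes the ``correction'' terms $\mu^0_q=\cmu_q(e_{s(q)}-e_{t(q)})$ through the operator $F$, while the right side encodes the same corrections through the recursive functional $u_\mu^{*-1}$; matching the two requires showing that the contributions from $u_\mu$ on the first Sweedler factor and from $u_\mu^{*-1}$ on the third factor telescope to reproduce the $e_{s(q)}-e_{t(q)}$ terms generated by $F$. A cleaner alternative available in the case of finite $Q$ is to invoke the Sweedler-dual isomorphism $\aut(\widehat{kQ^a})\cong\aut(kQ^c)$ of Corollary \ref{cor-anti-iso}: an inner automorphism $a\mapsto u\,a\,u^{-1}$ of $\widehat{kQ^a}$ dualizes precisely to the Sweedler conjugation form displayed above, and the conclusion $\sigma(D)\subseteq D$ then mirrors the elementary fact that inner automorphisms preserve every ideal. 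For arbitrary $Q$, since both sides of the master formula depend on only finitely many coefficients of $\mu$ and $c$ at a time, one may reduce to a finite subquiver containing all the paths involved.
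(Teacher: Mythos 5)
Your proof is correct, and its core mechanism is the same one the paper uses: write an inner automorphism so that its value on $c$ is a linear combination of Sweedler components of $c$ with scalar coefficients, whence $\Delta^{(2)}(c)\in D^{\otimes 3}$ forces $\sigma(c)\in D$. The difference is in which formula carries that mechanism. The paper deduces the corollary from Proposition \ref{prop-inn-autocoalg-formular}, an unstructured expansion of $\f_\mu(x)$ in terms of the scalars $\cmu(x_{(i)})$, whereas you use the convolution-conjugation form $\sigma(c)=\sum u(c_{(1)})c_{(2)}u^{*-1}(c_{(3)})$. That form is not available at this point of the paper; it is precisely the content of Lemma \ref{lem-inn-alg-coalg} together with Proposition \ref{prop-cor-inn-sbgps}, whose computation of $\chi_a=\f_\mu^*$ with $a=\sum_i k_i\overline{e_i}-\sum_{p\in P}k_{t(p)}\cmu_p\,\overline{p}$ is the transpose of your claimed identity and confirms your choice of $u_\mu$. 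Neither of those later results depends on this corollary, so there is no circularity, and your duality shortcut for finite $Q$ is legitimate. Your packaging buys two things: the inverse of conjugation by $(u,u^{*-1})$ is conjugation by $(u^{*-1},u)$, so $\sigma^{-1}(D)\subseteq D$ and hence $\sigma(D)=D$ comes for free without invoking that $\iomega^*(Q)$ is a subgroup; and for arbitrary $Q$ the path-length induction works directly, the finite-subquiver reduction being needed only for the duality route (where one should note $\f_\mu(p)\in kp+ke_{s(p)}+ke_{t(p)}$, so full subquivers are preserved). The one step you leave as bookkeeping, the inductive verification of the master identity, is also left unverified in the paper's own treatment of the corresponding Proposition \ref{prop-inn-autocoalg-formular}; your base case is right and the induction is the transpose of the computation carried out in Proposition \ref{prop-cor-inn-sbgps}, so this is not a gap in substance.
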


\begin{rem}We remark that the converse of the corollary may not hold, say an automorphism $\sigma\in\bigcap\limits_{D\subseteq C}\aut(C;D)$ needs not to be an inner automorphism. For example,
consider the quiver $Q$ given by exactly one vertex with a loop attached. In this case, all subcoalgebras of $kQ^c$ are given by $kQ^c_{(n)}$, $n\ge 0$. Obviously each automorphism of $kQ^c$
gives an automorphism of any $kQ^c_{(n)}$, while $kQ^c$ has no nontrivial inner automorphism.
\end{rem}

\begin{prop}\label{prop-inn-coalg-is-normal} Let $Q$ be an arbitrary quiver. Then $\iomega^*(Q)\lhd\Omega^*(Q)$, $\iomega^*_\circ(Q)\lhd\Omega^*(Q)$ and $\iomega^*(Q) = \iomega^*_\circ(Q) \rtimes \iomega^*_0(Q)$.
\end{prop}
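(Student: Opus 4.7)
The plan is to verify the three assertions by working entirely on the trans-datum side, relying on the composition formula \eqref{compositionformula} and the explicit expressions for $\f_\mu$ in Proposition \ref{prop-inn-autocoalg-formular}. I would first verify that the three sets are subgroups of $\Omega^*(Q)$: for $\iomega_0^*(Q)$ this is immediate from Proposition \ref{prop-inn-autocoalg-formular}(2), since an element $\lambda$ with scaling $\{k_i\}$ acts on every path $x$ by $\f_\lambda(x) = (k_{s(x)}/k_{t(x)})\,x$ and composition multiplies scalings coordinate-wise; for $\iomega_\circ^*(Q)$, closure under composition is a direct application of \eqref{compositionformula} to trans-data with $\mu^1_\alpha = \alpha$ and $\mu^1_p = 0$ for $l(p)\ge 2$, and an induction on $l(p)$ shows both constraints persist, with invertibility produced by the same inductive construction.

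Next I would establish the factorization $\iomega^*(Q) = \iomega_\circ^*(Q)\cdot\iomega_0^*(Q)$ with trivial intersection. Given $\mu \in \iomega^*(Q)$ with scaling $\{k_i\}$, pick $\lambda \in \iomega_0^*(Q)$ with the same scaling; \eqref{compositionformula} combined with the fact that $\lambda^{-1}$ acts by $k_{t(p)}/k_{s(p)}$-scaling on each path yields $(\mu\circ\lambda^{-1})^1_\alpha = \alpha$ and $(\mu\circ\lambda^{-1})^1_p = 0$ for $l(p)\ge 2$, so $\mu\circ\lambda^{-1}\in\iomega_\circ^*(Q)$. Triviality of the intersection is immediate because any element of $\iomega_\circ^*(Q)\cap\iomega_0^*(Q)$ simultaneously satisfies $\mu^1_\alpha = \alpha = (k_{s(\alpha)}/k_{t(\alpha)})\alpha$ together with $\mu_p = 0$ for $l(p)\ge 2$, forcing $\mu = \1$.

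For normality, take $\sigma\in\Omega^*(Q)$ and $\mu\in\iomega^*(Q)$, and set $\nu = \sigma\circ\mu\circ\sigma^{-1}$, so $\nu_0 = \id$. By Lemma \ref{corad} the coradical filtration is preserved by $\sigma$ and $\sigma^{-1}$; tracking the length-$1$ contribution through \eqref{compositionformula} gives $\nu^1_p = 0$ for $l(p)\ge 2$. For $\alpha \in Q_1$ the length-$1$ component of $\nu$ is determined by the induced action on $C_{(1)}/C_{(0)} \cong kQ_1$; since $\mu$ acts there as a diagonal scaling with respect to the decomposition $kQ_1 = \bigoplus_{s,t\in Q_0}k(Q_1)_{s,t}$, and this diagonal form (up to permutation of vertices) is preserved under conjugation by graded automorphisms, $\nu^1_\alpha = (k'_{s(\alpha)}/k'_{t(\alpha)})\alpha$ for a new family $\{k'_i\}$, so $\nu\in\iomega^*(Q)$. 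The specialization $k_i\equiv 1$ gives $\iomega_\circ^*(Q)\lhd\Omega^*(Q)$, and combining with the factorization and triviality of the intersection yields $\iomega^*(Q)=\iomega_\circ^*(Q)\rtimes\iomega_0^*(Q)$.

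The main obstacle I anticipate is the bookkeeping in \eqref{compositionformula} when $\mu^0$ is nontrivial, because the correction map $F$ and the iterated cotensor products can generate many terms. I would confine these calculations to small length cases (the scaling at length $1$, then an induction on length for the higher part) and rely wherever possible on the cleaner identity $\f_\nu\circ\f_\mu = \f_{\nu\circ\mu}$ available when $\mu^0 = 0$, which is the lemma appearing immediately before Theorem \ref{thm-composion-map}.
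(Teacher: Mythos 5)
Your subgroup verifications, the factorization $\iomega^*(Q)=\iomega^*_\circ(Q)\cdot\iomega^*_0(Q)$ via $\mu\mapsto\mu\circ\lambda^{-1}$ with $\lambda\in\iomega^*_0(Q)$ carrying the same scaling, and the triviality of $\iomega^*_\circ(Q)\cap\iomega^*_0(Q)$ are all correct (indeed more explicit than the paper's ``it is easy to show''). The gap is in the normality argument, at the single sentence ``tracking the length-$1$ contribution through \eqref{compositionformula} gives $\nu^1_p=0$ for $l(p)\ge 2$.'' That claim is the entire content of the proposition, and neither tool you invoke can detect it: Lemma \ref{corad} only controls where $\f_\nu(p)$ sits in the coradical filtration, i.e.\ its top-degree component, and the induced action on $C_{(1)}/C_{(0)}$ only controls $\nu^1_\alpha$ for arrows $\alpha$. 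The defining constraint of $\iomega^*(Q)$ is about the degree-one component of $\nu_p$ for paths of length $\ge 2$, which is invisible to both. Concretely, the trans-datum with $\mu_0=\id$, $\mu_\alpha=\alpha$ for all arrows and $\mu_p=\beta$ for a single length-two path $p$ and an arrow $\beta$ parallel to $p$ acts trivially on $C_{(1)}$ and satisfies $\f_\mu(x)\in x+C_{(l(x)-1)}$ for every path $x$, yet lies outside $\iomega^*(Q)$; so no argument based only on these two invariants can succeed. Moreover, when you expand $\nu=\sigma\circ(\mu\circ\sigma^{-1})$ by \eqref{compositionformula}, the outer $\tilde\sigma$ applied to cotensor products of length $r\ge 2$ genuinely produces degree-one terms, so the vanishing of $\nu^1_p$ is not a one-line bookkeeping matter.

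The mechanism that actually works, and that the paper uses, is a two-step comparison rather than a direct expansion of the triple composite. For $\mu\in\iomega^*_\circ(Q)$ one first checks that $\tilde\mu_q\in kQ_0$ for \emph{every} path $q$ of length $\ge 2$ in the augmented quiver, whence $(\mu\circ\nu)_q-\nu_q\in k(e-e)$ for every $q$; one then uses that left-composing with a fixed $\nu^{-1}$ sends two trans-data agreeing up to degree-zero corrections to trans-data again agreeing up to degree-zero corrections (a multilinearity argument in the cotensor factors, using that $\widetilde{\nu^{-1}}$ of any path containing a new arrow lies in $kQ_0$). Hence $\nu^{-1}\circ\mu\circ\nu$ agrees with $\nu^{-1}\circ\nu=\1$ up to degree zero and lies in $\iomega^*_\circ(Q)$. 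For $\iomega^*(Q)$ the comparison must additionally absorb the scaling factors $k_{s}/k_{t}$, which is a further (omitted but necessary) step. You would need to supply this comparison to close the argument. Finally, note that $\f_\nu\circ\f_\mu=\f_{\nu\circ\mu}$ holds for all trans-data by Theorem \ref{thm-composion-map}, but retreating to the case $\mu^0=0$ cannot help here: the elements of $\iomega^*(Q)$ are exactly those whose nontrivial content sits in $\mu^0$.
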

\begin{proof}
We first show that  $\iomega^*_\circ(Q)\lhd \Omega^*(Q)$. For any
$\mu\in\iomega^*_\circ(Q)$ and $\nu\in\Omega^*(Q)$, \[(\mu\circ\nu)_p = \sum_{p=p_1\cdots p_r, p_i\in P} \tilde \mu(\nu_{p_1}\Box\cdots\Box\nu_{p_r}),\]
and the degree $1$ component of $(\mu\circ\nu)_p$ appears only in the term $\tilde\mu(\nu_p)$, and therefore $(\mu\circ\nu)_p - \nu_p \in k(e_{s(p)}-e_{t(p)})$.
Now it is direct to show that for any nontrivial path $p$, \[(\nu^{-1}\circ(\mu\circ \nu))_p - (\nu^{-1}\circ \nu)_p \in kQ_0, \]
which means that $\nu^{-1}\circ \mu\circ \nu\in\iomega_\circ^*(Q)$ and hence $\iomega^*_\circ(Q)\lhd \Omega^*(Q)$. Similarly, one proves that $\iomega^*(Q)\lhd \Omega^*(Q)$.

It is easy to show that $\iomega^*(Q) = \iomega^*_\circ(Q)\iomega^*_0(Q)$, and the last assertion is obvious since $\iomega^*_\circ$ is normal and $\iomega^*_\circ(Q)\cap\iomega^*_0(Q) = \{\1\}$.
\end{proof}

Let $B$ be an algebra. Denote by $B^\stimes$ the group of multiplicative invertible elements in $B$. For any $b\in B^\stimes$, consider the map $\chi_b\colon B\longrightarrow B$,  $\chi_b(x) = bxb^{-1}$ for any $x\in B$. It is direct to show that $\chi_b$ is an automorphism of $B$, and we call it the \textbf{inner automorphism} induced by $b$. We denote by $\inn(B) = \{\chi_b\mid b\in B^\stimes\}$ the inner automorphism group. $\inn(B)$ is known to be a normal subgroup of $\aut(B)$, and the quotient group $\mathrm{Out}(B) = \aut(B)/\inn(B)$ is called the \textbf{outer automorphism group} of $B$. The map $\chi\colon B^\stimes\longrightarrow \inn(B)$ is usually called the characteristic map. Clearly, $\inn(B)\cong B^\stimes/Z(B)^\stimes$, here $Z(B)$ denotes the center of $B$.

Suppose $B$ is the dual algebra of some coalgebra $D$. The map $()^*\colon \aut(D)\longrightarrow \aut(B)$ considered in Lemma \ref{lem-coalg-to-alg} may not be
surjective in general, while any inner automorphism of $B$ is indeed obtained from an automorphism of $D$, just as shown in the following lemma.

\begin{lem}\label{lem-inn-alg-coalg} Let $D$ be a coalgebra and $B = D^*$ the dual algebra. Then there exists a group homomorphism $\Phi\colon \inn(B)\longrightarrow (\aut(D))^{op}$, such that $(\Phi(\sigma))^* = \sigma$ holds for any $\sigma\in\inn(B)$. In particular, if $D$ is reflexive, then $\Phi=()^\circ$.
\end{lem}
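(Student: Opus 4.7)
The plan is to write $\Phi(\chi_b) = \phi_b$ for each $b\in B^\stimes$, where $\phi_b\colon D \longrightarrow D$ is the $k$-linear map
\[
\phi_b(d) = \sum b(d_{(1)})\, d_{(2)}\, b^{-1}(d_{(3)}),
\]
with $b^{-1}$ the convolution inverse of $b$ in $B$. This is the natural candidate: under $()^*$ one computes $\phi_b^*(f)(d) = f(\phi_b(d)) = \sum b(d_{(1)}) f(d_{(2)}) b^{-1}(d_{(3)}) = (b*f*b^{-1})(d)$, i.e.\ $\phi_b^* = \chi_b$, giving the relation $(\Phi(\sigma))^* = \sigma$ automatically. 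So the substance of the proof lies in verifying that $\phi_b\in\aut(D)$ and that $b\mapsto\phi_b$ passes to a well-defined group homomorphism on $\inn(B)$.

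First I would check that $\phi_b$ is a coalgebra endomorphism. The counit compatibility $\varepsilon\circ\phi_b=\varepsilon$ follows immediately from $b*b^{-1}=\varepsilon$ and the counit axiom. For $\Delta\circ\phi_b=(\phi_b\otimes\phi_b)\circ\Delta$, I would expand both sides using coassociativity and the higher Sweedler notation $\Delta^n$. On the left, $\Delta\phi_b(d) = \sum b(d_{(1)}) d_{(2)}\otimes d_{(3)} b^{-1}(d_{(4)})$ after one split of the middle factor. On the right, expanding gives an expression of the form $\sum b(d_{(1)}) b^{-1}(d_{(3)}) b(d_{(4)}) b^{-1}(d_{(6)})\, d_{(2)}\otimes d_{(5)}$ using $\Delta^5$; the adjacent pair $b^{-1}(d_{(3)}) b(d_{(4)})$ is a convolution $(b^{-1}*b)$ evaluated on the corresponding factor of $\Delta^4$, which collapses to $\varepsilon$. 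Two successive applications of the counit axiom then give exactly the left-hand side. Invertibility follows by verifying $\phi_{b^{-1}}\circ\phi_b = \id_D$: after using that $\phi_b$ is a coalgebra map and the identities $b\circ\phi_b = b$ and $b^{-1}\circ\phi_b = b^{-1}$ (immediate from $\phi_b^*=\chi_b$), the same $(b*b^{-1})=\varepsilon$ collapse yields the identity.

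Next I would verify that $\Phi$ descends to $\inn(B)$. If $\chi_b = \chi_{b'}$ then $b'b^{-1}$ is central in $B$, so $\phi_{b'}^*(f)= b'fb'^{-1}= bfb^{-1} = \phi_b^*(f)$ for all $f\in B$; since $B=D^*$ separates the points of $D$, the map $()^*\colon \operatorname{Coalg}(D,D)\to \operatorname{Alg}(B,B)$ is injective, so $\phi_{b'}=\phi_b$. For the group homomorphism property, the key computation is
\[
(\phi_{b'}\circ\phi_b)^*(f) \;=\; \phi_b^*(\phi_{b'}^*(f)) \;=\; b(b'fb'^{-1})b^{-1} \;=\; (bb') f (bb')^{-1} \;=\; \phi_{bb'}^*(f),
\]
which by the same injectivity yields $\phi_{bb'}=\phi_{b'}\circ\phi_b$. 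Since $\chi_b\chi_{b'}=\chi_{bb'}$ in $\inn(B)$, we get $\Phi(\chi_b\chi_{b'}) = \phi_{b'}\circ\phi_b = \Phi(\chi_{b'})\cdot\Phi(\chi_b)$, which is precisely the multiplication in $\aut(D)^{op}$. The identity $(\Phi(\sigma))^* = \sigma$ then holds by construction.

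Finally, for the reflexive case, identify $D$ with $B^\circ$ under the natural map $d\mapsto(f\mapsto f(d))$. Then by definition $\chi_b^\circ$ is the restriction of $\chi_b^*\colon B^*\to B^*$ to $B^\circ$, so
\[
\chi_b^\circ(d)(f) \;=\; d(\chi_b(f)) \;=\; (bfb^{-1})(d) \;=\; \sum b(d_{(1)}) f(d_{(2)}) b^{-1}(d_{(3)}) \;=\; f(\phi_b(d)),
\]
which is exactly $\phi_b(d)$ regarded as an element of $B^\circ=D$; hence $\Phi=()^\circ$ in this case. The main technical obstacle is the coassociativity/counit bookkeeping that verifies $\phi_b$ is a coalgebra map; once that is done, everything else reduces to the separation property of $()^*$ and short formal identities.
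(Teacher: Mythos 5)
Your proposal is correct and follows essentially the same route as the paper: the paper defines $\Phi(\chi_g)(d)=\sum g(d_{(1)})d_{(2)}g^{-1}(d_{(3)})$ and computes $(\Phi(\chi_g))^*=\chi_g$ exactly as you do, leaving the well-definedness, coalgebra-map, and homomorphism checks as "easy to verify." You have simply filled in those verifications (correctly, via the convolution collapse $b^{-1}*b=\varepsilon$ and the injectivity of $()^*$), so there is nothing to change.
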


\begin{proof} Define $\Phi\colon \inn(B)\longrightarrow \aut(D)$ as follows. For any $g\in B^\stimes$, $\Phi(\chi_g)\colon D\longrightarrow D$ is given by setting $\Phi(\chi_g)(c) = \sum g(d_{(1)})d_{(2)}g^{-1}(d_{(3)})$ for any $d\in D$. It is easy to check that $\Phi(\chi_g)$ is independent of the choice of $g$, and hence $\Phi$ gives a well-defined map, which is easily shown to be a group homomorphism. Moreover, \[(\Phi(\chi_g))^*(f)(d) = f((\Phi(\chi_g))(d)) = \sum g(d_{(1)})f(d_{(2)})g^{-1}(d_{(3)}) = (gfg^{-1})(d) = (\chi_g(f))(d)\]
for any $f\in B$ and $d\in D$, which implies that $ (\Phi(\chi_g))^* = \chi_g$.
\end{proof}

In the rest part of this subsection, $Q$ is assumed to be a finite quiver. Set $C=kQ^c$ and $A=(kQ^c)^* = \widehat{kQ^a}$.
Recall the isomorphism $\aut(C)\cong \aut(A)$. A natural question is to figure out the subgroups $(\inn(C))^*$, $(\inn_\circ(C))^*$ and $(\inn_0(C))^*$ of $\aut(A)$.

To give an answer, we need some notations. Since $kQ^a$ is a subalgebra of $A$, we also use $A_n$ to denote the subspace $kQ_n$, and $A_{\ge n}$ the subspace $\{\sum_{l(p)\ge n} a_p\overline p\}$ for each $n\ge 0$. Thus $\rad(A) = A_{\ge 1}$, and $A_0$ is a subalgebra with $A^\stimes_0 = \{\sum_{i\in Q_0}a_i\overline{e_i}\mid a_i\in k^\stimes, \forall i\}$. We need also the subgroup $A^\stimes_\circ = 1_A + \rad(A)$. Clearly $A^\stimes = A^\stimes_0 + \rad(A)$ and $A^\stimes = A_\circ^\stimes \rtimes A_0^\stimes$. Set $\inn_\circ(A)=\chi(A_\circ^\stimes)$ and $\inn_0(A)=\chi(A_0^\stimes) $.

\begin{prop}\label{prop-cor-inn-sbgps}
Let $Q$ be a finite quiver. Set $C = kQ^c$ and $A=(kQ^c)^*$. Then we have the following one to one correspondences:
\begin{enumerate}
\item[(1)]\quad  \xymatrix @C=2.5pc {\inn_\circ(A) \ar@<+0.4ex>[r]^(.5){ ()^\circ} &\inn_\circ(C) \ar@<+0.4ex>[r]^(.5){ \mathbbm{t}} \ar@<+0.4ex>[l]^(.49){()^*} & \iomega_\circ^*(Q)\ar@<+0.4ex>[l]^(.5){\mathbbm{f}} };

\item[(2)] \quad  \xymatrix @C=2.5pc {\inn_0(A) \ar@<+0.4ex>[r]^(.5){ ()^\circ} &\inn_0(C) \ar@<+0.4ex>[r]^(.5){ \mathbbm{t}} \ar@<+0.4ex>[l]^(.49){()^*} & \iomega^*_0(Q)\ar@<+0.4ex>[l]^(.5){\mathbbm{f}} };

\item[(3)] \quad  \xymatrix @C=2.5pc {\inn(A) \ar@<+0.4ex>[r]^(.5){()^\circ} &\inn(C) \ar@<+0.4ex>[r]^(.5){ \mathbbm{t}} \ar@<+0.4ex>[l]^(.49){()^*} & \iomega^*(Q)\ar@<+0.4ex>[l]^(.5){\mathbbm{f}} }.
\end{enumerate}
\end{prop}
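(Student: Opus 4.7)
The right-hand bijection $\mbt \leftrightarrow \f$ is tautological in all three cases: Theorem~\ref{thm-monomial-extension} furnishes mutually inverse bijections $\aut(C) \leftrightarrow \Omega^*(Q)$, and by definition $\inn(C)$, $\inn_\circ(C)$, $\inn_0(C)$ are exactly the $\f$-images of $\iomega^*(Q)$, $\iomega_\circ^*(Q)$, $\iomega_0^*(Q)$. So the real work lies in the left-hand correspondence. By Corollary~\ref{cor-anti-iso}, the map $()^*\colon \aut(C) \to \aut(A)$ is an anti-isomorphism with inverse $()^\circ$, and it suffices to show that $()^*$ restricts to bijections between the named inner subgroups. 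Moreover, the decompositions $\iomega^*(Q) = \iomega_\circ^*(Q) \rtimes \iomega_0^*(Q)$ of Proposition~\ref{prop-inn-coalg-is-normal} and $A^\stimes = A_\circ^\stimes \rtimes A_0^\stimes$ (which give $\inn(A) = \inn_\circ(A)\inn_0(A)$) together reduce (3) to (1) and (2), so I focus on those.

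For (2), let $\mu \in \iomega_0^*(Q)$ be determined by scalars $\{k_i\}_{i \in Q_0}$. Proposition~\ref{prop-inn-autocoalg-formular}(2) gives $\f_\mu(x) = (k_s/k_t) x$ for $x \in kQ_{s,t}$, and dualizing yields $\f_\mu^*(\overline{p}) = (k_{s(p)}/k_{t(p)}) \overline{p}$. Taking $g := \sum_i k_i \overline{e_i} \in A_0^\stimes$, direct computation with $g\overline{p} = k_{s(p)}\overline{p}$ and $\overline{p}g^{-1} = k_{t(p)}^{-1}\overline{p}$ shows $\chi_g = \f_\mu^*$. Every element of $A_0^\stimes$ arises as such a $g$, so $()^*$ restricts to a bijection $\inn_0(C) \to \inn_0(A)$.

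For (1), given $g \in A_\circ^\stimes = 1_A + \rad(A)$, Lemma~\ref{lem-inn-alg-coalg} provides $\sigma_g := \Phi(\chi_g) \in \aut(C)$ with $\sigma_g^* = \chi_g$, so it suffices to show $\mbt(\sigma_g) \in \iomega_\circ^*(Q)$. I would propose the explicit candidate trans-datum $\nu$ with $\nu_0 = \id_{kQ_0}$, $\nu_\alpha = \alpha - g(\alpha)(e_{s(\alpha)} - e_{t(\alpha)})$ for $\alpha \in Q_1$, and $\nu_p = -g(p)(e_{s(p)} - e_{t(p)})$ for $l(p) \ge 2$, which visibly lies in $\iomega_\circ^*(Q)$, and then prove $\f_\nu = \sigma_g$. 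Using Proposition~\ref{prop-inn-autocoalg-formular}(1) to expand $\f_\nu$ (with $\cmu_p = -g(p)$ for all nontrivial $p$) and the formula $\sigma_g(x) = \sum g(x_{(1)}) x_{(2)} g^{-1}(x_{(3)})$ from Lemma~\ref{lem-inn-alg-coalg}, the two sides should match via the convolution inverse identity $\sum g(x_{(1)})g^{-1}(x_{(2)}) = \varepsilon(x)$; the base cases $x = e_i$ and $x \in Q_1$ are immediate, using $g(e_i) = g^{-1}(e_i) = 1$ and $g(\alpha) + g^{-1}(\alpha) = 0$.

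The main obstacle is the verification $\f_\nu = \sigma_g$ in (1). A cleaner alternative, which I would try first, is to avoid Proposition~\ref{prop-inn-autocoalg-formular}(1) and instead prove $\mbt(\sigma_g)_p = \nu_p$ by strong induction on $l(p)$, using the recursion $\mu_p = \sigma_g(p) - \sum_{r \ge 2,\, p = p_1 \cdots p_r} F(\mu_{p_1} \Box \cdots \Box \mu_{p_r})$ from the proof of Theorem~\ref{thm-monomial-extension}; the inductive step becomes a combinatorial match between the scalar coefficients produced by the map $F$ of Lemma~\ref{keylemma} and those arising from $g^{-1}$ in $\sigma_g(p)$. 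Once (1) and (2) are established, (3) follows at once from the semidirect decompositions cited in the first paragraph.
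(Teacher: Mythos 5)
Your proposal is correct, and its skeleton --- write down the explicit trans-datum of an inner automorphism, verify it by a direct computation, and assemble case (3) from the two factors --- is the same as the paper's; the differences lie in where the computation is carried out. For (1) the paper works entirely on the algebra side: for $a = 1 - \sum_{p\in P}\lambda_p\overline{p}$ it expands $a^{-1} = 1 + (1-a) + (1-a)^2 + \cdots$, writes out $\chi_a(\overline{x})$ term by term, computes $\f_\mu^*(\overline{x}) = \sum_{p}(\overline{x},\f_\mu(p))\overline{p}$ via the pairing and the definition of $F$, and matches coefficients. Your candidate $\nu$ (with $\check\nu_p = -g(p)$) is exactly the paper's $\mu$ (with $\lambda_p = -g(p)$), and your plan to verify $\f_\nu = \Phi(\chi_g)$ on the coalgebra side --- preferably by induction on path length through the recursion of Theorem \ref{thm-monomial-extension} --- is the dual of the same coefficient match; either of your two strategies goes through, though you leave the actual bookkeeping unexecuted, and that bookkeeping is the bulk of the paper's proof. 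For (3) the paper factors $a = xy$ with $x \in A_0^\stimes$ and $y \in 1+\rad(A)$ and then recomputes the composed trans-datum explicitly via the composition formula (\ref{compositionformula}), arriving at the closed form $\mu_\alpha = \frac{k_{s(\alpha)}}{k_{t(\alpha)}}\alpha + \frac{\lambda_\alpha}{k_{t(\alpha)}}e_{s(\alpha),t(\alpha)}$, $\mu_p = \frac{\lambda_p}{k_{t(p)}}e_{s(p),t(p)}$; your abstract reduction via $\iomega^*(Q) = \iomega_\circ^*(Q)\rtimes\iomega_0^*(Q)$ and $A^\stimes = A_\circ^\stimes\rtimes A_0^\stimes$, together with the fact that $()^*$ is an anti-homomorphism, is cleaner and fully sufficient for the stated correspondence, at the cost of not producing that explicit formula (which the paper's version supplies as a byproduct).
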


\begin{proof}
(1)\quad First consider $\inn_\circ(A)$ and $\iomega_\circ^*(Q)$. Let $a = 1- \sum_{p\in P} \lambda_p\overline{p}\in 1 + \rad(A)$. Let $\chi_a$ be the inner automorphism of $A$ induced by  $a$ and $\sigma_a = \chi_a^\circ$ the corresponding automorphism of $C$. We claim that $\mathbbm{t}_{\sigma_a} = \mu = (\id_{Q_0}, \mu_\splus)\in \iomega_\circ^*(Q)$ with $\mu_\splus$ given by $\mu^0_p = \lambda_pe_{s(p),t(p)}$ for each $p\in P$. To show this we need the fact \[a^{-1} = 1+(1-a)+(1-a)^2+ \cdots = 1+ \sum_{p\in P}\sum_{p=p_1p_2\cdots p_m\atop p_i\in P}\lambda_{p_1}\lambda_{p_2}\cdots\lambda_{p_m}\overline{p}.\]

 By direct calculation we have for each $x \in P$,
 \begin{align*}\chi_a(\overline{x}) =\ & (1-a) \overline{x} (1+ (1-a) + (1-a)^2 +\cdots) \\
  =\ &\overline{x} -  \sum_{q\in P \atop t(q) = s(x)}\sum_{p\in P, s(p)=t(x)\atop {p=p_1p_2\cdots p_n\atop p_i\in P}} \lambda_q\lambda_{p_1}\lambda_{p_2}\cdots\lambda_{p_m}\overline{qxp} \\
   \ & - \sum_{q\in P \atop t(q) = s(x)} \lambda_q\overline{qx}+ \sum_{p\in P \atop s(p) = t(x)}\sum_{p=p_1p_2\cdots p_m\atop p_i\in P}\lambda_{p_1}\lambda_{p_2}\cdots\lambda_{p_m}\overline{xp}
  \end{align*}
Let $\mathbbm{f}_\mu$ and $\mathbbm{f}_\mu^*$ be the automorphism of $C$ and $A$ corresponding to $\mu$ respectively. We will prove $\chi_a = \mathbbm{f}_\mu^*$. First assume that $x= x_1x_2\cdots x_r\in P$, here each $x_i\in Q_1$. Then
\begin{align*}
 {\mathbbm{f}_\mu^*}(\overline{x})
  = & \sum_{i\in Q_0}(\overline{x}, e_i)\overline{e_i} + \sum_{p\in P}(\overline{x}, \mathbbm{f}_\mu(p))\overline{p} \\
  = & \sum_{i\in Q_0}(\overline{x}, e_i)\overline{e_i} + \sum_{p\in P}\sum_{p=p_1p_2\cdots p_m\atop p_i\in P}
  (\overline{x}, F(\mu_{p_1}\Box\mu_{p_2}\Box\cdots \Box\mu_{p_m})) \overline{p}\\
  = & \sum_{i\in Q_0}(\overline{x}, e_i)\overline{e_i} + \sum_{p_1p_2\cdots p_m\atop p_i\in P}
  (\overline{x_1x_2\cdots x_r}, F(\mu_{p_1}\Box\mu_{p_2}\Box\cdots \Box\mu_{p_m})) \overline{p_1p_2\cdots p_m}.
\end{align*}

By definition of $F$, $(\overline{x_1x_2\cdots x_r}, F(\mu_{p_1}\Box\mu_{p_2}\Box\cdots \Box\mu_{p_m}))= 0$ unless $p_1p_2\cdots p_r = x$ or $p_2p_3\cdots p_{r+1} = x$, thus
\begin{align*}
 \mathbbm{f}_\mu^*(\overline{x})
  =\ &  (\overline{x_1x_2\cdots x_r}, F(\mu_{x_1}\Box\cdots \Box\mu_{x_r})) \overline{x}\\
    & + \sum_{q \in P}
  (\overline{x_1x_2\cdots x_r}, F(\mu_{q}\Box\mu_{x_1}\Box\cdots \Box\mu_{x_r}) \overline{qx}\\
  &  + \sum_{xp_1p_2\cdots p_m\atop p_i\in P}
  (\overline{x_1x_2\cdots x_r}, F(\mu_{x_1}\Box\cdots \Box\mu_{x_r}\Box\mu_{p_1}\Box\cdots \Box\mu_{p_m})) \overline{xp_1p_2\cdots p_m}\\
    & + \sum_{qxp_1p_2\cdots p_m\atop q, p_i\in P}
  (\overline{x_1x_2\cdots x_r}, F(\mu_q\Box\mu_{x_1}\Box\cdots \Box\mu_{x_r}\Box\mu_{p_1}\Box\cdots \Box\mu_{p_m})) \overline{qxp_1p_2\cdots p_m}\\
  =\ & \overline{x} - \sum_{q\in P\atop t(q)=s(x)}\lambda_q\overline{qx} + \sum_{xp_1p_2\cdots p_m\atop p_i\in P} \lambda_{p_1}\lambda_{p_2}\cdots\lambda_{p_m} \overline{xp_1p_2\cdots p_m}\\
   & -  \sum_{qxp_1p_2\cdots p_m\atop q, p_i\in P}\lambda_q\lambda_{p_1}\lambda_{p_2}\cdots\lambda_{p_m}
   \overline{qxp_1p_2\cdots p_m}.
\end{align*}
Note that when we write $qxp_1p_2\cdots p_m$, we require that $t(q) = s(x)$, $t(x)=s(p_1)$ and $t(p_i)=s(p_{i+1})$ for all $1\le i\le m-1$. It follows that $\chi_a(\overline{x})={\mathbbm{f}_\mu^*}(\overline{x}) $ in case $x\in P$.

Similarly, one shows easily that $\chi_a(\overline{e_i})={\mathbbm{f}_\mu^*}(\overline{e_i})  $ for any $i\in Q_0$. Thus $ \chi_a = {\mathbbm{f}_\mu^*}$ and hence $\mathbbm{t}_{\sigma_a}=(\id_{Q_0}, \mu_\splus)$. It follows that  $\iomega_\circ^*(Q)$ corresponds to $\inn_\circ(A)$.

(2)\quad For any $x = \sum_{i\in Q_0}k_i\overline{e_i}\in A_0^\stimes$, denote by $\chi_x$ and $\sigma_x$ the corresponding automorphism of $A$ and $C$ respectively. It is direct to check that $\tau = \mathbbm{t}_{\sigma_x} = (\id_{Q_0}, \tau_\splus)$, where $\tau_\splus$ is given by $\tau_\alpha = \frac{k_{s(\alpha)}}{k_{t(\alpha)}}\alpha$, $\forall \alpha\in Q_1$, and $\tau_p = 0$, $\forall p\in Q_{\ge2}$. In fact, for any $p=\alpha_1\alpha_2\cdots\alpha_n$,  \[\mathbbm{f}_\tau(\overline{p}) = \frac{k_{s(\alpha_1)}}{k_{t(\alpha_1)}} \frac{k_{s(\alpha_2)}}{k_{t(\alpha_2)}} \cdots \frac{k_{s(\alpha_n)}}{k_{t(\alpha_n)}}\overline{\alpha_1\alpha_2\cdots\alpha_n} = \frac{k_{s(p)}}{k_{t(p)}}\overline p = x\overline{p}x^{-1} = \chi_x(\overline p).\]

(3)\quad Now we consider a general case. Let $a=\sum_{i\in Q_0}k_i\bar{e_i}-\sum_{p\in P}\lambda_p\bar{p}\in A$. Note that $a\in A^\stimes$  if and only if $k_i\ne0$ for all $i\in Q_0$. We may write
 \[
 a = (\sum_{i\in Q_0}k_i\overline{e_i})\cdot (1 - \sum_{p\in P} \frac{1}{k_{s(p)}}\lambda_p\overline{p}) = xy,\] here we set $x = (\sum_{i\in Q_0}k_i\overline{e_i})$, $ y = \sum_{p\in P} \frac{1}{k_{s(p)}}\lambda_p\overline{p}$.
 Let $\chi_a$ be the inner automorphism of $A$ induced by  $a$ and $\sigma_a$ the corresponding automorphism of $C$. By (1) and (2) we know that $\tau = \mathbbm{t}_{\sigma_{x}}$ and $ \omega = \mathbbm{t}_{\sigma_{y}}$ are given by
\begin{gather*}
\mathbbm{t}_{\sigma_x} = (\id_{Q_0}, \tau_\splus),\ \tau_\alpha = \frac{k_{s(\alpha)}}{k_{t(\alpha)}}\alpha\ \forall \alpha\in Q_1,\ \tau_p = 0\ \forall p\in Q_{\ge2},
\\
\mathbbm{t}_{\sigma_y} =  (\id_{Q_0}, \omega_\splus),\  \omega_\alpha = \alpha + \frac{\lambda_{\alpha}}{k_{s(\alpha)}}e_{s(\alpha), t(\alpha)}\ \forall \alpha\in Q_1,\  \omega_p = \frac{\lambda_p}{k_{s(p)}}e_{s(p), t(p)}\ \forall p\in Q_{\ge2}.
\end{gather*}

Clearly $\mathbbm{t}_{{\sigma}_a} = (\id_{Q_0}, \mu_\splus)$  for some $\mu_\splus$ and $a = xy$ implies that
 $ \mathbbm{t}_{{\sigma}_a} = \mathbbm{t}_{{\sigma}_{y}} \circ \mathbbm{t}_{{\sigma}_{x}}$.
 By (\ref{compositionformula}), $\mu$ is given by setting $\mu_\alpha = \frac{k_{s(\alpha)}}{k_{t(\alpha)}}{\alpha} + \frac{\lambda_\alpha}{k_{t(\alpha)}} {e_{s(\alpha), t(\alpha)}}$ for each $\alpha\in Q_1$, and $\mu_{p} = \frac{\lambda_p}{k_{t(p)}} {e_{s(p), t(p)}}$ for each $p\in Q_{\ge2}$. The correspondence between $\inn(A)$ and $\iomega^*(Q)$ follows and the proof is completed.
\end{proof}

\begin{rem} Recall that in Proposition \ref{prop-inn-coalg-is-normal}, we have showed the normality of $\inn(C)$ and $\inn_\circ(C)$ and the semiproduct $\inn(C) = \inn_\circ(C) \rtimes \inn_0(C)$. By the above correspondence, the normality of $\inn(C)$ and $\inn_\circ(C)$
follows easily from the normality of $\inn(A)$ and $\inn_\circ(A)$. While for a direct proof of the semiproduct $\inn(A) = \inn_\circ(A) \rtimes \inn_0(A)$, it is not quite obvious and takes some works.
\end{rem}

\subsection{$\aut_0(\widehat{kQ^a})= \inn(\widehat{kQ^a})\aut_\bullet(\widehat{kQ^a})$}
The automorphisms of $C$ which act invariantly on the coideal $C_{\ge 1}$ form an important subgroup $\aut_\bullet(C)$ of $\aut(C)$, say $\aut_\bullet(C) = \{\sigma\in\aut_0(C)\mid \sigma(C_{\ge 1})\subseteq C_{\ge 1} \}$. Consider the following subset of $\Omega^*(Q)$:
\[
 \Omega^{*}_\bullet(Q)=\{(\id_{Q_0},\mu)\in\Omega_0^*(Q)\mid \mu_\splus=\mu^1 \}.
\]
Then we have the following correspondence.

\begin{prop}\label{prop-cor-bullet-sbgp}
Let $Q$ be an arbitrary quiver and $C = kQ^c$. Then
$\mbt(\aut_\bullet(C)) = \Omega^{*}_\bullet(Q)$ and
$\f(\Omega^{*}_\bullet(Q))=\aut_\bullet(C)$.
\end{prop}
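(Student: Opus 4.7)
The plan is to leverage the mutually inverse bijections $\mbt$ and $\f$ from Theorem \ref{thm-monomial-extension}, which make the two claimed equalities equivalent; thus it suffices to show that for $\sigma\in\aut_0(C)$ with corresponding trans-datum $\mu=\mbt_\sigma$, the condition $\sigma(C_{\ge1})\subseteq C_{\ge1}$ is equivalent to $\mu^0=0$, i.e.\ $\mu_\splus=\mu^1$.

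First I would handle the direction $\f(\Omega^*_\bullet(Q))\subseteq\aut_\bullet(C)$, which follows directly from formula (\ref{endoformulae}) of Proposition \ref{prop-coalgmap-to-datum}. When $\mu\in\Omega^*_\bullet(Q)$, every $\mu_p$ lies in $kQ_1$, so each cotensor product $\mu_{p_1}\Box\cdots\Box\mu_{p_r}$ already lies in $kQ_r\subseteq kQ^c$; since $F|_{kQ^c}=\id$ by Lemma \ref{keylemma}, the map $F$ acts as the identity on such terms. Hence $\f_\mu$ sends each nontrivial path into $kQ_{\ge 1}=C_{\ge 1}$, and together with $\f_\mu(e_i)=e_i$ this gives $\f_\mu\in\aut_\bullet(C)$.

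For the reverse inclusion, I would argue by induction on $l(p)$ using the explicit recurrence for $\mu_p$ established in the proof of Theorem \ref{thm-monomial-extension}:
$$\mu_p=\sigma(p)-\sum_{\substack{p=p_1p_2\cdots p_r,\, r\ge 2\\ p_i\in P}}F(\mu_{p_1}\Box\mu_{p_2}\Box\cdots\Box\mu_{p_r}).$$
At length $1$ one has $\mu_\alpha=\sigma(\alpha)\in C_{\ge 1}$, and by Taft--Wilson $\mu_\alpha\in\p_{s(\alpha),t(\alpha)}=k(e_{s(\alpha)}-e_{t(\alpha)})\oplus k(Q_1)_{s(\alpha),t(\alpha)}$; intersecting with $C_{\ge 1}$ forces $\mu_\alpha\in k(Q_1)_{s(\alpha),t(\alpha)}\subseteq kQ_1$. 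For the inductive step with $l(p)=n$, each subpath $p_i$ has length strictly less than $n$, so the induction hypothesis gives $\mu_{p_i}\in kQ_1$; hence each cotensor product lies in $kQ_{\ge 2}$ and $F$ again acts on it as the identity. Combining $\sigma(p)\in C_{\ge 1}$ with the fact that the subtracted sum lies in $C_{\ge 1}$, we get $\mu_p\in C_{\ge 1}\cap\p_{s(p),t(p)}=k(Q_1)_{s(p),t(p)}$, closing the induction.

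The argument is entirely mechanical; the only point requiring attention is the elementary observation that $C_{\ge 1}\cap\p_{s,t}=k(Q_1)_{s,t}$, since the nonzero elements of $k(e_s-e_t)$ live in degree zero. I do not anticipate any essential obstacle beyond this bookkeeping.
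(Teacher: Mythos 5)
Your proof is correct and follows essentially the same route as the paper: the inclusion $\f(\Omega^*_\bullet(Q))\subseteq\aut_\bullet(C)$ from formula (\ref{endoformulae}) together with $F|_{kQ^c}=\id$, and the converse by examining $\mu_p$ degree by degree along path length. The only cosmetic difference is that you run the converse as a direct induction on $l(p)$ using the recurrence from Theorem \ref{thm-monomial-extension}, whereas the paper argues contrapositively via a minimal-length $p$ with $\mu^0_p\neq 0$; these are the same argument in different clothing.
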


\begin{proof}  We need only to show that $\mathbbm{t}(\aut_\bullet(C))\subseteq \Omega^{*}_\bullet(Q)$ and
 $\mathbbm{f}(\Omega^{*}_\bullet(Q))\subseteq \aut_\bullet(C)$, and this follows easily from the construction
 of $\mathbbm{f}$ and $\mathbbm{t}$. The argument is as follows.

Let $\mu = (\id_{Q_0},\mu_\splus)\in \Omega^{*}_\bullet(Q)$. Then by definition, for any $p\in P$ we have
\[\f_\mu(p) = \sum_{p=p_1p_2\cdots p_m\atop p_i\in P} F(\mu_{p_1}
\Box\mu_{p_2}\Box\cdots\Box\mu_{p_m}),\] now $\mu_\splus=\mu^1$ implies
each $F(\mu_{p_1}\Box\mu_{p_2}\Box\cdots\Box\mu_{p_m})\in C_{\ge 1}$ and hence $\f_\mu\in \aut_\bullet(C)$.

Conversely, if $\mu = (\id_{Q_0},\mu_\splus)\notin \Omega^{*}_\bullet(Q)$, then there exists some $p\in P$,
such that $\mu^0_p\ne 0$. We take such a $p$ with minimal length. Then \[\f_\mu(p) = \sum_{p=p_1p_2\cdots p_m\atop m\ge2, p_i\in P} F(\mu_{p_1}
\Box\mu_{p_2}\Box\cdots\Box\mu_{p_m}) + \mu_p,\] since $\mu_{q} = \mu^1_{q}$ for any $q$ with length strictly less than $p$, we know that $\f_\mu(p)\in \mu^0_p + C_{\ge1}$,
and hence $\f_\mu\notin \aut_\bullet(C)$.
\end{proof}

\begin{rem} Note that when we use the notation $C_{\ge 1}$, we have already fixed
 a natural projection $C\twoheadrightarrow C_{(0)}$ of coalgebras,
 or equivalently, we fixed a decomposition $C = C_{(0)} \oplus C_{\ge1}$.
Note that the subgroup $\aut_\bullet(C)$ depends on the choice of the projection $C\twoheadrightarrow C_0$.
\end{rem}

The following result is given by direct calculation.

\begin{prop}\label{prop-decom-aut-pathcoalg} Let $Q$ be an arbitrary quiver. Then $\Omega_0^*(Q) = \Omega^*_\bullet(Q)\iomega^*_\circ(Q)$. Consequently, $\aut_0(kQ^c) = \aut_\bullet(kQ^c)\inn_\circ(kQ^c)$.
\end{prop}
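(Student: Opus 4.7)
The inclusion $\Omega^*_\bullet(Q)\iomega^*_\circ(Q) \subseteq \Omega_0^*(Q)$ is immediate, since both factors are subgroups of $\Omega_0^*(Q)$. For the reverse inclusion, given $\mu = (\id_{Q_0}, \mu_\splus) \in \Omega_0^*(Q)$, the plan is to construct $\lambda \in \iomega^*_\circ(Q)$ and $\nu \in \Omega^*_\bullet(Q)$ with $\mu = \nu \circ \lambda$ by recursion on the length of paths; morally, $\lambda$ will absorb the degree-zero primitive data $\mu^0$ of $\mu$ while $\nu$ retains the degree-one data $\mu^1$.

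For each $\alpha \in Q_1$ I would set $\check{\lambda}_\alpha := \check{\mu}_\alpha$, so that $\lambda_\alpha = \alpha + \check{\mu}_\alpha\, e_{s(\alpha), t(\alpha)}$, and $\nu_\alpha := \mu^1_\alpha$; formula $(\ref{compositionformula})$ immediately yields $(\nu \circ \lambda)_\alpha = \mathbbm{f}_{\tilde{\nu}}(\lambda_\alpha) = \nu^1_\alpha + \check{\mu}_\alpha(e_{s(\alpha)} - e_{t(\alpha)}) = \mu_\alpha$. For a path $p$ of length $n \ge 2$, the defining conditions of $\iomega^*_\circ(Q)$ and $\Omega^*_\bullet(Q)$ already force $\lambda_p = \check{\lambda}_p\, e_{s(p), t(p)}$ and $\nu_p = \nu^1_p \in kQ_1$, so only the scalar $\check{\lambda}_p$ and the arrow combination $\nu^1_p$ remain to be chosen.

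Assuming $\lambda_q, \nu_q$ have been defined for all $q \in P$ with $l(q) < n$ in such a way that $(\nu \circ \lambda)_q = \mu_q$, expand $(\nu \circ \lambda)_p$ via $(\ref{compositionformula})$. The $r = 1$ summand gives $\mathbbm{f}_{\tilde{\nu}}(\check{\lambda}_p\, e_{s(p),t(p)}) = \check{\lambda}_p(e_{s(p)} - e_{t(p)})$. In the $r = n$ summand $\mathbbm{f}_{\tilde{\nu}}(\lambda_{\alpha_1}\Box\cdots\Box\lambda_{\alpha_n})$ associated to the factorization $p = \alpha_1\cdots\alpha_n$, the cotensor expands into $2^n$ monomials indexed by subsets $S \subseteq \{1,\dots,n\}$ according to the choice of $\alpha_i$ versus $\check{\mu}_{\alpha_i}\, e_{s(\alpha_i),t(\alpha_i)}$; the $S = \emptyset$ monomial reproduces the path $p$ itself and, within $\mathbbm{f}_{\tilde{\nu}}(p)$, the single-piece splitting produces $\nu^1_p$. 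All remaining contributions (from splittings with $2 \le r \le n-1$, from the $2^n-1$ mixed monomials of the $r = n$ summand, and from multi-piece splittings inside $\mathbbm{f}_{\tilde{\nu}}(p)$) involve only $\check{\lambda}_{p_i}$ and $\nu^1_{p_i}$ with $l(p_i) < n$ and are therefore known. Writing their combined contribution as $R_p$, the trans-datum property of $\nu\circ\lambda$ forces $(\nu\circ\lambda)_p$ to be primitive, so $R_p$ itself lies in $\p_{s(p),t(p)} = k(e_{s(p)} - e_{t(p)}) \oplus k(Q_1)_{s(p),t(p)}$ by Taft--Wilson; matching the degree-zero and degree-one components of $(\nu\circ\lambda)_p = \mu_p$ then determines $\check{\lambda}_p$ and $\nu^1_p$ uniquely.

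By construction the resulting $\lambda$ lies in $\iomega^*_\circ(Q)$ and $\nu$ in $\Omega^*_\bullet(Q)$, with $\nu^1|_{kQ_1} = \mu^1|_{kQ_1}$ invertible because $\mu \in \Omega_0^*(Q)$, and $\mu = \nu \circ \lambda$ holds by design. The consequence $\aut_0(kQ^c) = \aut_\bullet(kQ^c)\inn_\circ(kQ^c)$ transports through the monoid isomorphism of Theorem \ref{thm-composion-map} together with Propositions \ref{prop-cor-inn-sbgps} and \ref{prop-cor-bullet-sbgp}. The main technical obstacle is the combinatorial bookkeeping in the inductive step: one must carefully expand the mixed $Q^\tau$-paths arising in $\lambda_{\alpha_1}\Box\cdots\Box\lambda_{\alpha_n}$ and track how each is evaluated under $F$, so as to confirm that after collecting all length-$n$ contributions, $\nu^1_p$ appears with coefficient exactly $1$ and $\check{\lambda}_p$ with coefficient exactly $e_{s(p)} - e_{t(p)}$, making the recursion well-posed.
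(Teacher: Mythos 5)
Your proof is correct and is essentially the paper's argument: a recursion on path length through the composition formula, splitting each $\mu_p$ into its $kQ_0$- and $kQ_1$-components. The only packaging difference is that the paper constructs just the inner factor, producing $\nu\in\iomega^*_\circ(Q)$ with $\mu\circ\nu\in\Omega^*_\bullet(Q)$ and then writing $\mu=(\mu\circ\nu)\circ\nu^{-1}$, whereas you solve $\mu=\nu\circ\lambda$ for both factors simultaneously; the two bookkeepings are interchangeable. Two remarks. First, in formula (\ref{compositionformula}) the expression $\tilde\nu(\lambda_{p_1}\Box\cdots\Box\lambda_{p_r})$ is the linear extension of $q\mapsto\tilde\nu_q$ over paths $q$ in $Q^\tau$, not $\f_{\tilde\nu}$ applied to that cotensor (with the latter reading $(\nu\circ\lambda)_p$ would fail to be primitive, and the identity $\1\circ\1=\1$ would already break on a length-two path); so the ``multi-piece splittings inside $\f_{\tilde\nu}(p)$'' you list as contributions to $R_p$ do not occur, and with the correct reading $R_p$ is a linear combination of components $\tilde\nu_q\in\p_{s(p),t(p)}$ and is therefore primitive without appealing to the primitivity of $(\nu\circ\lambda)_p$. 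Second, your recursion in fact collapses: since $\nu^0=0$, the lift $\tilde\nu$ vanishes on every path of $Q^\tau$ of length at least $2$ that contains a new arrow, so all mixed monomials of the $r=n$ summand and all splittings with $2\le r\le n-1$ vanish, giving $R_p=0$ and the closed-form solution $\check\lambda_p=\cmu_p$ and $\nu^1_p=\mu^1_p$ for every nontrivial path $p$.
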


\begin{proof} Take $\mu=(\id, \mu_+)\in \Omega_0^*(Q)$. We will construct inductively some
$\nu\in \iomega_\circ^*(Q)$ such that $\mu\circ\nu\in \Omega_\bullet^*(Q)$. If this has been done,
then $\mu = (\mu\circ\nu)\circ\nu^{-1}\in \Omega^*_\bullet(Q)\iomega^*_\circ(Q)$, and the
conclusion follows.

In fact, set $\nu_0 = \id$ and $\nu_\alpha = \alpha$ for any $\alpha\in Q_1$. For each $p$ with $l(p)=2$,
we set $\nu_p = -\mu^0_p$. Then clearly, \[\sum\limits_{p=p_{1}p_{2}\cdots p_{r} \atop {p_i\in P }} \tilde\mu(\nu_{p_{1}}\Box\cdots\Box\nu_{p_{r}})= \tilde\mu_{\nu_p} + \tilde\mu_p =\mu^1_p\in kQ_1\]
holds whenever $l(p)=2$.

Assume that for each $p$ with
$2\le l(p)< n$, we have fixed a $\nu_p\in k(e_{s(p)} - e_{t(p)})$, such that $\sum\limits_{p=p_{1}p_{2}\cdots p_{r} \atop {p_i\in P }} \tilde\mu(\nu_{p_{1}}\Box\cdots\Box\nu_{p_{r}})\in kQ_1$ holds for any
$p$ with $l(p)<n$. Now for a path $p$ with $l(p)=n$, we set
\[\nu_p=-\mu_p^0 - \sum\limits_{p=p_{1}p_{2}\cdots p_{r} \atop {r\ge 2, p_i\in P }} \tilde\mu(\nu_{p_{1}}\Box\cdots\Box\nu_{p_{r}}).\]
By definition of $\tilde\mu$, it is direct to show that $\nu_p\in k(e_{s(p)} - e_{t(p)})$, and
\[\sum\limits_{p=p_{1}p_{2}\cdots p_{r} \atop {p_i\in P }} \tilde\mu(\nu_{p_{1}}\Box\cdots\Box\nu_{p_{r}})=\mu_p^1\in kQ_1.\]
By induction, we obtain some $\nu\in \iomega_\circ^*(Q)$, such that $(\mu\circ \nu)_p\in kQ_1$ for any nontrivial path $p$, that is $\mu\circ\nu\in\Omega_\bullet^*(Q)$. The proof is completed.
\end{proof}

Now let $D\subseteq C$ be a large subcoalgebra of $C$. By Corollary \ref{cor-inn-preserving}, we have $\inn(C)\le \aut(C;D)$. We set $\inn(D)=\Res^C_D(\inn(C))$, and call it the \textbf{inner automorphism group} of $D$. We also set $\inn_\circ(D)=\Res^C_D(\inn_\circ(C))$ and $\inn_0(D)=\Res^C_D(\inn_0(C))$. We have a generalized version of Proposition \ref{prop-inn-coalg-is-normal}.

\begin{prop}\label{prop-large-coalg-inn-auto-gp} Let $Q$ be a quiver, $C=kQ^c$ and $D\subseteq C$ a large subcoalgebra. Then

(1)\quad $\inn(C)\le \aut(C;D)$, and the restriction map induces isomorphisms of groups $\inn(C)/(\inn(C)\cap\Gal(C/D))\cong \inn(D)$, $\inn_\circ(C)/(\inn_\circ(C)\cap\Gal(C/D))\cong \inn_\circ(D)$ and $\inn_0(C)/(\inn_0(C)\cap\Gal(C/D))\cong \inn_0(D)$;

(2)\quad $\inn(D)^* = \inn(D^*)$, $\inn_0(D)^* = \inn_0(D^*)$ and $\inn_\circ(D)^* = \inn_\circ(D^*)$;

(3)\quad $\inn(D)\lhd\aut(D)$, $\inn_\circ(D)\lhd\aut(D)$ and $\inn(D) = \inn_\circ(D) \rtimes \inn_0(D)$.
\end{prop}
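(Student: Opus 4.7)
The plan is to transport the whole picture from $C$ down to the large subcoalgebra $D$ via the restriction map $\Res^C_D\colon \aut(C;D)\longrightarrow \aut(D)$, leveraging Corollary \ref{cor-inn-preserving}, Proposition \ref{prop-cor-inn-sbgps}, Proposition \ref{prop-inn-coalg-is-normal}, and Theorem \ref{thm-large-auto-ext}. For (1), Corollary \ref{cor-inn-preserving} places $\inn(C)$, $\inn_\circ(C)$, and $\inn_0(C)$ inside $\aut(C;D)$, so $\Res^C_D$ is well defined on each. By the definitions of $\inn(D)$, $\inn_\circ(D)$, and $\inn_0(D)$ introduced in the statement, $\Res^C_D$ is surjective onto the respective targets, and its kernel on any subgroup is exactly the intersection with $\Gal(C/D)$. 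The first isomorphism theorem then yields all three stated quotient isomorphisms at once.

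For (2), I work under the standing finiteness assumption of Section 5, so that $A = C^* = \widehat{kQ^a}$ and the contravariant functor $()^*$ provides the anti-isomorphism $\aut(D)\cong \aut(D^*)$ of Corollary \ref{cor-anti-iso}. Writing $\pi = \iota^*\colon A\twoheadrightarrow A/D^\sperp = D^*$ for the projection dual to the inclusion $\iota\colon D\hookrightarrow C$, and using $D^\sperp\subseteq \rad(A)$ together with the completeness of $A$ with respect to its radical filtration, a routine lifting argument shows that $\pi$ restricts to surjections $A^\stimes\twoheadrightarrow (D^*)^\stimes$, $A^\stimes_0\twoheadrightarrow (D^*)^\stimes_0$, and $A^\stimes_\circ\twoheadrightarrow (D^*)^\stimes_\circ$. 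For $\sigma = \tilde\sigma|_D$ with $\tilde\sigma\in\inn(C)$ and $\tilde\sigma^* = \chi_a$, the compatibility $\sigma^*\circ\pi = \pi\circ\tilde\sigma^*$ forces $\sigma^* = \chi_{\pi(a)}$, so $\inn(D)^*\subseteq \inn(D^*)$; the reverse inclusion follows from the surjectivities above together with Proposition \ref{prop-cor-inn-sbgps}, and the same routine delivers the $\inn_\circ$ and $\inn_0$ refinements.

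For (3), normality of $\inn(D)$ and $\inn_\circ(D)$ in $\aut(D)$ will be established by lifting: given $\tau\in\aut(D)$ and $\sigma\in\inn(D)$, Theorem \ref{thm-large-auto-ext} extends $\tau$ to some $\tilde\tau\in\aut(C)$, which automatically satisfies $\tilde\tau(D)=D$ since $\tau$ is an automorphism of $D$, while $\sigma$ lifts to some $\tilde\sigma\in\inn(C)$. Then $\tilde\tau\tilde\sigma\tilde\tau^{-1}\in\inn(C)$ by Proposition \ref{prop-inn-coalg-is-normal}, and its restriction to $D$ equals $\tau\sigma\tau^{-1}\in\inn(D)$; the same argument works for $\inn_\circ$. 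Restricting the decomposition $\inn(C) = \inn_\circ(C)\rtimes\inn_0(C)$ yields $\inn(D) = \inn_\circ(D)\inn_0(D)$, so the only genuinely substantive point left is the triviality $\inn_\circ(D)\cap\inn_0(D)=\{\id_D\}$, which I expect to be the main obstacle. Suppose $\sigma$ admits lifts $\tilde\sigma_\circ$ and $\tilde\sigma_0$ from the respective ambient subgroups of $\inn(C)$; comparing the two resulting expressions for $\sigma$ on $kQ_1\subseteq D$ (available because $D$ is large) degreewise forces every $c_\alpha = 0$ and $k_{s(\alpha)} = k_{t(\alpha)}$ across every arrow, which in turn makes $\tilde\sigma_0$ act as the identity on every path (whose endpoints share a connected component). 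An induction up the coradical filtration of $D$, using the explicit formula of Proposition \ref{prop-inn-autocoalg-formular}(1) for $\tilde\sigma_\circ$, then kills each remaining $c_p$ for $p\in D$ with $l(p)\ge 2$, giving $\sigma = \id_D$.
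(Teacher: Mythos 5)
Your proof is correct and follows essentially the same route as the paper's, which only writes out part (3) — deriving normality from Proposition \ref{prop-inn-coalg-is-normal} via (1) and dismissing the intersection triviality as ``obvious since $D$ is large'' — so you are simply filling in the details the paper omits. One minor remark: your closing induction up the coradical filtration is redundant, since once $k_{s(\alpha)}=k_{t(\alpha)}$ holds for every arrow, $\tilde\sigma_0$ is already the identity on all of $C$, whence $\sigma=\tilde\sigma_0|_D=\id_D$ with no further work.
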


\begin{proof} We need only to prove (3). Recall that we have already shown that $\iomega^*_\circ(Q)\lhd \Omega^*(Q)$ and $\iomega^*(Q)\lhd \Omega^*(Q)$ in Proposition \ref{prop-inn-coalg-is-normal}, and hence $\inn(C)\lhd\aut(C)$ and $\inn_\circ(C)\lhd\aut(C)$. Now $\inn(D)\lhd\aut(D)$ and $\inn_\circ(D)\lhd\aut(D)$ follows from (1).

For the last assertion, it suffices to prove that $ \inn_\circ(D) \cap \inn_0(D)= \{\id_D\}$, which is equivalent to show that for any $\sigma\in \inn_0(C)$ and $\tau\in \inn_\circ(C)$, $\sigma\circ\tau\in \Gal(C/D)$ if and only if $\sigma$ and $\tau$ are both in $\Gal(C/D)$. This is obvious since $D$ is large.
\end{proof}

Consider the decomposition
$D = D_{(0)} \oplus D_\splus$, where $D_\splus = D\cap C_{\ge1}$ of $D$ is a coideal of $D$. Set $\aut_0(D) = \Gal(D/D_{(0)})$ and $ \aut_\bullet(D) =
 \{\sigma\in\aut_0(D)\mid \sigma(D_\splus)\subseteq D_\splus\}$. By a similar argument as in the
 proof of Proposition \ref{prop-coalg-mor-extension}, any
  $\sigma\in \aut_\bullet(D)$ extends to an element in $\aut_\bullet(C)$. Thus
if we set $\aut_\bullet(C;D) = \aut_\bullet(C)\cap\aut(C; D)$, then
 $\aut_\bullet(D) = \Res^C_D(\aut_\bullet(C;D))$. Applying Proposition \ref{prop-large-coalg-inn-auto-gp},
 we have the following characterization of $\aut_0(D)$, generalizing Proposition \ref{prop-decom-aut-pathcoalg}.

\begin{prop}\label{prop-decom-aut-coalg}  Let $Q$ be an arbitrary quiver and $D\subseteq kQ^c$ a large subcoalgebra. Then $\aut_0(D) = \aut_\bullet(D)\inn_\circ(D)$.
\end{prop}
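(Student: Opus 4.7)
The plan is to lift a given automorphism of $D$ to an automorphism of $C = kQ^c$, apply the already-established decomposition on $C$, and then check that both factors restrict nicely to $D$.

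More precisely, let $\sigma \in \aut_0(D)$. By Theorem \ref{thm-large-auto-ext}, $\sigma$ extends to some $\tilde\sigma \in \aut(C)$; since $D \supseteq C_{(1)} \supseteq C_0$ (because $D$ is large) and $\sigma$ fixes $D_{(0)} = C_0$, in fact $\tilde\sigma \in \aut_0(C)$. Applying Proposition \ref{prop-decom-aut-pathcoalg}, write $\tilde\sigma = \tau \circ \rho$ with $\tau \in \aut_\bullet(C)$ and $\rho \in \inn_\circ(C)$.

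The key observation is that both factors then stabilize $D$. Indeed, Corollary \ref{cor-inn-preserving} gives $\inn(C) \subseteq \aut(C;D)$, so $\rho(D) = D$; and since $\tilde\sigma(D) = D$ as well, we deduce $\tau(D) = \tilde\sigma(\rho^{-1}(D)) = D$. Thus $\tau \in \aut(C;D) \cap \aut_\bullet(C) = \aut_\bullet(C;D)$, whose restriction to $D$ lies in $\aut_\bullet(D)$: for $D_\splus = D \cap C_{\ge 1}$, one has $\tau(D_\splus) \subseteq \tau(D) \cap \tau(C_{\ge 1}) \subseteq D \cap C_{\ge 1} = D_\splus$, using that $\tau \in \aut_\bullet(C)$ means $\tau(C_{\ge 1}) \subseteq C_{\ge 1}$. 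On the other hand, by the definition $\inn_\circ(D) = \Res^C_D(\inn_\circ(C))$ recalled just before the proposition, $\rho|_D \in \inn_\circ(D)$. Taking restrictions yields $\sigma = (\tau|_D) \circ (\rho|_D) \in \aut_\bullet(D)\,\inn_\circ(D)$. The reverse inclusion is trivial since $\aut_\bullet(D), \inn_\circ(D) \subseteq \aut_0(D)$.

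I do not anticipate a serious obstacle here: the whole argument is an exercise in transferring a structural decomposition from $C$ down to $D$, and the only point that needs verification is precisely the stability of $D$ under the two factors, which is handled by Corollary \ref{cor-inn-preserving} together with the fact that $D_\splus$ is cut out of $D$ by intersection with $C_{\ge 1}$.
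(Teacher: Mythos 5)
Your proposal is correct and follows exactly the route the paper intends: extend $\sigma$ to $\tilde\sigma\in\aut_0(C)$ via Theorem \ref{thm-large-auto-ext}, split $\tilde\sigma=\tau\circ\rho$ using Proposition \ref{prop-decom-aut-pathcoalg}, and use Corollary \ref{cor-inn-preserving} to see that both factors stabilize $D$ so that the decomposition restricts. The verification that $\tau|_D\in\aut_\bullet(D)$ via $\tau(D_\splus)\subseteq\tau(D)\cap\tau(C_{\ge1})\subseteq D_\splus$ is exactly the point the paper leaves implicit, and you have supplied it correctly.
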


\begin{rem}\label{rem-connectness-coalg} Recall that in Proposition \ref{prop-identity-cpt-aut-coalg},
$\aut_0(D)$ is shown to be the identity component of $\aut(D)$ if $D=kQ^c$ for some finite acyclic quiver $Q$.
A naive question is to ask whether this property holds for any finite dimensional pointed coalgebra $D$.
We mention that the answer is yes, if $D$ is given by a truncated subcoalgebra of a finitary path coalgebra,
for this one uses a similar argument as in the proof of Proposition \ref{prop-identity-cpt-aut-coalg}.

When concerning with an arbitrary finitary pointed coalgebra $D$, the answer is no, the reason is that $\aut_0(D)$ is not connected in general. For instance,
consider the  quiver
\[\xymatrix{ 1\bullet \ar@<+0.5ex>[r]^{\alpha_1} \ar@<-0.5ex>[r]_{\alpha_2} &\bullet2 \ar@<+0.5ex>[r]^{\beta_1} \ar@<-0.5ex>[r]_{\beta_2} &\bullet3 },\]
and the large subcoalgebra $D=k\{e_1, e_2, e_3, \alpha_1,\alpha_2,\beta_1,\beta_2,\alpha_1\beta_1, \alpha_2\beta_2\}$. Direct calculation shows that $\aut_0(D)$ is not connected.

\end{rem}

In the rest part of this subsection, we will focus on algebras and the quiver $Q$ is assumed to be finte. Let $D$ be a large subcoalgebra of $C=kQ^c$, and $B = A/D^\sperp$ the dual algebra of $D$, here $D^\sperp=\{f\in A\mid f(d)= 0, \forall d\in D\}$. Note that $D$ is large if and only if $ D^\sperp\subseteq \rad^2(A)$.

Note that $A_0$ is embedded to $B$ via the map $A_0\hookrightarrow A\twoheadrightarrow B$, and the image is denoted by $B_0$. Then we have a decomposition $B = B_0 \oplus \rad(B)$, which just corresponds to the decomposition $D = D_{(0)} \oplus D_\splus$. Set $\inn_0(B)=\chi(B_0^\stimes)$ and $\inn_\circ(B)=\chi(1+\rad(B))$.  We claim that any inner automorphism of $B$ is induced from an inner automorphism of $A$. The reason is that for any $a\in A$, $a+D^\sperp$ is invertible in $B$ if and only if $a$ is invertible in $A$. This can be summarized as follows.

\begin{lem}\label{lem-inn-quotient-algebra} Let $Q$ be a finite quiver, $C=kQ^c$ the path coalgebra and $A=(kQ^c)^*$. Let $D$ be a large subcoalgebra of $C$ and $B=D^*$ its dual algebra. Then
$\inn(B)=(\inn(D))^*$, $\inn_\circ(B)=(\inn_\circ(D))^*$ and $\inn_0(B)=(\inn_0(D))^*$.
\end{lem}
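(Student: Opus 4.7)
The plan is to transport the three identities through the duality $()^*\colon\aut(D)\to\aut(B)$ and the projection $\pi\colon A\twoheadrightarrow B=A/D^\sperp$, using what we already know about $\inn(C)$ versus $\inn(A)$.

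First I would observe that because $D\subseteq C$ is a subcoalgebra, $D^\sperp$ is a two-sided ideal of $A$, so $\pi$ is an algebra surjection and conjugation by any invertible $a\in A^\stimes$ preserves $D^\sperp$; hence each $\chi_a\in\inn(A)$ descends to $\chi_{\pi(a)}\in\inn(B)$, giving a well-defined group epimorphism $\inn(A)\twoheadrightarrow\inn(B)$, $\chi_a\mapsto\chi_{\pi(a)}$. Next, because $D$ is large we have $D^\sperp\subseteq\rad^2(A)$, and so the projection $\pi$ restricts to an isomorphism $A_0\xrightarrow{\ \cong\ }B_0$ and sends $\rad(A)$ onto $\rad(B)$. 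Using this I would show that $\pi$ induces a surjection $A^\stimes\twoheadrightarrow B^\stimes$ that restricts further to surjections $A_0^\stimes\twoheadrightarrow B_0^\stimes$ and $1+\rad(A)\twoheadrightarrow 1+\rad(B)$: given $b\in B^\stimes$, decompose $b=b_0+b_+$ with $b_0\in B_0^\stimes$, lift $b_0$ to $a_0\in A_0^\stimes$ via the isomorphism $A_0\cong B_0$, lift $b_+$ arbitrarily to $a_+\in\rad(A)$, and set $a=a_0+a_+\in A^\stimes$; the two variants fall out of this construction.

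Second, I would use the compatibility diagram: if $\tilde\sigma\in\aut(C;D)$ restricts to $\sigma=\Res^C_D(\tilde\sigma)\in\aut(D)$, then by definition of $()^*$ we have the commutative square
\[
\begin{CD}
A @>{\tilde\sigma^*}>> A \\
@VV{\pi}V @VV{\pi}V \\
B @>{\sigma^*}>> B,
\end{CD}
\]
i.e.\ $\pi\circ\tilde\sigma^*=\sigma^*\circ\pi$. Applying this to $\tilde\sigma\in\inn(C)\le\aut(C;D)$ (the inclusion is Corollary \ref{cor-inn-preserving}) and combining with Proposition \ref{prop-cor-inn-sbgps}, which identifies $\inn(C)^*=\inn(A)$, $\inn_\circ(C)^*=\inn_\circ(A)$, $\inn_0(C)^*=\inn_0(A)$, yields $(\inn(D))^*\subseteq\inn(B)$ (and the same for the two decorated variants), since any $\tilde\sigma^*\in\inn(A)$ has the form $\chi_a$ and hence $\sigma^*=\chi_{\pi(a)}$.

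For the reverse inclusion I would start from $\chi_b\in\inn(B)$, pick an invertible lift $a\in A^\stimes$ (respectively in $A_0^\stimes$ or $1+\rad(A)$, by the step above), and use Proposition \ref{prop-cor-inn-sbgps} to write $\chi_a=\tilde\sigma^*$ for a unique $\tilde\sigma\in\inn(C)$ (respectively $\inn_0(C)$, $\inn_\circ(C)$). Setting $\sigma=\Res^C_D(\tilde\sigma)$ and applying Proposition \ref{prop-large-coalg-inn-auto-gp} places $\sigma$ in $\inn(D)$ (respectively the decorated subgroups), and the commutative square forces $\sigma^*=\chi_b$. The only point that requires care is that the lifting in the $\inn_\circ$ case really lands in $1+\rad(A)$ and in the $\inn_0$ case in $A_0^\stimes$; this is the main, albeit minor, obstacle, and is precisely the reason the hypothesis that $D$ is \emph{large} (equivalently $D^\sperp\subseteq\rad^2(A)\subseteq\rad(A)$) is used — it guarantees that the decomposition $A=A_0\oplus\rad(A)$ descends cleanly to $B=B_0\oplus\rad(B)$.
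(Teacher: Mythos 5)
Your proposal is correct and follows essentially the same route as the paper: the whole lemma rests on the observation that, because $D$ is large (so $D^\sperp\subseteq\rad^2(A)\subseteq\rad(A)$), every unit of $B=A/D^\sperp$ lifts to a unit of $A$ compatibly with the decompositions $A=A_0\oplus\rad(A)$ and $B=B_0\oplus\rad(B)$, which combined with $\inn(A)=(\inn(C))^*$ (Proposition \ref{prop-cor-inn-sbgps}), the definition $\inn(D)=\Res^C_D(\inn(C))$, and the commutativity $\pi\circ\tilde\sigma^*=\sigma^*\circ\pi$ gives all three identities. The paper compresses this into the single remark that $a+D^\sperp$ is invertible in $B$ if and only if $a$ is invertible in $A$; your write-up just makes the lifting and the compatibility square explicit.
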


Similarly, set $\aut_0(B) = \{{\sigma}\in\aut(B)\mid {\sigma}
(\overline{e_i})\in \overline{e_i}+\rad(B), \forall i\in Q_0\}$, and $\aut_\bullet(B)=\{\sigma\in\aut(B)\mid \sigma|_{B_0}=\id_{B_0}\}$.
Denote by $Z^\stimes_{B}(B_0) =\{a\in B^\stimes\mid ab=ba, \forall b\in B_0 \}$ the centralizer of $B_0$ in $B^\stimes$. Then we have the following structural result for $\aut(B)$.

\begin{thm}\label{thm-antiiso-dual-alg} Let $Q$ be a finite quiver, $C = kQ^c$ and $A= C^*$. Let $D\subseteq C$ a large subcoalgebra and $B = D^* = A/D^\sperp$ the dual algebra. Then

(1)  $(\aut_0(D))^* =  \aut_0(B)$.

(2)  $(\aut_\bullet(D))^*= \aut_\bullet(B)$.

(3)  $\inn(B)\lhd\aut(B)$, $\inn_\circ(B)\lhd\aut(B)$ and $\inn(B) = \inn_\circ(B) \rtimes \inn_0(B)$.

(4)  $\aut_0(B) = \inn_\circ(B)\aut_\bullet(B)$ and $\inn(B)\cap \aut_\bullet(B) = \chi(Z^\stimes_{B}(B_0))$.
If in addition, $Z^\stimes_{B}(B_0) = B_0^\stimes$, then $\aut_0(B) = \inn_\circ(B)\rtimes\aut_\bullet(B)$.

(5)  There is an exact sequence of groups \[1\longrightarrow \aut_\bullet(B)/\chi(Z^\stimes_{B}(B_0))\longrightarrow \aut(B)/\inn(B)\longrightarrow\aut(B)/\aut_0(B)\longrightarrow 1.\]
\end{thm}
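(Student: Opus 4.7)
The plan is to reduce everything to the coalgebra side by exploiting the anti-isomorphism $()^*\colon \aut(D)\xrightarrow{\ \cong\ }\aut(B)^{op}$ of Corollary \ref{cor-anti-iso}, and transporting the structural results on $\aut(D)$ already established in Propositions \ref{prop-large-coalg-inn-auto-gp} and \ref{prop-decom-aut-coalg}, together with Lemma \ref{lem-inn-quotient-algebra} which identifies the algebraic inner automorphism groups with the duals of the coalgebraic ones.

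For (1) and (2), I would work directly with the pairing $(-,-)$ and the identification $D_{(0)}^\sperp=\rad(B)$ coming from the facts in Section 1.3. An automorphism $\sigma\in\aut(D)$ restricts to the identity on $D_{(0)}$ iff $\sigma^*(f)-f\in D_{(0)}^\sperp=\rad(B)$ for every $f\in B$; applied to $f=\overline{e_i}$ this is exactly the $\aut_0(B)$ condition, yielding (1). For (2), the additional condition $\sigma(D_\splus)\subseteq D_\splus$ dualizes to $\sigma^*(B_0)\subseteq B_0$, which combined with (1) forces $\sigma^*|_{B_0}=\id_{B_0}$, that is $\sigma^*\in\aut_\bullet(B)$; the converses in both cases are immediate from the non-degeneracy of $(-,-)$.

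For (3), I would invoke Proposition \ref{prop-large-coalg-inn-auto-gp}(3) on the coalgebra side, together with Lemma \ref{lem-inn-quotient-algebra}. Anti-isomorphisms preserve normal subgroups and intersections, and send a semidirect product $N\rtimes H$ to a semidirect product (the product $NH$ equals $HN$ by normality, so the order of composition is immaterial), and hence the three assertions transfer verbatim. Part (4): the decomposition $\aut_0(B)=\inn_\circ(B)\aut_\bullet(B)$ is the image under $()^*$ of Proposition \ref{prop-decom-aut-coalg}, via (1) and (2). The intersection formula is immediate: $\chi_a\in\aut_\bullet(B)$ iff $\chi_a|_{B_0}=\id_{B_0}$ iff $a$ centralizes $B_0$. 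Under the extra hypothesis $Z^\stimes_B(B_0)=B_0^\stimes$, this intersection becomes $\inn_0(B)$, which meets $\inn_\circ(B)$ trivially by (3); combined with the product decomposition this yields the semidirect structure. Finally, (5) comes from the chain $\inn(B)\subseteq\aut_0(B)\subseteq\aut(B)$ together with the second isomorphism theorem: $\aut_0(B)/\inn(B)\cong\aut_\bullet(B)/(\inn(B)\cap\aut_\bullet(B))=\aut_\bullet(B)/\chi(Z^\stimes_B(B_0))$.

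The main obstacle, though essentially cosmetic, is keeping careful track of the order of composition when passing through the anti-isomorphism $\aut(D)\cong\aut(B)^{op}$, so that every coalgebraic decomposition is transported to the correct algebraic statement without direction errors. The one extra fact needed outside the quoted results is the inclusion $\inn(B)\subseteq\aut_0(B)$, used in both (4) and (5); this rests on $\iomega^*(Q)\subseteq\Omega_0^*(Q)$ by definition, whence $\inn(C)\subseteq\aut_0(C)$ descends along restrictions to $D$ and dualizes to $B$.
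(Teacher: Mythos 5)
Your proposal is correct and follows essentially the same route as the paper: parts (1) and (2) by dualizing along the pairing (your annihilator formulation $D_{(0)}^\sperp=\rad(B)$, $D_\splus^\sperp=B_0$ is just a repackaging of the paper's explicit computation of $\sigma^*(\overline{e_i})$), part (3) by citing Proposition \ref{prop-large-coalg-inn-auto-gp} and Lemma \ref{lem-inn-quotient-algebra}, part (4) from Proposition \ref{prop-decom-aut-coalg} plus the observation that $\chi_b\in\aut_\bullet(B)$ iff $b$ centralizes $B_0$, and part (5) from the second isomorphism theorem. Your explicit attention to the order reversal under the anti-isomorphism and to the inclusion $\inn(B)\subseteq\aut_0(B)$ is a welcome clarification of points the paper leaves implicit.
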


\begin{proof} (1) Let $\sigma\in\aut(D)$. By definition, for each $i\in Q_0$ we have
 \[\sigma^*(\overline {e_i}) = \sum_{j\in Q_0}(\overline {e_i}, \sigma(e_j))\overline{e_j}
 +\sum_{s\in S, l(s)\ge 1}(\overline{e_i},\sigma(s))\overline s \in \overline{\sigma(e_i)}+\rad(B),\]
it follows that $\sigma^*\in \aut_0(B)$ if and only if $\sigma\in\aut_0(D)$. Hence $(\aut_0(D))^*= \aut_0(B)$.

(2)   We prove that for any $\sigma\in\aut(D)$, $\sigma^*\in \aut_\bullet(B)$ if and only if $\sigma\in \aut_\bullet(D)$.
Assume that $\sigma\in \aut_\bullet(D)$. By definition $\sigma(D_\splus)\subseteq D_{\splus}$ and
hence $(\overline{e_i},\sigma(s))= 0$ for any $s\in S$ with $l(s)\ge 1$ and $i\in Q_0$. Thus
 \[\sigma^*(\overline {e_i}) = \sum_{j\in Q_0}(\overline {e_i}, \sigma(e_j))\overline{e_j}
 +\sum_{s\in S, l(s)\ge 1}(\overline{e_i},\sigma(s))\overline s = \overline {e_i}\] for any $i\in Q_0$, which implies that $\sigma^*\in \aut_\bullet(B)$.

If  $\sigma\notin \aut_\bullet(D)$, then there exists some $x\in D_\splus$ such that $\sigma(x)\notin D_\splus$. Then there exists some $i\in Q_0$ such
that $(\overline{e_i}, x)\ne 0$. Hence $(\sigma^*(\overline{e_i}), x)= (\overline{e_i}, x)\ne 0$, which implies that
$\sigma^*(\overline{e_i})\notin B_0$ and $\sigma^*\notin\aut_\bullet(B)$. Thus we have $(\aut_\bullet(D))^*= \aut_\bullet(B)$.

(3) follows directly from Proposition \ref{prop-large-coalg-inn-auto-gp} and Lemma \ref{lem-inn-quotient-algebra}.

(4) The first assertion follows from (2) and Proposition \ref{prop-decom-aut-coalg}. We next show that $\inn(B)\cap \aut_\bullet(B) = \chi(Z_{B^\stimes}(B_0))$. For any $b\in B^\stimes$, $\chi_b\in \aut_\bullet(B)$ if and only if $\chi_b(\overline{e_i}) = b\overline{e_i}b^{-1} = \overline{e_i}$ for any $i\in Q_0$, if and only if $b\in Z_B(B_0)$, it follows that $\inn(B)\cap \aut_\bullet(B) = \chi(Z^\stimes_{B}(B_0))$.

Since $\inn_0(B)\subseteq \inn(B)\cap \aut_\bullet(B)$, we obtain that $\aut_0(B) = \inn_\circ(B)\aut_\bullet(B)$. If $Z^\stimes_{B}(B_0) = B_0^\stimes$, then $\inn_\circ(B)\cap \aut_\bullet(B) = \{\id\}$, and the semi-product follows.

(5) follows easily from (4) and the second isomorphism theorem of groups, which says that $\inn(B)\aut_\bullet(B)/\inn(B)\cong \aut_\bullet(B)/\inn(B)\cap\aut_\bullet(B)$.
\end{proof}

\begin{rem} The parts (3), (4) and (5) of the theorem should be known to experts. For a direct proof of (4), one needs the following classical result, which can be found in any standard textbook on ring theory, see  for instance, \cite[Section 21]{la}, or  \cite[Theorem 3.4.1]{dk} for finite dimensional algebra case:

Let $R$ be a ring with identity and $1 = x_1 + x_2 +\cdots + x_m = y_1 + y_2+ \cdots + y_n$ be two primitive orthogonal decompositions of identity, then $ m = n $ and there exists $r\in R^\stimes$ and a permutation $\tau$ of $\{1, 2, \cdots, n\}$ such that $x_i = ry_{\tau(i)}r^{-1}$ for all $1\le i\le n$.

Let $\sigma\in\aut_0(B)$. Then we have primitive orthogonal decompositions of identity
$1= \sum_{i\in Q_0}{\overline{e_i}} = \sum_{i\in Q_0} {\sigma}({\overline{e_i}})$. By the above result,
there exists some $b\in B^{\stimes}$ and a permutation $\tau$ of $Q_0$, such
that $\chi_b(\overline{e_i}) = b\cdot\overline{e_i}\cdot b^{-1} = {\sigma}(\overline{e_{\tau(i)}})$ for any $i\in Q_0$. Now
\[\chi_b(\overline{e_i}) =  {\sigma}(\overline{e_{\tau(i)}})\in \overline{e_{\tau(i)}} + \rad(B)\] implies that $\tau = \id_{Q_0}$ and hence $\chi_b(\overline{e_i}) =  {\sigma}(\overline{e_{i}})$.
It follows that $\chi_b^{-1} {\sigma}\in\aut_\bullet(B)$ and
$\aut_0(B) = \inn(B)\aut_\bullet(B)$.

Note that the above argument is only an existence proof, while our proof of Proposition \ref{prop-decom-aut-pathcoalg} is in some extend a constructive one.

\end{rem}

\subsection{The finite dimensional case}
We will apply to finite dimensional case and recover some classical results. Note that when the quiver $Q$ is finite acyclic, the path algebra is finite dimensional and coincides with the complete path algebra, and hence $\aut(kQ^c)\cong\aut(kQ^a)$.

\begin{prop} \label{prop-autogp-path-alg} Let $Q$ be a finite acyclic quiver and $A = kQ^a$ the path algebra.  Then

(1) $\aut_0(A)$ is the identity component of $\aut(A)$ and $\aut(A)/\aut_0(A)\cong\aut(Q)$;

(2) $\inn(A)\cap \aut_\bullet(A)= \inn_0(A)$ and hence $\aut_0(A)/\inn(A) \cong \aut_\bullet(A)/\inn_0(A)$;

(3) $\aut_0(A) = \inn_\circ(A)\rtimes\aut_\bullet(A)$;
\end{prop}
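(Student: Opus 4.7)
The three parts will all be obtained by transferring results already proved on the coalgebra side to the algebra side through the anti-isomorphism $()^{*}\colon \aut(kQ^{c}) \xrightarrow{\cong} \aut(kQ^{a})$ supplied by Corollary \ref{cor-anti-iso} (recall that $kQ^{a}=\widehat{kQ^{a}}$ since $Q$ is finite acyclic). The general strategy is: invoke Theorem \ref{thm-antiiso-dual-alg} with $B = A = kQ^{a}$ (and $D = C = kQ^{c}$), and feed in the two quiver-specific inputs available here, namely the topological statement of Proposition \ref{prop-identity-cpt-aut-coalg} and the fact that $Q$ being acyclic forces $e_{i}Ae_{i} = ke_{i}$.

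For part (1), I would start from Proposition \ref{prop-identity-cpt-aut-coalg}, which asserts that $\aut_{0}(kQ^{c})$ is the identity component of the linear algebraic group $\aut(kQ^{c})$ and that $\aut(kQ^{c})/\aut_{0}(kQ^{c}) \cong \aut(Q)$. By Theorem \ref{thm-antiiso-dual-alg}(1), the anti-isomorphism $()^{*}$ carries $\aut_{0}(kQ^{c})$ bijectively onto $\aut_{0}(A)$. Since $()^{*}$ is an anti-isomorphism of algebraic groups, it is a homeomorphism (in the Zariski topology) and hence sends the identity component to the identity component; this gives the first assertion. The quotient isomorphism then follows because an anti-isomorphism $G \to H$ still induces a group isomorphism $G/N \cong H/\phi(N)$ for normal $N$ (take inverses), together with the standard fact that $\aut(Q) \cong \aut(Q)^{op}$.

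For part (3), the decisive point is to verify the hypothesis $Z^{\stimes}_{A}(A_{0}) = A_{0}^{\stimes}$ in the second half of Theorem \ref{thm-antiiso-dual-alg}(4). An element $a \in A$ commutes with every $\overline{e_{i}}$ precisely when $a \in \bigoplus_{i\in Q_{0}} \overline{e_{i}} A \overline{e_{i}}$; because $Q$ is acyclic, each space $\overline{e_{i}} A \overline{e_{i}}$ is spanned by the paths from $i$ to $i$, i.e.\ only by $\overline{e_{i}}$, so $Z_{A}(A_{0}) = A_{0}$ and hence $Z^{\stimes}_{A}(A_{0}) = A_{0}^{\stimes}$. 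Theorem \ref{thm-antiiso-dual-alg}(4) then gives $\aut_{0}(A) = \inn_{\circ}(A) \rtimes \aut_{\bullet}(A)$ directly.

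For part (2), the identification $\inn(A) \cap \aut_{\bullet}(A) = \chi(Z^{\stimes}_{A}(A_{0})) = \chi(A_{0}^{\stimes}) = \inn_{0}(A)$ is immediate from Theorem \ref{thm-antiiso-dual-alg}(4) and the computation of $Z^{\stimes}_{A}(A_{0})$ in the previous step. For the quotient isomorphism, I note first that $\inn(A) \subseteq \aut_{0}(A)$: for any $a \in A^{\stimes}$, $\chi_{a}(\overline{e_{i}}) = a\overline{e_{i}}a^{-1} \equiv \overline{e_{i}} \pmod{\rad(A)}$ since $A/\rad(A) = A_{0}$ is commutative. Combining this with $\aut_{0}(A) = \inn_{\circ}(A)\aut_{\bullet}(A) \subseteq \inn(A)\aut_{\bullet}(A) \subseteq \aut_{0}(A)$ gives $\aut_{0}(A) = \inn(A)\aut_{\bullet}(A)$, and the second isomorphism theorem of groups yields $\aut_{0}(A)/\inn(A) \cong \aut_{\bullet}(A)/(\inn(A) \cap \aut_{\bullet}(A)) = \aut_{\bullet}(A)/\inn_{0}(A)$. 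No step is really an obstacle; the only subtle point, and the only place the acyclic hypothesis enters essentially beyond guaranteeing $kQ^{a} = \widehat{kQ^{a}}$, is the computation $e_{i}Ae_{i} = ke_{i}$.
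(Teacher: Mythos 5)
Your proposal is correct and follows essentially the same route as the paper: part (1) is obtained by transporting Proposition \ref{prop-identity-cpt-aut-coalg} through the anti-isomorphism $()^{*}$, and parts (2) and (3) are read off from Theorem \ref{thm-antiiso-dual-alg} once one observes that acyclicity forces $e_iAe_i=ke_i$ and hence $Z_A(A_0)=A_0$. The only difference is that you spell out the details (the homeomorphism argument for identity components, the second isomorphism theorem) that the paper leaves implicit.
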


\begin{proof}  Let $C= kQ^c$. It is direct to show that $\aut_0(A) = \{\sigma^*\mid \sigma\in \aut_0(C)\}$. Thus the assertion (1) follows from Proposition \ref{prop-identity-cpt-aut-coalg} together with the fact $\aut(C)$ and $\aut(A)$ are anti-isomorphic.
(2) and (3) follow from Theorem \ref{thm-antiiso-dual-alg}. Note that $Z_{A}(A_0) = A_0$ in case $Q$ is acyclic.
\end{proof}

Moreover, by Theorem \ref{thm-schurian-coalg} we have the following characterization for solvability of the automorphism group of a truncated path algebra.

\begin{prop}\label{prop-sol-autgp-trunalg} Let $Q$ be a finite quiver, $A=kQ^a$ and $J=kQ_{\ge1}$ the graded Jacobson ideal. Then
$\aut_0(A/J^n)$ is solvable for some $n\ge 2$ if and only if $Q$ is a Schurian quiver, if and only if
$\aut_0(A/J^n)$ is solvable for all $n\ge 2$; $\aut(A/J^n)$ is solvable for some $n\ge 2$ if and only if $Q$ is a Schurian quiver with $\aut(Q)$ being solvable, if and only if $\aut(A/J^n)$ is solvable for all $n\ge 2$.
\end{prop}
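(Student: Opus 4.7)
The plan is to reduce the statement to the already established Theorem~\ref{thm-schurian-coalg} by invoking the algebra-coalgebra duality developed in Section 5.1. For any finite quiver $Q$ and any $n\ge 2$, the truncated path algebra $A/J^n = kQ^a/kQ_{\ge n}$ is finite dimensional with basis consisting of paths of length strictly less than $n$. Under the non-degenerate pairing $(\bar p, q) = \delta_{p,q}$ introduced in Section 1.3, the dual vector space of $A/J^n$ is canonically identified with the truncated path coalgebra $kQ^c_{\le n-1}$, and this identification is an isomorphism of coalgebras (it is simply the restriction of the pairing between $\widehat{kQ^a}$ and $kQ^c$ to the cofinite ideal $J^n$ and its orthogonal complement $kQ^c_{\le n-1}$).

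First I would verify that $\aut(A/J^n)\cong\aut(kQ^c_{\le n-1})$ as groups, and moreover that this isomorphism sends $\aut_0(A/J^n)$ to $\aut_0(kQ^c_{\le n-1})$. Since $kQ^c_{\le n-1}$ is finite dimensional it is reflexive, so Lemma~\ref{lem-coalg-to-alg} produces an anti-isomorphism via $(-)^*$, which becomes a group isomorphism after composition with inversion. The identification of the subgroups $\aut_0$ on both sides is exactly the content of Theorem~\ref{thm-antiiso-dual-alg}(1) applied to $D = kQ^c_{\le n-1}$ and $B = A/J^n$; note that we do not need $Q$ to be acyclic here, since we are working with the finite dimensional truncated objects rather than with $kQ^a$ versus $kQ^c$ themselves.

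Second, I would apply Theorem~\ref{thm-schurian-coalg} to $kQ^c_{\le n-1}$. When $n\ge 3$, i.e.\ $n-1\ge 2$, the theorem gives directly that $\aut_0(kQ^c_{\le n-1})$ is solvable if and only if $Q$ is Schurian. For the boundary case $n=2$, the dual coalgebra is $kQ^c_{\le 1}$, and every automorphism of $kQ^c_{\le 1}$ fixing the coradical is automatically graded, so by Lemma~\ref{lem-graut-c1} we have
\[
  \aut_0(kQ^c_{\le 1}) \;\cong\; \graut_0(kQ^c_{\le 1}) \;\cong\; \prod_{s,t\in Q_0} GL(k,|(Q_1)_{s,t}|),
\]
which is solvable precisely when each $|(Q_1)_{s,t}|\le 1$, i.e.\ when $Q$ is Schurian. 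This completes the first chain of equivalences.

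Third, for the full automorphism group, the quotient $\aut(A/J^n)/\aut_0(A/J^n)$ corresponds under the duality to $\aut(kQ^c_{\le n-1})/\aut_0(kQ^c_{\le n-1})$, and by the argument of Proposition~\ref{prop-identity-cpt-aut-coalg} (which applies verbatim to the truncated coalgebra) this last quotient is isomorphic to $\aut(Q)$. Since an extension of solvable groups is solvable and any subgroup/quotient of a solvable group is solvable, $\aut(A/J^n)$ is solvable if and only if both $\aut_0(A/J^n)$ and $\aut(Q)$ are solvable, which by the previous step occurs if and only if $Q$ is Schurian with $\aut(Q)$ solvable. The main obstacle is purely bookkeeping: the indexing shift between $A/J^n$ and $kQ^c_{\le n-1}$ and the resulting boundary case $n=2$ (which falls outside the stated range of Theorem~\ref{thm-schurian-coalg}); both are handled as above, so no essentially new argument is required.
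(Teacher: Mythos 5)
Your overall strategy --- dualize $A/J^n$ to the truncated path coalgebra $kQ^c_{\le n-1}$ via Corollary~\ref{cor-anti-iso} and Theorem~\ref{thm-antiiso-dual-alg}(1), then quote Theorem~\ref{thm-schurian-coalg} --- is exactly the route the paper takes (the paper presents the proposition as an immediate dual of Theorem~\ref{thm-schurian-coalg} and offers no further argument), and your attention to the index shift $n\mapsto n-1$ and to the resulting boundary case $n=2$ is a genuine gain in precision over the paper. However, one step in your treatment of that boundary case is wrong as stated: it is not true that every automorphism of $kQ^c_{\le 1}$ fixing the coradical is graded. For $s\ne t$ one has $\p_{s,t}=k(e_s-e_t)\oplus k(Q_1)_{s,t}$, so for instance on the quiver $1\to 2$ the assignment $e_i\mapsto e_i$, $\alpha\mapsto\alpha+e_1-e_2$ defines a non-graded coalgebra automorphism of $kQ^c_{\le 1}$. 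Hence $\aut_0(kQ^c_{\le 1})\ne\graut_0(kQ^c_{\le 1})$ in general, and your displayed isomorphism fails.

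The gap is easily closed with the paper's own machinery: by Proposition~\ref{prop-normal-subgp-chain} and Lemma~\ref{lem-auto-coalg-factorgp}, $\aut_0(kQ^c_{\le 1})\cong\Omega_0^*(Q)/\Omega_1^*(Q)$ sits in an extension with abelian normal subgroup $\Omega_{1/2}^*(Q)/\Omega_1^*(Q)\cong k^{d_1}$ and quotient $\graut_0(C_{(1)})\cong\prod_{s,t\in Q_0}GL(k,|(Q_1)_{s,t}|)$, so it is solvable if and only if $\graut_0(C_{(1)})$ is, if and only if $Q$ is Schurian --- the conclusion you wanted. With that repair the argument is complete: the identification $(\aut_0(A/J^n))^*=\aut_0(kQ^c_{\le n-1})$ is correctly sourced from Theorem~\ref{thm-antiiso-dual-alg}(1), the case $n\ge 3$ from Theorem~\ref{thm-schurian-coalg}, and the passage to the full automorphism group from the extension $1\to\aut_0\to\aut\to\aut(Q)\to 1$ of Proposition~\ref{prop-identity-cpt-aut-coalg} together with the closure of solvability under subgroups, quotients and extensions.
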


\begin{rem} The results should be known to experts. For instance, the solvability of the identity component of the automorphism group of a monomial algebra has been discussed in \cite[Corollary 2.23]{gs2}, from which the first part of the above proposition can also be deduced.
\end{rem}

Combined with Proposition \ref{prop-cor-inn-sbgps} and \ref{prop-cor-bullet-sbgp}, we reobtain the following known dimension formula, which is essentially given in \cite[Remark 4.10]{gs2}.

\begin{cor}\label{cor-dim-outer-gp} Let $Q$ be a finite acyclic quiver and $A=kQ^a$. Then  \[\dim(\mathrm{Out}(A)) = \sum_{ s, t\in Q_0} |(Q_1)_{s,t}|\cdot |(Q_{\ge1})_{s,t} | - |Q_0| + r,\] where $r$ denotes the number of connected components of the quiver $Q$.
\end{cor}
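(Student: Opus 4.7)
The plan is to reduce the computation to $\dim(\aut_\bullet(A))-\dim(\inn_0(A))$ and then evaluate both terms using the trans-datum calculus developed earlier. First, since $\aut(A)/\aut_0(A)\cong\aut(Q)$ is finite by Proposition \ref{prop-autogp-path-alg}(1), we have $\dim(\aut(A))=\dim(\aut_0(A))$. Applying the semidirect product decomposition $\aut_0(A)=\inn_\circ(A)\rtimes\aut_\bullet(A)$ from Proposition \ref{prop-autogp-path-alg}(3), together with the decomposition $\inn(A)=\inn_\circ(A)\rtimes\inn_0(A)$ obtained by transporting Proposition \ref{prop-inn-coalg-is-normal} through the correspondence of Proposition \ref{prop-cor-inn-sbgps}, and observing that $\inn(A)\subseteq\aut_0(A)$ (Proposition \ref{prop-autogp-path-alg}(2)), the $\inn_\circ(A)$-contributions cancel and we obtain
\[\dim(\mathrm{Out}(A)) \;=\; \dim(\aut_\bullet(A)) - \dim(\inn_0(A)).\]

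For the first term, I would combine Corollary \ref{cor-anti-iso}, Theorem \ref{thm-antiiso-dual-alg}(2), and Proposition \ref{prop-cor-bullet-sbgp} to identify $\aut_\bullet(A)$ with $\Omega^*_\bullet(Q)$ via the bijection $\mbt$. Since $Q$ is acyclic one has $s(p)\neq t(p)$ for every $p\in P$, so $\p_{s(p),t(p)}\cap kQ_1=k(Q_1)_{s(p),t(p)}$, and an element of $\Omega^*_\bullet(Q)$ amounts to a tuple $(\mu_p)_{p\in P}$ with $\mu_p\in k(Q_1)_{s(p),t(p)}$, subject only to the Zariski-open invertibility of its length-one block $\mu|_{kQ_1}$. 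This realizes $\aut_\bullet(A)$ as an open subvariety of an affine space, giving
\[\dim(\aut_\bullet(A)) \;=\; \sum_{p\in P}|(Q_1)_{s(p),t(p)}| \;=\; \sum_{s,t\in Q_0}|(Q_1)_{s,t}|\cdot|(Q_{\ge 1})_{s,t}|.\]

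For the second term, the torus $A_0^\stimes=(k^\times)^{|Q_0|}$ has dimension $|Q_0|$, and $\inn_0(A)=\chi(A_0^\stimes)\cong A_0^\stimes/(A_0^\stimes\cap Z(A))$. An element $a=\sum_{i\in Q_0}a_i\overline{e_i}\in A_0$ is central iff it commutes with every $\overline{\alpha}$ for $\alpha\in Q_1$, equivalently iff $a_{s(\alpha)}=a_{t(\alpha)}$ for every arrow; this is exactly the condition that the function $i\mapsto a_i$ is constant on each connected component of $Q$. Hence $A_0\cap Z(A)$ has dimension $r$, so $\dim(\inn_0(A))=|Q_0|-r$, and substituting into the identity of Paragraph 1 yields the claimed formula.

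The main obstacle is purely bookkeeping: checking that dimension is additive across the semidirect product decompositions (which is routine for morphisms of algebraic groups since the factors are connected or have finite component group) and correctly identifying $A_0\cap Z(A)$ with the span of the indicator idempotents of the connected components of $Q$. Everything else is a direct application of the trans-datum dictionary, so no new ideas are required beyond the structural results already established.
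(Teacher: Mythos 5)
Your proposal is correct and follows essentially the same route as the paper: reduce to $\dim(\aut_\bullet(A))-\dim(\inn_0(A))$ via Proposition \ref{prop-autogp-path-alg} and the discreteness of $\aut(A)/\aut_0(A)$, compute $\dim(\aut_\bullet(A))=\sum_{s,t}|(Q_1)_{s,t}|\cdot|(Q_{\ge1})_{s,t}|$ from the trans-datum description of $\Omega^*_\bullet(Q)$, and compute $\dim(\inn_0(A))=|Q_0|-r$ by identifying $A_0\cap Z(A)$ with the span of the component idempotents. The only cosmetic difference is that you cancel the $\inn_\circ(A)$ factors from the two semidirect product decompositions, whereas the paper invokes the isomorphism $\aut_0(A)/\inn(A)\cong\aut_\bullet(A)/\inn_0(A)$ of Proposition \ref{prop-autogp-path-alg}(2) directly; these are equivalent.
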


\begin{proof} For a proof we use the correspondence given in  Proposition \ref{prop-cor-bullet-sbgp}. One shows that $\dim(\aut_\bullet(A)) =  \sum_{ s, t\in Q_0} |(Q_1)_{s,t}|\cdot |(Q_{\ge1})_{s,t} |$ and $\dim(\inn_0(A)) = |Q_0| - r$. By Proposition \ref{prop-autogp-path-alg}(3) we have
\[\dim(\aut_0(A)/\inn(A)) = \sum_{ s, t\in Q_0} |(Q_1)_{s,t}|\cdot |(Q_{\ge1})_{s,t} | - |Q_0| + r.\]
Now $\aut(A)/ \aut_0(A)$ is a discrete group implies that \[\dim(\mathrm{Out}(A)) = \dim(\aut(A)/\inn(A)) = \dim(\aut_0(A)/\inn(A))\] and the equality follows.
\end{proof}

\begin{rem} Note that $\mathrm{Out}(A)$ is not invariant under Morita equivalence in general, while the group $\aut_0(A)/\inn(A)$ is always a Morita invariance for finite dimensional algebras \cite{po}.
\end{rem}

\begin{rem}
 Compare with the dimension formula of the first Hochschild cohomology of a path algebra \cite[Proposition 1.6]{ha}, which computes the $\dim( \operatorname{Der(A)}/ \operatorname{InnDer(A)})$, here $\operatorname{Der(A)}$ and $\operatorname{InnDer(A)}$ denote the space of derivations and inner derivations respectively. Recall that $\operatorname{Der(A)}$ and $\operatorname{InnDer(A)}$ are the Lie algebras of $\aut(A)$ and $\inn(A)$ respectively.
\end{rem}

By \cite[Corollary 1.6]{ha}, $\dim(\operatorname{Der(A)}/ \operatorname{InnDer(A)}) = 0$ if and only if $Q$ is a tree, that is, the underlying graph of $Q$ contains no cycles. Moreover, $\aut_0(A)/\inn(A)$ is connected since $\aut_0(A)$ is.  Combining with Proposition \ref{prop-identity-cpt-aut-coalg}, we have

\begin{cor}\label{cor-autogp-eq-inngp} Let $Q$ be a finite acyclic quiver. Then $\aut_0(kQ^a) = \inn(kQ^a)$ if and only if $Q$ is a tree; if and only if $\mathrm{Out}(A)$ is a finite group.
\end{cor}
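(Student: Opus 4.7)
The plan is to reduce all three conditions to a single dimension computation by exploiting the fact that $\aut_0(A)/\inn(A)$ is a connected algebraic group.

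First I would establish the containment $\inn(A) \subseteq \aut_0(A)$. This is automatic from the structural decomposition already in hand: by Proposition~\ref{prop-autogp-path-alg}(3), $\aut_0(A) = \inn_\circ(A) \rtimes \aut_\bullet(A)$, and since $\inn(A) = \inn_\circ(A) \rtimes \inn_0(A)$ with $\inn_0(A) \subseteq \aut_\bullet(A)$, the containment follows. Moreover, since $\inn(A)$ is normal in $\aut(A)$, the quotient $\aut_0(A)/\inn(A)$ is a well-defined algebraic group, and it is connected because $\aut_0(A)$ is (by Proposition~\ref{prop-identity-cpt-aut-coalg}, since $Q$ is finite acyclic).

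Next I would handle the equivalence $\aut_0(kQ^a) = \inn(kQ^a) \Longleftrightarrow Q$ is a tree. Since $\aut_0(A)/\inn(A)$ is a connected algebraic group, it is trivial if and only if its dimension is zero. By Corollary~\ref{cor-dim-outer-gp} (more precisely, by the computation inside its proof),
\[
\dim\bigl(\aut_0(A)/\inn(A)\bigr) = \sum_{s,t\in Q_0} |(Q_1)_{s,t}| \cdot |(Q_{\ge 1})_{s,t}| - |Q_0| + r.
\]
The cited result of Happel, $\dim(\operatorname{Der}(A)/\operatorname{InnDer}(A)) = 0$ iff $Q$ is a tree, then gives the equivalence once one recalls that $\operatorname{Der}(A)$ and $\operatorname{InnDer}(A)$ are the Lie algebras of $\aut(A)$ and $\inn(A)$ respectively, so the right-hand side above vanishes precisely when $Q$ is a tree.

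Finally, for the equivalence with finiteness of $\mathrm{Out}(A)$, I would exploit the short exact sequence of algebraic groups
\[
1 \longrightarrow \aut_0(A)/\inn(A) \longrightarrow \aut(A)/\inn(A) \longrightarrow \aut(A)/\aut_0(A) \longrightarrow 1.
\]
The quotient on the right is $\aut(Q)$ by Proposition~\ref{prop-identity-cpt-aut-coalg}, which is a finite group since $Q$ is a finite quiver. Therefore $\mathrm{Out}(A)$ is finite if and only if $\aut_0(A)/\inn(A)$ is finite; but a connected algebraic group is finite only when it is trivial, so this collapses back to the dimension-zero condition, hence to $Q$ being a tree. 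No step looks genuinely hard here---the substance is packaged in Corollary~\ref{cor-dim-outer-gp} and in Happel's theorem; the only conceptual point is the connectedness argument that lets one pass from ``dimension zero'' to ``trivial'' (and from ``finite'' to ``trivial'') for the quotient $\aut_0(A)/\inn(A)$.
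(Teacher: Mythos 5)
Your proposal is correct and follows essentially the same route as the paper: the paper also combines the dimension formula of Corollary \ref{cor-dim-outer-gp} (equivalently Happel's computation of $\dim(\operatorname{Der}(A)/\operatorname{InnDer}(A))$), the connectedness of $\aut_0(A)/\inn(A)$ inherited from $\aut_0(A)$, and the finiteness of $\aut(A)/\aut_0(A)\cong\aut(Q)$ to reduce all three conditions to the vanishing of a single dimension. No gaps.
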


Now we turn to a more general situation. Let $B$ be an arbitrary finite dimensional elementary algebra. Elementary means that each simple $B$-module is of one dimension. As we mentioned in Section 1.3, $B\cong kQ^a/I$ for some finite quiver $Q$ and an admissible ideal $I$ of $kQ^a$. Let $D = B^*$ be the dual coalgebra of $A$, then $D$ is a subcoalgebra of $kQ^c$. By Theorem \ref{thm-large-auto-ext}, we have the following result.

\begin{thm}\label{thm-autogp-fd-alg-lifting} Let $B = kQ^a/I$, where $Q$ is a finite quiver and $I$ an admissible ideal of $kQ^a$. Then $\aut(B)$ is a subquotient group of $\aut(\widehat{kQ^a})$. In particular, if $Q$ is acyclic, then $\aut(B)$ is a subquotient group of $\aut(kQ^a)$.
\end{thm}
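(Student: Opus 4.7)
The plan is to combine two key ingredients already established in the paper: the coalgebra extension result Theorem~\ref{thm-large-auto-ext}, and the duality identification Corollary~\ref{cor-anti-iso}. The point is that once we translate the problem from algebras to coalgebras, everything is already in place.

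First I would set up the correspondence. Since $B = kQ^a/I$ is a finite dimensional elementary algebra with $Q$ its extension quiver and $I$ admissible, the dual vector space $D := B^*$ is a large subcoalgebra of $C := kQ^c$ (this is stated explicitly in the discussion of Gabriel's theorem in Section 1.3). Because $Q$ is finite, $C$ is finitary and hence reflexive, so Corollary~\ref{cor-anti-iso} provides group anti-isomorphisms $\aut(D) \cong \aut(B)$ and $\aut(C) \cong \aut(\widehat{kQ^a})$, the latter realized via $(-)^*$.

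Next I would invoke Theorem~\ref{thm-large-auto-ext}, which asserts precisely that $\aut(D)$ is a subquotient of $\aut(C)$. Explicitly, the extension assertion says that the restriction map from the stabilizer $\aut(C;D) = \{\sigma \in \aut(C) \mid \sigma(D) = D\}$ onto $\aut(D)$ is surjective, and its kernel is $\Gal(C/D)$; hence $\aut(D) \cong \aut(C;D)/\Gal(C/D)$.

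Finally I would transfer this presentation to the algebra side. Since every group is canonically isomorphic to its opposite via $g \mapsto g^{-1}$, being a subquotient group is preserved under group anti-isomorphisms; concretely, the image of $\aut(C;D)$ in $\aut(\widehat{kQ^a})$ under $(-)^*$ is a subgroup, and the image of the normal subgroup $\Gal(C/D)$ is a normal subgroup of it, with the quotient anti-isomorphic (hence isomorphic) to $\aut(B)$. This yields the desired subquotient presentation, and the acyclic case is automatic from $kQ^a = \widehat{kQ^a}$. There is no genuine obstacle here: the theorem is really a repackaging of prior results, and the only minor care needed is in checking that the anti-isomorphism $(-)^*$ respects the subgroup/normal subgroup structure used to form the subquotient, which is routine.
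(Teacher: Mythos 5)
Your proposal is correct and follows exactly the route the paper intends: realize $D=B^*$ as a large subcoalgebra of $kQ^c$, apply Theorem~\ref{thm-large-auto-ext} to present $\aut(D)$ as $\aut(C;D)/\Gal(C/D)$, and transport this through the anti-isomorphisms of Corollary~\ref{cor-anti-iso}. The paper itself gives no further detail beyond citing Theorem~\ref{thm-large-auto-ext}, so your write-up (including the observation that subquotient structure survives a group anti-isomorphism) is a faithful and slightly more explicit version of the same argument.
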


\begin{rem} When the quiver $Q$ is not acyclic, an automorphism of $B$ needs not to be induced from an automorphism of $kQ^a$ in general. An easy example is as follows.

Let $Q$ be the quiver with exactly one vertex and one arrow. Then $kQ^a\cong k[x]$. Let $I = \langle x^3\rangle$ and $B = k[x]/I$. Consider the automorphism $\tau$ of $B$ given by  $\tau(\bar x)= \bar x + \bar x^2$. Then $\tau $ is not obtained from any automorphism of $k[x]$. Compare with Example \ref{polynomial-alg}.
\end{rem}

\section{Examples}

In this section, we consider some typical quivers and calculate the automorphism group and outer automorphism group of their path algebras.

\subsection{Quivers of directed $A_n$ type}
\begin{exm}\label{exm-a-type} Let $Q $ be the quiver of directed $A_n$ type, pictorially $Q$ is of the shape
 \[\overset{1}\bullet\longrightarrow\overset{2}\bullet\longrightarrow\overset{3}
  \bullet\longrightarrow\cdots\longrightarrow\overset{n}\bullet,\] and $A = kQ^a$ the path algebra. We claim that $\aut(A)\cong T_n/k^\stimes$, where \[T_n=\left\{{\scriptscriptstyle \begin{pmatrix}
                                            k_1 & *   & *      & * \\
                                                & k_2 & *      & * \\
                                                &     & \ddots & * \\
                                                &     &        & k_n  \\
                                          \end{pmatrix}},
 k_i\in k^\stimes\right \}\subseteq GL_{n,k}\] is the group of invertible $n\times n$ upper triangular matrices with entries in $k$, and $k^\stimes$ is identified with the subgroup of $T_n$ consisting of multiples of identity. Clearly $T_n/k^\stimes$ has a section $\widetilde{T}_n$ \[\left\{{\scriptscriptstyle \begin{pmatrix}
                                            1   & *   & *      & * \\
                                                & k_2 & *      & * \\
                                                &     & \ddots & * \\
                                                &     &        & k_n  \\
                                          \end{pmatrix}},
 k_i\in k^\stimes, i\ge 2\right \}\] in $T_n$, and this froms a subgroup which is isomorphic to $T_n/k^\stimes$.
\end{exm}

The claim is easy to prove. In fact, $A$ is isomorphic to the algebra of upper triangular $n\times n$ matrices with entries in $k$ and $A^\stimes = T_n$. Moreover, $Q$ is a tree with $\aut(Q) = \{1\}$ implies that $\aut(A) = \inn(A) = A^{\stimes}/Z(A^\stimes)$ and the conclusion follows.

Next we consider large subcoalgebras of $C=kQ^c$ and their Galois groups. Recall that for each $1\le i\le j\le n$, there exists an unique path starting at $e_i$ and terminating at $e_j$, which we denote by $E_{i,j}$.

Let $D$ be a large subcoalgebra of $C$. For each $1\le i\le n$, let $r_i$ denote the maximal integer such that $E_{i, r_i}\in D$. Clearly $r_{n-1} = r_n = n$, $i+1\le r_i\le n$ for each $1\le i\le n-1$ and $r_i \ge r_{i-1}$ for each $2\le i\le n$. One shows easily that $D$ is uniquely determined by the vector $(r_1, r_2, \cdots, r_{n})\in \mathbb{N}^n$. By direct calculation we show that
\[\Gal(C/D)=\{(X_{i,j})\in \widetilde{T}_n\mid X_{i,i} = 1, X_{i, l}=0,  1\le i\le n, i+1\le l\le r_i\}.\]
Moreover, one shows easily $\aut(C;D)=\aut(C)$, i.e., $\sigma(D)=D$ for each $\sigma\in \aut(C)$, and hence $\aut(D)\cong\aut(C)/\Gal(C/D)$ follows.

\subsection{Generalized Kronecker quivers}
\begin{exm} Let $Q = K_n$ be the generalized Kronecker quiver
\[ \xymatrix{ 1\bullet \ar@<+2.0ex>[rr]|{\alpha_1} \ar@<+1.1ex>[rr] \ar@<-2.0ex>[rr]^{ \vdots}|{\alpha_n} & &\bullet2 }\] with $n$ arrows and $A$ the path algebra. Thus $\aut(A) \cong GL(V)\ltimes V$, where $V= \oplus_{1\le j\le n}k\alpha_j =kQ_1$ and $GL(V)$ acts on $V$ canonically.

In fact, in this case $\aut(Q) = \{1\}$ again and hence $\aut(A) = \aut_0(A)$. For any $ v \in V$, we denote by $t_v\in \aut(A)$ the automorphism with $t_v(e_1)=e_1+ v, t_v(e_2) = e_2-v$ and $t_v(\alpha_j)=\alpha_j$ for $1\le j\le n$. For each $X\in GL(V)$, we denote by $\sigma_X\in \aut(A)$ the automorphism with $\sigma_X(e_i) = e_i, i=1, 2$ and $\sigma_X(\alpha_i) = X\cdot \alpha_j$ for any $1\le j\le n$. Now we have $\aut_\bullet(A) = \{\sigma_X\mid X\in GL(V)\}\cong GL(V)$ and $\inn_\circ(A) = \{ t_v\mid v\in V \}\cong V$, and the action of $\aut_\bullet(A)$ on $\inn_\circ(A)$ is given by the canonical action of $GL(V)$ on $V$. More explicitly, $\aut(A)$ is given by the matrix group
\[
                \left\{\begin{pmatrix}
                              GL_n(k) & *   \\
                              0       & 1  \\
                    \end{pmatrix}
   \right\}\subseteq GL_{n+1}(k).
\]
\end{exm}

It is direct to show that $\inn_0(A)=\{\sigma_{a\cdot\id_V}\mid a\in k^\stimes\}$. Thus by Theorem \ref{thm-antiiso-dual-alg}(5), we have $\mathrm{Pic}(A)\cong \mathrm{Out}(A)\cong PGL(V)$, the projective general linear group of $V$.

\subsection{$n$-subspace quivers}
\begin{exm} Let $Q$ be the $n$-subspace quiver, that is a star-like quiver of the shape
\[ \xymatrix{ 1\bullet \ar[rd]_{\alpha_1} & \cdots &\bullet n \ar[ld]^{\alpha_n} \\
& \bullet 0  &
},\]
 and $A$ its path algebra, here $n\ge 1$ is a positive integer. Then $\aut(Q)\cong \mathfrak{S}_n$, the symmetric group of $\{1,2,\cdots, n\}$ and
\[
  \aut_0(A) =\left\{\begin{pmatrix}
                           k_1 &        &     & \lambda_1 \\
                               & \ddots &     & \vdots \\
                               &        & k_n & \lambda_n \\
                               &        &     & 1 \\
                    \end{pmatrix}, \lambda_i\in k,  k_i\in k^\stimes
   \right\}
\]
a subgroup of $GL_{n+1, k}$. Now $\aut(A) = \mathfrak{S}_n \ltimes \aut_0(A)$ with the action of $\sigma\in \mathfrak{S}_n$
on $\aut_0(A)$ given by
\[\sigma\cdot\begin{pmatrix}
                    k_1 &        &     & \lambda_1 \\
                        & \ddots &     & \vdots \\
                        &        & k_n & \lambda_n \\
                        &        &     & 1 \\
              \end{pmatrix} =
              \begin{pmatrix}
                    k_{\sigma^{-1}_1} &        &                   & \lambda_{\sigma^{-1}_1}\\
                                      & \ddots &                   & \vdots \\
                                      &        & k_{\sigma^{-1}_n} & \lambda_{\sigma^{-1}_n} \\
                                      &        &                   & 1 \\
              \end{pmatrix}.
\]

By comparing $\Omega_\bullet(Q)$ and $\inn_0(Q)$, one shows easily that $\aut_\bullet(A)=\inn_0(A)$. Again by Theorem \ref{thm-antiiso-dual-alg}(5), we have $\mathrm{Pic(A)}\cong\mathrm{Out}(A)\cong  \mathfrak{S}_n$.
\end{exm}

\subsection{$\aut(k[x])$ and $\aut(k[[x]])$}

As we mentioned before, if the quiver contains oriented cycles, then the natual correspondence between the automorphism group of the path algebra and the one of the path coalgebra given as in Lemma \ref{lem-coalg-to-alg} may not be one to one. We will discuss this in more detail with the following example.

\begin{exm} \label{polynomial-alg}
 Let $Q$ be the quiver given by one vertex with a loop attached. Then the path algebra $kQ^a$
is isomorphic to the polynomial algebra $k[x]$ in one variable. The complete path algebra $\widehat{kQ^a}$ is given by the algebra of power series $k[[x]]$ in one variable.

It is easy to show that $\aut(k[x])\cong
\left\{\begin{pmatrix}
1& \mu\\
0& \lambda
\end{pmatrix}, \mu\in k, \lambda\in k^\stimes \right\}$, here each $\begin{pmatrix}
1& \mu\\
0& \lambda
\end{pmatrix} $ corresponds to the automorphism of $k[x]$ mapping $x$ to $\lambda x + \mu$.
\end{exm}

On the other hind, $\Omega^*(Q)=\{(\lambda_n)_{n\ge 1}, \lambda_1\in k^\stimes, \lambda_i\in k, i=2, 3, \cdots\}$, here each $(\lambda_n)_{n\ge 1}$ corresponds to the trans-datum $\lambda$ given by $\lambda_{x^n} = \lambda_n x$.

Let $\lambda = (\lambda_n)_{n\ge1}\in \Omega^*$. Let $\mathbbm{f}_\lambda \in \aut(kQ^c)$ and  $\mathbbm{f}^*_\lambda\in \aut(k[[x]])$ be the corresponding automorphisms. One shows that $\mathbbm{f}^*_\lambda(k[x]) \subseteq k[x]$ if and only if $ \lambda_n = 0$ for sufficiently large $n$; $\mathbbm{f}^*_\lambda(k[x])=k[x]$ if and only if $\lambda_n =0$ for any $n\ge 2$, and such automorphisms  of $kQ^c$ correspond to the subgroup $
\left\{\begin{pmatrix}
1& 0\\
0& \lambda
\end{pmatrix}, \lambda\in k^\stimes \right\}$ of $\aut(k[x])$.

\begin{rem} The above example shows that for a general quiver, the construction as in Lemma \ref{lem-coalg-to-alg} gives only a correspondence between certain subgroup of $\aut(kQ^a)$ and certain subgroup of $\aut(kQ^c)$. To determine this subgroup is still open for us.
\end{rem}

\noindent{\bf Acknowledgements:} The work is partly done during the author's visit at the University of Paderborn with a support by Alexander von Humboldt Stiftung. He would like to thank Professor Henning Krause and the faculty of Institut f\"{u}r Mathematik for their hospitality.  Thanks should also go to Professor Manuel Saor\'{\i}n for helpful communications, especially for pointing out the reference \cite{gs2}.

\vskip20pt
{\small\noindent  Department of Mathematics, University of Science and Technology of China, China\\
Wu Wen-Tsun Key Laboratory of Mathematics, USTC, Chinese Academy of Sciences, China\\
{\it Email}: yeyu@ustc.edu.cn}

\end{document}